\newcommand{\Pc}{\mathcal{P}}
\newcommand{\aseq}{{\stackrel{\mathrm{a.s.}}{=}}}
\newcommand{\asconv}{{\stackrel{\mathrm{a.s.}}{\longrightarrow}}}
\newcommand{\bpf}[1][Proof]{{\noindent {\sc #1: }}}
\newcommand{\epf}{{{\hfill $\Box$ \smallskip}}}
\newcommand{\ONE}{{\mathbf{1}}}
\newcommand{\N}{{\mathbb N}}
\newcommand{\Q}{{\mathbb Q}}
\newcommand{\Fc}{\mathcal{F}}
\newcommand{\Nc}{\mathcal{N}}
\newcommand{\Pp}{\mathsf{P}}
\newcommand{\Z}{{\mathbb Z}}
\newcommand{\iid}{i.i.d.\ }
\newcommand{\RZ}{\R\times\Z}
\newcommand{\ZR}{\Z\times\R}
\newcommand{\E}{\mathsf{E}}
\newcommand{\Leb}{\mathrm{Leb}}
\newcommand{\R}{{\mathbb R}}
\newcommand{\TT}{{\mathbb T}}
\newcommand{\HH}{{\mathbb H}}
\newcommand{\Co}{{\rm Co}}
\newcommand{\eps}{{\varepsilon}}
\newcommand{\visc}{\varkappa}
\newtheorem{theorem}{Theorem}[section]
\newtheorem{remark}{Remark}[section]
\newtheorem{lemma}{Lemma}[section]
\numberwithin{equation}{section}
\renewenvironment{proof}[1][Proof]{
{\noindent {\sc #1: }}
}{
{{\hfill $\Box$ \smallskip}}
}
\let\orgdescriptionlabel\descriptionlabel
\renewcommand*{\descriptionlabel}[1]{%
  \let\orglabel\label
  \let\label\@gobble
  \phantomsection
  \edef\@currentlabel{#1}%
  \let\label\orglabel
  \orgdescriptionlabel{#1}%
}
\title[Directed polymers and Burgers equation]{Thermodynamic limit for directed polymers and stationary solutions of the Burgers equation}
\author{Yuri Bakhtin}
\address{Courant Institute of Mathematical Sciences\\ New York University \\ 251 Mercer St, New York, NY 10012 }
\author{Liying Li}
\email{bakhtin@cims.nyu.edu, liying@cims.nyu.edu}
\begin{document}
\begin{abstract}
 The first goal of this paper is to prove multiple asymptotic results for a time-discrete and space-continuous polymer model of a random walk in a random potential. These results include: existence of deterministic free energy density in the infinite volume limit for every fixed
 asymptotic slope; concentration inequalities for free energy implying a bound on its fluctuation exponent; straightness estimates implying a bound on the transversal fluctuation exponent. The culmination of this program is almost sure existence and uniqueness of polymer measures
 on one-sided infinite paths with given endpoint and slope, and interpretation of these infinite-volume Gibbs measures as thermodynamic limits.
 Moreover, we prove that marginals of polymer measures with the same slope and different endpoints are asymptotic to each other. 
 
 The second goal of the paper is to develop ergodic theory of the Burgers equation with positive viscosity and random kick forcing on the real line without any compactness assumptions. 
 Namely, we prove a One Force -- One Solution principle, using the infinite volume polymer measures to construct a family of stationary global solutions for this system, and proving that each of those solutions is a one-point pullback attractor on the initial
 conditions with the same spatial average. 
This provides a natural extension of the same program realized for the inviscid Burgers equation with the help of action minimizers that can be viewed as zero temperature
 limits of polymer measures.
\end{abstract}

\maketitle
\section{Introduction. The Burgers equation} The main topics in this paper are directed polymers and the Burgers equation. We begin with the Burgers equation which is one of the basic nonlinear evolutionary PDEs. It has various interpretations and applications. Although Burgers himself introduced it as a fluid dynamics model in an attempt to create a simplified picture of turbulence
(see \cite{Burgers:MR0001147}, \cite{burgers1973nonlinear}), 
the equation along with its modifications has been used to model diverse real world phenomena such as interface growth, traffic, or the distribution of matter in the Universe.  Various aspects of the Burgers turbulence are discussed 
in~\cite{Bec-Khanin:MR2318582}. The Burgers equation is also directly related to the KPZ equation that has been intensively studied lately. For an introduction to KPZ, see~\cite{Corwin:MR2930377} or~\cite{Quastel:MR3098078}. 

In one dimension, the Burgers equation is
\begin{equation}
\label{eq:Burgers}
 \partial_t u + u \partial_x u=\visc\partial_{xx} u + f.
\end{equation}
Under the fluid dynamics interpretation, the equation describes the evolution of a velocity profile $u$ of particles moving along the real line. 
The velocity of the particle located at time $t\in\R$ at point $x\in\R$ is denoted by $u(t,x)\in\R$. The left-hand side of~\eqref{eq:Burgers} represents the acceleration of the particle, 
so the right-hand side must contain all the forces acting on the particle. In this pressureless model, particles are subject to external forcing $f=f(t,x)$ and viscous friction forces 
represented by the term $\visc\partial_{xx}u(t,x)$, where $\visc\ge0$ is the
viscosity constant.

The following (viscous) Hamilton--Jacobi--Bellman (HJB) equation:
\begin{equation}
\label{eq:HJB}
 \partial_t U +  \frac{(\partial_x U)^2}{2}=\visc\partial_{xx} U + F, 
\end{equation}
is tightly connected to the Burgers equation.
Namely, if $U$ is a solution of~\eqref{eq:HJB}, then $u=\partial_x U$ solves~\eqref{eq:Burgers} with $f=\partial_x F$.

Under mild assumptions on $f$ and the initial conditions, the Cauchy problem for~\eqref{eq:Burgers} has smooth classical solutions for~$\visc>0$. In fact, the Hopf--Cole logarithmic transformation  reduces the problem
to the linear heat equation with multiplicative potential. This linear equation can be solved using the classical Feynman--Kac formula. The works~\cite{Hopf:MR0047234},~\cite{Cole:MR0042889} by Hopf and Cole were,
in fact, preceded by \cite{Florin:MR0029605}, see the note~\cite{Biryuk} for some historical comments.

Another way to represent solutions of viscous HJB equations is via stochastic control, see~\cite{Fleming-Soner:MR2179357} for systematic treatment of stochastic control.

If~$\visc=0$, then even smooth initial velocity profiles result in 
formation of discontinuities called shock waves. In this important case, one has to work with appropriately defined generalized solutions that can be obtained from the smooth solutions via a limiting $(\visc\to0)$ procedure. The solutions
can be characterized through a variational principle.

In this paper we continue the study of the case where the forcing $f=f_{\omega}(t,x)$ is a stationary random field, the argument $\omega$ being an element of a probability space $(\Omega,\Fc,\Pp)$.

Ergodic properties of the Burgers equation (and its generalizations) with random forcing have been studied in various settings in \cite{Sinai:MR1117645},  \cite{ekms:MR1779561}, 
\cite{Iturriaga:MR1952472},
\cite{Khanin-Hoang:MR1975784}, \cite{Suidan:MR2141893}, 
\cite{Gomes-Iturriaga-Khanin:MR2241814},  \cite{Dirr-Souganidis:MR2191776},  \cite{yb:MR2299503}, \cite{Bakhtin-quasicompact}, \cite{BCK:MR3110798}, \cite{Debussche-Vovelle:MR3418750}, \cite{kickb:bakhtin2016}.

The details of the systems considered in those papers vary. In one dimension, the equation may be considered on a circle,  a compact segment, or the entire real line. The kinds of 
time-stationary forcing that have been considered are: white noise, kick (applied at a discrete sequence of times), Poissonian (concentrated at configuration points of a Poisson point process). It is usual to assume the zero-range dependence in time (this ensures the Markov property of the solutions but is not absolutely necessary). 
Several kinds of spatial dependence structure and behavior at infinity have been considered. Also, the role of the external forcing can be played 
by random boundary conditions. Each setting calls for its own toolbox, but despite the variety of approaches and methods that have been employed, there are several general features that we proceed to describe informally.

In dissipative systems, statistically steady states often emerge due to some form of energy balance. 
For the Burgers equation, the energy is pumped into the system by the external forcing and 
dissipated either due to the Laplacian viscosity friction term (in the case of positive viscosity), or at shocks (in the zero viscosity case). So it is natural to expect existence of stationary regimes.

The question of ergodicity of the invariant measures for Markov processes including those associated with stochastic PDEs can often be approached by studying regularity properties of the transition probabilities
or controllability properties, see, e.g.,~\cite{H-M:MR2478676} for the stochastic Navier--Stokes case.

In contrast with turbulence described by the Navier--Stokes system, the dynamics generated by Burgers equation
is dominated by contraction.  So the random dynamical system approach turns out to be more fruitful and gives more detailed information about the pathwise behavior of the system. Namely, it is natural and beneficial to study 
the stochastic flow, i.e., the self-consistent (satisfying the so called {\it cocycle} property) family of random operators $\Phi^{st}_\omega$ constructing the solution
$\Phi^{st}_\omega u$ at time $t$ given the initial condition $u$ at time~$s$. 
For various settings, it turns out that one can describe ergodic components in the following terms:
for two velocity profiles $u^1$ and $u^2$ in the same ergodic component, $\Phi_\omega^{st}u^1$
and $\Phi_\omega^{st}u^2$ get close to each other as $t-s\to\infty$. In other words, the evolution over long time intervals depends mostly on the random forcing while the dependence on the initial condition
vanishes, which can be interpreted as loss of memory in the system. Moreover, with probability one, there is a limit 
\begin{equation}
\label{eq:general-pullback}
u_{t,\omega}=\Phi^{-\infty t}_\omega u^0=\lim_{s\to-\infty} \Phi^{s 	t}_\omega u^0,   
\end{equation}
and it does not depend on the initial condition $u^0$ within an ergodic component. The resulting family $(u_{t,\omega})$ of velocity profiles forms a global solution, i.e.,
\[
u_{t,\omega}= \Phi_\omega^{st}u_{s,\omega},\quad s<t,
\]
and is non-anticipating, i.e., $u_t$ depends only on the history of the forcing up to time~$t$. Moreover, for almost every $\omega$, $(u_{t,\omega})$ is a unique global solution with values in the given ergodic component.
This statement along with the pullback attraction property~\eqref{eq:general-pullback} is often called One Force --- One Solution Principle (1F1S).

The study of ergodic properties of solutions of~\eqref{eq:Burgers} with random forcing began in \cite{Sinai:MR1117645} where the evolution was considered on the circle (or one-dimensional torus) $\TT^1=\R^1/ \Z^1$ (i.e., all the functions involved
were assumed or required to be space-periodic), the forcing was assumed to be white in time and smooth in the space variable, and a mixing statement showing loss of memory in the system was proved. The key consideration in this paper
is the view at the iterative application of the Feynman--Kac formula as the product of positive operators. 

In~\cite{Kifer:MR1452549} the connection with the directed polymers in random environments was noticed and used for the first time. With the help
of the Hopf--Cole transform and Feynman--Kac formula it was shown that for the high-dimensional version of~\eqref{eq:Burgers} and sufficiently small forcing (this situation is known as {\it weak disorder} in 
the studies of directed polymers in random environments), certain series in the spirit of perturbation theory converge and can be used to define global attracting solutions of the Burgers equation.     

In~\cite{ekms:MR1779561}, the zero viscosity case on the circle was considered. Solutions of the Burgers equation with zero viscosity admit a variational Hamilton -- Jacobi -- Bellman -- Hopf -- Lax -- Oleinik representation. 
The minimizing paths in the variational principle can be identified with particle trajectories, and the analysis of solutions over long time intervals involves the study of asymptotic properties of those minimizers.
Since the mean velocity is preserved by the Burgers system, all velocity profiles in one ergodic component have the same mean.  One of the main results of~\cite{ekms:MR1779561} is that all functions with the same mean
form one ergodic component, i.e., there is a unique invariant measure for the corresponding Markov dynamics on this set. Moreover, for each mean velocity, 1F1S holds on the associated ergodic component. The global solution is defined
by a family of one-sided infinite action minimizers stretching into the infinite past. Also, {\it hyperbolicity} holds, i.e., all these minimizers are exponentially asymptotic to each other.

In \cite{Iturriaga:MR1952472}, this program was repeated for the multi-dimensional version of the inviscid Burgers equation on the torus $\TT^d=\R^d/\Z^d$, $d\in\N$, and in~\cite{Gomes-Iturriaga-Khanin:MR2241814}, 
it was extended to the positive viscosity case. Unlike \cite{Sinai:MR1117645}, the approach of~\cite{Gomes-Iturriaga-Khanin:MR2241814} was based on stochastic control. In fact, for a fixed mean velocity,
a unique global solution is constructed using optimal control of diffusions on semi-infinite time intervals stretching to the infinite past. The variational character of the stochastic control approach allowed to 
show that as $\visc\to 0$, the optimally controlled
diffusions converge to the one-sided action minimizers. This also allowed to deduce the convergence of invariant distributions as $\visc\to 0$.

In~\cite{yb:MR2299503}, 1F1S was established for the Burgers equation with random boundary conditions. Given an appropriate notion of generalized solutions and the associated variational characterization, the argument is very simple.
It turns out that it takes a finite random time to erase all the memory about the initial condition, so the system exhibits an extreme form of contraction. 

In all the results discussed above (see also \cite{Dirr-Souganidis:MR2191776} and \cite{Debussche-Vovelle:MR3418750} that do not use variational or stochastic representations and use PDE tools instead),
the space was assumed to be compact, being a torus or a segment, except \cite{Kifer:MR1452549}. 
Extending those results to noncompact situations turned out to be a nontrivial task. 
Quasi-compact settings where the system is considered on the entire real line but the forcing is mostly concentrated in a compact part, were studied in~\cite{Khanin-Hoang:MR1975784}, \cite{Suidan:MR2141893},  
\cite{Bakhtin-quasicompact}. 

However, truly noncompact situations with space-time homogeneous random forcing in one dimension for positive or zero viscosity presented  serious difficulties. In the noncompact case, there is much less rigidity
in the behavior of optimal paths or diffusions used in the representation of solutions, and they are much harder to control. Also, the approach of~\cite{Kifer:MR1452549} is useful only in the weak disorder case
and fails in dimension~$1$.

In the zero viscosity case, the ergodic theory of the Burgers equation on the real line without compactness or periodicity assumptions was constructed in~\cite{BCK:MR3110798} for forcing given by space-time Poisson point process,
and in~\cite{kickb:bakhtin2016} for kick forcing. Similarly to the compact case,  the ergodic components are essentially formed  by velocity profiles with common mean, but 
establishing 1F1S on each ergodic component required using methods originating from studies of long geodesics in the last-passage percolation theory. In the Poissonian forcing case, due to the discrete character of the forcing, 
all the one-sided minimizers giving rise to the global solution
coalesce, strengthening the hyperbolicity property for the spatially smooth periodic forcing case. However, the behavior of minimizers in the kick forcing case is more complicated. Although they are expected to be asymptotic to each other, 
only a much weaker {\it liminf} substitute of hyperbolicity was proved in~\cite{kickb:bakhtin2016}.

The main goal of the present paper is to extend this program to the Burgers equation with positive viscosity, space-time homogeneous random kick forcing, and without any compactness assumptions. 
Our main results at a formal level look similar to the ergodic results for the inviscid case with kick forcing: ergodic components are formed by velocity
profiles with common mean value, and on each ergodic component 1F1S holds. This extension to the viscous case is natural to expect because positive viscosity means stronger dissipation and contraction.
However, a very important feature of our work is that in order to analyze the Burgers equation we rely on the Feynman--Kac formula and the associated directed polymer model.  

For that directed polymer model in $1+1$ dimensional random environment, we prove a whole series of new results that are of independent interest.  
These results include: the existence of finite nonrandom quenched free energy density for every fixed asymptotic slope and its quadratic dependence on the slope; a concentration inequality for free energy and a bound on the fluctuation exponent; straightness estimates and a bound on the transversal fluctuation exponent.
We use those results to construct thermodynamic limits satisfying the DLR conditions and prove their uniqueness for every fixed
 asymptotic slope. We also prove results on limiting ratios of partition functions playing the role of Busemann functions and show that a version of hyperbolicity property holds true. Namely, we show that the marginals of any two infinite volume polymer measures 
with the same slope are asymptotic to each other in the total variation distance.

We state and discuss our main results on directed polymers in section~\ref{sec:polymers} that also has an introductory character. Before that, we explain the Burgers equation setting in Section~\ref{sec:setting} and state the main 
1F1S results in Section~\ref{sec:main_results}. 

\bigskip

{\bf Acknowledgments.} The authors are grateful to Konstantin Khanin for stimulating discussions. They also thank Firas Rassoul-Agha for useful bibliographical suggestions. Yuri Bakhtin gratefully acknowledges partial support by NSF through grant DMS-1460595.

\section{The setting. Kick forcing}
\label{sec:setting}
The main model that we study in this paper is the Burgers equation with kick forcing of the following form: 
\begin{equation*}
f(t,x)=\sum_{n\in\Z} f_{n}(x)\delta_{n}(t). 
\end{equation*}
This means that the additive forcing is applied only at integer times. At each time $n\in\Z$, the entire velocity profile receives an instantaneous macroscopic increment equal to $f_n$:
\begin{equation}
\label{eq:effect-of-one-kick}
u(n,x)=u(n-0,x)+f_{n}(x),\quad x\in \R, 
\end{equation}
and between the integer times
the velocity field evolves according to the unforced viscous Burgers equation
\begin{equation}
\label{eq:unforced-Burgers}
\partial_t u + u \partial_x u=\visc\partial_{xx} u. 
\end{equation}
We assume that the potential $F=F_{n,\omega}(x)$ of the forcing 
\[
f_n(x)=f_{n,\omega}(x)=\partial_x F_{n,\omega}(x), \quad n\in\Z,\ x\in\R,\ \omega\in \Omega, 
\]
is a stationary random field (i.e., its distribution is invariant under space-time shifts)
defined on some probability space $(\Omega,\Fc,\Pp)$. We will describe all conditions that we impose on $F$ in the end of this section. At this point we need only the following consequence of those
conditions: for every $\omega\in\Omega$,
and every $n\in\Z$, the function $F_{n,\omega}(\cdot)$ is measurable and satisfies  
\begin{equation}
\label{eq:forcing-averages-to-0}
\lim_{|x|\to\infty} \frac{F_{n,\omega}(x)}{|x|}=0.
\end{equation}

\bigskip

Let us now define the Burgers dynamics with kick forcing.  First, we remind that if $U$ is a $C^3$ function solving the unforced ($F\equiv 0$) equation~\eqref{eq:HJB}, then, differentiating~\eqref{eq:HJB}, we obtain
that $u=U_x$ solves~\eqref{eq:unforced-Burgers}. Furthermore, we can introduce a new variable $\varphi$ via
the Hopf--Cole transformation:
\begin{equation*}
U(t,x)=- 2 \visc \ln \varphi(t,x), 
\end{equation*}
or, equivalently,
\begin{equation*}
 \varphi(t,x)=e^{- \frac{U(t,x)}{2\visc}},
\end{equation*}
and directly check that if $\varphi$ is a $C^3$ positive solution of the heat equation
\begin{equation}
\label{eq:HE}
\partial_t \varphi =\visc \partial_{xx} \varphi, 
\end{equation}
then the function $u$ obtained from $\varphi$ by
\begin{equation}
\label{eq:Hopf-Cole-1}
 u(t,x)=\partial_x U(t,x)=- 2\visc \partial_x \ln \varphi(t,x) = - 2\visc\frac{\partial_x \varphi(t,x)}{\varphi(t,x)} 
\end{equation}
is a solution of~\eqref{eq:unforced-Burgers}. The solution for the Cauchy problem for~\eqref{eq:HE} under very broad assumptions (measurability and moderate growth of the initial condition) is given by
\begin{equation}
 \label{eq:heat-equation-solution}
 \varphi(t,x)=\int_\R \varphi(s,y) g_{2\visc(t-s) }(x-y)dy,\quad x\in\R,\ t>s,
\end{equation}
where
\begin{equation*}
g_{D}(x)=\frac{1}{\sqrt{2\pi D}}e^{-\frac{x^2}{2D}}, \quad x\in\R,
\end{equation*}
is the centered Gaussian density corresponding to variance $D>0$. So, the evolution between the kicks is governed by \eqref{eq:Hopf-Cole-1} and \eqref{eq:heat-equation-solution}.
To see what happens at the kick, we rewrite~\eqref{eq:effect-of-one-kick} at the level of potentials as
\begin{equation*}
U(n,x)=U(n-0,x)+F_{n}(x),\quad x\in \R, 
\end{equation*}
and in the multiplicative form as 
\begin{equation*}
\varphi(n,x)=\varphi(n-0,x)e^{-\frac{F_{n}(x)}{2\visc}}.
\end{equation*}
Combining the instantaneous kick at a time $n$ and unforced evolution on the time interval $[n,n+1)$ leads to the following random linear operator: 
\begin{align*}
 \Xi^{n,n+1}_\omega \varphi (y)&=\int_{\R} g_{2\visc}(y-x)e^{-\frac{F_{n,\omega}(x)}{2\visc}}\varphi(x)dx,\quad y\in\R.
\end{align*}
Iterating this, we obtain that the (linear) solution operator over an interval  $[m,n)$ (producing the solution
just before the next instantaneous kick is applied at time~$n$) is given by
\begin{equation}
\label{eq:F-K}
 \Xi^{m,n}_\omega \varphi (y)=\int_{\R} Z^{m,n}_{x,y,\omega}\varphi(x)dx,\quad y\in\R,
\end{equation}
where
\begin{equation}
\label{eq:Z}
Z^{m,n}_{x,y,\omega}=\int_{\R}\dots\int_\R \prod_{k=m}^{n-1} \left[g_{2\visc}(x_{k+1}-x_k)e^{-\frac{F_k(x_k)}{2\visc}}\right] \delta_x(dx_m) dx_{m+1}\ldots dx_{n-1}\delta_y(dx_n).
\end{equation}
We can interpret~\eqref{eq:F-K} as a time-discrete version of the Feynman--Kac formula for
 the following kicked parabolic  model
\begin{equation*}
\partial_t \varphi(t,x)=\visc \partial_{xx} \varphi(t,x)+ \varphi(t,x)\cdot\sum_{n\in\Z} (e^{-\frac{F_n(x)}{2\visc}}-1)\delta_n(t).  
\end{equation*}
Invoking the change of variables~\eqref{eq:Hopf-Cole-1}, we can define evolution on potentials by
\[
\Phi_\omega^{m,n} U = -2\visc\ln \Xi_\omega^{m,n} e^{- \frac{U}{2\visc}}. 
\]

Let us now be more precise about the domains of the operators we have introduced. The $C^3$~requirement that we started with
is superfluous since our Feynman--Kac formula makes sense and produces generalized solutions under much weaker
requirements.
The space of velocity potentials that we will consider will be~$\HH$, the space of all locally
Lipschitz functions $W:\R\to\R$
 satisfying
\begin{align*}
 \liminf_{x\to\pm\infty}\frac{W(x)}{|x|}&>-\infty.
\end{align*}
We will also need a family of spaces
\[
\HH(v_-,v_+)=\left\{W\in\HH:\ \lim_{x\to \pm\infty} \frac{W(x)}{x}=v_\pm \right\},\quad v_-,v_+\in\R.
\] 

\begin{lemma} \label{lem:invariant_spaces} For any $\omega\in\Omega$,
for any $l,n,m\in\Z$ with $l<n<m$ and $W\in\HH$,
\begin{enumerate}
 \item\label{it:inv-of-HH} $\Phi^{n,m}_\omega W$ is well-defined and belongs to $\HH$;
 \item\label{it:inv-of-HH-vv} if $W\in\HH(v_-,v_+)$ for some  $v_-,v_+$, then $\Phi^{n,m}_\omega W\in\HH(v_-,v_+)$;
 \item\label{it:cocycle} $\Phi^{l,m}_\omega W=\Phi^{n,m}_\omega\Phi^{l,n}_\omega W$.  
\end{enumerate}
\end{lemma}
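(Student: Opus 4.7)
The plan is to analyze the one-step operator $\Phi^{n,n+1}_\omega$ in detail and pass to general intervals via the cocycle identity of part~(\ref{it:cocycle}). I would first introduce $\varphi(x)=e^{-W(x)/(2\visc)}$. Since $W\in\HH$, there exists $C=C(W)>0$ with $W(x)\ge -C(1+|x|)$ for all $x\in\R$, which gives $\varphi(x)\le e^{C(1+|x|)/(2\visc)}$. By \eqref{eq:forcing-averages-to-0} applied to $F_{n,\omega}$, for every $\eps>0$ there is $C_{\eps}=C_{\eps}(n,\omega)$ with $|F_{n,\omega}(x)|\le\eps|x|+C_{\eps}$, so $e^{-F_{n,\omega}(x)/(2\visc)}\le e^{(\eps|x|+C_{\eps})/(2\visc)}$. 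Combined with the Gaussian decay of $g_{2\visc}(y-x)$, these bounds show that the integral defining $\Xi^{n,n+1}_\omega\varphi(y)$ is absolutely convergent and strictly positive, and completion of the square in the exponent produces an upper bound of the form $A\exp(A|y|)$ with $A=A(\omega,n,W)$. Gaussian convolution gives $C^\infty$ dependence on $y$, so $\Phi^{n,n+1}_\omega W=-2\visc\ln\Xi^{n,n+1}_\omega\varphi$ is smooth, and the upper bound on $\Xi^{n,n+1}_\omega\varphi$ translates into $\Phi^{n,n+1}_\omega W(y)\ge -2\visc\ln A-2\visc A|y|$. This establishes part~(\ref{it:inv-of-HH}) for the one-step operator.

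For part~(\ref{it:inv-of-HH-vv}) in the one-step case, I would assume in addition that $W\in\HH(v_-,v_+)$, so that for every $\eps>0$ there exists $M_\eps$ with $|W(x)-v_\pm x|\le\eps|x|$ whenever $\pm x\ge M_\eps$. I would fix $y>0$ large, change variables to $z=x-y$, and split $\Xi^{n,n+1}_\omega\varphi(y)$ into a bulk part over $|z|\le y/2$ and a tail part over $|z|>y/2$. On the bulk, $x=y+z\ge y/2$, so $W(x)=v_+x+O(\eps y)$ and $F_{n,\omega}(x)=O(\eps y)$ uniformly in $z$; using the exact Gaussian identity $\int_\R g_{2\visc}(z)e^{-v_+ z/(2\visc)}\,dz=e^{v_+^2/(4\visc)}$, the bulk integral is sandwiched between matching upper and lower bounds of the form $\exp(-v_+y/(2\visc)+O(\eps y))$. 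The tail is dominated by $\exp(-y^2/(32\visc))$ times an at-most-exponential factor and is therefore negligible. Taking $-2\visc\ln$, sending $y\to+\infty$ and then $\eps\to 0$ yields $\Phi^{n,n+1}_\omega W(y)/y\to v_+$; the case $y\to-\infty$ is symmetric.

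For part~(\ref{it:cocycle}), I would verify the Chapman--Kolmogorov identity $Z^{l,m}_{x,y,\omega}=\int_\R Z^{l,n}_{x,z,\omega}Z^{n,m}_{z,y,\omega}\,dz$ directly from \eqref{eq:Z} by regrouping the $m-l$ Gaussian and potential factors around the intermediate point $x_n=z$; the exponential growth bounds from part~(\ref{it:inv-of-HH}) justify Fubini, so this upgrades to the operator identity $\Xi^{l,m}_\omega=\Xi^{n,m}_\omega\Xi^{l,n}_\omega$. Applying both sides to $e^{-W/(2\visc)}$ and unwinding the definition of $\Phi$ twice yields $\Xi^{l,m}_\omega e^{-W/(2\visc)}=\Xi^{n,m}_\omega e^{-\Phi^{l,n}_\omega W/(2\visc)}=e^{-\Phi^{n,m}_\omega\Phi^{l,n}_\omega W/(2\visc)}$, and taking $-2\visc\ln$ gives the stated cocycle. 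Parts (\ref{it:inv-of-HH}) and (\ref{it:inv-of-HH-vv}) for arbitrary $n<m$ then follow by iterating the one-step results along this cocycle.

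The main technical obstacle is part~(\ref{it:inv-of-HH-vv}): one has to extract the exact asymptotic slope after convolution, which requires the Laplace-type estimate to be tight enough that the $o(y)$ errors from the sublinear tails of $W$ and $F_{n,\omega}$ combine cleanly with the Gaussian concentration at scale $\sqrt{\visc}$. Parts (\ref{it:inv-of-HH}) and (\ref{it:cocycle}) are largely bookkeeping once these exponential growth bounds and the Chapman--Kolmogorov identity are in place.
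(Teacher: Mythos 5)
Your proposal follows essentially the same route as the paper's proof: the key one-step estimate is the same linear lower bound on $F_n + W$ combined with a Gaussian moment-generating-function computation, the slope preservation in part~(\ref{it:inv-of-HH-vv}) is the Laplace-type refinement of that same estimate (the paper just says ``similar to part~(\ref{it:inv-of-HH})''), and the cocycle property is the Chapman--Kolmogorov factorization of $\Xi^{l,m}$ through the intermediate time, which the paper treats as built into the iterative construction of $Z^{m,n}$. You simply spell out the details the paper leaves implicit.
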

We give a proof of this lemma in Section~\ref{sec:aux}. Due to the last statement of the lemma (called
the cocycle property) we can view the Burgers evolution as
a random dynamical system (see, e.g.,~\cite[Section~1.1]{Arnold:MR1723992}).

Potentials are naturally defined up to an additive constant. It is thus convenient to
work with $\hat \HH$, the space of equivalence classes of potentials from $\HH$. 
We can also introduce spaces $\hat\HH(v_-,v_+)$ 
as classes of potentials in $\HH(v_-,v_+)$ coinciding up to an additive constant.
The cocycle $\Phi$ can be projected on $\hat\HH$ in a natural way. We denote the resulting cocycle on $\hat\HH$ by~$\hat\Phi$.

We can also introduce the Burgers dynamics on the space $\HH'$ of velocities $w$ (actually, classes of equivalence of functions since we do not distinguish two functions coinciding almost everywhere) such that
for some function $W\in\HH$ and Lebesgue almost every~$x$,  $w(x)=W'(x)=\partial_x W(x)$.
For all $v_-,v_+\in\R$,  $\HH'(v_-,v_+)$ is the space of velocity profile with well-defined one-sided averages $v_-$ and $v_+$, it consists of functions $w$ such that
the potential~$W$ defined by $W(x)=\int_0^x w(y)dy$ belongs to $\HH(v_-,v_+)$.

We will
write $ w_1=\Psi^{n_0,n_1}_\omega w_0$ if $w_0=W'_0$, $w_1=W'_1$, and $W_1=\Phi^{n_0,n_1}_\omega
W_0$  for some $W_0,W_1\in\HH$. Of course, the maps belonging to the Burgers cocycle $(\Psi^{n_0,n_1})$ map $\HH'$ into itself. The spaces $\HH'(v_-,v_+)$  are also invariant.

\smallskip

Our choice of solution spaces follows that of~\cite{kickb:bakhtin2016} where discontinuous
solutions in $\HH'$ naturally arose in the zero viscosity setting. In our setting, in fact, for any $w\in \HH'$, $\omega\in\Omega$, 
and $m,n\in\Z$ satisfying $m<n$, $\Psi^{m,n}w$ can be viewed as  a function in $C^\infty(\R)$ as straightforward
differentiation in~\eqref{eq:F-K} shows.

In fact, the Burgers dynamics has the following additional regularizing property:
\begin{lemma}
\label{lem:monotonicity_of_x-ux}
For any $w\in \HH'$, $\omega\in\Omega$, and $m,n\in\Z$ satisfying $m<n$, the function
$x\mapsto x-\Psi^{m,n}w(x)$ is nondecreasing. 
\end{lemma}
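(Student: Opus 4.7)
The plan is to compute the derivative of $y \mapsto y - \Psi^{m,n}w(y)$ explicitly through the Feynman--Kac representation and identify it with the variance of a polymer endpoint, which is obviously nonnegative.

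First I would reduce to a single Hopf--Cole picture. Pick a potential $W\in\HH$ with $W'=w$ and set $\varphi=e^{-W/(2\visc)}$ and $\psi(y)=\Xi^{m,n}_\omega\varphi(y)$. By Lemma~\ref{lem:invariant_spaces} and the growth condition at infinity, $\psi$ is strictly positive and smooth in $y$ (since only the last Gaussian factor $g_{2\visc}(y-x_{n-1})$ in~\eqref{eq:Z} depends on $y$, and its derivatives decay Gaussianly fast while $\varphi$ grows at most exponentially), so
\[
u(y):=\Psi^{m,n}_\omega w(y)=-2\visc\,\frac{\psi'(y)}{\psi(y)}
\]
is a smooth function of $y$. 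It therefore suffices to show $(y-u(y))'\ge 0$.

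Next I would use the key observation that, upon differentiating~\eqref{eq:F-K}--\eqref{eq:Z} under the integral in $y$, $\partial_y g_{2\visc}(y-x_{n-1})=-\tfrac{y-x_{n-1}}{2\visc}g_{2\visc}(y-x_{n-1})$. Hence if we define the probability measure
\[
\mu_y(dx_m,\dots,dx_{n-1})=\frac{1}{\psi(y)}\,\varphi(x_m)\prod_{k=m}^{n-1}\Bigl[g_{2\visc}(x_{k+1}-x_k)e^{-F_k(x_k)/(2\visc)}\Bigr]dx_m\cdots dx_{n-1}
\]
(with $x_n=y$), then
\[
u(y)=-2\visc\cdot\Bigl(-\tfrac{1}{2\visc}\Bigr)\E_{\mu_y}[y-X_{n-1}]=\E_{\mu_y}[y-X_{n-1}],
\]
so
\[
y-u(y)=\E_{\mu_y}[X_{n-1}].
\]

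The final step is to recognize that the marginal density of $X_{n-1}$ under $\mu_y$ has the form
\[
p_y(x)\propto e^{-(y-x)^2/(4\visc)}\,h(x),
\]
where $h(x)$ collects all factors independent of $y$. This is a one-parameter exponential family in the natural parameter $y/(2\visc)$. The standard exponential-family identity gives
\[
\frac{d}{dy}\E_{\mu_y}[X_{n-1}]=\mathrm{Cov}_{\mu_y}\!\Bigl(X_{n-1},\,\frac{X_{n-1}-y}{2\visc}\Bigr)=\frac{1}{2\visc}\,\mathrm{Var}_{\mu_y}(X_{n-1})\ge 0,
\]
which proves the lemma.

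The only genuinely delicate step is justifying the differentiation under the integral sign and the convergence of all the integrals (including the covariance computation), but the $\HH$-growth of $W$ together with the fast Gaussian decay of $g_{2\visc}$ makes this routine; everything else is a clean algebraic identity.
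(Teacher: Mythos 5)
Your proof is correct. The first step --- expressing $x-\Psi^{m,n}w(x)$ as the mean of the time-$(n-1)$ marginal of a point-to-line polymer measure pinned at $x$ at time $n$ --- is exactly the representation the paper uses (the paper writes $x-\Psi^{m,n}w(x)=\int y\,\bar\mu_{V,x}^{m,n}(dy)$ with $V=e^{-W}$). Where you diverge is in how you prove this mean is nondecreasing in $x$. The paper invokes Lemma~\ref{lem:monotonicity-for-second-kind-polymer-measure} (stochastic dominance of the point-to-line polymer measures with respect to the terminal point), which rests on Lemma~\ref{lem:abstract-monotonicity}, an abstract statement about the Gaussian kernel; monotonicity of the mean is then an immediate corollary of the stochastic dominance. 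You instead observe that, conditioned on $X_n=y$, the marginal density of $X_{n-1}$ is a one-parameter exponential family in the natural parameter $y/(2\visc)$, and use the identity
\[
\frac{d}{dy}\,\E_{\mu_y}[X_{n-1}]=\frac{1}{2\visc}\,\mathrm{Var}_{\mu_y}(X_{n-1})\ \ge\ 0.
\]
Both arguments are ultimately powered by the log-concavity (monotone likelihood ratio) of the Gaussian kernel, so they are siblings rather than strangers. Your route is more direct and quantitative: it produces an explicit formula for the derivative, showing that $x\mapsto x-\Psi^{m,n}w(x)$ is in fact \emph{strictly} increasing. The paper's route is shorter at this point in the text only because it recycles dominance lemmas that are needed repeatedly elsewhere (Lemmas~\ref{lem:dominance-1}, \ref{lem:main-monotonicity}, \ref{lem:monotonicity-for-second-kind-polymer-measure}). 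The integrability issues you flag --- differentiation under the integral, finiteness of $\E_{\mu_y}[X_{n-1}^2]$, positivity of $\psi$ --- are indeed routine: on the set $\Omega$ of~\eqref{eq:def-of-Omega}, each $F_k$ is sublinear and $W\in\HH$ gives $e^{-W/(2\visc)}$ at most exponential growth, while $g_{2\visc}$ decays super-exponentially, so all moments exist and differentiation under the integral is justified as in Lemma~\ref{lem:smoothness-of-partition-function}.
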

We prove this lemma in Section \ref{sec:aux} using the results on monotonicity that we introduce in
Sections~\ref{sec:monotonicity} and~\ref{sec:uniqueness-of-global-solutions}.

\subsection{The requirements on the random forcing}
\label{sec:assumption-on-the-potential}
 For simplicity, we will work on the canonical probability space $(\Omega_0,\Fc_0,\Pp_0)$ 
of realizations of the potential, although other more general settings are also possible. 
We assume that $\Omega_0$ is the space of continuous functions $F:\RZ\to\R$ equipped with $\Fc_0$, the completion of the Borel $\sigma$-algebra with respect to local uniform topology, and $\Pp_0$ is
a probability measure preserved by the group of shifts $(\theta^{n,x})_{(n,x)\in\ZR}$  defined by
\[
 (\theta^{n,x}F)_m(y) = F_{n+m}(x+y),\quad (n,x),(m,y)\in\ZR, 
\]
i.e., $(F_n(x))_{(n,x)\in\ZR}$ is a space-time stationary process. In this framework, $F=F_\omega=\omega$, and we will use all these notations intermittently.

In addition to this, we introduce the following requirements: 

\begin{description}
\item[{(A1)}\label{item:stationary-in-space}] The flow $(\theta^{0,x})_{x\in\R}$ is ergodic. In particular,
for every $n\in\Z$,  $F_n(\cdot)$ is ergodic with respect to the spatial shifts.
\item[{(A2)}\label{item:indep-in-time}] The sequence of processes $\big( F_n(\cdot)\big)_{n \in \Z}$
  is \iid
\item[(A3)\label{item:smooth}] With probability 1, for all $n\in\Z$, $F_n(\cdot)\in C^1(\R)$. 
\item[(A4)\label{item:exponential-moment-with-viscosity}] For all  $(n,x) \in \Z\times\R$,
  \begin{equation*}
    \lambda:=\E e^{- \frac{1}{2 \visc} F_n(x)} < \infty.
  \end{equation*}
\item[(A5)\label{item:exponential-moment-for-maximum}] 
There is $\eta>0$ such that for all $(n,j) \in \Z  \times \Z$,
  \begin{equation*}
  \E e^{\eta F^{*}_{n}(j)} < \infty,
\end{equation*}
where $F^{*}_{n}(j) = \sup \{ |F_{n}(x)| : x \in [j,j+1] \}$.
\end{description}

We will use these standing assumptions throughout the paper. However, many of our results will hold true if one removes~\ref{item:smooth} 
because~\ref{item:exponential-moment-for-maximum} guarantees that $F$ is locally bounded which is sufficient for most of our results. Of course, differentiability of $F$ guarantees that $f=\partial_x F$ in the Burgers equation is defined as a function, but even this is not necessary for some of our claims on the Burgers equation.

 A~sufficient condition on distributional properties of~$F$ at any fixed time, say, time 0, for existence of an appropriate probability space 
satisfying~\ref{item:stationary-in-space} and \ref{item:indep-in-time} is mixing of~$F_0$ with respect to spatial shifts.
This (along the other requirements from the list above) holds, for example, for Gaussian processes with decaying correlations and processes with
finite dependence range. Also, processes obtained from Poissonian noise (or any other space-time ergodic process) via spatial smoothening
are compatible with  probability spaces satisfying~\ref{item:stationary-in-space}--\ref{item:indep-in-time}. So, the conditions that we impose define a very broad class of processes. We note that the entire inviscid Burgers equation program developed in~\cite{kickb:bakhtin2016} using shot-noise potential, also holds for this broad class of potentials.

Stationarity and~\ref{item:exponential-moment-for-maximum} imply that~\eqref{eq:forcing-averages-to-0} holds
with probability~$1$ on $\Omega_0$. It will be convenient in this paper to work on a modified probability space
\begin{equation}
\label{eq:def-of-Omega}
\Omega=\left\{F\in\Omega_0: \lim_{|x|\to\infty}\frac{F_{n}(x)}{|x|}=0,\quad n\in\Z\right\}\in\Fc_0.
\end{equation}
of probability $1$ instead of $\Omega_0$. As we will see, on this set, the Burgers evolution possesses 
some nice properties. Moreover, $\Omega$ is invariant 
under space-time shifts~$\theta^{n,x}$ and under Galilean  space-time shear transformations~$L^{v},$ $v\in\R$,
defined by 
\begin{equation}
\label{eq:def-of-shear}
(L^v F)_n (x)=F_n(x+vn),\quad (n,x)\in\ZR. 
\end{equation}

We denote the restrictions of $\Fc_0$ and $\Pp_0$ onto $\Omega$ by $\Fc$ and $\Pp$.
From
now on we work with the probability space $(\Omega,\Fc,\Pp)$. Under this modification, all the distributional properties are preserved.

Having introduced the shifts $\theta^{n,x}$, we can also rewrite the cocycle property as
 \[
 \Phi^{n+m}_{\omega}W=\Phi^{m}_{\theta^{n}\omega} \Phi^{n}_\omega W,\quad n,m\in\N,\quad\omega\in\Omega,
\]
where $\theta^{n}=\theta^{n,0}$ and  $\Phi^n_\omega=\Phi^{0,n}_\omega$.
The cocycle property of $\Psi$ and $\Xi$ can also be expressed similarly.

\section{Main results}\label{sec:main_results}
Our main results for the positive viscosity Burgers equation are parallel to those of~\cite{kickb:bakhtin2016}
for the inviscid case. 

We say that $u(n,x)=u_\omega(n,x)$, $(n,x)\in\Z\times\R$ is a global solution for the
cocycle~$\Psi$ if there is a set $\Omega'\in\Fc$ with $\Pp(\Omega')=1$ such that for all
$\omega\in\Omega'$, all $m$ and $n$ with $m<n$, we have $\Psi^{m,n}_\omega u_\omega(m,\cdot)= u_\omega(n,\cdot)$.

A function $u_\omega(x)$, $\omega\in\Omega$, $x\in\R$ is called skew-invariant
if there is a set $\Omega'\in\Fc$ with $\Pp(\Omega')=1$ such that for any $n\in\Z$, $\theta^n\Omega'=\Omega'$, and for any
$n\in\N$ and $\omega\in\Omega'$,
$\Psi^n_\omega u_\omega =u_{\theta^n\omega}$. 

If $u_\omega(x)$ is a skew-invariant function, then $u_\omega(n,x)=u_{\theta^n\omega}(x)$ is a stationary global solution. One can naturally view the potentials of $u_\omega(x)$ and
$u_\omega(n,x)$ as a  skew-invariant
function and global solution for the cocycle $\hat\Phi$.

To state our first result, a description of stationary global solutions, we need more notation.  For a subset $A$ of $\Z\times\R$, we denote by $\Fc_A$ the $\sigma$-sub-algebra
of $\Fc$  generated by random variables $(F_{n}(x))_{(n,x)\in A}$.

\begin{theorem}\label{thm:global_solutions}
For every $v\in\R$
there is a unique skew-invariant function
$u_v:\Omega\to\HH'$ such that for almost every $\omega\in\Omega$, $u_{v,\omega}\in
\HH'(v,v)$. The process $u_{v,\omega}(n,\cdot)=u_{v,\theta^n\omega}(\cdot)$ is a unique stationary global solution in $\HH'(v,v)$.

The potential $U_{v,\omega}$ defined by $U_{v,\omega}(x)=\int^x u_{v,\omega}(y)dy$ is a unique
skew-invariant function for $\hat\Phi$ in~$\hat\HH(v,v)$. It defines a unique stationary
global solution $U_{v,\omega}(n,\cdot)=U_{v,\theta^n\omega}(\cdot)$ for $\hat\Phi$ in $\hat\HH(v,v)$. The skew-invariant functions
$U_{v,\omega}$ and $u_{v,\omega}$ are measurable  w.r.t.\ $\Fc|_{(-\N)\times\R}$, i.e., they depend only on the
history of the forcing.   The spatial random process $(u_{v,\omega}(x))_{x\in\R}$ is stationary
and ergodic with respect to space shifts.
\end{theorem}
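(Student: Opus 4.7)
The plan is to construct $u_v$ as a Hopf--Cole pullback limit built from the infinite-volume directed-polymer measures of slope $v$ whose existence is advertised in the introduction, and then to use their hyperbolicity to derive uniqueness. Fix $v\in\R$. For each $y\in\R$ and $n\in\N$ I would form the normalized ratio
$$\varphi_n(y,\omega)=\frac{Z^{-n,0}_{-vn,y,\omega}}{Z^{-n,0}_{-vn,0,\omega}},$$
which, by the thermodynamic-limit results for the polymer model (existence of a quenched free-energy density quadratic in the slope, concentration inequalities for free energy, and construction of DLR infinite-volume polymer measures with slope $v$), converges almost surely to a positive continuous limit $\varphi_{v,\omega}(y)$. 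Set $U_{v,\omega}(y)=-2\visc\ln\varphi_{v,\omega}(y)$ and $u_{v,\omega}=\partial_y U_{v,\omega}$. The straightness and transversal-fluctuation bounds, together with the quadratic free-energy density, force $U_{v,\omega}(y)/y\to v$ as $y\to\pm\infty$, so $U_{v,\omega}\in\HH(v,v)$ and $u_{v,\omega}\in\HH'(v,v)$. The Chapman--Kolmogorov identity for the kernels $Z^{m,n}_{\cdot,\cdot,\omega}$ combined with a time shift by $\theta^k$ shows that $\Xi^{0,k}_\omega\varphi_{v,\omega}$ is proportional to $\varphi_{v,\theta^k\omega}$, and the multiplicative constant drops out after taking the logarithmic derivative, yielding $\Psi^{0,k}_\omega u_{v,\omega}=u_{v,\theta^k\omega}$, so $u_v$ is skew invariant; measurability with respect to $\Fc|_{(-\N)\times\R}$ is built into the pullback definition.

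For uniqueness, suppose $\tilde u$ is a second skew-invariant function with $\tilde u_\omega\in\HH'(v,v)$ almost surely, and let $\tilde\varphi_\omega=\exp(-\tilde U_\omega/(2\visc))$ be the associated positive Hopf--Cole representative. Writing the skew invariance backward in time as
$$\tilde\varphi_\omega(y)=C_n(\omega)\int_\R Z^{-n,0}_{x,y,\omega}\tilde\varphi_{\theta^{-n}\omega}(x)\,dx$$
and invoking the announced total-variation asymptotic equivalence of all infinite-volume polymer measures with slope $v$, I would show that $\tilde\varphi_\omega(y)/\varphi_{v,\omega}(y)$ is almost surely independent of $y$, hence $\tilde u_\omega=u_{v,\omega}$. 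Uniqueness of $U_v$ in $\hat\HH(v,v)$ follows by quotienting by additive constants, and $u_v(n,\cdot)=u_{v,\theta^n\omega}(\cdot)$ is then the unique stationary global solution in $\HH'(v,v)$. For spatial ergodicity, the identity $Z^{-n,0}_{-vn,y,\theta^{0,a}\omega}=Z^{-n,0}_{-vn-a,y-a,\omega}$ propagates through the pullback to give $u_{v,\theta^{0,a}\omega}(y)=u_{v,\omega}(y-a)$, so $(u_{v,\omega}(x))_{x\in\R}$ is a measurable functional of the forcing field and inherits stationarity and ergodicity under spatial shifts from assumption \ref{item:stationary-in-space}.

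The main obstacle will be the uniqueness step: existence and the basic structural properties of the pullback follow fairly mechanically from the polymer machinery, but eliminating alternate skew-invariant solutions requires the full strength of the total-variation asymptotic equivalence of infinite-volume polymer measures with a common slope, which in the noncompact viscous setting is the principal new analytic input since the \emph{a priori} space of stationary velocity profiles is much larger than in any compact-space case.
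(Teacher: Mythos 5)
Your existence construction aligns well with the paper's: both define $\varphi_{v,\omega}$ via pullback limits of normalized partition-function ratios (the paper's $V^N_v(n,x)=Z^{N,n}_{Nv,x}/Z^{N,n}_{Nv,0}$ and its limit through $G_v$), derive $u_v$ by the logarithmic-derivative / Hopf--Cole inversion, and check skew invariance via Chapman--Kolmogorov. One small discrepancy: you claim membership in $\HH'(v,v)$ via straightness and transversal-fluctuation bounds, whereas the paper instead computes $\E u_v(n,0)=v$ from the shear invariance of the free energy and then invokes the ergodic theorem for spatial shifts, backed by a uniform integrability lemma; your route is not obviously wrong but is not the one the paper makes rigorous.

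The genuine gap is in the uniqueness step, and it is precisely where you declare the main obstacle. You propose that the total-variation asymptotic equivalence of infinite-volume polymer measures with slope $v$ (Theorem~\ref{thm:convergence-total-variation}) will force $\tilde\varphi_\omega/\varphi_{v,\omega}$ to be constant. But that theorem compares the two already-identified measures $\mu_{x_1}^{n_1,+\infty}(v)$ and $\mu_{x_2}^{n_2,+\infty}(v)$ with different \emph{endpoints} and a common slope; it says nothing about whether the backward point-to-line polymer measure induced by an arbitrary stationary skew-invariant $\tilde u\in\HH'(v,v)$ has slope $v$ at all. That identification is the crux. In the paper it is carried by Lemma~\ref{lem:condition-second-kind-polymer-measure-converge-to-LLN}, whose proof requires a careful tail analysis of the Hopf--Cole transform under the conditions~\eqref{eq:no_flux_from_infinity}--\eqref{eq:flux_from_the_right_wins}, an almost-sure shape-function concentration (Lemma~\ref{lem:truncation-and-concentration-almost-surely-true}), and a subtle stationarity argument (Lemma~\ref{lem:property-on-stationary-rvs}) to extract a good subsequence of times from an arbitrary stationary sequence of potentials $W_N$. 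Your plan omits all of this. Moreover, once that LLN/SLLN with slope $v$ \emph{is} established for the backward measure $\bar\mu_x^{-\infty,n}$, the conclusion follows directly from the uniqueness of infinite-volume polymer measures in $\Pc_x^{-\infty,n}(v)$ (part~\ref{it:existence-uniqueness-of-infinite-volume} of Theorem~\ref{thm:thermodynamic-limit}, proved via Lemma~\ref{lem:uniqueness-of-polymer-measures}), without any appeal to total-variation equivalence. So the ingredient you invoke is both not sufficient (it presupposes the slope identification) and not what is actually needed once that identification is made.
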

\begin{remark}\rm All uniqueness statements in this theorem are understood {\it up to zero-measure modifications}. We say that a process $u$ is 
a unique (up to a zero-measure modification) process with certain properties if for every process $\tilde u$ defined on the same probability space and possessing these properties,
$u$ and $\tilde u$ coincide with probability~$1$.  
\end{remark}

This theorem can be interpreted as a 1F1S Principle: for
any velocity value $v$, the
solution at time $0$ with mean velocity $v$ is uniquely determined by
the history of the forcing: $u_{v,\omega}\stackrel{\rm a.s.}{=}\chi_v(F|_{(-\N)\times\R })$ for some
deterministic functional $\chi_v$ of the forcing  in the past, i.e., in $(-\N)\times\R$. We actually describe
$\chi_v$ in the proof, it is constructed via infinite volume polymer measures on one-sided paths. Since the forcing is stationary in time, we obtain that $u_{v,\theta^n\omega}$ is a stationary
process in $n$, and the distribution of $u_{v,\omega}$ is an invariant distribution for the corresponding Markov
semi-group, concentrated on $\HH'(v,v)$.
\medskip

The next result shows that each of the global solutions constructed in Theorem~\ref{thm:global_solutions} plays the
role of a one-point pullback attractor.
 To describe the domains of attraction  we need to introduce several assumptions on initial potentials
$W\in\HH$. Namely, we will assume that there is $v\in\R$ such that $W$ and $v$ satisfy one of the following sets of
conditions:
\begin{align}
v&=0,\notag\\
\liminf_{x\to+\infty} \frac{W(x)}{x}&\ge 0,  \label{eq:no_flux_from_infinity}\\
\limsup_{x\to-\infty} \frac{W(x)}{x}&\le 0,\notag
\end{align}
or
\begin{align}
v&> 0,\notag \\
\lim_{x\to-\infty} \frac{W(x)}{x}&= v,\label{eq:flux_from_the_left_wins}\\
\liminf_{x\to+\infty} \frac{W(x)}{x}&> -v,\notag
\end{align}
or
\begin{align}
v&< 0,\notag\\
\lim_{x\to+\infty} \frac{W(x)}{x}&= v,\label{eq:flux_from_the_right_wins}\\
\limsup_{x\to-\infty} \frac{W(x)}{x}&< -v.\notag
\end{align}

Condition~\eqref{eq:no_flux_from_infinity} means that there is no macroscopic flux of particles from infinity toward
the origin for the initial velocity profile $W'$. In particular, any $W\in\HH(0,0)$ or any $W\in\HH(v_-,v_+)$ with
$v_-\le 0$ and $v_+\ge 0$ satisfies~\eqref{eq:no_flux_from_infinity}. If, additionally, $v_+>0$ and $v_-<0$, then it is natural to call this situation a rarefaction fan.
We will see that in this case the long-term behavior is described
by the global solution $u_0$ with mean velocity $v=0$.

Condition~\eqref{eq:flux_from_the_left_wins} means that the initial velocity profile $W'$ creates an influx of
particles from $-\infty$ with effective velocity $v\ge 0$, and the influence of the particles at $+\infty$ is not as
strong.
In particular, any $W\in\HH(v,v_+)$ with
$v\ge 0$ and $v_+> -v$ (e.g., $v_+=v$) satisfies \eqref{eq:flux_from_the_left_wins}. We will see that in this case the
long-term
behavior is described by the global solution $u_v$.

Condition~\eqref{eq:flux_from_the_right_wins} describes a situation symmetric to~\eqref{eq:flux_from_the_left_wins},
where
in the long run the system is dominated by the flux  of particles from $+\infty$.

The following precise statement supplements Theorem~\ref{thm:global_solutions} and describes the basins of attraction of
the global solutions $u_v$ in terms of
conditions~\eqref{eq:no_flux_from_infinity}--\eqref{eq:flux_from_the_right_wins}.

\begin{theorem}\label{thm:pullback_attraction} For every $v\in\R$,
there is a set $\hat \Omega\in\Fc$ with $\Pp(\hat \Omega)=1$ such that if
$\omega\in\hat \Omega$, $W\in \HH$, and one of
conditions~\eqref{eq:no_flux_from_infinity},\eqref{eq:flux_from_the_left_wins},\eqref{eq:flux_from_the_right_wins}
holds,
then $w=W'$ belongs to the domain of pullback attraction of $u_v$:
for any $n\in\R$ and any  $x\in\R$,
\[
\lim_{m\to-\infty} \Psi^{m,n}_\omega w(x) = u_{v,\omega}(n,x),
\]
and the convergence is uniform on compact sets.
\end{theorem}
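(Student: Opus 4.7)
The strategy is to translate the pullback convergence into convergence of ratios of polymer partition functions via Hopf--Cole, and then to reduce that to the infinite-volume polymer theory that the paper develops. By \eqref{eq:Hopf-Cole-1} together with the Feynman--Kac formula \eqref{eq:F-K}--\eqref{eq:Z}, setting $\varphi_0(x):=e^{-W(x)/(2\visc)}$ one has
$$\Psi^{m,n}_\omega w(y) \;=\; -2\visc\,\partial_y \log \mathcal{Z}^{m,n}_\omega[W](y),\qquad \mathcal{Z}^{m,n}_\omega[W](y):=\int_\R Z^{m,n}_{x,y,\omega}\,e^{-W(x)/(2\visc)}\,dx.$$
Since velocities are derivatives of potentials defined only up to an additive constant, the claimed pointwise convergence $\Psi^{m,n}_\omega w(y)\to u_{v,\omega}(n,y)$ is equivalent to the convergence, as $m\to-\infty$, of the normalized ratios $y\mapsto \mathcal{Z}^{m,n}_\omega[W](y)/\mathcal{Z}^{m,n}_\omega[W](y_0)$ for an arbitrary fixed $y_0\in\R$.

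The first step is to identify the limit. Apply the Hopf--Cole formula to the linear reference potential $W_v(x):=vx$, for which $W_v'=v$. The infinite-volume polymer results announced in the introduction (existence and uniqueness of the Gibbs measure on one-sided infinite paths with asymptotic slope $v$, and the Busemann-type limits of partition-function ratios that come with it) give, on a full-measure event, a finite limit
$$\pi^v_\omega(n,y)\;:=\;\lim_{m\to-\infty}\frac{\mathcal{Z}^{m,n}_\omega[W_v](y)}{\mathcal{Z}^{m,n}_\omega[W_v](y_0)},$$
and identify the skew-invariant function of Theorem~\ref{thm:global_solutions} by $u_{v,\omega}(n,y)=-2\visc\,\partial_y\log\pi^v_\omega(n,y)$. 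It therefore suffices to show that every $W$ satisfying one of \eqref{eq:no_flux_from_infinity}--\eqref{eq:flux_from_the_right_wins} produces the same ratio limit as $W_v$.

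For the comparison, I would split the integral defining $\mathcal{Z}^{m,n}_\omega[W](y)$ at $x=y+v(m-n)\pm A|m-n|^{\beta}$, isolating a ``good'' region around the polymer's typical starting point and a ``bad'' tail on the opposite side (or two bad tails in the case $v=0$). On the good region, the boundary hypothesis gives $W(x)-W_v(x)=o(|x|)$, so $e^{-(W(x)-W_v(x))/(2\visc)}$ is slowly varying; combined with the transversal-fluctuation estimate from the polymer section, which concentrates the polymer starting point at sublinear scale around $y+v(m-n)$, this forces the ratio of good pieces to converge to $1$ after normalization. The bad tails are controlled by the quadratic shape of the free-energy density $\alpha$: the strict inequality in the boundary hypothesis (for instance $\liminf_{x\to+\infty}W(x)/x>-v$ in case \eqref{eq:flux_from_the_left_wins}) together with the strict convexity of $\alpha$ produces a uniform $c>0$ such that the bad-tail contribution is exponentially smaller than the good piece by a factor $e^{-c|m|}$, hence negligible after normalization by $\mathcal{Z}^{m,n}_\omega[W_v](y_0)$. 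Once pointwise convergence is secured on a countable dense set of $y$'s, uniform convergence on compact sets follows from Lemma~\ref{lem:monotonicity_of_x-ux}: the functions $y\mapsto y-\Psi^{m,n}_\omega w(y)$ are nondecreasing, the family $\{y-\Psi^{m,n}_\omega w\}_m$ has uniformly bounded variation on every compact interval, and a Helly-type selection argument promotes dense-pointwise to uniform convergence.

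The principal obstacle is the control of the bad tail. In the inviscid setting \cite{kickb:bakhtin2016} this is done by tracking individual action-minimizing trajectories; in the positive-viscosity regime one cannot single out minimizers and must replace them with quantitative free-energy concentration. This is precisely why the polymer results of the paper (the quadratic free-energy density, the concentration inequality for free energy, the straightness estimates, and the uniqueness of the infinite-volume Gibbs measure of a given slope) are indispensable: they supply the exponential gap that kills the bad-tail contribution uniformly in $m$ and the transversal-fluctuation bound that pins the good region.
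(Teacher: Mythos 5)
Your plan is close in spirit but differs in organization from the paper's proof, and the difference is worth spelling out. The paper does not reduce to convergence of normalized ratios of $\mathcal{Z}^{m,n}[W]$; instead it writes
\[
\Psi^{N,n}_\omega w(x)\;=\;\int_\R (y-x)\,\bar\mu_{V,x}^{N,n}\pi_{n-1}^{-1}(dy),
\]
where $\bar\mu_{V,x}^{N,n}$ is the point-to-line polymer measure built from the Hopf--Cole transform $V$ of $W$, and then argues entirely at the level of measures: Lemma~\ref{lem:LLN-for-p2l-polymer-measure-non-random} shows the terminal measures $\nu_{N,n,x}=\bar\mu_{V,x}^{N,n}\pi_N^{-1}$ satisfy LLN with slope $v$ (this is where \eqref{eq:no_flux_from_infinity}--\eqref{eq:flux_from_the_right_wins} enter), the uniqueness Lemma~\ref{lem:uniqueness-of-polymer-measures} then identifies the weak limit of $\bar\mu_{V,x}^{N,n}\pi_{n-1}^{-1}$ as $\mu_x^{-\infty,n}(v)\pi_{n-1}^{-1}$ without any comparison to a reference $W_v$, and the uniform tail bound from Lemma~\ref{lem:small-first-step-almost-surely-all-range} upgrades weak convergence to convergence of the first moment. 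What you want to establish by hand---that an arbitrary $W$ produces the same partition-function ratio limit as the linear reference $W_v$---the paper gets for free from uniqueness of the infinite-volume Gibbs measure of slope~$v$, rather than by re-deriving Busemann-type ratio limits for each initial condition. That shortcut is precisely what makes the uniqueness machinery worth building.

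Two points in your sketch need more care. First, the stated ``equivalence'' between pointwise convergence of $\Psi^{m,n}_\omega w$ and pointwise convergence of the normalized ratios $\mathcal{Z}^{m,n}[W](\cdot)/\mathcal{Z}^{m,n}[W](y_0)$ is not an equivalence in general: pointwise (even uniform) convergence of functions does not imply convergence of their derivatives. What rescues this is Lemma~\ref{lem:monotonicity_of_x-ux}: the maps $y\mapsto y-\Psi^{m,n}_\omega w(y)$ are nondecreasing, so once you have the ratio convergence you can apply a Helly-type argument exactly as you gesture at in your final sentence---this step is load-bearing, not a finishing remark, and should be done before, not after, the identification of the limit. (The paper avoids this altogether by working with the integral representation above.) Second, your good/bad decomposition with the strict convexity of $\alpha$ killing the bad tail is genuinely the right idea, and in fact it is the content of Step~2 in the proof of Lemma~\ref{lem:condition-second-kind-polymer-measure-converge-to-LLN} (via Lemma~\ref{lem:truncation-and-concentration-almost-surely-true}); but the paper packages it as an LLN statement about the terminal measure rather than as a ratio estimate, which fits more cleanly into the monotonicity/uniqueness framework already in place. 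So the key estimates you identify do appear, but your route would require you to separately establish existence of the ratio limits for general $W$ (which the paper only proves for point-to-point partition functions and delta or compactly truncated terminal measures), whereas the paper's LLN-plus-uniqueness route sidesteps that.
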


The last statement of the theorem implies that for every $v\in\R$, the invariant
measure on $\HH'(v,v)$ described after Theorem~\ref{thm:global_solutions} is unique and for any initial condition
$w=W'\in\HH'$ satisfying one of
conditions~\eqref{eq:no_flux_from_infinity},\eqref{eq:flux_from_the_left_wins}, and~\eqref{eq:flux_from_the_right_wins},
the distribution of the random velocity profile at time $n$ weakly converges to the unique stationary
distribution on $\HH'(v,v)$ as $n\to\infty$, in the local uniform topology. However, our approach does not produce any convergence rate estimates.

We also note that, due to Lemma~\ref{lem:monotonicity_of_x-ux}, proving uniform convergence in this theorem amounts to proving
pointwise convergence.

\medskip

We prove Theorems~\ref{thm:global_solutions} and~\ref{thm:pullback_attraction} in 
Section~\ref{sec:global-solutions-and-ratios-of-partition-functions} using thermodynamic limits for directed polymers that
we introduce next.

\section{Directed polymers}\label{sec:polymers}
Our analysis of the Burgers equation is based on several new results concerning directed polymers and their thermodynamic limits. 
We provide the necessary background and state those results in this section.

Directed polymers in random environment are a class of random media models
given by random
Boltzmann--Gibbs distributions on paths with (i) free measure describing classical random walks and 
(ii) the energy function given by the potential accumulated from the random environment by the random walk. 

In the Burgers equation context, the directed polymers emerge naturally through the Feynman--Kac formula~\eqref{eq:F-K}. It can be understood as integration over the space of paths
endowed with appropriate polymer measures.

Namely, for any $m,n\in\Z$ with $m<n$ and any $x,y\in\R$, we can introduce the 
point-to-point polymer measure 
$\mu^{m,n}_{x,y,\omega}$ via its density
\begin{align*}
p^{m,n}_{x,y,\omega}(x_m,\ldots,x_n)
&=\frac{\prod_{k=m}^{n-1} \left[g_{2\visc}(x_{k+1}-x_k)e^{-\frac{F_k(x_k)}{2\visc}}\right]
}{Z^{m,n}_{x,y,\omega}},
\end{align*}
with respect to $\delta_x\times \Leb^{n-m-1} \times \delta_y$, where $\Leb$ is the Lebesgue
measure on $\R$.
The random normalizing quantity $Z^{m,n}_{x,y,\omega}$ in this formula has been defined 
in~\eqref{eq:Z}.
It is called the point-to-point partition function. The random measure 
$\mu^{m,n}_{x,y,\omega}$ can be viewed as the Gibbs measure obtained from the Gaussian random walk connecting $x$ to $y$ over time interval~$[m,n]_\Z=[m,n]\cap\Z$, by reweighting paths using the potential energy function
\begin{equation}
\label{eq:path-energy} 
 H_\omega^{m,n}(x_m,\ldots,x_n)=\sum_{k=m}^{n-1}F_{k,\omega}(x_k), \quad  (x_m,\ldots, x_n)\in \R^{n-m+1},
\end{equation}
and temperature $2\visc$. Note the asymmetry in the definition of $H_\omega^{m,n}$: we have to include $k=m$, but exclude $k=n$. All our results
on polymer measures are proved for this choice of path energy, but it is straightforward to obtain their counterparts for the version
of energy where $k=m$ is excluded and $k=n$ is included.

In our notation for partition functions, we omit the dependence on~$\visc$. In fact, in the remaining part of the paper, we will always assume $\visc=1/2$ to make the notation lighter. Adaptation of all the statements and proofs to the general $\visc>0$ is straightforward. We also often omit the $\omega$
argument in all the notations used above and often write $Z^{m,n}(x,y)$ for $Z^{m,n}_{x,y}$.

The polymer density can also be expressed as
\[
 p^{m,n}_{x,y,\omega}(x_m,\ldots,x_n)= \frac{(2\pi)^{-(n-m)/2}e^{-A_\omega^{m,n}(x_m,\ldots,x_n)}}{Z^{m,n}_{x,y,\omega}},
\]
where the total action 
\begin{equation}
 \label{eq:action}
A^{m,n}_\omega(x_m,\ldots,x_n)=H_\omega^{m,n}(x_m,\ldots,x_n)+I^{m,n}(x_m,\ldots,x_n)
\end{equation}
is defined via the potential energy introduced in~\eqref{eq:path-energy} and the kinetic energy:
\[
I^{m,n}(x_m,\ldots,x_n)=\frac{1}{2}\sum_{k=m}^{n-1}(x_{k+1}-x_k)^2.
\]

Asymptotic properties of directed polymer models similar to ours have been extensively studied in the literature, see, e.g., surveys \cite{Comets-Sh-Y-survey:MR2073332}, \cite{Hollander:MR2504175}, \cite{Giacomin:MR2380992}. Here, we will mention only results most tightly
related to ours. 

One of our first results is the existence of the infinite volume quenched density of the free energy:
\begin{theorem} \label{thm:shape-function}  There is a constant $\alpha_0\in\R$ 
such that for any $v\in\R$,
\[
 \frac{\ln Z^{0,n}_{0,nv}}{n}\ \asconv\ \alpha_0-\frac{v^2}{2}, \quad {n\to\infty}.
\]
\end{theorem}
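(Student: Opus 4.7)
The plan is first to eliminate the slope $v$ by a Galilean change of variables, reducing the problem to the case $v=0$, and then to prove that $n^{-1}\log Z^{0,n}_{0,0}$ converges almost surely to a deterministic constant $\alpha_0$. Substituting $x_k = y_k + kv$ in the integral~\eqref{eq:Z}, so that the constraints $x_0=0$, $x_n=nv$ become $y_0=y_n=0$, the identity
\[
\sum_{k=0}^{n-1}(x_{k+1}-x_k)^2 = \sum_{k=0}^{n-1}(y_{k+1}-y_k)^2 + 2v(y_n-y_0) + nv^2 = \sum_{k=0}^{n-1}(y_{k+1}-y_k)^2 + nv^2
\]
together with $F_k(x_k)=(L^v F)_k(y_k)$ yields the pathwise identity
\[
Z^{0,n}_{0,nv}[F] = e^{-nv^2/2}\, Z^{0,n}_{0,0}[L^v F].
\]
Since $\Pp$ is preserved by the shear $L^v$ (built into the definition of $\Omega$ via A1--A2), the law of $Z^{0,n}_{0,0}[L^v F]$ coincides with that of $Z^{0,n}_{0,0}[F]$, and applying $n^{-1}\log Z^{0,n}_{0,0}\asconv\alpha_0$ to the environment $L^v F$ delivers the advertised limit $\alpha_0-v^2/2$.

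For the $v=0$ case, the point-to-point quantity itself does not admit a clean superadditivity --- the Chapman--Kolmogorov decomposition $Z^{0,m+n}_{0,0} = \int Z^{0,m}_{0,y}\, Z^{m,m+n}_{y,0}\,dy$ is an integral, not a product --- so I would instead work with the point-to-line partition function $Z^{0,n}_{0,\anypt}:=\int Z^{0,n}_{0,y}\,dy$. The analogous factorization $Z^{0,m+n}_{0,\anypt} = \int Z^{0,m}_{0,y}\, Z^{m,m+n}_{y,\anypt}\,dy$, combined with Jensen's inequality applied to the endpoint probability measure $Z^{0,m}_{0,y}\,dy/Z^{0,m}_{0,\anypt}$, gives
\[
\log Z^{0,m+n}_{0,\anypt}\ \ge\ \log Z^{0,m}_{0,\anypt}+\int\frac{Z^{0,m}_{0,y}}{Z^{0,m}_{0,\anypt}}\log Z^{m,m+n}_{y,\anypt}\,dy.
\]
Taking $\E$ and using temporal independence (A2) plus spatial stationarity (A1), which force $\E\log Z^{m,m+n}_{y,\anypt}$ to be independent of $y$ and equal to $\E\log Z^{0,n}_{0,\anypt}$, yields
\[
\E\log Z^{0,m+n}_{0,\anypt}\ \ge\ \E\log Z^{0,m}_{0,\anypt}+\E\log Z^{0,n}_{0,\anypt}.
\]
Fekete's lemma produces $c := \lim_n n^{-1}\E\log Z^{0,n}_{0,\anypt}\in(-\infty,\log\lambda]$, the upper bound coming from $\E Z^{0,n}_{0,\anypt}=\lambda^n$ (A4) and Jensen, and the lower bound from restricting to paths confined to $[-1,1]$ and invoking A5.

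To promote this to almost-sure convergence and pass from the point-to-line to the point-to-point version, I would combine two ingredients. A martingale-difference bound along the filtration $\Fc_k=\sigma(F_0,\ldots,F_{k-1})$, with per-layer increments controlled by the exponential moments of $F^*_n(j)$ from A5, gives $n^{-1}\log Z^{0,n}_{0,\anypt}\asconv c$ through an Azuma/Bernstein-type estimate. The Galilean identity of the first step then recasts
\[
Z^{0,n}_{0,\anypt}\ =\ n\int_\R e^{-nv^2/2}\,Z^{0,n}_{0,0}[L^v F]\,dv
\]
as a Laplace-type integral dominated by $|v|\lesssim n^{-1/2}$. The distributional invariance of $L^v F$ forces any limit of $n^{-1}\log Z^{0,n}_{0,0}[L^v F]$ to be a single constant $\alpha_0$, and matching Laplace asymptotics against the known limit $c$ of the left-hand side identifies $\alpha_0=c$.

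The hard part will be this final transfer: the Laplace window has width of order $n^{-1/2}$, so one needs uniform-in-$v$ control of the random free-energy density $n^{-1}\log Z^{0,n}_{0,0}[L^v F]$ over a shrinking neighbourhood of $v=0$, i.e.\ a quantitative modulus of continuity in the slope parameter. Producing such a uniform bound --- presumably via refined concentration estimates with explicit fluctuation exponents, hand in hand with the straightness and transversal-fluctuation estimates announced in the introduction --- is the technical heart of the argument.
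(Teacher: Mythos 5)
Your Galilean reduction to $v=0$ matches the paper's Lemma~\ref{lem:shear-for-Z_v} exactly; the interesting divergence is in how you handle $v=0$, and there you take a genuinely different route that runs into a real gap which you correctly flag at the end.

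The paper defines $Z_*^{m,n}(x,y)=\min_{|\Delta x|,|\Delta y|<1/2}Z^{m,n}(x+\Delta x,y+\Delta y)$, a purely point-to-point auxiliary that \emph{is} pathwise supermultiplicative: restricting the Chapman--Kolmogorov integral to the unit window around the intermediate point dominates the product $Z_*(x,y)Z_*(y,z)$. Kingman's theorem then gives an almost-sure limit $\alpha(v)$ at one stroke, with no concentration estimates, and the remaining step (Lemma~\ref{lem:same-shape-function-for-true-Z}) is to show $\ln Z^{0,n}(0,0)/n$ and $\ln Z_*^{0,n}(0,0)/n$ converge to the same limit. That comparison is \emph{local}: split $Z^{0,n}(0,0)$ according to the positions of $\gamma_1,\gamma_{n-1}$, kill the contribution of far locations by a Borel--Cantelli moment bound, and for the bounded piece compare to $Z^{0,n}(x_*,y_*)$ (the argmin realizing $Z_*$) by controlling only the single-step Gaussian kernels and $F_0(\cdot)$ on a set of size $n^{3/4}$, at a total cost $e^{o(n)}$. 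No uniform-in-slope control is needed anywhere. By contrast, your point-to-line free energy is supermultiplicative only after taking expectations (your Jensen step with the mid-time marginal is correct, and Fekete does produce $c$ with the bounds you indicate), so Kingman no longer applies directly, and you have to pay for a.s.\ convergence with Azuma, which is already more machinery.

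The genuine gap is the transfer from $Z^{0,n}_{0,\anypt}$ back to $Z^{0,n}(0,0)$. Your Laplace representation $Z^{0,n}_{0,\anypt}=n\int e^{-nv^2/2}Z^{0,n}_{0,0}[L^vF]\,dv$ puts the target quantity in the integrand only at $v=0$; to match exponents you need (i) uniform almost-sure control of $n^{-1}\ln Z^{0,n}_{0,0}[L^vF]$ over a window $|v|\lesssim n^{-1/2}$ and (ii) uniform-in-$v$ upper bounds to control the tails. Point (i) does not follow from a.s.\ convergence at $v=0$: the fields $Z^{0,n}_{0,0}[L^vF]$ for nearby $v$ are correlated in a way that gives no useful uniformity for free, and the place where the paper does obtain such simultaneous control over a range of spatial endpoints (Lemma~\ref{lem:concentration-simultaneouly-for-one-point}) rests on Theorem~\ref{thm:concentration-of-free-energy}, which in turn uses the shape function through the doubling argument of Lemma~\ref{lem:doubling-argument-inequality}. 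So the route is circular as proposed. The $Z_*$ construction is precisely the device that sidesteps this: it replaces your spatial integral by a pointwise minimum over a \emph{fixed} unit window, so that the final comparison with $Z^{0,n}(0,0)$ is a bounded-domain, first-and-last-step estimate independent of any statement uniform in the slope. I would adopt that device; after the Galilean reduction, the remainder of your argument then collapses to a clean application of Kingman with no concentration machinery at all.
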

We prove it in Section~\ref{sec:shape-function} after providing some useful properties of partition functions in 
Section~\ref{sec:properties-of-partition-function}.  The quadratic term $-\frac{v^2}{2}$ comes
  from the shear-invariance symmetry (see~(\ref{eq:def-of-shear})) of our model.

The existence of infinite volume normalized quenched free energy has been obtained for a variety of polymer models
using subadditivity arguments, 
see~\cite{Carmona-Hu:MR1939654}, \cite{Comets-Shiga-Yoshida:MR1996276}, \cite{Vargas:MR2299713},
\cite{Comets-Fukushima-Nakajima-Yoshida:MR3406700}, for 
lattice polymers under various assumptions and \cite{Comets-Yoshida:MR2117626}, \cite{Comets-Cranston:MR3038513},
\cite{Comets-Yoshida:MR3085670} for some continuous models. Variational characterizations of the free energy in terms of auxiliary skew-invariant functions (cocycles) were developed in  \cite{Yilmaz-2009:MR2531552}, \cite{Rassoul-Agha--Seppalainen:MR3176363}, \cite{Rassoul-Agha--Seppalainen--Yilmaz:MR2999296}, \cite{Rassoul-Agha--Seppalainen--Yilmaz:2013arXiv1311.3016G}, \cite{Rassoul-Agha--Seppalainen--Yilmaz:2014arXiv1410.4474R}.

Concentration inequalities for the finite volume free energy have been obtained for several polymer models, see, e.g., 
\cite{Mejane:MR2060455}, \cite{Carmona-Hu-2004:MR2073415}, \cite{Rovira-Tindel:MR2129770}. These estimates also imply that the fluctuation of the quenched energy for polymer of length $n$ are (roughly)
bounded by $n^{1/2}$ and the typical transversal fluctuations for polymer paths themselves in those settings are smaller than (roughly) $n^{3/4}$, although it is believed that for a large class of models including ours (KPZ universality class, see, e.g.,~\cite{Corwin:MR2930377}), the true scalings are $n^{1/3}$ and $n^{2/3}$, respectively. We prove new concentration inequalities for our model. Their statements, related results, and proofs are given in Section~\ref{sec:concentration}. 

In the proof of concentration inequalities and throughout the rest of the paper we heavily use the 1-dimensional character of the model. More
specifically, in Section~\ref{sec:monotonicity} we prove monotone dependence of point-to-point polymer measures on the endpoints, along with
several other monotonicity results.  

In Section~\ref{sec:delta-straightness-and-tightness}, we use the concentration inequalities obtained in Section~\ref{sec:concentration}
to derive so-called $\delta$-straightness which in turn is then used to prove tightness of sequences
of polymer measures. 
In Section~\ref{sec:infinite-volume-polymer-measure}, we study the thermodynamic limit for polymer measures. Namely, we use tightness to construct 
infinite volume polymer measures as limits of finite volume ones and prove a uniqueness statement. These infinite volume polymer
measures will be used in Section~\ref{sec:global-solutions-and-ratios-of-partition-functions} to construct global solutions
of the Burgers equation and prove their uniqueness and pullback attraction property.

The notion of $\delta$-straightness goes back to~ \cite{Ne}.
It can be derived from the concentration of finite volume free energy and the uniform curvature
assumption on the shape function, introduced in~\cite{Ne}.
It was later used in~\cite{Licea1996}, \cite{HoNe}, \cite{Wu}, \cite{ferrari2005}, \cite{CaPi}, \cite{BCK:MR3110798}
and~\cite{kickb:bakhtin2016} in the context of optimal paths in random environments.
In these papers, either the curvature assumption was assumed (as in \cite{Licea1996}) or the
  shape functions were explicitly known so that the curvature assumption was satisfied.

To state the thermodynamic limit results precisely, we need some notation.

For every $m,n\in\Z$ satisfying $m<n$, we denote the set of all paths
$\gamma:\{m,m+1,\ldots,n\}\to\R$ by $S^{m,n}$.  
If in addition a point $x\in\R$ is given, then $S_{x,*}^{m,n}$ denotes the set of all such paths that
satisfy $\gamma_m=x$. If $n=\infty$, then we understand the above spaces as the spaces of one-sided semi-infinite paths.

If points $x,y\in\R$ are given, then $S_{x,y}^{m,n}$  denotes the set of all such paths that
satisfy $\gamma_m=x$ and $\gamma_{n}=y$. The random point-to-point polymer measure $\mu_{x,y}^{m,n}=\mu_{x,y,\omega}^{m,n}$ is concentrated on  $S_{x,y}^{m,n}$ for 
all~$\omega\in\Omega$.

We call a measure $\mu$ on  $S^{m,n}_{x,*}$ a polymer measure if there is a probability measure~$\nu$ on~$\R$
such that $\mu=\mu_{x,\nu}^{m,n}$, where 
\[
\mu_{x,\nu}^{m,n}=\int_{\R}\mu_{x,y}^{m,n}\nu(dy). 
\] 
We call $\nu$ the terminal measure for~$\mu=\mu_{x,\nu}^{m,n}$. It is also natural to call $\mu$ a point-to-measure polymer
measure associated to~$x$ and~$\nu$. Later we will also introduce and study a related notion of point-to-line polymer measure.

For every $(m,x)\in\ZR$, we denote by $S_{x}^{m,+\infty}$ the set of paths $y:\{m,m+1,m+2,\ldots\}\to\R$
such that $y_{m}=x$.

A measure $\mu$ on $S_{x}^{m,+\infty}$ is called an infinite volume polymer measure if for any $n\ge m$
the projection of $\mu$ on $S^{m,n}_{x,*}$ is a polymer measure. This condition is equivalent to the Dobrushin--Lanford--Ruelle (DLR) condition
on the measure $\mu$. 

We will need the notions of the law of large numbers (LLN) and the strong law of large numbers (SLLN) for polymer measures. We begin with SLLN. For 
every $(m,x)\in\ZR$ and every $v\in\R$,  we denote
\[
S_x^{m,+\infty}(v)=\left\{\gamma\in S_x^{m,+\infty}:\ \lim_{n\to\infty}\frac{\gamma_n}{n}=v\right\} 
\]
and say that the strong law of large numbers (SLLN) with slope $v\in\R$ holds for a measure $\mu$ on $S_x^{m,+\infty}$ if $\mu(S_x^{m,+\infty}(v))=1$.

We say that LLN with slope $v\in\R$ holds for a sequence of Borel measures $(\nu_n)$ on $\R$ if
for all $\delta>0$,
\[
 \lim_{n\to\infty} \nu_n([(v-\delta)n,(v+\delta) n])=1.
\]
Finally, for any $(m,x)\in\ZR$, we say that a measure $\mu$ on $S_x^{m,+\infty}$ satisfies LLN with slope $v$ if the sequence of its marginals 
$\nu_k(\cdot)=\mu\{\gamma:\ \gamma_k\in\cdot\}$ does.

We denote by $\Pc_x^{m,+\infty}(v)$  the set of all polymer measures on $S_x^{m,+\infty}$ satisfying SLLN with slope $v$. The set of all polymer measures on $S_x^{m,+\infty}$ satisfying LLN with slope $v$ 
is denoted by $\widetilde \Pc_x^{m,+\infty}(v)$. These sets are random since they depend on the realization of the environment, but we suppress the dependence on $\omega\in\Omega$ in this notation.

\begin{theorem}\label{thm:thermodynamic-limit}
  Let $v\in\R$. Then there is a set $\Omega_v\in\Fc$  and such that
\begin{enumerate}[1.]
 \item $\Pp(\Omega_v)=1$.
 \item \label{it:existence-uniqueness-of-infinite-volume} For all $\omega\in\Omega_v$ and all $(m,x)\in\Z\times\R$, there is a polymer measure $\mu_{x}^{m,+\infty}(v)$ such that 
 \[\Pc_x^{m,+\infty}(v)=\widetilde\Pc_x^{m,+\infty}(v)=\{\mu_{x}^{m,+\infty}(v)\}.\]  The finite-dimensional distributions of $\mu_{x}^{m,+\infty}(v)$ are absolutely continuous. 
 \item \label{it:thermodynamic-limit} For all $\omega\in\Omega_v$, all $(m,x)\in\Z\times\R$, and 
 for every sequence of measures $(\nu_n)$ satisfying LLN with slope $v$, 
 finite-dimensional distributions of $\mu_{x,\nu_n}^{m,n}$ converge to $\mu_{x}^{m,+\infty}(v)$ in total variation. 
\end{enumerate}
\end{theorem}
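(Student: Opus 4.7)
The plan is to construct $\mu_x^{m,+\infty}(v)$ as a subsequential limit of point-to-point polymer measures with terminal point moving along the ray of slope $v$, then use a limiting ratio of partition functions (playing the role of Busemann functions) to identify every element of $\widetilde\Pc_x^{m,+\infty}(v)$ with this limit and to obtain total variation convergence for arbitrary LLN terminal sequences. To build $\mu_*:=\mu_x^{m,+\infty}(v)$, fix $(m,x)$ and consider the sequence $\mu^{m,n}_{x,vn}$ with terminal measure $\delta_{vn}$. The $\delta$-straightness estimates of Section~\ref{sec:delta-straightness-and-tightness}, derived from the concentration inequalities of Section~\ref{sec:concentration} and the uniform curvature of the shape function in Theorem~\ref{thm:shape-function}, make the $k$-th marginal tight for each fixed $k$. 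A diagonal subsequential extraction yields a limit $\mu_*$ on $S_x^{m,+\infty}$, defined almost surely on a set $\Omega_v$ formed by intersecting full-measure events from the concentration, straightness, and shape-function results over a countable set of parameters. The Markov-type consistency
\[
\mu^{m,n}_{x,vn}\big|_{S^{m,k}_{x,*}} \;=\; \mu^{m,k}_{x,\widehat\nu^{(n)}_k},
\]
with $\widehat\nu^{(n)}_k$ the $k$-th marginal of $\mu^{m,n}_{x,vn}$, passes to the limit and yields DLR for $\mu_*$. SLLN at slope $v$ for $\mu_*$ then follows from the quantitative straightness along the subsequence.

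For uniqueness, take any $\mu\in\widetilde\Pc_x^{m,+\infty}(v)$, let $\nu_n$ be its $n$-th marginal, and use DLR to write, for any fixed $k\ge m$ and any $n\ge k$, the density of the $k$-window marginal of $\mu$ with respect to $\delta_x\times\Leb^{k-m}$ as
\[
\prod_{j=m}^{k-1}\big[g_{2\visc}(x_{j+1}-x_j)\,e^{-F_j(x_j)/(2\visc)}\big]\cdot \frac{Z^{k,n}_{x_k,\nu_n}}{Z^{m,n}_{x,\nu_n}}.
\]
The crucial step is to show that as $n\to\infty$, the ratio $Z^{k,n}_{y,\nu_n}/Z^{m,n}_{x,\nu_n}$ converges almost surely on $\Omega_v$ to a limit $\rho_k(y)$ that depends on $m,x,k,v$ but not on the particular LLN sequence $(\nu_n)$. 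This forces the $k$-window marginal of every $\mu\in\widetilde\Pc_x^{m,+\infty}(v)$ to coincide with a single explicit density, so $\mu=\mu_*$; in particular the subsequential limit was unique from the start and $\Pc_x^{m,+\infty}(v)=\widetilde\Pc_x^{m,+\infty}(v)=\{\mu_*\}$. Item~\ref{it:thermodynamic-limit} is then the same computation applied to an arbitrary LLN terminal sequence: pointwise convergence of the $k$-window densities of $\mu^{m,n}_{x,\nu_n}$ to that of $\mu_*$, upgraded to total variation by Scheff\'e's lemma. Absolute continuity of the finite-dimensional distributions of $\mu_*$ is immediate from the density formula.

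The main obstacle is the partition function ratio limit. One must show $Z^{k,n}_{y,\nu_n}/Z^{m,n}_{x,\nu_n}\to \rho_k(y)$ almost surely, uniformly for $y$ in compact sets and for arbitrary LLN sequences $(\nu_n)$. To handle it, $\delta$-straightness localizes the paths defining $Z^{k,n}_{\cdot,\nu_n}$ inside a thin tube around the ray of slope $v$, so only the $\nu_n$-mass near $vn$ contributes meaningfully; monotonicity of point-to-point measures in the endpoints (Section~\ref{sec:monotonicity}) then sandwiches the ratio between cases with translated point-mass terminal measures; and the curvature from Theorem~\ref{thm:shape-function} combined with the concentration inequalities pins down the common limit value and controls fluctuations at a rate fast enough to transfer across the sandwich. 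This step is where the one-dimensional nature of the model, together with the accumulated straightness and concentration machinery, is essential.
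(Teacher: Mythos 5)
Your proposal inverts the logical structure of the paper's argument, and the inversion creates a genuine gap in the identification step.

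You propose to first establish the partition-function-ratio limit
$Z^{k,n}_{y,\nu_n}/Z^{m,n}_{x,\nu_n}\to\rho_k(y)$ (with $\rho_k$ independent of the LLN sequence), and to deduce uniqueness and total-variation convergence from it. The paper does the reverse: it first proves uniqueness of $\mu_x^{m,+\infty}(v)$, and only then establishes the ratio limit by using that uniqueness to identify subsequential limits. Specifically, the paper's uniqueness proof (Lemma~\ref{lem:uniqueness-and-convergence-at-rationals}) does not go through ratios of partition functions at all. It considers, for each level $\alpha\in(0,1)$, the interval of quantiles
$I_\alpha(v)=(q_\alpha^-(v),q_\alpha^+(v))$ over all measures in $\Pc_x^{m,+\infty}(v)$; monotonicity in the slope (Lemma~\ref{lem:infinite-volume-monotonicity-wrt-slope}) implies these intervals are nested/ordered in $v$, so only countably many are nonempty; shear-invariance makes $\Pp\{I_\alpha(v)\ne\emptyset\}$ independent of $v$; and the Fubini/averaging argument of Howard--Newman forces that probability to be zero. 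Extension from rational $x$ to all $x$ goes through the equicontinuity estimates of Lemmas~\ref{lem:equicontinuity-of-polymer-measures-wrt-endpoint}--\ref{lem:continuity-of-one-sided-polymer-measures-wrt-endpoint}. The partition-function-ratio theorems (Theorems~\ref{thm:convergence-of-partition-function-ratios} and~\ref{thm:convergence-of-density-functions}) are then derived \emph{from} this uniqueness: the precompactness Lemma~\ref{lem:precompactness-of-gN} extracts subsequential limits of $\ln g^N_{n,m}$, and the already-known weak convergence to the unique $\mu_x^{m,+\infty}(v)$ is what pins each subsequential limit down to the density $f_{v,n,m}$.

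The gap in your version is precisely that pin-down step. You assert that ``curvature from Theorem~\ref{thm:shape-function} combined with the concentration inequalities pins down the common limit value and controls fluctuations at a rate fast enough to transfer across the sandwich.'' This is not substantiated, and the orders of magnitude argue against it: Theorem~\ref{thm:concentration-of-free-energy} controls $|\ln Z^n-\alpha(v)n|$ only to accuracy $O(n^{1/2}\ln^2 n)$, while the ratio $Z^{k,n}_{y,\nu_n}/Z^{m,n}_{x,\nu_n}$ must be controlled to $O(1)$ accuracy (indeed must converge). The numerator and denominator are highly correlated, so individual concentration of each $\ln Z$ does not cancel the shared fluctuation in the quotient; you need an argument specific to the ratio. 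The paper's Lemma~\ref{lem:precompactness-of-gN} obtains precompactness of the log-ratio by a Lipschitz bound on a single time step of the Gaussian kernel (not from the concentration of free energy), and this gives existence of subsequential limits but not their uniqueness; identifying them requires an extra input, which in the paper is the uniqueness of the infinite-volume polymer measure. Without that (or some substitute such as a separate coalescence or ergodicity argument for the ratio), your argument is circular: you use the ratio limit to prove uniqueness, but the ratio limit is exactly the thing whose uniqueness you have not established. To repair this you would need to supply a direct proof that all subsequential limits of $Z^{k,n}_{y,\nu_n}/Z^{m,n}_{x,\nu_n}$ coincide and are independent of the LLN sequence $(\nu_n)$, which the sketch does not provide.

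Apart from that, the existence and absolute-continuity parts of your outline (tightness from straightness, diagonal extraction, DLR consistency, Scheff\'e to upgrade pointwise density convergence to total variation) match the paper's Lemma~\ref{lem:existence-of-polymer-measures-and-convergence-along-subsequences} and Theorem~\ref{thm:convergence-of-density-functions} and are fine.
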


In other words, with probability one, there is a unique infinite volume polymer measure with prescribed endpoint and slope. Moreover, this infinite volume measure can be obtained via a thermodynamic limit, i.e., as a limit of finite volume polymer measures.

A similar result was obtained in \cite{Georgiou--Rassoul-Agha--Seppalainen--Yilmaz:MR3395462} for a model called log-gamma polymer. 
Log-gamma polymer describes a random walk in a certain random potential on the lattice $\Z^2$. Compared to that model, the one that we study has several features that make the analysis harder. Namely, in our model, the space is continuous and the increments of 
the polymer paths are not uniformly bounded. Moreover, our model does not give rise to explicit computations that are possible for the log-gamma polymer, so we have to rely only on estimates. Of course, a very useful feature of our model is that the free energy
function is exactly computed in Theorem~\ref{thm:shape-function} (except an unknown additive constant), it is quadratic and thus strongly convex.  

Note that we 
prove the thermodynamic limit not just for point-to-point polymers, but also for more general point-to-measure polymers. This can be done for gamma-polymers as well. In~\cite{Georgiou--Rassoul-Agha--Seppalainen--Yilmaz:MR3395462} similar results
on point-to-line polymers are established for terminal conditions on the line given by a linear tilt function. 
Our results on pullback attraction in Section~\ref{sec:global-solutions-and-ratios-of-partition-functions} allow to state a version of such a result in our
setting,  with more general tilt functions that are required to be only asymptotically linear.

Tightly connected to the thermodynamic limit results in~\cite{Georgiou--Rassoul-Agha--Seppalainen--Yilmaz:MR3395462} are results on the limits of ratios of partition functions. 
Logarithms of these limiting ratios are polymer counterparts of Busemann functions that compare
actions of infinite geodesics to each other in zero temperature models such as first passage percolation (FPP), last passage percolation,
or zero-viscosity Burgers equation, see~\cite{HoNe}, \cite{CaPi-ptrf}, \cite{BCK:MR3110798},\cite{kickb:bakhtin2016},
 \cite{Rassoul-Agha--Seppalainen--Yilmaz:2013arXiv1311.3016G},
 \cite{Rassoul-Agha--Seppalainen--Yilmaz:2015arXiv151000859G}, \cite{DamronHanson2014},
 \cite{Damron2017} and~\cite{AHD:2015arXiv151103262A}, which is a recent survey on FPP. In~\cite{Rassoul-Agha--Seppalainen--Yilmaz:2013arXiv1311.3016G} and \cite{Rassoul-Agha--Seppalainen--Yilmaz:2014arXiv1410.4474R} a variational approach to ratios of partition functions 
 is described.
 It should be noted that in~\cite{Rassoul-Agha--Seppalainen--Yilmaz:2015arXiv151000859G}
  and~\cite{DamronHanson2014}, \cite{Damron2017}, some differentiablity assumptions on the shape
 function were used to study the semi-infinite geodesic and the Busemann function.

We also prove a result on limits of partition function ratios for our model:
\begin{theorem} 
  \label{thm:ratio-partition-function-intro}
For every $v \in \R$, there is a full measure set $\Omega'_{v}$ such that for all $\omega\in\Omega'_v$, for all
$(n_1,x_1),(n_2,x_2) \in \Z\times\R\ $,
and for every sequence of
numbers $(y_N)$ with $\lim\limits_{N \to  \infty} y_N/N = v$, we have
\begin{equation*}
\lim\limits_{N\to \infty}  \frac{Z_{x_1, y_N}^{n_1, N}}{Z_{ x_2, y_N}^{n_2, N}} = G,
\end{equation*}
where $G=G_{v,\omega}(n_1,x_1,n_2,x_1) \in(0,\infty)$ does not depend on the sequence $(y_N)$.
\end{theorem}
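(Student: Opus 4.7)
The plan is to use Theorem~\ref{thm:thermodynamic-limit} to express the partition function ratio as a ratio of marginal densities of polymer measures, whose convergence then follows directly from the thermodynamic limit. Fix any integer $n\geq\max(n_1,n_2)$ and let $q_{N,i}(z)$ denote the Lebesgue density at position $z$ of the marginal at time $n$ of the polymer measure $\mu^{n_i,N}_{x_i,y_N}$, for $i=1,2$. A direct Chapman--Kolmogorov-type splitting of~\eqref{eq:Z} at time $n$ gives
\[
q_{N,i}(z) \;=\; \frac{Z^{n_i,n}_{x_i,z}\,Z^{n,N}_{z,y_N}}{Z^{n_i,N}_{x_i,y_N}}, \qquad i=1,2.
\]
Dividing the $i=1$ expression by the $i=2$ one, the common factor $Z^{n,N}_{z,y_N}$ cancels, yielding the key identity
\[
\frac{Z^{n_1,N}_{x_1,y_N}}{Z^{n_2,N}_{x_2,y_N}} \;=\; \frac{Z^{n_1,n}_{x_1,z}\,q_{N,2}(z)}{Z^{n_2,n}_{x_2,z}\,q_{N,1}(z)}, \qquad z\in\R.
\]
Although the right-hand side a priori depends on $z$, the left-hand side does not, so the right-hand side is in fact constant in $z$.

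By Theorem~\ref{thm:thermodynamic-limit}, part~\ref{it:thermodynamic-limit}, the measures $\mu^{n_i,N}_{x_i,y_N}$ converge in total variation on every finite-dimensional projection to $\mu^{n_i,+\infty}_{x_i}(v)$ as $N\to\infty$ with $y_N/N\to v$. Combined with part~\ref{it:existence-uniqueness-of-infinite-volume} (absolute continuity of fdd distributions), this yields $L^1(\R)$ convergence $q_{N,i}\to q_{\infty,i}$, where $q_{\infty,i}$ is the density of the marginal at time $n$ of the infinite-volume measure. Each $q_{\infty,i}$ is positive on a set of full Lebesgue measure, since each $q_{N,i}$ is a product of the strictly positive smooth factor $Z^{n_i,n}_{x_i,\cdot}$ and a nonnegative one. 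Passing to a common subsequence yields pointwise convergence on a full-measure set $E\subseteq\R$. Picking any $z\in E$ with $q_{\infty,1}(z),q_{\infty,2}(z)>0$, the right-hand side of the identity converges to
\[
G \;=\; \frac{Z^{n_1,n}_{x_1,z}\,q_{\infty,2}(z)}{Z^{n_2,n}_{x_2,z}\,q_{\infty,1}(z)}\in(0,\infty),
\]
so the left-hand side converges to the same $G$ along that subsequence. Since every subsequence of the original sequence has a further subsequence converging to $G$, the full sequence converges to $G$. The identity then forces $G$ to be independent of the auxiliary $n$ and $z$, while the uniqueness clause in Theorem~\ref{thm:thermodynamic-limit}\ref{it:existence-uniqueness-of-infinite-volume} forces independence from the particular sequence $(y_N)$.

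Finally, the full-measure set $\Omega'_v$ is built by intersecting the sets $\Omega_v$ from Theorem~\ref{thm:thermodynamic-limit} over a countable dense family of starting data $(n_i,x_i)\in\Z\times\Q$, and then extending the convergence to all $(n_i,x_i)\in\Z\times\R$ using smoothness of partition functions in their endpoints together with uniform local bounds on the partition-function ratios. The main technical obstacle is exactly this last step: Theorem~\ref{thm:thermodynamic-limit} delivers only $L^1$ rather than pointwise-everywhere convergence of the densities $q_{N,i}$, and one has to sidestep this by working at almost every $z$ and exploiting the $z$-constancy of the identity; upgrading to uniform convergence on compacts (needed for the simultaneous extension to all $(n_i,x_i)$) relies on equicontinuity bounds obtained from the concentration inequalities and straightness estimates developed in the preceding sections.
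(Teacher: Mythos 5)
Your starting identity
\[
\frac{Z^{n_1,N}_{x_1,y_N}}{Z^{n_2,N}_{x_2,y_N}} \;=\; \frac{Z^{n_1,n}_{x_1,z}\,q_{N,2}(z)}{Z^{n_2,n}_{x_2,z}\,q_{N,1}(z)}
\]
is correct and is, in spirit, the same Chapman--Kolmogorov decomposition the paper works with (compare formula~\eqref{eq:expression-of-polymer-measure-density}). The problem is the step where you cite Theorem~\ref{thm:thermodynamic-limit}, part~\ref{it:thermodynamic-limit} (total variation convergence of finite-dimensional marginals) to obtain $L^1$, and then a.e.\ subsequential pointwise, convergence of the densities $q_{N,i}$. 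In the paper, that total variation convergence is itself a \emph{consequence} of Theorem~\ref{thm:ratio-partition-function-intro} and its sibling, Theorem~\ref{thm:convergence-of-density-functions}: Section~\ref{sec:infinite-volume-polymer-measure} establishes only \emph{weak} convergence of finite-dimensional marginals (Lemma~\ref{lem:uniqueness-of-polymer-measures}), and the very first paragraph of Section~\ref{sec:global-solutions-and-ratios-of-partition-functions}, together with the displayed derivation of part~\ref{it:thermodynamic-limit} there, makes it explicit that the upgrade from weak to total variation convergence runs through the partition-function-ratio results. So your argument is circular as written.

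Weak convergence of the marginals does not give $L^1$ convergence of their densities (e.g.\ $1+\sin(Nx)$ on $[0,1]$ converges weakly but not in $L^1$), so the circularity is not cosmetic: without TV convergence your passage to a.e.\ pointwise convergence of $q_{N,i}$ has no justification. Your closing remark that the real obstacle is upgrading to uniform convergence on compacts is pointing at the right gap, but this is precisely what the paper does first, via Lemma~\ref{lem:precompactness-of-gN}: one proves that the logarithms of the ratio functions $g^N_{n,m}$ form a precompact, uniformly bounded, equicontinuous family in $\mathcal{C}(K)$, using the straightness and tightness estimates of Sections~\ref{sec:delta-straightness-and-tightness} and~\ref{sec:concentration}, and only then identifies the subsequential limits using \emph{weak} convergence and the uniqueness of the infinite-volume polymer measure. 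That ordering avoids assuming TV convergence. In other words, the precompactness step is not a polish added at the end to handle irrational endpoints simultaneously, as your last paragraph frames it; it is the engine that replaces the unavailable total variation input. Without it, your proof does not close.
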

This theorem along with some related and more general results is proved in 
Section~\ref{sec:global-solutions-and-ratios-of-partition-functions}.  In fact, we also prove convergence of logarithmic derivatives of partition function ratios. 
This extension is nontrivial but useful in our space-continuous setting since, due to the Hopf--Cole transformation, solutions of the Burgers equation can be expressed through these logarithmic derivatives.
In particular, this enables us to obtain in Section~\ref{sec:global-solutions-and-ratios-of-partition-functions} our main 1F1S results on global solutions of the Burgers equation stated in Section~\ref{sec:main_results}. 

For these results, we need to work with time-reversed polymer measures on paths that are defined down to $-\infty$ in time. The subtlety here is that our results on forward polymer measures are not 
absolutely equivalent to those for backward polymer measures due to the slight asymmetry in the definition of the 
energy function~$H^{m,n}_\omega$. However, these results (and their proofs) are straightforward modifications of each other, and we often find it practical 
to ignore the difference between them.

\medskip

In Section~\ref{sec:overlap},
we use the result on convergence of partition function ratios to derive a version of hyperbolicity property for the positive temperature polymer case. Namely, we show that the marginals of any two polymer measures with the same slope 
are asymptotic to each other:

\begin{theorem}
\label{thm:convergence-total-variation}
Let $v \in \R$. On a full measure event $\Omega'_{v}$, for any $(n_1,x_1), (n_2,x_2) \in \Z
\times\R$, we have
\begin{equation*}
  \lim_{N \to \infty} \| \mu_{x_1}^{n_1,+\infty}(v) \pi^{-1}_N - \mu_{x_2}^{n_2,+\infty}(v)
  \pi^{-1}_N \|_{TV} = 0,
\end{equation*}
where $\mu \pi^{-1}_N$ is the projection of a measure $\mu$ on $N$-th coordinate and  
$\| \cdot \|_{TV}$ denotes the total variation distance.
\end{theorem}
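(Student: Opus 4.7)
The plan is to reduce the statement to Theorem~\ref{thm:ratio-partition-function-intro} via a density comparison, localized by LLN to a shrinking window around $vN$. Fix a deterministic sequence $(y_n)$ with $y_n/n \to v$. For $i=1,2$, the $N$-th marginal density of the finite-volume measure $\mu_{x_i,\delta_{y_n}}^{n_i,n}$ is
\[
\rho_{i,n}^{(N)}(y)\;=\;\frac{Z^{n_i,N}_{x_i,y}\,Z^{N,n}_{y,y_n}}{Z^{n_i,n}_{x_i,y_n}},
\]
and by item~\ref{it:thermodynamic-limit} of Theorem~\ref{thm:thermodynamic-limit} one has $\rho_{i,n}^{(N)} \to \rho_i^{(N)}$ in $L^1(\R)$, where $\rho_i^{(N)}$ denotes the density of $\mu_{x_i}^{n_i,+\infty}(v)\pi_N^{-1}$. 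Passing to a common subsequence along which the convergence holds pointwise a.e., and noting that the common factor $Z^{N,n}_{y,y_n}$ cancels in the ratio $\rho_{1,n}^{(N)}/\rho_{2,n}^{(N)}$, Theorem~\ref{thm:ratio-partition-function-intro} applied to the normalizations $Z^{n_i,n}_{x_i,y_n}$ (whose endpoints $y_n/n \to v$ match the hypothesis) yields
\[
\frac{\rho_1^{(N)}(y)}{\rho_2^{(N)}(y)}\;=\;\frac{1}{G_{v,\omega}(n_1,x_1,n_2,x_2)}\cdot\frac{Z^{n_1,N}_{x_1,y}}{Z^{n_2,N}_{x_2,y}}
\]
for a.e.\ $y\in\R$.

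Next I would show that this ratio tends to $1$ uniformly on a window $A_N = [(v-\delta_N)N,(v+\delta_N)N]$ with $\delta_N \to 0$. Any sequence $y_N \in A_N$ satisfies $y_N/N \to v$, so Theorem~\ref{thm:ratio-partition-function-intro} gives $Z^{n_1,N}_{x_1,y_N}/Z^{n_2,N}_{x_2,y_N} \to G_{v,\omega}(n_1,x_1,n_2,x_2)$. A standard extension argument (if the sup over $A_{N_k}$ failed to vanish along some subsequence, realize $y_{N_k} \in A_{N_k}$ nearly attaining the sup and extend arbitrarily outside $\{N_k\}$ to a full sequence with $y_N/N \to v$, contradicting the pointwise statement) upgrades this to uniform convergence on $A_N$, hence $\sup_{y\in A_N}|\rho_1^{(N)}(y)/\rho_2^{(N)}(y)-1| \to 0$ for \emph{every} choice $\delta_N \to 0$.

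To conclude, choose $\delta_N \to 0$ slowly enough that $\mu_{x_i}^{n_i,+\infty}(v)\pi_N^{-1}(A_N) \to 1$ for both $i=1,2$; this is possible by LLN, since each $\mu_{x_i}^{n_i,+\infty}(v)$ lies in $\widetilde\Pc_{x_i}^{n_i,+\infty}(v)$, together with a diagonal extraction. Splitting the total variation,
\begin{align*}
\bigl\|\mu_{x_1}^{n_1,+\infty}(v)\pi_N^{-1} - \mu_{x_2}^{n_2,+\infty}(v)\pi_N^{-1}\bigr\|_{TV}
&\le \mu_{x_1}^{n_1,+\infty}(v)\pi_N^{-1}(A_N^c) + \mu_{x_2}^{n_2,+\infty}(v)\pi_N^{-1}(A_N^c) \\
&\quad + \int_{A_N}\Bigl|1-\tfrac{\rho_2^{(N)}(y)}{\rho_1^{(N)}(y)}\Bigr|\rho_1^{(N)}(y)\,dy;
\end{align*}
the first two terms vanish by the choice of $\delta_N$ and the third vanishes by the uniform convergence of the integrand on $A_N$.

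The main technical obstacle is the first step: the individual factors $Z^{N,n}_{y,y_n}$ and $Z^{n_i,n}_{x_i,y_n}$ appearing in $\rho_{i,n}^{(N)}$ need not stabilize as $n \to \infty$, so the identity must be extracted by cancelling the common future factor \emph{before} taking the limit and then invoking Theorem~\ref{thm:ratio-partition-function-intro} separately for the two normalizations. Once this cancellation is performed, the argument reduces cleanly to a combination of the ratio asymptotics with LLN localization.
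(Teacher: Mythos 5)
Your argument is correct and follows the same route as the paper's Section~\ref{sec:overlap}: localize to an LLN window $[(v-\delta_N)N,(v+\delta_N)N]$, identify the marginal density ratio with a partition function ratio times the constant $G_{v,\omega}(n_1,x_1,n_2,x_2)$, and drive that ratio to $1$ on the window via Theorem~\ref{thm:ratio-partition-function-intro}. The paper reaches the density-ratio identity through the closed-form expression in Lemma~\ref{lem:representation-through-solution-to-burgers-equation} (using the Hopf--Cole transform $V_v$ of the global solution) rather than your limiting cancellation from finite-volume marginals, and obtains uniformity on the window by an infimum/supremum device (the extrema of $Z^{N,n_2}_{x,x_2}/Z^{N,n_1}_{x,x_1}$ over $[\alpha_N,\beta_N]$ are attained at points with slope tending to $v$) rather than your subsequence-extension argument, but both devices accomplish the same thing and the proofs are essentially equivalent.
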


In fact, we prove a time-reversed version of this result in Section~\ref{sec:overlap}.

Since the marginals $\mu_{x_i}^{n_i,+\infty}(v) \pi^{-1}_N$ define the entire measure $\mu_{x_i}^{n_i,+\infty}(v)$ uniquely due 
to the Markovian character of nearest neighbor interactions encoded in the action functional, a stronger statement on overlap of measures on paths also follows immediately.

\section{Properties of the partition function}\label{sec:properties-of-partition-function}
We recall that throughout the paper, the reasoning does not depend on the viscosity value, so from now on 
we set $\visc=1/2$ for brevity. Under this convention, $2\visc=1$, so we will often work with Gaussian kernel $g(\cdot)=g_1(\cdot)$.

Let us denote $Z^n(y)=Z^{0,n}(0,y)$ for brevity. The following lemma states how distributional properties
of partition functions behave under shear transformations of space-time. We write $\stackrel{d}{=}$ to
denote identity in distribution.

\begin{lemma}
 For any $m,n\in\Z$ satisfying $m<n$ and any points $x,y\in\R$,
 \[
  Z^{n+l,m+l}(x+\Delta,y+\Delta)\stackrel{d}{=} Z^{n,m}(x,y). 
 \]
Also, for any $v\in\R$,
\[
 Z^n(vn)\stackrel{d}{=} e^{-\frac{v^2}{2}n} Z^{n}(0).
\]
\end{lemma}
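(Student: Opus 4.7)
The plan is that both identities follow from the explicit formula~\eqref{eq:Z} combined with the invariance of the distribution of $F$ under the appropriate group action on $\Omega$.

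For the first identity, the idea is to compare the two integrals defining the partition functions after a simultaneous translation in time and space. Writing out~\eqref{eq:Z} for $Z^{n+l,m+l}(x+\Delta,y+\Delta)$ and performing the change of variables $x_k = \tilde x_{k-l} + \Delta$, I reindex the product so that the dummy path $(\tilde x_n,\ldots,\tilde x_m)$ runs from $\tilde x_n = x$ to $\tilde x_m = y$, and the integrand becomes $\prod_{k=n}^{m-1} g(\tilde x_{k+1} - \tilde x_k) e^{-F_{k+l}(\tilde x_k + \Delta)}$. The Gaussian factors match those of $Z^{n,m}(x,y)$ exactly, and the only random piece is the field $\tilde F_k(\tilde x) := F_{k+l}(\tilde x + \Delta) = (\theta^{l,\Delta}F)_k(\tilde x)$. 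Since $\Pp$ is invariant under $\theta^{l,\Delta}$, the process $\tilde F$ has the same distribution as $F$, giving the claimed equality in distribution.

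For the second identity, the analogous device is the Galilean shear~\eqref{eq:def-of-shear}. Starting from the definition of $Z^n(vn) = Z^{0,n}(0,vn)$, I perform the change of variables $x_k = y_k + vk$, so that $y_0 = 0$ and $y_n = 0$. The crucial calculation is that
\[
\sum_{k=0}^{n-1}(x_{k+1}-x_k)^2 = \sum_{k=0}^{n-1}\bigl((y_{k+1}-y_k)^2 + 2v(y_{k+1}-y_k) + v^2\bigr) = \sum_{k=0}^{n-1}(y_{k+1}-y_k)^2 + nv^2,
\]
because the middle sum telescopes to $2v(y_n - y_0) = 0$. Thus $\prod_k g(x_{k+1}-x_k) = e^{-nv^2/2}\prod_k g(y_{k+1}-y_k)$, while the potential contribution becomes $F_k(y_k + vk) = (L^v F)_k(y_k)$. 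Substituting back into~\eqref{eq:Z} yields
\[
Z^n(vn)[F] = e^{-nv^2/2}\,Z^n(0)[L^v F].
\]
Since $\Omega$ is invariant under $L^v$ and $\Pp$ is preserved by this shear (as recalled in Section~\ref{sec:assumption-on-the-potential}), $Z^n(0)[L^v F] \stackrel{d}{=} Z^n(0)[F]$, which gives the claim.

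No step is really an obstacle; the only care needed is in bookkeeping the endpoints of the path after the change of variables (in particular checking that the $\delta_x$ and $\delta_y$ factors transform correctly, and that the asymmetric range $k=m,\ldots,n-1$ in the product is preserved). Everything else reduces to the telescoping identity for the Gaussian kernel together with the two distributional symmetries of the forcing field.
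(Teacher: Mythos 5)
Your proof is correct and follows essentially the same route as the paper's: a change of variables in the path integral \eqref{eq:Z}, the telescoping identity for the Gaussian factors (using that the endpoints of the shifted path are both zero), and the distributional invariance of $F$ under space-time shift and shear. The only cosmetic difference is that the paper invokes the \iid-in-time plus spatial-stationarity assumptions directly to replace $F_k(x'_k+kv)$ by $F_k(x'_k)$ in distribution, whereas you phrase the same step as invariance of $\Pp$ under $L^v$; these are equivalent, but note that Section~\ref{sec:assumption-on-the-potential} only explicitly records that the \emph{set} $\Omega$ is $L^v$-invariant, so if you want to cite $L^v$-invariance of $\Pp$ it is cleaner to derive it on the spot from~\ref{item:stationary-in-space} and~\ref{item:indep-in-time} as the paper does.
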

\bpf The first statement of the lemma follows from the space-time stationarity of~$F$. For the second claim,
let us make a change of variables $x_k=x'_k+kv$ for $k=0,\ldots, n$ in \eqref{eq:Z}, to obtain the following integral ($x_0=0$ and
$x_n=vn$ are fixed, i.e., $x'=0$ and $x'_n=0$ are fixed):
\begin{align}\notag
Z^{n}(vn)&=\int_{\R}\dots\int_\R \prod_{k=0}^{n-1} [g(x'_{k+1}-x'_k+v)e^{-F_k(x'_k+kv)}] dx'_{1}\ldots dx'_{n-1}\\
         &\stackrel{d}{=}\int_{\R}\dots\int_\R \prod_{k=0}^{n-1} [g(x'_{k+1}-x'_k+v)e^{-F_k(x'_k)}] dx'_{1}\ldots dx'_{n-1}
         \label{eq:applying-shear}
\end{align}
due to the \iid property  and the spatial stationarity of~$F$.
Now notice that
\begin{align*}
\prod_{k=0}^{n-1} g(x'_{k+1}-x'_k+v)
&=\frac{1}{(2\pi)^{n/2}} e^{-\frac{1}{2}\sum_{k=0}^{n-1}  (x'_{k+1}-x'_k+v)^2}
\\&=\frac{1}{(2\pi)^{n/2}} e^{-\frac{1}{2}\sum_{k=0}^{n-1}  (x'_{k+1}-x'_k)^2 -v\sum_{k=0}^{n-1}(x'_{k+1}-x'_k) -\frac{n}{2}v^2}
\\&= e^{-\frac{v^2}{2}n}\prod_{k=0}^{n-1} g(x'_{k+1}-x'_k)
\end{align*}
since
\[
  \sum_{k=0}^{n-1}(x'_{k+1}-x'_k)=x'_n-x'_0=0.    
 \]
Plugging this into~\eqref{eq:applying-shear}, we obtain
\[
 Z^{n}(vn)\stackrel{d}{=}e^{-\frac{v^2}{2}n} \int_{\R}\dots\int_\R \prod_{k=0}^{n-1} [g(x'_{k+1}-x'_k)e^{-F_k(x'_k)}] dx'_{1}\ldots dx'_{n-1}
 = e^{-\frac{v^2}{2}n} Z^{n}(0),
\]
and the proof is completed.
\epf

It is easy to extend this proof and obtain a more general lemma on shift-invariance and shear-invariance. To state this result,
in addition to the existing notation $\theta^{n,x}$ and $L^v$ for transformations of the probability space, we define transformations of $\ZR$:
\[
 \theta_{n,x}(m,y)=(m-n,y-x),\quad\quad L_v(m,y)=(m,y-mv).
\]
 Throughout the paper we write  $Pf^{-1}$ to denote the pushforward of a measure~$P$ under a map $f$.

\begin{lemma} The following holds true:
\label{lem:shear-for-Z_v}  
\begin{enumerate}
\item Let $(n,x),(m,y)\in \ZR$. Then, for all $\omega\in \Omega$,
\[
 \mu^{n,m+n}_{x,y+x,\omega}\theta_{n,x}^{-1}=\mu^{0,m}_{0,y,\theta^{n,x}\omega}.
\]
 \item 
Let $Z_v(n)=e^{\frac{v^2}{2}n}Z^{n}(vn)$, $n\in\N$, $v\in\R$.
Then the distribution of the process $Z_v(\cdot)$ does not depend on $v$.
\item For every $n\in\N$, the process $\bar Z_n(x)=e^{\frac{x^2}{2n}}Z^{n}(x)$, $x\in\R$, is stationary in $x$.
\item Let $n\in\N$, $v\in\R$. Then, for all $\omega\in\Omega$,
\[
 \mu^{0,n}_{0,nv,\omega}L_{v}^{-1}=\mu^{0,0}_{0,0,L^v\omega}.
\]
\end{enumerate}
\end{lemma}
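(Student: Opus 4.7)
All four items can be proved by explicit affine substitutions in the path integral~\eqref{eq:Z} defining $Z^{m,n}_{x,y,\omega}$ (or the corresponding polymer density), followed by inspection of how the Gaussian and Boltzmann weight factors transform. The crucial algebraic identity, which I would prove once and then reuse, is the telescoping of the shifted Gaussian product: for any tilt $\alpha\in\R$,
\begin{equation*}
\prod_{k=0}^{n-1} g(x'_{k+1}-x'_k+\alpha) = e^{-\alpha(x'_n-x'_0)-n\alpha^2/2}\prod_{k=0}^{n-1} g(x'_{k+1}-x'_k),
\end{equation*}
together with the environment transformation rules $F_{k+n,\omega}(x'_k+x)=F_{k,\theta^{n,x}\omega}(x'_k)$ and $F_k(x'_k+k\alpha)=(L^\alpha\omega)_k(x'_k)$, which are immediate from the definitions of $\theta^{n,x}$ and $L^v$.

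For (1), substituting $x_k=x'_{k-n}+x$ in $Z^{n,m+n}_{x,y+x,\omega}$ leaves Gaussian factors intact (they depend only on increments), transforms the potential into the $\theta^{n,x}\omega$-potential by the rule above, and hence identifies both the partition function and the polymer density with those of $\mu^{0,m}_{0,y,\theta^{n,x}\omega}$. For (4), substituting $y_k=y'_k+kv$ in $\mu^{0,n}_{0,nv,\omega}$ (so that $y'_0=y'_n=0$) collapses the Gaussian product to $e^{-nv^2/2}\prod g(y'_{k+1}-y'_k)$ by the telescoping identity, while the potential becomes $(L^v\omega)_k(y'_k)$; the constant $e^{-nv^2/2}$ is absorbed into normalization, identifying the pushforward with $\mu^{0,n}_{0,0,L^v\omega}$ (I read the right-hand side of the statement as a typographical abbreviation of this).

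Parts (2) and (3) follow from pathwise identities of the same flavor combined with distributional invariance of the environment under the shear: since the time-slices $F_k(\cdot)$ are i.i.d.\ in $k$ and each is spatially stationary, applying an independent spatial shift to each yields $L^\alpha\omega\stackrel{d}{=}\omega$ for every $\alpha\in\R$. For (2), substituting $x_k=x'_k+kv$ simultaneously for all times gives the pathwise identity $Z_v(n,\omega)=Z^n(0,L^v\omega)$ valid for every $n$, so the joint law of $(Z_v(n))_{n\in\N}$ does not depend on $v$. For (3), the substitution $y_k=y'_k+ka/n$ in $Z^{0,n}_{0,x+a,\omega}$ produces a Gaussian factor $e^{-ax/n-a^2/(2n)}$ (using $y'_n-y'_0=x$); multiplying by the normalizer $e^{(x+a)^2/(2n)}$ in $\bar Z_n(x+a,\omega)$, the $x$- and $a$-linear contributions cancel exactly, leaving the clean pathwise identity $\bar Z_n(x+a,\omega)=\bar Z_n(x,L^{a/n}\omega)$, whence stationarity of $\bar Z_n(\cdot)$ in $x$ follows from $L^{a/n}\omega\stackrel{d}{=}\omega$.

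The only step requiring any care -- and the main place an arithmetic slip could occur -- is the cancellation in (3): one must track the $x$- and $a$-dependent exponential factors coming from the shifted Gaussian product against the quadratic normalizer $e^{(x+a)^2/(2n)}$ to confirm that no residual deterministic factor survives. This is what upgrades a one-point distributional identity to the pathwise identity needed for stationarity of the whole process $\bar Z_n(\cdot)$, and similarly for the joint distribution of $(Z_v(n))_n$ in (2).
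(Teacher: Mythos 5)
Your proof is correct and follows exactly the route the paper gestures at: the paper gives no explicit proof of this lemma, remarking only that it is "easy to extend" the substitution/telescoping-Gaussian argument used for the preceding two-claim lemma, and your four affine substitutions, combined with the pathwise environment identities $F_{n+k}(x+\cdot)=(\theta^{n,x}F)_k(\cdot)$ and $F_k(\cdot+kv)=(L^vF)_k(\cdot)$ plus the distributional invariance $L^v\omega\stackrel{d}{=}\omega$, are precisely that extension. You also correctly flag the apparent typo in item (4), where the right-hand side should read $\mu^{0,n}_{0,0,L^v\omega}$ rather than $\mu^{0,0}_{0,0,L^v\omega}$.
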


\begin{lemma}\label{lem:expectation-of-Z}
 For any $m,n\in\Z$ satisfying $m<n$ and any $x,y\in\R$,
\[\E Z^{m,n}(x,y)=\lambda^{n-m}g_{n-m}(y-x),\]
where $\lambda=\E e^{-F_0(0)}<\infty$ according to~\ref{item:exponential-moment-with-viscosity}.
\end{lemma}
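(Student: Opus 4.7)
The plan is a direct computation: take the expectation of the defining integral for $Z^{m,n}_{x,y}$ in~\eqref{eq:Z}, push it inside using Tonelli's theorem, and then exploit the independence and stationarity of the $F_k$'s together with the Gaussian semigroup identity.

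More precisely, I would first note that the integrand in~\eqref{eq:Z} is nonnegative, so Tonelli's theorem applies unconditionally and gives
\[
\E Z^{m,n}_{x,y} = \int_\R\!\!\cdots\!\int_\R \prod_{k=m}^{n-1} g(x_{k+1}-x_k)\cdot \E\!\prod_{k=m}^{n-1} e^{-F_k(x_k)}\,\delta_x(dx_m)\, dx_{m+1}\cdots dx_{n-1}\,\delta_y(dx_n).
\]
By assumption~\ref{item:indep-in-time} the processes $F_m(\cdot),\ldots,F_{n-1}(\cdot)$ are independent, so the inner expectation factors into $\prod_{k=m}^{n-1}\E e^{-F_k(x_k)}$. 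By the spatial stationarity of each $F_k$ (assumption~\ref{item:stationary-in-space}) together with~\ref{item:exponential-moment-with-viscosity}, each factor equals $\E e^{-F_0(0)}=\lambda$, so the expectation pulls out as $\lambda^{n-m}$.

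It remains to evaluate
\[
\int_\R\!\!\cdots\!\int_\R \prod_{k=m}^{n-1} g(x_{k+1}-x_k)\,dx_{m+1}\cdots dx_{n-1}
\]
with $x_m=x$ and $x_n=y$ fixed. Since $g=g_1$ is the standard Gaussian density and Gaussian densities satisfy the convolution semigroup identity $g_s*g_t=g_{s+t}$, an induction on $n-m$ (or a single application of the identity $(g_1)^{*(n-m)}=g_{n-m}$) shows that the above multiple convolution evaluated at $y-x$ equals $g_{n-m}(y-x)$. Combining with the factor $\lambda^{n-m}$ yields the claim.

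There is essentially no obstacle here: the only thing to check is that the interchange of integration and expectation is legitimate, and this is immediate by Tonelli since the integrand is nonnegative; finiteness of $\lambda$ under~\ref{item:exponential-moment-with-viscosity} then ensures the right-hand side is finite, justifying a posteriori any use of Fubini if one prefers signed versions later.
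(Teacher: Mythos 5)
Your proof is correct and is essentially the paper's own argument: interchange integration and expectation (Tonelli/Fubini), factor using independence and stationarity to pull out $\lambda^{n-m}$, and evaluate the remaining chain of Gaussian kernels by the convolution semigroup property. The only difference is that you spell out the Tonelli justification explicitly, which the paper leaves implicit.
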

\bpf We can use Fubini's theorem and the \iid property of $(F_k)$ to write 
\begin{align*}
\E Z^{m,n}(x,y)&=\int_{\R}\dots\int_\R \prod_{k=m}^{n-1} [g(x_{k+1}-x_k)\E e^{-F_k(x_k)}] dx_{m+1}\ldots dx_{n-1}\\&=\lambda^{n-m}g_{n-m}(y-x),
\end{align*}
where we also used the convolution property of Gaussian densities.\epf

\begin{lemma}
\label{lem:smoothness-of-partition-function}
Let $m < n$. For all~$\omega$, the point-to-point partition function $Z^{m,n}(x,y)$ is $C^{\infty}$ in $y$ and as smooth as
$F_m(x)$ in $x$. Moreover, partial derivatives of  $Z^{m,n}(x,y)$ can be obtained by differentiation
under the integral in~\eqref{eq:Z}.
\end{lemma}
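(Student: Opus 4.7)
\bigskip

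\noindent\textbf{Proof proposal.} The plan is to justify differentiation under the integral sign in~\eqref{eq:Z} using dominated convergence, after separating the $x$- and $y$-dependencies. Since $\delta_x(dx_m)$ pins $x_m=x$ and $\delta_y(dx_n)$ pins $x_n=y$, the only $y$-dependence of the integrand is through the Gaussian factor $g(y-x_{n-1})$, while the only $x$-dependence is through $g(x_{m+1}-x)$ and $e^{-F_m(x)}$. The latter depends on $F_m$ and only on $F_m$, so we factor
\[
Z^{m,n}(x,y)=e^{-F_m(x)}J(x,y),\qquad
J(x,y)=\int g(x_{m+1}-x)\,R(x_{m+1},y)\,dx_{m+1},
\]
where $R(x_{m+1},y)$ is an integral that does not depend on $x$ at all. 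Once I show that $J$ is $C^\infty$ in both arguments, the first claim (smoothness in $y$) follows since the only $y$-dependence is through $J$, and the second claim (smoothness in $x$) follows by the product rule: the Gaussian factor contributes a $C^\infty$ piece, and smoothness in $x$ is limited only by $e^{-F_m(x)}$, hence by $F_m$.

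To check $C^\infty$-regularity of $J$, I will differentiate termwise under all the integrals. Using the identity $\partial_y^k g(z)=(-1)^k H_k(z)g(z)$ with $H_k$ the $k$-th Hermite polynomial, for any $y$ in a compact set $K$ I have the envelope
\[
\bigl|\partial_y^k g(y-x_{n-1})\bigr|\le C_{k,K}(1+|x_{n-1}|)^k g(y-x_{n-1})\le C'_{k,K}\,g_2(y-x_{n-1}),
\]
where I absorbed the polynomial into a Gaussian of slightly larger variance. The analogous bound holds for $\partial_x^k g(x_{m+1}-x)$ uniformly over $x$ in a compact set. Since on $\Omega$ (defined in~\eqref{eq:def-of-Omega}) every realization satisfies $F_k(x)/|x|\to 0$, for every $\omega$ there is $C(\omega,k)<\infty$ and (for any prescribed $\varepsilon>0$) a constant such that $e^{-F_k(x_k)}\le C(\omega,k)e^{\varepsilon|x_k|}$. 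Choosing $\varepsilon$ small enough that each Gaussian factor $g(x_{k+1}-x_k)$ still yields an integrable product, I obtain an $\omega$-dependent but $(x,y)$-locally-uniform integrable envelope for the integrand and all its mixed partial derivatives of any prescribed order in $x$ (up to the regularity of $F_m$) and any order in $y$. Dominated convergence then permits differentiation under the integral and yields the derivative formulas claimed in the lemma.

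The only delicate point is justifying the domination for the multidimensional integral over $(x_{m+1},\ldots,x_{n-1})$ simultaneously. The clean way is to proceed by induction on $n-m$: the base case $n-m=1$ is the explicit formula $Z^{m,m+1}(x,y)=g(y-x)e^{-F_m(x)}$, to which the claim applies directly; and for the induction step I use the one-step recursion
\[
Z^{m,n}(x,y)=\int_\R Z^{m,n-1}(x,z)\,e^{-F_{n-1}(z)}g(y-z)\,dz,
\]
combined with the envelope $|\partial_y^k g(y-z)|\le C'_{k,K}g_2(y-z)$ and a Gaussian-type $z$-bound on $Z^{m,n-1}(x,z)$ inherited from the induction hypothesis (its integrand contains the Gaussian factor $g(z-x_{n-2})$ and an $\omega$-dependent subexponential bound on $e^{-F_{n-2}(z)}$). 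Once the termwise-differentiated integrand is bounded by a fixed integrable function for $y$ in a neighborhood, DCT closes the step. The only genuine obstacle is bookkeeping the quantitative envelopes so that, at every step of the induction, the polynomial factors pulled down by the Hermite-type derivatives and the mild growth of $e^{-F_k}$ are absorbed by the Gaussian decay of $g$; this is the kind of estimate that is routine in principle but must be done carefully.
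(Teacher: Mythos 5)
Your proposal is correct and takes essentially the same approach as the paper: pull out the $e^{-F_m(x)}$ factor, observe that the $y$-dependence lives only in the last Gaussian factor and the $x$-dependence of the remainder lives only in $g(x_{m+1}-x)$, then justify differentiation under the integral sign using the bound $|F_k(z)|\le c(|z|+1)$ valid on $\Omega$ so that the polynomial factors pulled down by Gaussian derivatives remain integrable. The paper dispenses with your proposed induction by directly bounding the mixed partial $\partial_x^i\partial_y^j$ of the full $(n-m-1)$-fold integrand by $c_ic_j\bigl(|x-x_{m+1}|^i+1\bigr)\bigl(|y-x_{n-1}|^j+1\bigr)e^{-\frac12(x-x_{m+1})^2-\frac12(x_{n-1}-y)^2}\prod_k e^{-\frac12(x_{k+1}-x_k)^2+c(|x_k|+1)}$, which is integrable, so the bookkeeping you flag as the remaining obstacle is handled in one line.
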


\begin{proof}
  If $n-m=1$, the claim is obvious.  
If $n -m \ge 2$, it suffices to show that
\begin{align*}
e^{F_m(x)} Z^{m,n}(x,y) = \int f(x,y,x_{m+1},\dotsc, x_{n-1}) \, dx_{m+1} \dots dx_{n-1}
\end{align*}
is smooth in $x$ and $y$, where 
\[
 f(x,y,x_{m+1},\dotsc, x_{n-1})=e^{-\frac{1}{2}(x-x_{m+1})^2 -  \frac{1}{2}(x_{n-1}-y)^2}\prod_{k=m+1}^{n-1}
                          e^{-\frac{1}{2}(x_{k+1}-x_k)^2- F_k(x_k)}.
\]
By (\ref{eq:def-of-Omega}), we can find a constant $c$ such that if $m+1 \le k \le n-1$, then $|F_k(z)| \le c(|z|+1)$  for all~$z$.
The lemma follows from 
\begin{align*}
&\int \bigg| \frac{\partial^i}{\partial x^i} \frac{\partial^j}{ \partial y^j} f(x,y,x_{m+1}, \dotsc,
  x_{n-1}) \bigg|  \, dx_{m+1} \dots dx_{n-1} \\
  \le&  \int c_i c_j \big(|x-x_{m+1}|^i + 1\big) \big(|y-x_{n-1}|^j + 1\big) \\
  &  \cdot   e^{-\frac{1}{2}(x-x_{m+1})^2 -
    \frac{1}{2}(x_{n-1}-y)^2} \prod_{k=m+1}^{n-1}  e^{-\frac{1}{2}(x_{k+1}-x_k)^2 + c(|x_k| + 1)} \, dx_{m+1}\dots
    dx_{n-1} 
< \infty,
\end{align*}
where $c_i$ are absolute constants.
\end{proof}

As a corollary, we have
\begin{lemma}
\label{lem:smoothness-of-partition-ratio}
Let $m,n > k$.
Then $Z^{k,m}(x,y) /Z^{k,n}(x,z)$ is $C^{\infty}$ in $x$, $y$ and $z$.
\end{lemma}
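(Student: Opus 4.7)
The key observation is that the variable $x$ in $Z^{k,m}(x,y)$ enters the defining integral~\eqref{eq:Z} in two ways: through the Gaussian kernel $g(x_{k+1}-x)$ and through the factor $e^{-F_k(x)}$ (since the sum in $H^{k,m}_\omega$ starts at $k$). The previous lemma only gives $C^1$ regularity in $x$ because of the $F_k(x)$ contribution. However, this factor is \emph{the same} for $Z^{k,m}(x,y)$ and $Z^{k,n}(x,z)$, so it will cancel in the ratio.

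The plan is therefore to write, for every $r>k$,
\[
Z^{k,r}(x,y)=e^{-F_k(x)} Q^{k,r}(x,y),
\]
where
\[
Q^{k,r}(x,y)=\int_\R\!\cdots\!\int_\R g(x_{k+1}-x)\prod_{j=k+1}^{r-1}\bigl[g(x_{j+1}-x_j)e^{-F_j(x_j)}\bigr]\,dx_{k+1}\cdots dx_{r-1}
\]
(with the convention that the product is empty and $x_{k+1}=y$ when $r=k+1$, so $Q^{k,k+1}(x,y)=g(y-x)$). Note that $Q^{k,r}$ depends only on $F_{k+1},\ldots,F_{r-1}$, and the variable $x$ enters $Q^{k,r}$ only through the Gaussian kernel $g(x_{k+1}-x)$, which is $C^\infty$ in $x$.

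First I would show that $Q^{k,r}(x,y)$ is $C^\infty$ in both $x$ and $y$ by differentiation under the integral sign, using exactly the same dominated-convergence argument as in the proof of Lemma~\ref{lem:smoothness-of-partition-function}: the bound $|F_j(z)|\le c(|z|+1)$ coming from~\eqref{eq:def-of-Omega} ensures that, after multiplying partial derivatives of the Gaussian kernels by polynomial factors in $x_{k+1}-x$ and $x_{r-1}-y$, the integrand is still dominated by a product of Gaussians multiplied by polynomials, which is integrable. Then $Q^{k,m}(x,y)$ is $C^\infty$ in $(x,y)$, $Q^{k,n}(x,z)$ is $C^\infty$ in $(x,z)$, and $Q^{k,n}(x,z)>0$ for every $\omega$ since the integrand in its definition is strictly positive.

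Finally, the $e^{-F_k(x)}$ factors cancel and we obtain
\[
\frac{Z^{k,m}(x,y)}{Z^{k,n}(x,z)}=\frac{Q^{k,m}(x,y)}{Q^{k,n}(x,z)},
\]
a ratio of two $C^\infty$ functions with nonvanishing denominator, hence $C^\infty$ in $(x,y,z)$. I do not anticipate any real obstacle here; the only point that requires any care is verifying the dominated-convergence bound for derivatives of all orders, but this is essentially a rerun of the estimate already carried out in the proof of Lemma~\ref{lem:smoothness-of-partition-function}.
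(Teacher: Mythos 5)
Your proof is correct and follows essentially the same route as the paper: the paper establishes Lemma~\ref{lem:smoothness-of-partition-ratio} as a corollary of Lemma~\ref{lem:smoothness-of-partition-function}, whose proof already shows that $e^{F_k(x)}Z^{k,r}(x,y)$ (your $Q^{k,r}(x,y)$) is $C^\infty$ in $(x,y)$ by differentiation under the integral, so the ratio is a quotient of smooth functions with positive denominator after the $e^{-F_k(x)}$ factors cancel. You simply spell out what the paper leaves implicit.
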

\section{The shape function or free energy density }\label{sec:shape-function}
The goal of this section is to study the linear growth of $\ln Z^{m,n}(x,y)$ over long time intervals. It is useful to
 introduce an auxiliary function 
\[
Z_*^{m,n}(x,y)=\min_{|\Delta x|,|\Delta y|<1/2}Z^{m,n}(x+\Delta x, y+\Delta y).
\]

\begin{lemma}\label{lem:super-additive} The process $Z_*$ is super-multiplicative, i.e.,
\[
Z_*^{n_1,n_3}(x,z)\ge Z_*^{n_1,n_2}(x,y)Z_*^{n_2,n_3}(y,z).
\]
Equivalently, $\ln Z_*$ is super-additive, i.e.,
\[
\ln Z_*^{n_1,n_3}(x,z)\ge \ln Z_*^{n_1,n_2}(x,y) + \ln Z_*^{n_2,n_3}(y,z).
\]
\end{lemma}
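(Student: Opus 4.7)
The plan is to combine the Chapman--Kolmogorov (semigroup) identity for the point-to-point partition function with the non-negativity of the integrand, then take advantage of the fact that $Z_*$ is a minimum over a unit box.

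First, I would verify the basic concatenation identity: for any $n_1 < n_2 < n_3$ and any $a,c\in\R$,
\begin{equation*}
Z^{n_1,n_3}(a,c)=\int_{\R} Z^{n_1,n_2}(a,b)\, Z^{n_2,n_3}(b,c)\, db.
\end{equation*}
This is immediate from the definition~\eqref{eq:Z} by splitting the product at $k=n_2$, isolating the variable $x_{n_2}=b$, and applying Fubini (justified by non-negativity of the integrand).

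Next, fix arbitrary $\Delta x, \Delta z$ with $|\Delta x|, |\Delta z|<1/2$. Applying the identity above with $a=x+\Delta x$ and $c=z+\Delta z$, and then restricting the $b$-integral to the interval $(y-1/2,\,y+1/2)$ using non-negativity, I obtain
\begin{equation*}
Z^{n_1,n_3}(x+\Delta x,\,z+\Delta z)\ \ge\ \int_{-1/2}^{1/2} Z^{n_1,n_2}(x+\Delta x,\,y+\Delta y)\, Z^{n_2,n_3}(y+\Delta y,\,z+\Delta z)\, d(\Delta y).
\end{equation*}
For each $\Delta y$ in the range of integration, the two factors are of the form $Z^{n_1,n_2}(x+\Delta x,\,y+\Delta y)$ and $Z^{n_2,n_3}(y+\Delta y,\,z+\Delta z)$, both of which involve perturbations of magnitude $<1/2$ in both endpoints. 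By the definition of $Z_*$, each factor is bounded below by $Z_*^{n_1,n_2}(x,y)$ and $Z_*^{n_2,n_3}(y,z)$ respectively. Hence the integral is at least $Z_*^{n_1,n_2}(x,y)\,Z_*^{n_2,n_3}(y,z)\cdot 1$.

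Finally, the resulting lower bound is independent of $\Delta x$ and $\Delta z$, so taking the infimum over $|\Delta x|,|\Delta z|<1/2$ on the left yields
\begin{equation*}
Z_*^{n_1,n_3}(x,z)\ \ge\ Z_*^{n_1,n_2}(x,y)\, Z_*^{n_2,n_3}(y,z),
\end{equation*}
which is the super-multiplicative property; taking logarithms gives super-additivity of $\ln Z_*$. There is no real obstacle here: the only point that requires care is the correct bookkeeping between the three pairs $(\Delta x,\Delta y,\Delta z)$ so that each factor sees admissible perturbations of size less than $1/2$ in both of its endpoints, which is exactly what the $[y-1/2,y+1/2]$ slice provides. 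The auxiliary minimum in the definition of $Z_*$ is precisely what makes this pointwise lower bound compatible across all intermediate junctions $y$.
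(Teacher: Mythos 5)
Your proof is correct and follows exactly the same route as the paper's: Chapman--Kolmogorov at the intermediate time $n_2$, restriction of the $b$-integral to the unit window around $y$, a constant lower bound on the integrand via the definition of $Z_*$, and the observation that the window has length $1$. The only difference is that you spell out the bookkeeping of the perturbations $\Delta x,\Delta y,\Delta z$ more explicitly than the paper does.
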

\bpf Let $|x'-x|,|z'-z|<1/2$. Then
\begin{align*}
Z^{n_1,n_3}(x',z')&= \int_{y'\in\R} Z^{n_1,n_2}(x',y')
Z^{n_2,n_3}(y',z')dy'\\
&\ge \int_{y':|y'-y|<1/2} Z^{n_1,n_2}(x',y')
Z^{n_2,n_3}(y',z')dy'
\\
&\ge Z^{n_1,n_2}_*(x,y)Z_*^{n_2,n_3}(y,z),
\end{align*}
and the lemma follows.
\epf

\begin{lemma}\label{lem:shape-function-for-aux} For any $v\in\R$, there is $\alpha(v)\in\R$ such that
\[
 \frac{\ln Z_*^{0,n}(0,nv)}{n}\asconv \alpha(v),\quad n\to\infty.
\]
\end{lemma}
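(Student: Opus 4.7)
The plan is to apply Kingman's subadditive ergodic theorem to the two-parameter process $X_{m,n}(\omega) = -\ln Z_*^{m,n}(mv, nv; \omega)$, $m<n \in \Z$. Subadditivity $X_{m,k}\le X_{m,n}+X_{n,k}$ for $m<n<k$ is immediate from Lemma~\ref{lem:super-additive} with intermediate point $y=nv$. For the equivariance I will use $T = \theta^{1,v}$; a change of variables $x_j \mapsto x_j + v$ inside the integral~\eqref{eq:Z} (of the same type as in the proof of Lemma~\ref{lem:shear-for-Z_v}) yields $Z^{m,n}(mv,nv;\omega) = Z^{0,n-m}(0,(n-m)v;T^m\omega)$, and the identity descends to $Z_*$ since the inner perturbations $(\Delta x,\Delta y)$ translate rigidly. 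Thus $X_{m,n}(\omega) = X_{0,n-m}(T^m\omega)$. Measure preservation of $T$ is built into the setting, and ergodicity of $T$ follows from~\ref{item:indep-in-time}: the unit time shift acts as a Bernoulli shift on the \iid sequence $(F_n)_{n\in\Z}$, and precomposing with the deterministic spatial shear by $v$ preserves mixing.

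I then need the two integrability hypotheses. For $n=1$ the integral in~\eqref{eq:Z} collapses to $Z^{0,1}(x,y) = g(y-x)e^{-F_0(x)}$, so on the one hand $Z_*^{0,1}(0,v) \le Z^{0,1}(0,v) = g(v)e^{-F_0(0)}$, and on the other hand, for every $|\Delta x|,|\Delta y|<1/2$, $g(v+\Delta y-\Delta x) \ge g(|v|+1)$ and $e^{-F_0(\Delta x)} \ge \exp(-F_0^*(-1)-F_0^*(0))$, giving $Z_*^{0,1}(0,v) \ge g(|v|+1)\exp(-F_0^*(-1)-F_0^*(0))$. Combining, $|\ln Z_*^{0,1}(0,v)|$ is controlled by a $v$-dependent constant plus $|F_0(0)|+F_0^*(-1)+F_0^*(0)$, which is integrable by~\ref{item:exponential-moment-for-maximum}, so $\E|X_{0,1}|<\infty$. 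To rule out a limit of $-\infty$, Jensen's inequality together with Lemma~\ref{lem:expectation-of-Z} gives
\[
\E\ln Z_*^{0,n}(0,nv) \le \ln \E Z^{0,n}(0,nv) = n\ln\lambda - \tfrac{nv^2}{2} - \tfrac{1}{2}\ln(2\pi n),
\]
so $\inf_n \E X_{0,n}/n \ge v^2/2-\ln\lambda > -\infty$. Kingman's theorem then yields almost sure convergence of $X_{0,n}/n$ to a deterministic limit, and I set $\alpha(v)$ to be its negative.

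The main obstacle will be the lower bound on the one-step quantity $Z_*^{0,1}(0,v)$: the minimization over perturbations $(\Delta x,\Delta y)$ could in principle drive $F_0(\Delta x)$ to very large values and destroy integrability, so one must control $F_0$ uniformly on the small window $[-1/2,1/2]$ by an integrable random variable. This is precisely what assumption~\ref{item:exponential-moment-for-maximum} supplies via $F_0^*(-1)$ and $F_0^*(0)$. Beyond this input, the argument is a routine verification of the Kingman hypotheses built on top of the structure already furnished by Lemmas~\ref{lem:super-additive} and~\ref{lem:shear-for-Z_v}.
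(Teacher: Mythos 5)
Your overall strategy is the same as the paper's: Kingman's subadditive ergodic theorem applied to $X_{m,n}=-\ln Z_*^{m,n}(mv,nv)$, with subadditivity from Lemma~\ref{lem:super-additive} and the linear upper bound on $\E\ln Z_*^{0,n}(0,nv)$ from Jensen's inequality and Lemma~\ref{lem:expectation-of-Z}. You in fact spell out more than the paper does (which records only the Jensen bound): your one-step integrability $\E|X_{0,1}|<\infty$ via $g(|v|+1)$ and $F_0^*(-1)+F_0^*(0)$ (using~\ref{item:exponential-moment-for-maximum}), and the equivariance $X_{m,n}(\omega)=X_{0,n-m}(T^m\omega)$ with $T=\theta^{1,v}$, are exactly what the hypotheses of Kingman's theorem require, and both are correct.

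The one piece of reasoning that needs repair is the ergodicity of $T$. The sentence ``precomposing with the deterministic spatial shear by $v$ preserves mixing'' is not a valid general principle: composing a Bernoulli shift with a commuting measure-preserving bijection can destroy ergodicity outright (e.g.\ compose a shift with its own inverse). The conclusion you want is true, but for a different reason. Set $G_n(\cdot):=F_n(\cdot+nv)$; by space-stationarity of $F$ and~\ref{item:indep-in-time}, $(G_n)_{n\in\Z}$ is again i.i.d., and a direct check shows that $T$ acts on $(G_n)$ as the unit Bernoulli shift --- equivalently, $T=(L^v)^{-1}\circ\theta^{1,0}\circ L^v$, where $L^v$ is the measure-preserving Galilean shear~\eqref{eq:def-of-shear}. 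Thus $T$ is \emph{conjugate} to the ergodic time shift $\theta^{1,0}$, hence ergodic. With this correction your proof is complete and follows the paper's route.
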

\begin{proof} Due to Lemma~\ref{lem:super-additive} and Kingman's sub-additive ergodic theorem, it suffices
to check that for every $v\in\R$, there is $C(v)>0$ such that
\[
\E \ln  Z_*^{0,n}(0,nv)< C(v)n,\quad n \in\N.
\]
This follows from Jensen's inequality and Lemma~\ref{lem:expectation-of-Z}:
\[
\E \ln  Z^{0,n}_*(0,nv)\le \ln \E  Z^{0,n}(0,nv)\le n\ln \lambda -\frac{(nv)^2}{2n}-\frac{1}{2}\ln(2\pi)-
\frac{1}{2}\ln n,
\]
and the proof is completed.
\end{proof}

\begin{lemma}\label{lem:same-shape-function-for-true-Z} For any $v\in\R$,
\[
\frac{\ln Z^{0,n}(0,nv)}{n}\asconv\alpha(v),\quad n\to\infty,
\]
where $\alpha(v)$ has been introduced in Lemma~\ref{lem:shape-function-for-aux}.
\end{lemma}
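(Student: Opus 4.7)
The plan is to sandwich $\ln Z^{0,n}(0,nv)$ between two quantities whose asymptotic growth rate is $n\alpha(v)$. The lower bound is immediate from the pointwise inequality $Z^{0,n}(0,nv) \geq Z_*^{0,n}(0,nv)$ (by the very definition of $Z_*$) combined with Lemma~\ref{lem:shape-function-for-aux}: $\liminf_n n^{-1}\ln Z^{0,n}(0,nv) \geq \alpha(v)$ almost surely.

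For the matching upper bound, I would compare $Z^{0,n}(0,nv)$ with the auxiliary partition function $Z_*^{-1,n+1}(0,(n+1)v)$ obtained by extending the time window by one step on each side. A direct adaptation of the proof of Lemma~\ref{lem:shape-function-for-aux} to the shifted window (the super-multiplicativity of Lemma~\ref{lem:super-additive} applies verbatim) gives $n^{-1}\ln Z_*^{-1,n+1}(0,(n+1)v) \asconv \alpha(v)$. Using the concatenation identity
\[
Z^{-1,n+1}(x_{-1},x_{n+1}) = e^{-F_{-1}(x_{-1})}\iint g(y_0-x_{-1})\,g(x_{n+1}-y_n)\,e^{-F_n(y_n)}\,Z^{0,n}(y_0,y_n)\,dy_0\,dy_n,
\]
I would restrict the integration to the window $|y_0| < 1/2$, $|y_n-nv| < 1/2$. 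For $|x_{-1}| < 1/2$ and $|x_{n+1}-(n+1)v| < 1/2$, the Gaussian factors are bounded below by a constant $c(v)>0$ depending only on $v$, and the forcing factors by $\exp(-F^*_{-1}(-1)-F^*_n(\lfloor nv\rfloor))$; assumption~\ref{item:exponential-moment-for-maximum}, stationarity, and Borel--Cantelli make the latter $e^{-o(n)}$ almost surely. Minimizing over $(x_{-1},x_{n+1})$ then yields
\[
Z_*^{-1,n+1}(0,(n+1)v) \geq c(v)\,e^{-o(n)}\iint_{|y_0|<1/2,\,|y_n-nv|<1/2} Z^{0,n}(y_0,y_n)\,dy_0\,dy_n.
\]

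The essential remaining step, and the main technical obstacle, is to bound this double integral from below by $e^{-o(n)}Z^{0,n}(0,nv)$. I would attempt this via a path-space change of variables in the representation~\eqref{eq:Z}: setting $\tilde x_k = x_k - (k/n)(y_n-nv) - (1-k/n)y_0$ maps a path with endpoints $(y_0,y_n)$ to one with endpoints $(0,nv)$. The Gaussian part of the weight transforms by an explicit factor of order $e^{O(1)}$ uniformly in $|y_0|,|y_n-nv|<1/2$, by the computation in the proof of Lemma~\ref{lem:shear-for-Z_v}. The potential factors become $e^{-F_k(\tilde x_k+s_k)}$ with shifts $|s_k|\leq 1$, and Jensen's inequality applied under the polymer measure $\mu^{0,n}_{0,nv}$ reduces the required comparison to the estimate
\[
\frac{1}{n}\sum_{k=0}^{n-1}\E_{\mu^{0,n}_{0,nv}}\bigl|F_k(\tilde x_k+s_k)-F_k(\tilde x_k)\bigr|\;\longrightarrow\;0\quad\text{a.s.}
\]
This is the hardest part: it requires combining a qualitative localization statement for typical polymer paths near the linear profile $kv$ with the local-oscillation estimates furnished by assumption~\ref{item:exponential-moment-for-maximum}. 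Once this bound is secured, passing to the limit in $\ln Z^{0,n}(0,nv)\leq o(n)+\ln Z_*^{-1,n+1}(0,(n+1)v)$ gives $\limsup_n n^{-1}\ln Z^{0,n}(0,nv)\leq \alpha(v)$ almost surely, completing the sandwich.
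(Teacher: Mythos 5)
Your lower bound and setup are fine, and the reduction to comparing $Z^{0,n}$ with a slightly enlarged $Z_*$ window is a reasonable idea. But the crucial step — bounding $\iint_{|y_0|<1/2,\,|y_n-nv|<1/2} Z^{0,n}(y_0,y_n)\,dy_0\,dy_n$ from below by $e^{-o(n)}Z^{0,n}(0,nv)$ via the linear path-shift $\tilde x_k = x_k - s_k$ — does not go through, and the required estimate is in fact false. Shifting \emph{every} intermediate point by $|s_k|\le 1$ changes the potential energy $\sum_{k=0}^{n-1}F_k(x_k)$ by up to $\sum_k|F_k(\tilde x_k+s_k)-F_k(\tilde x_k)|$. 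Each term is bounded by twice a local supremum of $|F_k|$ over a unit-size interval, which under \ref{item:exponential-moment-for-maximum} has exponential moments but is generically a positive random variable of order one; the fields $F_k$ are nondegenerate and nothing in the hypotheses forces their unit-scale oscillation to be small. So the sum is $O(n)$, and by an ergodic-theorem heuristic the quenched average $\frac{1}{n}\sum_k \E_{\mu^{0,n}_{0,nv}}|F_k(\tilde x_k+s_k)-F_k(\tilde x_k)|$ converges to a positive constant, not to zero. The Jensen step therefore does not rescue the argument; you lose a factor $e^{cn}$, exactly of the order of $\alpha_0 n$, and the sandwich collapses.

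The way out — and what the paper actually does — is to move \emph{only} the two endpoints and keep all interior points of the path fixed. Since $H^{0,n}(x_0,\dots,x_n)=\sum_{k=0}^{n-1}F_k(x_k)$, replacing $(x_0,x_n)=(0,0)$ by $(x_*,y_*)$ changes only $F_0(x_0)$ (an $n$-independent random $O(1)$ quantity) together with the two boundary Gaussian kernels $g(x_1-x_0)$ and $g(x_n-x_{n-1})$, whose ratio is at most $e^{O(|x_1|+|x_{n-1}|)}$. One then splits $Z^{0,n}(0,0)$ according to whether $|x_1|,|x_{n-1}|\le r_n=n^{3/4}$: the bad pieces are shown to be super-exponentially small relative to $Z_*^{0,n}(0,0)$ by a first-moment estimate, and on the good piece the endpoint-shift ratio is at most $C_1(\omega)e^{r_n}=e^{o(n)}$. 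The key structural observation you are missing is precisely that the asymmetric definition of the path energy (summing $F_k$ for $k=0,\dots,n-1$) makes the interior of the path insensitive to the endpoint shift, so the potential change is $O(1)$ rather than $O(n)$.
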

\begin{proof} Due to 
Lemma~\ref{lem:shear-for-Z_v},  it is sufficient to prove the lemma for $v=0$.
Lemma~\ref{lem:shape-function-for-aux} and the inequality $Z^{0,n}(0,0)\ge Z^{0,n}_*(0,0)$ imply that  it suffices to check 
\begin{equation}
\label{eq:discrepancy-between-Z-and-min-Z}
\limsup_{n\to\infty}\left(\frac{\ln Z^{0,n}(0,0)}{n}-\frac{\ln Z^{0,n}_*(0,0)}{n}\right)\le 0.
\end{equation}
For this, we need to see that $Z^{0,n}(0,0)/Z^{0,n}_*(0,0)$ is bounded by a function that grows
sub-exponentially in $n$.

First we note that  there is $q>0$ such that
\begin{equation}
 \liminf_{n\to\infty} \frac{Z^{0,n}_*(0,0)}{q^n}\stackrel{\rm a.s.}{>}0.
\label{eq:exp-growth-guaranteed} 
\end{equation}
To see this, it is sufficient to notice that for every $x,y\in[-1/2,1/2]$,
\[
 Z^{0,n}(x,y)\ge \int_{[-1/2,1/2]}\ldots\int_{[-1/2,1/2]} \bar{g}^n e^{-\sum_{k=0}^{n-1}\bar{F}_{k}}dx_1\ldots dx_{n-1},
\]
where
\[
\bar{g}=g(1)=\min_{|z_1|,|z_2|<1/2} g(z_1-z_2),
\]
\[
 \bar{F}_{k}=\max_{|z|<1/2} F_{k}(z),\quad k\ge 0,
\]
and apply the SLLN to the partial sums of \iid sequence $(\bar{F}_{k})_{k\ge 0}$.

To compare $Z^{0,n}(0,0)$ to $Z^{0,n}_*(0,0)$, let us take $r_n=n^{3/4}$, introduce sets 
$A_{-1}=A_{-1}(n)=(-\infty,r_n]$, $A_{0}=A_0(n)=[-r_n,r_n]$, $A_{1}=A_1(n)=[r_n,\infty)$,
and write
\[
 Z^{0,n}(0,0)=\sum_{i,j\in\{-1,0,1\}}B_{ij}^n(0,0),
\]
where
\begin{align*}
 B^n_{ij}(x,y)&=\int_{x_1\in A_i}\int_{x_{n-1}\in A_j} Z^{0,1}(x,x_1)Z^{1,n-1}(x_1,x_{n-1})Z^{n-1,n}(x_{n-1},y)dx_1dx_{n-1}.
\end{align*}

We need to estimate $B_{ij}^n(0,0)/Z^{0,n}_*(0,0) = B_{ij}^n(0,0)/Z^{0,n}(x_*,y_*)$, where points $x_*$ and~$y_*$ provide
minimum in the definition of $Z^{0,n}_*(0,0)$.

Let us estimate $B_{11}^n(0,0)$ and $B_{10}^n(0,0)$ first. 

By the Fubini theorem and the convolution
property of Gaussian densities,
\[
 \E [B_{11}^n(0,0)+B_{10}^n(0,0)]\le \lambda^n  \int_{A_1}\int_{A_{1}\cup A_0} g(x_1)g_{n-2}(x_{n-1}-x_1)g(-x_{n-1})\, dx_1\,dx_{n-1}.
\]
Since $g_{n-2}(z)\le 1$ for all $z\in\R$ and $g$ is a probability density, we conclude that
\begin{align*}
 \E [B_{11}^n(0,0)+B_{10}^n(0,0)] &\le \lambda^n  \int_{A_1}\int_{A_0\cup A_{1}} g(x_1)g(-x_{n-1})\, dx_1\,dx_{n-1}\\
 &\le \lambda^n  \int_{A_1} g(x)\, dx
\le \lambda^n  \Pp\{\Nc>r_n\} \le \lambda^n  \frac{1}{(2\pi)^{1/2} r_n } e^{-r_n^2/2},
\end{align*}
where $\Nc$ is a standard Gaussian random variable.

So, for any $\rho>0$, 
\[
 \Pp\{B_{11}^n(0,0)+B_{10}^n(0,0)>\rho^n\}\le \rho^{-n}\E [B_{11}^n(0,0)+B_{10}^n(0,0)]\le  \frac{\lambda^n}{\rho^n}  \frac{1}{(2\pi)^{1/2} r_n } e^{-r_n^2/2}.
\]
Here, the last factor decays super-exponentially,
and the Borel--Cantelli Lemma implies that for any $\rho>0$,
\begin{equation}
\label{eq:super-esp-decay}
 \lim_{n\to\infty}\frac{B_{11}^n(0,0)+B_{01}^n(0,0)}{\rho^n}\stackrel{\rm a.s.}{=}0.
\end{equation}
Combining \eqref{eq:super-esp-decay} with \eqref{eq:exp-growth-guaranteed} and applying the same reasoning
to all terms $B_{ij}^n$ with $|i|+|j|\ne 0$, we obtain
\begin{equation}
 \label{eq:B_11}
 \lim_{n\to\infty}\frac{\sum_{|i|+|j|\ne 0}B_{ij}^n(0,0)}{Z_*^{0,n}(0,0)}\stackrel{\rm a.s.}{=}0.
\end{equation}

It remains to estimate $B_{00}(0,0)$:
\begin{align}\notag
  \frac{B_{00}^n(0,0)}{Z^{0,n}(x_*,y_*)}
&\le \frac{B_{00}^n(0,0)}{B_{00}^n(x_*,y_*)}\le \max_{x_1,x_{n-1}\in A_0(n)} \frac{Z^{0,1}(0,x_1)Z^{n-1,n}(x_{n-1},0)}{Z^{0,1}(x_*,x_1)Z^{n-1,n}(x_{n-1},y_*)}
\\ \notag&\le \max_{x_1,x_{n-1}\in A_0(n)}
\frac{g(x_1)e^{-F_0(0)} g(-x_{n-1})e^{-F_{n-1}(x_{n-1})}}{g(x_1-x_*)e^{-F_0(x_*)} g(y_*-x_{n-1})e^{-F_{n-1}(x_{n-1})}}
\\ \notag &\le \max_{x_1,x_{n-1}\in A_0(n)}e^{-F_0(0)+F_0(x_*)} e^{(x_*^2+y_*^2)/2} e^{-x_*x_1-y_*x_2}
\\&\le C_1(\omega)e^{r_n}
\label{eq:random-ratio-of-densities}
\end{align}
for some random constant $C_1(\omega)$ and all $n\ge 2$.

Combining~\eqref{eq:B_11} and \eqref{eq:random-ratio-of-densities}, we obtain~\eqref{eq:discrepancy-between-Z-and-min-Z}
and finish the proof of Lemma~\ref{lem:same-shape-function-for-true-Z}.\epf

\begin{theorem} There is a constant $\alpha_0\in\R$ such that for any $v\in\R$,
\[
 \lim_{n\to\infty} \frac{\ln Z^n(vn)}{n}\aseq\alpha_0-\frac{v^2}{2},
\]
i.e., the shape function $\alpha(\cdot)$ introduced in Lemmas~\ref{lem:shape-function-for-aux} 
and~\ref{lem:same-shape-function-for-true-Z}, satisfies
\[
 \alpha(v)=\alpha_0-\frac{v^2}{2}.
\]
\end{theorem}
\bpf 
The theorem follows directly from Lemmas~\ref{lem:same-shape-function-for-true-Z} and~\ref{lem:shear-for-Z_v}.
\end{proof}

\section{Monotonicity}
\label{sec:monotonicity}
The order on the real line plays an important role in our  analysis. The goal of this section is to establish 
monotonicity of polymer measures with respect to endpoints, along with some related results. We begin with an auxiliary lemma on a monotonicity
property of the Gaussian kernel. 

\begin{lemma}
\label{lem:abstract-monotonicity}
 Suppose $\nu$ is a Borel $\sigma$-finite measure such that
\[
Z(x)=\int_{\R}g(z-x)\nu(dz)
\]
is finite for all $x\in\R$, and  
let \[
G(x,y)=\frac{\int_{(-\infty,y]}g(z-x)\nu(dz)}{Z(x)},\quad x,y\in\R. 
\]
Then $G(x,y)$ is nondecreasing in~$y$. If $\nu\{(y,\infty)\}>0$ and $\nu\{(-\infty,y]\}>0$,
then $G(x,y)$ is strictly decreasing in $x$.
\end{lemma}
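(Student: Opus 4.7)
The first claim that $G(x,y)$ is nondecreasing in $y$ is essentially free: for fixed $x$, the denominator $Z(x)$ does not depend on $y$, while the numerator $\int_{(-\infty,y]} g(z-x)\,\nu(dz)$ is the distribution function of a nonnegative measure, hence nondecreasing in $y$. No real work is needed here.

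For strict monotonicity in $x$, my plan is to fix $y$, take $x_1 < x_2$, and reduce the inequality $G(x_1,y) > G(x_2,y)$ to a sign statement for a double integral. Cross-multiplying by the positive denominators $Z(x_1)Z(x_2)$ and splitting each factor $\int_\R$ as $\int_{(-\infty,y]}+\int_{(y,\infty)}$, the claim becomes
\[
\int_{(-\infty,y]}\!\!\int_{(y,\infty)} \bigl[g(z_1-x_1)g(z_2-x_2) - g(z_1-x_2)g(z_2-x_1)\bigr]\,\nu(dz_1)\nu(dz_2) > 0,
\]
after cancellation of the two ``diagonal'' pieces. So the problem reduces to showing that the bracketed integrand is strictly positive on $\{z_1 \le y < z_2\}$ and that this region carries positive $\nu\otimes\nu$-mass.

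The first point is a short calculation using $g(u)=(2\pi)^{-1/2}e^{-u^2/2}$: the exponents combine as
\[
-\tfrac{1}{2}\bigl[(z_1-x_1)^2+(z_2-x_2)^2-(z_1-x_2)^2-(z_2-x_1)^2\bigr] = (z_1-z_2)(x_1-x_2),
\]
so $g(z_1-x_1)g(z_2-x_2)/[g(z_1-x_2)g(z_2-x_1)] = e^{(z_1-z_2)(x_1-x_2)}$. On the integration region we have $z_1-z_2<0$ and $x_1-x_2<0$, making this ratio strictly greater than $1$, hence the integrand is pointwise strictly positive there. The second point is immediate from the hypothesis: $\{z_1 \le y < z_2\}$ is a product of two sets each having positive $\nu$-measure, so it has positive $\nu\otimes\nu$-measure and the double integral is strictly positive.

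I do not anticipate a real obstacle; the only step that requires attention is recognizing the log-ratio of the two Gaussian products as the bilinear form $(z_1-z_2)(x_1-x_2)$, which is precisely the ``FKG-like'' positive-correlation structure that drives the monotonicity. Once this identity is in hand, the rest is bookkeeping.
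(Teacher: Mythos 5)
Your proof is correct, and it takes a genuinely different route from the paper's. The paper rewrites $1/G(x,y)=1+\int_{(y,\infty)}g(z-x)\nu(dz)\big/\int_{(-\infty,y]}g(z-x)\nu(dz)$ and then divides both numerator and denominator by $g(z-x)$ for a fixed $z>y$, reducing matters to showing that
\[
H(x,y,z)=\frac{\int_{(-\infty,y]}g(z'-x)\nu(dz')}{g(z-x)}
=e^{z^2/2}\int_{(-\infty,y]}e^{x(z'-z)-z'^2/2}\,\nu(dz')
\]
is strictly decreasing in $x$ for each fixed $z>y$, which is immediate because $z'-z<0$ on the domain of integration. You instead cross-multiply by $Z(x_1)Z(x_2)$, split each $Z$ into its $(-\infty,y]$ and $(y,\infty)$ parts, cancel the diagonal pieces by symmetry, and are left with the antisymmetrized double integral
\[
\int_{(-\infty,y]}\int_{(y,\infty)}\bigl[g(z_1-x_1)g(z_2-x_2)-g(z_1-x_2)g(z_2-x_1)\bigr]\,\nu(dz_1)\,\nu(dz_2),
\]
whose integrand you show is pointwise positive via the identity $g(z_1-x_1)g(z_2-x_2)\big/\bigl(g(z_1-x_2)g(z_2-x_1)\bigr)=e^{(z_1-z_2)(x_1-x_2)}>1$ on $\{z_1\le y<z_2,\ x_1<x_2\}$. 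Both proofs hinge on exactly the same algebraic fact --- the bilinear cross term $zx$ in $\ln g(z-x)$, i.e.\ the total positivity of the Gaussian kernel --- but you package it as a two-point correlation inequality (in the FKG/Chebyshev style), whereas the paper packages it as a one-variable monotonicity statement. The paper's route is slightly shorter and avoids the product measure, while yours makes the TP${}_2$ structure that really drives the lemma more explicit, which could be useful context if one later wanted to replace the Gaussian kernel by another log-supermodular one. One small point worth being explicit about when you write this up: the $\nu\{(-\infty,y]\}>0$ and $\nu\{(y,\infty)\}>0$ hypotheses also guarantee $Z(x)>0$, which is needed before cross-multiplying.
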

\begin{proof} The monotonicity in $y$ is obvious.  Due to
\[
\frac{1}{G(x,y)}= \frac{\int_{\R}g(z-x)\nu(dz)}{\int_{(-\infty,y]}g(z-x)\nu(dz)}=1+\frac{\int_{(y,\infty)}g(z-x)\nu(dz)}{\int_{(-\infty,y]}g(z-x)\nu(dz)},
\]
it remains to prove that for all $z\in(y,\infty)$, 
\[
H(x,y,z)=\frac{\int_{(-\infty,y]}g(z'-x)\nu(dz')}{g(z-x)}
\]
decreases in $x$.
We rewrite
\[
 H(x,y,z)=\int_{(-\infty,y]}e^{\frac{-(x-z')^2+(x-z)^2}{2}}\nu(dz')=e^{z^2/2}\int_{(-\infty,y]}e^{x(z'-z)-\frac{z'^2}{2} }\nu(dz').
\]
 Since $z'-z<0$,  the integrand $e^{x(z'-z)-\frac{z'^2}{2} }$ decreases in $x$ and so does the integral on the right-hand side.
\end{proof}

For any $d\in\N$, we denote by ${\preceq}$ 
the natural partial order on $\R^d$, i.e., we write $x\preceq y$ iff
$x_k\le y_k$ for all $k=1,\ldots,d$. A function $f:\R^d\to \R$ is coordinatewise nondecreasing if $x\preceq y$ implies $f(x)\le f(y)$.
For two Borel probability measures $\nu_1,\nu_2$ on $\R^d$, we write $\nu_1\preceq \nu_2$  (and say
that  $\nu_1$ is {\it stochastically dominated} by $\nu_2$)
iff for any bounded coordinatewise nondecreasing
function $f:\R^d\to\R$ 
\[
 \int_{\R^d} f(x)\nu_1(dx)\le \int_{\R^d} f(x)\nu_2(dx).
\]
For $d=1$, $\nu_1\preceq\nu_2$ is equivalent to $\nu_1\{(-\infty,x]\}\ge \nu_2\{(-\infty,x]\}$ for all $x\in\R$. There is also
a coupling characterization of stochastic dominance usually called Strassen monotone coupling theorem 
(see Theorems~7 and~11 in~\cite{Strassen:MR0177430}
and a discussion in~\cite{Lindvall:MR1711599}). To state this theorem and our results on stochastic dominance, we introduce notation that will be used in various
contexts throughout the paper: we use $\pi_k x$ to denote the $k$-th coordinate of $x$, where $x$ is either a vector
or an infinite sequence. We also use $\pi_{m,n}x=(x_{m},\ldots,x_{n})$.

\begin{lemma}[Monotone coupling]\label{lem:dominance-by-coupling}
 Borel measures $\nu_1,\ldots,\nu_n$ on $\R^d$ satisfy $\nu_1\preceq\ldots\preceq \nu_n$  iff there is a measure 
 $\nu$ on $(\R^d)^n$ such that $\nu_k$ is the $k$-th marginal of $\nu$, i.e., $\nu_k=\nu \pi_k^{-1}$, $k=1,\ldots,n$,
and
\[
\nu\{(x^{(1)},\ldots,x^{(n)})\in(\R^d)^n:\ x^{(1)}\preceq\ldots\preceq x^{(n)}\}=1. 
\]
\end{lemma}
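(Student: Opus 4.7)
The easy direction is the ``if'' statement. Suppose $\nu$ is a coupling on $(\R^d)^n$ concentrated on $\{x^{(1)}\preceq\ldots\preceq x^{(n)}\}$ with $\nu\pi_k^{-1}=\nu_k$. For any bounded coordinatewise nondecreasing $f:\R^d\to\R$ and any $k<\ell$, we have $f(x^{(k)})\le f(x^{(\ell)})$ for $\nu$-a.e. sample, so integrating against $\nu$ and using the marginal property gives $\int f\,d\nu_k\le\int f\,d\nu_\ell$, i.e.\ $\nu_k\preceq\nu_\ell$.

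For the ``only if'' direction I would proceed by induction on $n$, using the classical $n=2$ case of Strassen's theorem (cited in the paper as Theorems~7 and~11 of~\cite{Strassen:MR0177430}) as the base. Assume the result is known for $n-1$ and that $\nu_1\preceq\ldots\preceq\nu_n$. By the inductive hypothesis there is a measure $\tilde\nu$ on $(\R^d)^{n-1}$ with marginals $\nu_1,\ldots,\nu_{n-1}$, concentrated on the chain $\{x^{(1)}\preceq\ldots\preceq x^{(n-1)}\}$. By the base case applied to $\nu_{n-1}\preceq\nu_n$, there is a measure $\rho$ on $(\R^d)^2$ with marginals $\nu_{n-1},\nu_n$ and with $\rho\{(u,v):u\preceq v\}=1$. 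Since $\R^d$ is a Polish space, $\rho$ admits a regular disintegration with respect to its first coordinate: $\rho(du,dv)=\nu_{n-1}(du)\,K(u,dv)$, where $K$ is a Markov kernel satisfying $K(u,\{v:u\preceq v\})=1$ for $\nu_{n-1}$-a.e.\ $u$.

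Now glue these together by defining, for bounded measurable $\varphi$ on $(\R^d)^n$,
\[
\int \varphi\,d\nu=\int \tilde\nu(dx^{(1)},\ldots,dx^{(n-1)})\int K(x^{(n-1)},dx^{(n)})\,\varphi(x^{(1)},\ldots,x^{(n)}).
\]
The marginal of $\nu$ on the first $n-1$ coordinates is $\tilde\nu$, so those marginals are $\nu_1,\ldots,\nu_{n-1}$. The marginal on the last coordinate is obtained by integrating out the first $n-2$ coordinates of $\tilde\nu$ to leave $\nu_{n-1}$ on the $(n-1)$-st slot, then applying $K$, which by construction reproduces $\rho$ and hence yields $\nu_n$. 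The chain relation $x^{(1)}\preceq\ldots\preceq x^{(n-1)}$ holds $\tilde\nu$-a.s., and $x^{(n-1)}\preceq x^{(n)}$ holds $K(x^{(n-1)},\cdot)$-a.s.\ for $\nu_{n-1}$-a.e.\ $x^{(n-1)}$, so the full chain $x^{(1)}\preceq\ldots\preceq x^{(n)}$ holds $\nu$-a.s.

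The only genuine subtlety is the existence of the regular conditional kernel $K$, which requires the underlying space to be Polish; this is standard for $\R^d$. Everything else is bookkeeping, and the $n=2$ Strassen theorem does the real work. Thus the main obstacle is essentially delegated to the cited $n=2$ case, and the induction just iterates it through a measurable gluing.
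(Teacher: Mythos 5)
Your proof is correct. The paper does not actually supply a proof of this lemma; it simply states it and points to Strassen's theorem (Theorems~7 and~11 of~\cite{Strassen:MR0177430}) and Lindvall's discussion. Your argument fills in the missing details in the standard way: the forward implication is immediate, and for the converse you bootstrap from the $n=2$ case of Strassen by induction, gluing the inductive coupling $\tilde\nu$ to the two-step coupling $\rho$ through a regular conditional kernel $K$. The marginal computations you sketch are sound: the first $n-1$ marginals come directly from $\tilde\nu$, and projecting $\tilde\nu$ onto its last coordinate yields $\nu_{n-1}$, so composing with $K$ reconstructs $\rho$ and hence gives $\nu_n$ as the $n$-th marginal. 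The a.s.\ chain inequality holds because $\tilde\nu$ is concentrated on the shorter chain and $K(u,\cdot)$ is concentrated on $\{v : u\preceq v\}$ for $\nu_{n-1}$-a.e.\ $u$. You are also right that the only technical point needing care is the existence of the regular conditional probability, which is guaranteed on the Polish space $\R^d$ (the measures here are implicitly probability measures, consistent with the paper's definition of $\preceq$). In short, this is exactly the argument the paper is implicitly delegating to the cited references, written out explicitly and correctly.
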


\begin{lemma}\label{lem:dominance-1} Let $x\le x'$. Then for any $m,n$ with $m<n$, any $y\in\R$, and all $\omega$, the polymer measure $\mu_{x,y}^{m,n}$ is stochastically dominated by $\mu_{x',y}^{m,n}$.
\end{lemma}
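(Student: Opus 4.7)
The plan is to proceed by induction on $n-m$, exploiting the Markov structure of $\mu^{m,n}_{x,y}$ together with the Gaussian monotonicity already packaged in Lemma~\ref{lem:abstract-monotonicity}. The base case $n-m=1$ is vacuous since both measures are delta measures on a single two-point path and there are no interior coordinates to compare. For $n-m\ge 2$, I first examine the marginal law of $x_{m+1}$: integrating out $x_{m+2},\ldots,x_{n-1}$ in the polymer density and using the definition of $Z^{m+1,n}$, this marginal has density proportional to $g(z-x)\,Z^{m+1,n}(z,y)$. Taking $\nu(dz)=Z^{m+1,n}(z,y)\,dz$, the identity
\[
Z^{m,n}(x,y)=e^{-F_m(x)}\int g(z-x)\,\nu(dz)
\]
guarantees that $\int g(z-x)\nu(dz)<\infty$, so Lemma~\ref{lem:abstract-monotonicity} applies and shows that the CDF of the $x_{m+1}$-marginal is nonincreasing in the starting point $x$. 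In particular, the $x_{m+1}$-marginal of $\mu^{m,n}_{x,y}$ is stochastically dominated by that of $\mu^{m,n}_{x',y}$.

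Next, I would promote this one-coordinate dominance to the full path via the Markov property. Conditional on $x_{m+1}=z$, the law of $(x_{m+1},\ldots,x_n)$ under $\mu^{m,n}_{x,y}$ coincides with $\mu^{m+1,n}_{z,y}$ and is independent of the starting point $x$. By Strassen's theorem (Lemma~\ref{lem:dominance-by-coupling}), I couple the $x_{m+1}$-marginals of the two measures so that $X_{m+1}\le X'_{m+1}$ almost surely. Conditionally on a pair $(z,z')$ with $z\le z'$, the inductive hypothesis applied at time $m+1$ yields $\mu^{m+1,n}_{z,y}\preceq\mu^{m+1,n}_{z',y}$, and another application of Lemma~\ref{lem:dominance-by-coupling} produces a coordinatewise-ordered coupling of the two continuations. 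Concatenating these two stages gives a joint realization of paths drawn from $\mu^{m,n}_{x,y}$ and $\mu^{m,n}_{x',y}$ in which one path dominates the other coordinatewise, and the dominance claim then follows from the easy direction of Lemma~\ref{lem:dominance-by-coupling}.

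The one technical point I expect to manage carefully is measurability: for the two-stage coupling to define a genuine probability measure on $\R^{n-m-1}\times\R^{n-m-1}$, the continuation coupling must be chosen as a measurable stochastic kernel in $(z,z')$. This can be arranged using a regular version of Strassen's theorem together with the joint smoothness of the polymer densities in their endpoints, which is available from Lemma~\ref{lem:smoothness-of-partition-function}. With this regularity in hand the inductive step goes through without further difficulty, and the argument is deterministic in $\omega$ since every step uses only fixed-$\omega$ expressions.
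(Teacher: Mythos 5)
Your proof is correct and follows essentially the same route as the paper's. The paper inducts on the number of coordinates $k$ with the time interval $[0,n]$ held fixed, extending a coupling of $\pi_{1,k}$ to one of $\pi_{1,k+1}$ at each step by invoking the one-coordinate monotonicity (Lemma~\ref{lem:abstract-monotonicity} applied to $\mu^{k,n}_{z,y}\pi_{k+1}^{-1}$). You instead induct on $n-m$, coupling $x_{m+1}$ first and then invoking the full inductive hypothesis on the shorter interval $[m+1,n]$. Once unwound, both inductions build the identical coordinate-by-coordinate monotone coupling through the Markov structure, so the difference is purely in the bookkeeping of the induction, not in the mathematical content. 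One small thing worth noting in your favor: you flag the need for a measurable choice of the conditional coupling kernel $(z,z')\mapsto\nu_{z,z'}$, which the paper's proof uses but does not mention; the quantile (inverse-CDF) coupling handles this, since the CDFs depend continuously on $(z,z')$ by Lemma~\ref{lem:smoothness-of-partition-function}, and that is exactly the kind of fix you sketch.
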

\begin{proof} The reasoning does not depend on $m$, so we set $m=0$ for brevity.
  We prove by induction in $k$ that for all $x < x'$ and for any $k\in (0,n)\cap\N$, there is a
  measure $\nu_{k}$ on $(\R^k)^2$ such that 
  \begin{align*}
  \nu_k(\cdot \times \R^k) &= \mu_{x,y}^{0,n} \pi_{1,k}^{-1},\\
  \nu_k(\R^k \times \cdot) &= \mu_{x',y}^{0,n} \pi_{1,k}^{-1}, 
  \end{align*}
  and 
  \begin{equation}
    \label{eq:dominance-coupling}
\nu_{k} \{ (x, x'): x \preceq x'\} = 1.    
  \end{equation}
In particular, taking $k = n-1$ we obtain the conclusion of the lemma.

 Let us check the case $k=1$ first. 
\begin{align*}
 \mu_{x,y}^{0,n}\pi^{-1}_{1} \bigl( (-\infty, r] \bigr)
&=\frac{1}{Z_{x,y}^{0,n}}\int_{(-\infty,r]}Z_{x,{s}}^{0,1}Z_{{s},y}^{1,n} d{s}
=\frac{\int_{(-\infty,r]}g({s}-x)e^{-F_0(x)}Z_{{s},y}^{1,n} d{s}}{\int_{\R}g({s}-x)e^{-F_0(x)}Z_{{s},y}^{1,n} d{s}}
\\
&=\frac{\int_{(-\infty,r]}g({s}-x)Z_{{s},y}^{1,n} d{s}}{\int_{\R}g({s}-x)Z_{{s},y}^{1,n} d{s}}.
\end{align*}
Introducing $\nu(d{s})=Z_{{s},y}^{1,n} d{s}$, we can apply Lemma~\ref{lem:abstract-monotonicity} to see that
$\mu_{x,y}^{0,n} \pi_{1}^{-1}\bigl( (-\infty,r] \bigr)$ is decreasing in $x$.
Therefore
$\mu_{x,y}^{0,n}\pi_{1}^{-1}\preceq\mu_{x',y}^{0,n}\pi_{1}^{-1}$ for $x < x'$, which finishes the argument
for the basis of induction.

Suppose for $k \ge 1$ the desired $\nu_k$ have been constructed.
We will construct $\nu_{k+1}$ using $\nu_k$. The basis of induction (the claim for 1-dimensional marginals) implies that for any $z,z'\in\R$ satisfying $z \le z'$, there is a measure $\nu_{z,z'}$ on
$\R \times \R$ such that 
\begin{align*}
\nu_{z,z'}(\cdot \times \R) &= \mu_{z,y}^{k,n}\pi_{k+1}^{-1}(\cdot),\\
\nu_{z,z'}(\R \times \cdot) &= \mu_{z',y}^{k,n}\pi_{k+1}^{-1}(\cdot), 
\end{align*}
and $\nu_{z,z'}\{(w,w'): w
\le w' \} = 1$.
Then the measure $\nu_{k+1}$ defined by
\begin{multline*}
  \nu_{k+1}\bigl( (A_{1}\times\dotsm \times A_{k+1})\times (A_{1}' \times \dotsm \times A_{k+1}')
  \bigr) \\
  = \int_{x_i \in A_i, x_i' \in A_i',i \le k} \nu_k \bigl( dx_{1}, ..., dx_{k},
  dx_{1}', ..., dx_{k}' \bigr)  \nu_{x_{k}, x_{k}'} \bigl( A_{k+1}\times A_{k+1}' \bigr)
\end{multline*}
satisfies (\ref{eq:dominance-coupling}) with $k$ replaced by $k+1$.
To see that $\nu_{k+1}$ has correct marginals, it suffices to notice that from the definition of
polymer measures, we have
\[
  \mu_{x,y}^{0,n}(A_{1}\times \dotsm \times A_{n-1})
  = \int_{x_i \in A_i, i\le k} \mu_{x,y}^{0,n}\pi_{1,k}^{-1} (dx_{1}, ..., dx_{k})
  \mu_{x_{k}, y}^{k,n} ( A_{k+1} \times \dotsm \times A_{n-1})  
\]
for any $x, y$ and $k \le n-1$.
\end{proof}

One can also easily obtain a time-reversed version of Lemma~\ref{lem:dominance-1}:

\begin{lemma}\label{lem:dominance-2} Let $y\le y'$. Then for any $m,n$ with $m<n$, any $x\in\R$, and all $\omega$, the polymer measure $\mu_{x,y}^{m,n}$ is stochastically dominated by $\mu_{x,y'}^{m,n}$. 
\end{lemma}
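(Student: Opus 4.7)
The plan is to mirror the proof of Lemma~\ref{lem:dominance-1}, working from the right endpoint instead of the left and exploiting the symmetry $g(y-s)=g(s-y)$ of the Gaussian kernel. Setting $m=0$ for brevity, I would prove by induction on $k \in (0,n) \cap \N$ that for every $y \le y'$ there exists a measure $\nu_k$ on $(\R^k)^2$ with
\[
\nu_k(\cdot \times \R^k) = \mu^{0,n}_{x,y}\pi^{-1}_{n-k,n-1}, \qquad \nu_k(\R^k \times \cdot)=\mu^{0,n}_{x,y'}\pi^{-1}_{n-k,n-1},
\]
and $\nu_k\{(u,u') : u \preceq u'\}=1$. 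Taking $k=n-1$ yields the lemma, via Lemma~\ref{lem:dominance-by-coupling}.

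For the base case $k=1$, integrating out the intermediate coordinates gives
\begin{align*}
\mu^{0,n}_{x,y}\pi^{-1}_{n-1}\bigl((-\infty,r]\bigr) = \frac{\int_{(-\infty,r]} Z^{0,n-1}_{x,s}\, g(y-s)\, e^{-F_{n-1}(s)}\, ds}{\int_{\R} Z^{0,n-1}_{x,s}\, g(y-s)\, e^{-F_{n-1}(s)}\, ds}.
\end{align*}
Letting $\nu(ds) = Z^{0,n-1}_{x,s}\, e^{-F_{n-1}(s)}\, ds$ and using $g(y-s)=g(s-y)$, this expression has exactly the form $G(y,r)$ in the notation of Lemma~\ref{lem:abstract-monotonicity}, with the variables $(x,y)$ in that lemma played here by $(y,r)$. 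Lemma~\ref{lem:abstract-monotonicity} then shows this CDF is nonincreasing in $y$, which gives $\mu^{0,n}_{x,y}\pi^{-1}_{n-1} \preceq \mu^{0,n}_{x,y'}\pi^{-1}_{n-1}$ for $y \le y'$.

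The inductive step is formally identical to that of Lemma~\ref{lem:dominance-1}. Given $\nu_k$, I would build $\nu_{k+1}$ by splicing: by the Markov property, the conditional distribution of $x_{n-k-1}$ given $x_{n-k}$ under $\mu^{0,n}_{x,y}$ is $\mu^{0,n-k}_{x,x_{n-k}}\pi^{-1}_{n-k-1}$ and does not depend on $y$. For each ordered pair $(x_{n-k},x'_{n-k})$ in the support of $\nu_k$, the base case applied to the shorter polymer $\mu^{0,n-k}$ furnishes a monotone coupling of $\mu^{0,n-k}_{x,x_{n-k}}\pi^{-1}_{n-k-1}$ and $\mu^{0,n-k}_{x,x'_{n-k}}\pi^{-1}_{n-k-1}$; combining this coupling with $\nu_k$ via the splicing formula in the inductive step of Lemma~\ref{lem:dominance-1} yields the desired $\nu_{k+1}$, which still has marginals given by the correct projections of $\mu^{0,n}_{x,y}$ and $\mu^{0,n}_{x,y'}$.

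There is no real obstacle beyond what has already been overcome in Lemma~\ref{lem:dominance-1}; the key conceptual observation is that the asymmetry in the definition of the energy $H^{m,n}_\omega$, which includes $F_m,\dots,F_{n-1}$ but not $F_n$, means the right endpoint $y$ enters the polymer weights only through the symmetric Gaussian factor $g(y-x_{n-1})$. A time-reversal argument would have been obstructed by any endpoint potential term, but here its absence makes Lemma~\ref{lem:abstract-monotonicity} applicable essentially verbatim to the variable $y$.
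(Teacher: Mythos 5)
Your proof is correct and is exactly what the paper has in mind: the paper presents Lemma~\ref{lem:dominance-2} as a ``time-reversed version'' of Lemma~\ref{lem:dominance-1} without writing it out, and your base case (via the symmetry $g(y-s)=g(s-y)$ and Lemma~\ref{lem:abstract-monotonicity}) together with the backward inductive splicing through the Markov structure fills in the details faithfully. One small quibble with your closing remark: a hypothetical endpoint factor $e^{-F_n(y)}$ would depend only on $y$ and therefore cancel in the normalized ratio defining $\mu^{0,n}_{x,y}\pi^{-1}_{n-1}((-\infty,r])$, so the symmetry of the Gaussian kernel is what carries the argument, not the asymmetric energy convention per se.
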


We can now state the main result of this section. It easily follows from Lemmas~\ref{lem:dominance-by-coupling},~\ref{lem:dominance-1}, and~\ref{lem:dominance-2}. 
\begin{lemma}[Main monotonicity lemma]\label{lem:main-monotonicity} The following holds for all $\omega\in\Omega$:
\begin{enumerate}[1.] 
\item  Let $x\le x'$ and $y\le y'$. Then for any $m,n$ with $m<n$, the polymer measure $\mu_{x,y}^{m,n}$ is stochastically dominated by $\mu_{x',y'}^{m,n}$.

\item If  two distributions $\nu_1,\nu_2$ on $\R$ satisfy $\nu_1\preceq\nu_2$, then, for any $x\in\R$ and any $m,n\in\Z$ satisfying $m\le n$, we 
have $\mu_{x,\nu_1}^{m,n}\preceq \mu_{x,\nu_2}^{m,n}$. 

\item If $x\le x'$, then for any distribution $\nu$ on $\R$ and any $m,n\in\Z$ satisfying $m\le n$, we have $\mu_{x,\nu}^{m,n}\preceq \mu_{x',\nu}^{m,n}$.
\end{enumerate}
\end{lemma}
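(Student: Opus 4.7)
The plan is to reduce all three parts to the single-endpoint dominance statements (Lemma~\ref{lem:dominance-1} and Lemma~\ref{lem:dominance-2}) together with the Strassen-type monotone coupling theorem (Lemma~\ref{lem:dominance-by-coupling}). For Part 1, I would argue by transitivity of stochastic dominance: if $x \le x'$ and $y \le y'$, then Lemma~\ref{lem:dominance-1} gives $\mu_{x,y}^{m,n} \preceq \mu_{x',y}^{m,n}$, and Lemma~\ref{lem:dominance-2} gives $\mu_{x',y}^{m,n} \preceq \mu_{x',y'}^{m,n}$. Chaining these yields the desired dominance. Transitivity itself is immediate from the definition via integration of bounded coordinatewise nondecreasing test functions.

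For Parts 2 and 3, I would use the mixing representation $\mu_{x,\nu}^{m,n} = \int_\R \mu_{x,y}^{m,n}\,\nu(dy)$. Fix a bounded coordinatewise nondecreasing test function $f$ on $\R^{n-m-1}$ (the free intermediate coordinates) and set $g(x,y) = \int f\, d\mu_{x,y}^{m,n}$. Thanks to Lemma~\ref{lem:smoothness-of-partition-function}, the density of $\mu_{x,y}^{m,n}$ is smooth in $(x,y)$, so $g$ is jointly measurable and bounded by $\|f\|_\infty$; Fubini then yields $\int f\, d\mu_{x,\nu}^{m,n} = \int g(x,y)\,\nu(dy)$. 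By Lemma~\ref{lem:dominance-1} and Lemma~\ref{lem:dominance-2}, $g(x,y)$ is nondecreasing separately in $x$ and in $y$. Part 3 follows at once from pointwise comparison $g(x,y) \le g(x',y)$, and Part 2 follows because $\nu_1 \preceq \nu_2$ together with monotonicity of $g(x,\cdot)$ forces $\int g(x,y)\,\nu_1(dy) \le \int g(x,y)\,\nu_2(dy)$. An alternative route to Part 2 is to use Lemma~\ref{lem:dominance-by-coupling} to build a monotone coupling $(Y_1,Y_2)$ of $\nu_1$ and $\nu_2$ with $Y_1 \le Y_2$ a.s., and then invoke the fiberwise dominance from Lemma~\ref{lem:dominance-2} on each realization before averaging.

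There is essentially no hard step here: the content of the lemma is a clean packaging of the single-endpoint monotonicities with the standard facts that stochastic dominance is both transitive and preserved under mixtures against ordered driving measures. The only minor technical points are joint measurability of $g(x,y)$, handled by the smoothness supplied in Section~\ref{sec:properties-of-partition-function}, and the harmless approximation of general bounded nondecreasing test functions, neither of which presents an obstacle.
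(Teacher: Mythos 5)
Your proposal is essentially the approach the paper takes. The paper does not write out an explicit argument for this lemma, stating only that it ``easily follows'' from Lemma~\ref{lem:dominance-by-coupling}, Lemma~\ref{lem:dominance-1}, and Lemma~\ref{lem:dominance-2}; your chaining argument for Part~1 and the coupling route you sketch for Parts~2 and~3 are precisely what is intended.

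One small slip in your primary argument for Part~2: you take the test function $f$ on $\R^{n-m-1}$ (the intermediate coordinates), but $\mu_{x,\nu}^{m,n}$ is a measure on $S^{m,n}_{x,*}\cong\R^{n-m}$, which includes the \emph{random} terminal coordinate $\gamma_n$, so stochastic dominance must be verified against coordinatewise nondecreasing $f:\R^{n-m}\to\R$ that may depend on $\gamma_n$ as well. The fix is immediate: take such an $f$, set $g(x,y)=\int f(x_{m+1},\dotsc,x_{n-1},y)\,\mu_{x,y}^{m,n}(dx_{m+1}\dotsm dx_{n-1})$, and note that $g(x,\cdot)$ is still nondecreasing, now for two reasons simultaneously: $\mu_{x,y}^{m,n}$ is stochastically increasing in $y$ (Lemma~\ref{lem:dominance-2}) and $f(\dotsc,y)$ is pointwise nondecreasing in $y$. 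Your alternative route via the Strassen coupling of $\nu_1$ and $\nu_2$ already avoids this issue and is the cleaner match for the paper's citation of Lemma~\ref{lem:dominance-by-coupling}.
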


\section{Concentration inequality for free energy}\label{sec:concentration}
The aim of this section is to prove a concentration inequality of the free energy around its
asymptotic value.
It is natural to expect that our model
belongs to the KPZ universality class. In particular, the fluctuations of $\ln Z^n$ around the mean are expected to be of order~$n^{\chi}$,   where~$\chi=1/3$.  The main result of this section may be interpreted 
as~$\chi\le 1/2$.

\begin{theorem}
  \label{thm:concentration-of-free-energy}
  There are positive constants $c_0, c_1, c_2, c_3$ such that for all $v \in \R$, all $n > c_0$ and all $u \in (c_3n^{1/2} \ln^{3/2}n,  n \ln n]$,
  \begin{equation*}
    \Pp \{ |\ln Z^n(0,vn) - \alpha(v)n | > u \} \le c_1 \exp
    \left\{ -c_2 \frac{u^2}{n \ln^2n} \right\}.
  \end{equation*}  
\end{theorem}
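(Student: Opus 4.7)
The approach is the bounded-differences martingale method: expose the i.i.d.\ potentials $F_0, F_1, \ldots, F_{n-1}$ one at a time, and bound the resulting martingale increments via a logarithmic truncation of the environment. First, by the distributional identity $Z^n(vn) \stackrel{d}{=} e^{-v^2 n/2} Z^n(0)$ from Lemma~\ref{lem:shear-for-Z_v} together with $\alpha(v) = \alpha_0 - v^2/2$, the event whose probability is to be estimated has the same probability for slope $v$ as for slope $0$, so it is enough to treat the case $v=0$.

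Fix a large constant $K$ (to be chosen in terms of the moment parameter $\eta$ from \ref{item:exponential-moment-for-maximum}). Define the truncated potential $\tilde F_k(x) = \max(-K\ln n, \min(F_k(x), K\ln n))$ and the corresponding partition function $\tilde Z^n$. With $\mathcal{G}_k = \sigma(F_0,\ldots,F_k)$ and $M_k = \E[\ln \tilde Z^n \mid \mathcal{G}_k]$, the standard resampling trick (replacing $F_k$ by an independent copy $F'_k$ to produce $\tilde Z^{n,(k)}$) together with the pointwise bound $|\ln(\tilde Z^n / \tilde Z^{n,(k)})| \le \sup_x |\tilde F_k(x) - \tilde F'_k(x)| \le 2K\ln n$ (immediate from Jensen's inequality applied to the time-$k$ polymer marginal) yields $|M_k - M_{k-1}| \le 2K\ln n$. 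Azuma--Hoeffding then gives
\begin{equation*}
\Pp\{|\ln\tilde Z^n - \E \ln\tilde Z^n| > u\} \le 2\exp\bigl(-u^2 /(8nK^2 \ln^2 n)\bigr).
\end{equation*}

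To transfer this bound to $\ln Z^n$ and recenter at $\alpha_0 n$, introduce the nice event $\mathcal{E}_n = \{F^*_k(j) \le K\ln n:\ 0\le k < n,\ |j|\le R_n\}$ with $R_n$ a polynomial in $n$; assumption \ref{item:exponential-moment-for-maximum} and a union bound give $\Pp(\mathcal{E}_n^c) \le 2R_n n^{1-\eta K}$, which is superpolynomially small once $K$ is large. On $\mathcal{E}_n$ the environment is modified only outside $[-R_n, R_n]$, and a Gaussian tail bound on $g_n$ (combined with the crude $L^1$ estimate of Lemma~\ref{lem:expectation-of-Z}) shows that paths escaping this window contribute negligibly to $Z^n$, so both $|\ln Z^n - \ln\tilde Z^n|$ and the expectation gap $|\E \ln Z^n - \E \ln\tilde Z^n|$ are small. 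Finally, to bound $|\E \ln Z^n - \alpha_0 n|$, I bootstrap: superadditivity of $\E \ln Z^n_*$ (Lemma~\ref{lem:super-additive}) furnishes $\E \ln Z^n_* \le \alpha_0 n$, while a subadditive-style combination of the concentration just established with the almost sure convergence from Theorem~\ref{thm:shape-function} yields the matching lower estimate down to scale $\sqrt{n}\ln^{3/2} n$. This last discrepancy is precisely what forces the lower restriction $u > c_3\sqrt{n}\ln^{3/2} n$ in the statement, while the upper restriction $u \le n\ln n$ is the natural range of validity of Azuma with increments of size $K\ln n$ over $n$ steps.

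The main obstacle I anticipate is that at this point in the paper neither the transversal-fluctuation nor the straightness estimates are available -- both are derived only later in Section~\ref{sec:delta-straightness-and-tightness} using the very concentration bound being proved here. Consequently the control of paths escaping $[-R_n, R_n]$ must rest entirely on the Gaussian free measure plus crude first-moment estimates, which is what makes the double-logarithmic loss $\ln^2 n$ in the exponent unavoidable and what restricts the admissible range to $u \in (c_3\sqrt{n}\ln^{3/2} n, n\ln n]$. Balancing the truncation level $K\ln n$ against both the probability $\Pp(\mathcal{E}_n^c) \sim n^{1-\eta K}$ and the residual Gaussian mass $e^{-R_n^2/(2n)}$ is the delicate point.
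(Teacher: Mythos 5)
Your first three steps track the paper closely: the reduction to $v=0$ via the shear identity of Lemma~\ref{lem:shear-for-Z_v}, the logarithmic truncation of the potential (the paper zeroes out $F_k$ entirely when $\xi_k = \max_j F_k^*(j) > b\ln n$, while you clip pointwise to $[-K\ln n, K\ln n]$ — cosmetically different but operationally equivalent), the Azuma step with increments $O(\ln n)$ yielding a subgaussian tail at rate $u^2/(n\ln^2 n)$, and the spatial cutoff to $[-Rn,Rn]$ with a Gaussian tail bound together with the first-moment estimate of Lemma~\ref{lem:expectation-of-Z}. The paper's Lemmas~\ref{lem:partition-function-not-too-small}--\ref{lem:martingale-concentration-for-ZF-bar} are precisely this assembly, and your anticipated obstacle — that no straightness or transversal-fluctuation bound is available yet, forcing crude Gaussian/first-moment control and the $\ln^2 n$ loss — is exactly right.

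The genuine gap is in your last paragraph of the argument proper, where you must recenter from $\E\ln Z^n$ to $\alpha_0 n$. Superadditivity of $\ln Z^n_*$ (Lemma~\ref{lem:super-additive}) and Fekete give you $\E\ln Z^n_* \le \alpha_0 n$, so that direction is easy, and passing from $Z^n_*$ to $Z^n$ costs only $O(1)$. But the other direction, $\alpha_0 n - \E\ln Z^n \lesssim \sqrt{n}\ln^2 n$, does \emph{not} follow from superadditivity plus the concentration inequality plus the almost sure limit: the a.s.\ convergence carries no rate, and superadditivity only yields the one-sided comparison. Your phrase ``a subadditive-style combination of the concentration just established with the almost sure convergence\ldots yields the matching lower estimate'' gestures at Alexander's method but omits its actual mechanism. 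What is needed — and what the paper supplies as Lemma~\ref{lem:doubling-argument-inequality} and Lemma~\ref{lem:doubling-argument-lemma} — is a \emph{two-sided} doubling estimate $|\E\ln Z^{0,2n} - 2\E\ln Z^{0,n}| \le b_4 n^{1/2}\ln^2 n$, followed by the HoNe-type accelerating-sequence lemma that converts a doubling bound plus convergence of $a_n/n$ into $|a_n - \alpha_0 n| \le c\, g_n$. The doubling estimate itself is nontrivial: it requires splitting the path at time $n$ with a controlled jump of size $O(\sqrt{n})$ (Lemma~\ref{lem:small-probability-for-single-square-root-jump}), reducing the midpoint to integers, and then — crucially — using the concentration bound already proved in Lemma~\ref{lem:free-energy-concentration-around-expectation} to control $\E\max_{|k|\le 2Rn}\bigl(\ln Z^{0,n}(0,k) - \E\ln Z^{0,n}(0,k)\bigr)$ by $O(\sqrt{n}\ln^{3/2}n)$. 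Without this lemma you have no quantitative route from $\E\ln Z^n$ to $\alpha_0 n$, and the theorem does not follow.
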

The right endpoint of the interval of applicability of this inequality is chosen to be $ n \ln n$. In fact, similar
inequalities with appropriate adjustments hold for different choices of the right endpoint.

Due to the invariance under shear transformations (Lemma~\ref{lem:shear-for-Z_v}), it suffices to prove this theorem for $v = 0$.
The proof can be divided into two steps:
the first step is Lemma~\ref{lem:free-energy-concentration-around-expectation}, where we obtain a concentration expectation of $\ln Z^n$ around its expectation
$\E \ln Z^n$ using Azuma's inequality after certain truncation; the second step is to estimate the speed of convergence, i.e., how far $\E \ln Z^n$ deviates from the linear function $\alpha_0n$, using Lemmas~\ref{lem:doubling-argument-inequality}
and~\ref{lem:doubling-argument-lemma}.

Such concentration inequalities have been proved for various FPP/LPP and polymer models with different tails.  The first such result appeared in~\cite{Kesten:MR1221154} on FPP, with a tail of
$e^{-cu/\sqrt{n}}$.  Using Talagrand's inequality, this can be improved to~$e^{-cu^2/n}$.
In \cite{benjamini2003}, the authors proved that for FPP with edge weight distribution~$\Pp(w_e = a
) =\Pp(w_e = b) = 1/2$, the variance of $\ln Z^n$ is $O(\frac{n}{\log n})$, which is sublinear.
The result was later strengthened to a concentration inequality with a tail $e^{-cu \sqrt{\ln n/n}}$
  for more general distributions,  see~\cite{benaim2008} and \cite{damron2014}.
  In~\cite{Alexander2013}, similar concentration inequality was obtained for a polymer model.

Our method in this section is more elementary and will not lead to a sharper subgaussian
concentration as mentioned above, but it is sufficient, in conjunction with the convexity of the shape function, to help us to establish straightness
estimates in section~\ref{sec:delta-straightness-and-tightness}. Moreover, 
in a forthcoming paper on
the zero-temperature limit of polymer measures, we strengthen our estimates proving
a functional  concentration inequality on the free energy process indexed by temperature (or viscosity) in uniform topology.  

\begin{lemma}
  \label{lem:free-energy-concentration-around-expectation}
  There are positive constants $b_0,b_1,b_2,b_3$ such that for all $n \ge b_0$ and all $u\in (b_3, n\ln n]$, 
  \begin{equation*}
    \Pp\{ |\ln Z^n - \E \ln Z^n| > u \}
    \le
    b_1 \exp \left\{ -b_2 \frac{u^2}{n\ln^2n} \right\}.
  \end{equation*}
\end{lemma}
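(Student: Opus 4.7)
The plan is to apply the Azuma--Hoeffding inequality to a truncated version of $\ln Z^n$, then remove the truncation using assumption~\ref{item:exponential-moment-for-maximum}. Fix a truncation level $T=C\ln n$ with $C$ large (to be chosen), and set $\tilde F_k(x)=(F_k(x)\vee(-T))\wedge T$ together with the truncated partition function $\tilde Z^n$ defined by~\eqref{eq:Z} with every $F_k$ replaced by $\tilde F_k$. By construction, $|\tilde F_k|\le T$ pointwise, and each $\tilde F_k$ depends only on $F_k$.

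Let $\Fc_k=\sigma(F_0,\ldots,F_{k-1})$ and consider the Doob martingale $M_k=\E[\ln\tilde Z^n\mid\Fc_k]$, so $M_0=\E\ln\tilde Z^n$ and $M_n=\ln\tilde Z^n$. The standard resampling representation of the martingale increment, combined with the polymer-measure identity
\[
\frac{\tilde Z^n(\ldots,F_{k-1}',\ldots)}{\tilde Z^n(\ldots,F_{k-1},\ldots)} = \E_{\tilde\mu}\bigl[\exp\bigl(\tilde F_{k-1}(x_{k-1})-\tilde F_{k-1}'(x_{k-1})\bigr)\bigr]
\]
for an independent copy $F_{k-1}'$ of $F_{k-1}$, shows $|M_k-M_{k-1}|\le 2T$ almost surely since the integrand in the polymer expectation is pointwise bounded by $e^{2T}$. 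Azuma--Hoeffding then yields
\[
\Pp\bigl\{|\ln\tilde Z^n - \E\ln\tilde Z^n| > u/3\bigr\} \le 2\exp\!\Big(-\tfrac{u^2}{72\,C^2\,n\ln^2 n}\Big).
\]

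For the truncation error, pick a polynomial cutoff $L=n^{3/2}$ and consider the event $A=\{F_k^*(j)\le T: 0\le k<n,\ |j|\le L\}$. By~\ref{item:exponential-moment-for-maximum} and a union bound, $\Pp(A^c)\le 2nL\cdot\E e^{\eta F_0^*(0)}\cdot e^{-\eta T}$, which is super-polynomially small once $C$ is large. On $A$, the ratio $Z^n/\tilde Z^n=\E_{\tilde\mu^n}[\exp\sum_{k<n}(\tilde F_k(x_k)-F_k(x_k))]$ localizes on paths ever exiting $[-L,L]$. The mass of such paths, both for $Z^n$ and for $\tilde Z^n$, is controlled from above by Lemma~\ref{lem:expectation-of-Z} combined with the Gaussian decay $g_n(y)\lesssim e^{-y^2/(2n)}$, which is super-exponentially small once $|y|\ge L\gg n^{1/2}$, and from below by the exponential bound~\eqref{eq:exp-growth-guaranteed} on $Z^n_*$; together these give $|\ln Z^n-\ln\tilde Z^n|\le u/3$ with overwhelming probability. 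A parallel Jensen-plus-moment argument, using~\ref{item:exponential-moment-for-maximum} to control $\E(F^*_k(j)-T)_+\lesssim e^{-\eta T/2}$, handles the expectation comparison $|\E\ln Z^n - \E\ln\tilde Z^n|\le u/3$.

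The main obstacle is this truncation step: tuning $T$ and $L$ so that $A$ has sufficiently high probability, paths exiting $[-L,L]$ carry negligible polymer mass, and the resulting errors all fit within the target subgaussian envelope $\exp(-cu^2/(n\ln^2n))$ uniformly over $u\in(b_3, n\ln n]$. The exponential moment assumption~\ref{item:exponential-moment-for-maximum} is essential: it is precisely what permits the logarithmic truncation level $T=C\ln n$, converting the $O(nT^2)=O(n\ln^2 n)$ variance scaling in Azuma into the tail exponent of the lemma.
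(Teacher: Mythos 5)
Your Azuma step is sound: the resampling identity does give $|M_k-M_{k-1}|\le 2T$ almost surely, so that part matches the paper's use of Azuma (Lemma~\ref{lem:martingale-concentration-for-ZF-bar}) and produces the right sub-gaussian term. The gap is in removing the truncation. Your event $A=\{F_k^*(j)\le T:0\le k<n,\,|j|\le L\}$ with $T=C\ln n$ and $L=n^{3/2}$ has
\[
\Pp(A^c)\lesssim nL\,e^{-\eta T}\asymp n^{5/2-\eta C},
\]
which is \emph{polynomially} small (not ``super-polynomially'', as you claim) — raising $C$ raises the power but never changes the functional form. On $A^c$ you have no control of $|\ln Z^n-\ln\tilde Z^n|$, so your bound for $\Pp\{|\ln Z^n-\ln\tilde Z^n|>u/3\}$ is at least $\Pp(A^c)$, a quantity independent of $u$. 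But the lemma requires the conclusion for all $u$ up to $n\ln n$, and at $u=n\ln n$ the target tail is $b_1e^{-b_2u^2/(n\ln^2 n)}=b_1e^{-b_2 n}$, exponentially small in $n$. No constant $C$ makes $n^{5/2-\eta C}\le b_1e^{-b_2n}$ for large $n$, so the stated inequality fails on the upper part of the interval (roughly for $u\gtrsim n^{1/2}\ln^2 n$).

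The paper sidesteps this precisely because it never conditions on a ``no large potentials'' event. It instead separates two truncations with tails that decay in the right way: (i) restricting paths to $[-Rn,Rn]$ costs at most $O(2^{-n})$ in $|\ln Z^n-\ln\tilde Z^n|$ except on an event of probability $e^{-r_1n}$ (Lemma~\ref{lem:large-deviation-for-Z-bar}), which \emph{is} exponentially small; (ii) zeroing out entire time slices where $\xi_k=\max_{|j|\le Rn}F_k^*(j)$ exceeds $b\ln n$ gives the additive bound $|\ln\tilde Z^n-\ln\tilde Z^n(\bar F)|\le\sum_k\xi_k\ONE\{\xi_k>b\ln n\}$, whose tail is shown to be $\le 2e^{-\eta x/2}$ (Lemma~\ref{lem:partition_function_of_omega_bar_and_omega}) — a bound that decays with $x$ and therefore fits the target uniformly over $u$. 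Your pointwise clamp could in principle be made to work by the same mechanism: after restricting paths to $[-L,L]$ one has $|\ln\hat Z^n-\ln\tilde{\hat Z}^n|\le\sum_k\max_{|j|\le L}(F_k^*(j)-T)_+$, whose moment-generating function is controlled by~\ref{item:exponential-moment-for-maximum} in the same way and yields a tail $\lesssim e^{-\eta x/2}$. But as written, you replace that tail estimate with a union-bound on $\Pp(A^c)$ that does not depend on $u$, and this is where the argument breaks.
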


Here are some notational conventions. For a Borel set $B \subset \R^{n-1}$, we define
\begin{equation*}
Z^n(B) = \int_{\R\times B\times \R} \prod_{j=0}^{n-1} g(x_{j+1}-x_j)e^{-F_j(x_j)}
    \delta_0(dx_0)dx_{1}...dx_{n-1}\delta_0(dx_n).
\end{equation*}
It is often practical to use the same description for events that are technically defined on paths over different time intervals. To that end we
introduce more notation. We recall that $\pi_{m,n}$ denotes the restriction of a vector or sequence onto the time interval $[m,n]_{\Z}$.
For a Borel set $D \subset \R^{\infty} =  S^{-\infty,\infty}_{*,*}$, we
define 
\begin{equation*}
  \mu_{x,y}^{m,n}(D) = \mu_{x,y}^{m,n}(\pi_{m,n} D), \quad
  Z^{m,n}_{x,y}(D) = Z^{m,n}(x,y,D) = Z^{m,n}_{x,y} \mu_{x,y}^{m,n}(D).
\end{equation*}
For example, if $m \le k \le l \le n$ and 
\begin{equation*}
D = \{ \gamma: \gamma_k \in A_k, \dotsc, \gamma_l \in A_l\},
\end{equation*}
then 
\begin{equation*}
Z^{m,n}_{x,y}(D) =  \int_{x_k \in A_k, \dotsc, x_l \in A_l} \prod_{j=m}^{n-1} g(x_{j+1}-x_j)e^{-F_j(x_j)}
    \delta_x(dx_m)dx_{m+1}...dx_{n-1}\delta_y(dx_n).
  \end{equation*}
If $x=y=0$, then we write $Z^{m,n}_{x,y}(D) = Z^{m,n}(D)$.
  
  \begin{lemma}
    \label{lem:partition-function-not-too-small}
    There are constants $\beta, r_0, \rho_0 \in (0,1)$ such that for all $n\in\N$ and any Borel set $B$ that satisfies $[0,1]^{n-1} \subset B \subset \R^{n-1}$, 
    \begin{equation*}
      \Pp \left\{ Z^n(B) \le \rho^n \right\} \le \beta^n \left(\frac{\rho}{\rho_0}\right)^{nr_0},\quad \rho>0,
    \end{equation*}
or, equivalently,
\begin{equation*}
      \Pp \left\{ Z^n(B) \le z \right\} \le \beta^n \frac{z^{r_0}}{\rho_0^{nr_0}},\quad z>0.
    \end{equation*}
  \end{lemma}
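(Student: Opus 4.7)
The plan is to reduce the lower bound on $Z^n(B)$ to controlling a sum of \iid random variables using (A5), and then to apply Chernoff's inequality with a suitable adjustment of constants.

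First, since $B \supset [0,1]^{n-1}$, one has the monotonicity $Z^n(B) \ge Z^n([0,1]^{n-1})$. On the integration domain $[0,1]^{n-1}$, together with the fixed endpoints $x_0 = x_n = 0$, all consecutive differences satisfy $|x_{k+1}-x_k| \le 1$, so $g(x_{k+1}-x_k) \ge g(1) =: \bar g \in (0,1)$; similarly, $e^{-F_k(x_k)} \ge e^{-F_k^*(0)}$ with $F_k^*(0) = \sup_{x\in[0,1]} |F_k(x)|$ as in \ref{item:exponential-moment-for-maximum}. Taking the product and using $\Leb([0,1]^{n-1}) = 1$, we get
\[
Z^n(B) \ge \bar g^{\,n} e^{-S_n}, \qquad S_n := \sum_{k=0}^{n-1} F_k^*(0).
\]

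Consequently $\{Z^n(B) \le \rho^n\} \subset \{S_n \ge n\ln(\bar g/\rho)\}$. By \ref{item:indep-in-time} the random variables $F_k^*(0)$ are \iid, and by \ref{item:exponential-moment-for-maximum} there is $\eta > 0$ with $M := \E e^{\eta F_0^*(0)} < \infty$; we may shrink $\eta$ so that $\eta \in (0,1)$. Chernoff's inequality then gives
\[
\Pp\{S_n \ge nt\} \le M^n e^{-\eta n t}, \quad t \in \R,
\]
which, applied to $t = \ln(\bar g/\rho)$, yields
\[
\Pp\{Z^n(B) \le \rho^n\} \le M^n \left(\frac{\rho}{\bar g}\right)^{n\eta}.
\]

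To conform with the required form, fix any $\beta \in (0,1)$ (say $\beta = 1/2$), set $r_0 := \eta \in (0,1)$ and $\rho_0 := (\beta/M)^{1/\eta}\, \bar g$. Since $M \ge 1$ by Jensen and $\bar g < 1$, we have $\rho_0 \in (0,1)$. A direct algebraic check shows
\[
\beta^n \left(\frac{\rho}{\rho_0}\right)^{n r_0}
= \beta^n \cdot \frac{M^n}{\beta^n} \left(\frac{\rho}{\bar g}\right)^{n\eta}
= M^n \left(\frac{\rho}{\bar g}\right)^{n\eta},
\]
which matches the Chernoff bound above and completes the proof. The only subtlety (and the sole obstacle worth mentioning) is that $M \ge 1$ forces us to rearrange constants so that the prefactor $\beta$ lies strictly in $(0,1)$; the rearrangement above absorbs the factor $M^n$ into $\rho_0$ at the cost of shrinking $\rho_0$.
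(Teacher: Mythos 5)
Your proof is correct and follows the paper's approach: both reduce the problem to the lower bound $Z^n(B) \ge Z^n([0,1]^{n-1}) \ge g(1)^n e^{-S_n}$ with $S_n=\sum_{k=0}^{n-1}F_k^*(0)$, and then apply an exponential Chebyshev bound to the i.i.d.\ sum $S_n$ using assumption \ref{item:exponential-moment-for-maximum}. The only difference is in the final bookkeeping: the paper centers $F_k^*(0)$, introduces a free parameter $r$, and tunes $r$ and $\rho_0$ so that $H(r,\rho_0)<0$ (using the expansion $\Lambda(r)\sim\sigma^2 r^2/2$), whereas you apply Markov directly at the fixed exponent $\eta$ from \ref{item:exponential-moment-for-maximum} and then simply absorb the resulting factor $M^n$ into the definition of $\rho_0$, fixing $\beta$ arbitrarily in $(0,1)$. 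Your version is slightly cleaner since it sidesteps the centering and the analysis of the log-moment generating function; both are valid because $\rho_0$ is a free constant that may be taken as small as needed.
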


\begin{proof}
For any $\rho>0$,
\begin{align*}
&\Pp\{Z^n(B)<\rho^n\} 
\\ \le & \Pp\left\{\int_{\R\times [0,1]^{n-1}\times \R} \prod_{k=0}^{n-1} [g(x_{k+1}-x_k)e^{-F_k(x_k)}] \delta_0(dx_0)dx_{1}\ldots dx_{n-1}\delta_0(dx_n)<\rho^n \right\}
\\
\le & \Pp\left\{ g^{n}(1)e^{-\sum_{k=0}^{n-1} F^{*}_k(0)} <\rho^n\right\}
\le \Pp\left\{- \sum_{k=0}^{n-1} F^{*}_k(0)<n\ln(\rho g^{-1}(1))\right\}.
\end{align*}
Let us use the standard Cram\'er large deviation approach. Taking any number $r>0$ and using the Markov inequality, we can write
\begin{align*}
& \quad\Pp\bigg\{ -\sum_{k=0}^{n-1} F^*_k(0) <n\ln\big(\rho g^{-1}(1)\big)\bigg\} \\
&=
 \Pp\bigg\{ -\sum_{k=0}^{n-1} \Big(F^*_k(0)-\E F^*_k(0) \Big) <n\ln \Big(\rho g^{-1}(1)e^{\E
  F^*_k(0) } \Big) \bigg\}
  \\
&\le      \exp \Big( rn\ln\big(\rho g^{-1}(1)e^{\E F^*_k(0) }\big)  \Big)  \Big(\E e^{r \big(F^{*}_0(0)-\E F^{*}_0(0)\big)}\Big)^n 
\\
&\le \exp \Big( n\big(r\ln(\rho g^{-1}(1)e^{\E F^*_k(0) }) + \Lambda(r) \big) \Big)
\\
&\le \exp\Big(  n\big(r\ln(\rho_0 g^{-1}(1)e^{\E F^*_k(0)}) + \Lambda(r)  \big)  \Big) \Big(\frac{\rho}{\rho_0}\Big)^{nr}
\le e^{nH(r,\rho_0)}\left(\frac{\rho}{\rho_0}\right)^{nr},
\end{align*}
where
\[
 \Lambda(r)=\ln \E e^{r (F^{*}_0(0)-\E F^{*}_0(0))},\quad r \le \eta,
\]
and
\[
H(r,\rho_0)= r\ln(\rho_0 g^{-1}(1)e^{\E F^*_k(0)}) + \Lambda(r) ,\quad r,\rho_0>0.
\]
Here we use the assumption \ref{item:exponential-moment-for-maximum}. There is $\sigma>0$ such that 
\[
\Lambda(r)=\frac{\sigma^2}{2} r^2+o(r^2),\quad r\to 0. 
\]
Therefore, choosing $\rho_0\in(0,1)$ small enough to guarantee that
\[
 \rho_0 g^{-1}(1) e^{\E F^*_k(0)}<1,
\]
and choosing $r \in (0,1)$ sufficiently small we guarantee that $H(r,\rho_0)<0$,  
and the lemma follows.
\end{proof}

We recall the standard tail estimate for normal distribution:
\begin{lemma}
\label{lem:tail-estimate-for-normal}
For any $a, b > 0$, 
\begin{equation*}
  \int_{x \ge b } e^{-\frac{x^2}{a}} dx
  \le \frac{a}{2b} e^{-\frac{b^2}{a}};\quad\quad\quad   \int_{|x| \ge b } e^{-\frac{x^2}{a}} dx
  \le \frac{a}{b} e^{-\frac{b^2}{a}}.
\end{equation*}
\end{lemma}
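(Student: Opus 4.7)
The plan is to use the standard ``multiplying by $x/b \geq 1$'' trick that turns a Gaussian tail into an elementary exact integral. First I would observe that on the region of integration, namely $x \ge b$, one has the pointwise inequality $1 \le x/b$, so that
\[
\int_{b}^{\infty} e^{-x^{2}/a}\,dx \;\le\; \frac{1}{b}\int_{b}^{\infty} x\, e^{-x^{2}/a}\,dx.
\]
The right-hand side is an exact antiderivative computation: since $\frac{d}{dx}\bigl(-\tfrac{a}{2} e^{-x^{2}/a}\bigr) = x e^{-x^{2}/a}$, one obtains
\[
\int_{b}^{\infty} x\, e^{-x^{2}/a}\,dx \;=\; \frac{a}{2}\, e^{-b^{2}/a},
\]
which immediately yields the first inequality $\int_{x \ge b} e^{-x^{2}/a}\,dx \le \frac{a}{2b} e^{-b^{2}/a}$.

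For the second inequality, I would just use the symmetry $x \mapsto -x$ of the integrand $e^{-x^{2}/a}$ to split
\[
\int_{|x|\ge b} e^{-x^{2}/a}\,dx \;=\; 2\int_{b}^{\infty} e^{-x^{2}/a}\,dx
\]
and invoke the first bound, giving the factor-of-two worse constant $a/b$ in place of $a/(2b)$.

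There is no real obstacle here — the only small point of care is to make sure the antiderivative constant matches the factor $a/2$ (coming from the variance parameterization $e^{-x^{2}/a}$ rather than the more common $e^{-x^{2}/(2\sigma^{2})}$), but this is just bookkeeping. The lemma is purely deterministic and the argument occupies a couple of lines.
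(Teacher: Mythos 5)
Your proof is correct and is the standard argument; the paper states this lemma without proof, calling it ``the standard tail estimate for normal distribution,'' and your ``multiply by $x/b \geq 1$'' trick followed by the exact antiderivative and symmetry is precisely the expected derivation.
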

  Let
  \begin{equation*}
    \Gamma(R) = \{ (x_1, ..., x_{n-1}) \in \R^{n-1}: |x_i| \le Rn, i=1,...,n-1 \}.
  \end{equation*}
\begin{lemma}\label{lem:large-deviation-for-Z-bar}
  There are constants $R, r_1 > 0$ such that for sufficiently large $n$, 
  \begin{equation*}
    \Pp \left\{ \frac{Z^n(\Gamma(R)^c)}{Z^n} > 2^{-n} \right\} \le e^{-r_1 n}, 
  \end{equation*}
or, in terms of polymer measures,
\begin{equation*}
  \Pp \Big\{ \mu_{0,0}^{0,n} \big\{ \gamma: \max_{1\le k \le n-1}|\gamma_k| \ge Rn \big\} \ge 2^{-n}
  \Big\}  \le e^{-r_1n}.
\end{equation*}

\end{lemma}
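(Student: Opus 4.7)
The plan is to decompose the event $\{Z^n(\Gamma(R)^c)/Z^n > 2^{-n}\}$ into a lower tail for the denominator $Z^n$ and an upper tail for the numerator $Z^n(\Gamma(R)^c)$: for any $\rho > 0$,
\[
\Pp\{Z^n(\Gamma(R)^c)/Z^n > 2^{-n}\} \le \Pp\{Z^n \le \rho^n\} + \Pp\{Z^n(\Gamma(R)^c) > 2^{-n}\rho^n\}.
\]
The first summand is handled immediately by Lemma~\ref{lem:partition-function-not-too-small} applied with $B = \R^{n-1}$, giving $\Pp\{Z^n \le \rho^n\} \le \beta^n (\rho/\rho_0)^{n r_0}$. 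This decays exponentially in $n$ as soon as $\rho$ is chosen small enough that $\beta(\rho/\rho_0)^{r_0} < 1$.

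For the second summand I would first estimate $\E Z^n(\Gamma(R)^c)$. Using $\Gamma(R)^c \subset \bigcup_{k=1}^{n-1}\{|x_k| > Rn\}$, Fubini, the \iid and stationarity assumptions \ref{item:indep-in-time}--\ref{item:exponential-moment-with-viscosity} giving $\E e^{-F_j(x_j)} = \lambda$, and the convolution identity $g_k * g_{n-k} = g_n$ to integrate out all variables other than $x_k$, I get
\[
\E Z^n(\Gamma(R)^c) \le \sum_{k=1}^{n-1} \lambda^n \int_{|x|>Rn} g_k(x)\, g_{n-k}(x)\,dx.
\]
Since $g_k(x) g_{n-k}(x)$ is a constant multiple of $\exp(-x^2 n/(2k(n-k)))$ with $k(n-k)/n \le n/4$, Lemma~\ref{lem:tail-estimate-for-normal} with $a = 2k(n-k)/n$ and $b = Rn$ bounds each summand by $C R^{-1}(k(n-k))^{-1/2} e^{-2R^2 n}$. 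The discrete sum $\sum_{k=1}^{n-1} (k(n-k))^{-1/2}$ is $O(1)$, yielding
\[
\E Z^n(\Gamma(R)^c) \le \frac{C \lambda^n}{R}\, e^{-2R^2 n}.
\]

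Markov's inequality then gives $\Pp\{Z^n(\Gamma(R)^c) > 2^{-n}\rho^n\} \le \frac{C}{R}\bigl(\tfrac{2\lambda}{\rho}\bigr)^n e^{-2R^2 n}$. Once $\rho$ has been fixed for the first bound, this second bound decays exponentially as soon as $2R^2 > \ln(2\lambda/\rho)$, so choosing $R$ large enough makes both bounds at most $\tfrac12 e^{-r_1 n}$ for some $r_1 > 0$. This proves the first form of the lemma, and the polymer-measure form follows from the identity $Z^n(\Gamma(R)^c) = Z^n \cdot \mu_{0,0}^{0,n}(\Gamma(R)^c)$. There is no real obstacle: the only substantive point is that for large $R$ the Gaussian tail $e^{-2R^2 n}$ dominates the factor $(2\lambda/\rho)^n$ introduced by the Markov step, and Lemma~\ref{lem:partition-function-not-too-small} precisely enables us to replace the random $Z^n$ in the denominator by the deterministic $\rho^n$.
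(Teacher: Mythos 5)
Your proof is correct and follows essentially the same strategy as the paper: split the event into a lower tail for $Z^n$ (handled by Lemma~\ref{lem:partition-function-not-too-small} with $\rho=\rho_0$) and an upper tail for $Z^n(\Gamma(R)^c)$ (handled by Markov, Fubini, the Gaussian convolution identity, and a tail bound, decomposing over the time index $k$). The only cosmetic difference is that you keep the sharper exponent $e^{-2R^2n}$ and the exactly convergent sum $\sum_k(k(n-k))^{-1/2}=O(1)$, whereas the paper loosens to $e^{-R^2n}$ with an extra polynomial factor in $n$; both give the needed exponential decay once $R$ is large.
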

\begin{proof}
   Suppose  $1 \le k \le n-1$.
  By \ref{item:exponential-moment-with-viscosity} and the Fubini theorem, 
\begin{equation*} 
\begin{split}
  \E \int_{|y| \ge Rn} Z^{0,k}(0,y) Z^{k,n}(y,0) dy
  &= \lambda^n \int_{|y| \ge Rn} g_k(y)g_{n-k}(y) dy \\
  &= \lambda^n \int_{|y| \ge Rn} \frac{1}{2\pi \sqrt{k(n-k)}} e^{-\frac{y^2}{2k} -
    \frac{y^2}{2(n-k)}} dy. \\
\end{split}
\end{equation*}
Since $k(n-k) \ge n-1$ and 
\begin{equation*}
 \frac{y^2}{2k} + \frac{y^2}{2(n-k)} \ge \frac{y^2}{n},
\end{equation*}
we can continue the computation to find 
\begin{equation*}
  \E \int_{|y| \ge Rn} Z^{0,k}(0,y) Z^{k,n}(y,0) dy
  \le  \frac{\lambda^n}{2\pi \sqrt{n-1}} \int_{|y| \ge Rn} e^{- \frac{y^2}{n}} dy \le  \frac{\lambda^n}{2\pi  R\sqrt{n-1}} e^{-R^2n}. 
\end{equation*}
  Then by  Markov inequality,
  \begin{equation}
    \label{eq:nominator-not-too-big}
\begin{split}
  \Pp \left\{ Z^n\big(\Gamma(R)^c\big) \ge \left(\frac{\rho_0}{2}\right)^n \right\}
  &\le \frac{\E Z^n \bigl( \Gamma(R)^c \bigr)}{(\rho_0/2)^{n}}
\\  &
\le \frac{1}{(\rho_0/2)^n} \sum_{k=1}^{n-1} \E \int_{|y| \ge Rn} Z^{0,k}(0,y) Z^{k,n}(y,0) dy
\\ &
    \le \frac{n\lambda^n e^{- R^2n}}{2\pi R \sqrt{n-1} (\rho_0/2)^n}
 \le e^{-\theta_1n}
\end{split}
  \end{equation}
for some $\theta_1>0$ if $R$ is chosen large enough.  Now, taking $\rho=\rho_0$ in Lemma \ref{lem:partition-function-not-too-small}, we can find $\theta_2>0$ such that
  \begin{equation*}
    \Pp \{ Z^n \le \rho_0^n \} \le e^{-\theta_2n},
  \end{equation*}
  and the lemma follows.
\end{proof}

\begin{lemma}
  \label{lem:second-moment-growth-of-partition-function}
  There is a constant $d_1$ such that for any Borel set $B$ that satisfies $[0,1]^{n-1} \subset B \subset \R^{n-1}$,
  \begin{equation*}
    \E \ln^2 Z^n(B) \le d_1 n^2.
  \end{equation*}
  In particular, $\E \ln^2 Z^n \le d_1 n^2$ for all n.
\end{lemma}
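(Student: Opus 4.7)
\medskip
\noindent\textbf{Proof plan.} The strategy is to bound the positive and negative parts of $\ln Z^n(B)$ separately, each in $L^2(\Pp)$, and then combine via $\ln^2 x = (\ln x)_+^2 + (\ln x)_-^2$.

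For the positive part, the hypothesis $B\subset\R^{n-1}$ gives $Z^n(B)\le Z^n$, so $(\ln Z^n(B))_+\le(\ln Z^n)_+$. I would use Markov's inequality together with Lemma~\ref{lem:expectation-of-Z}, which yields $\E Z^n=\lambda^n g_n(0)=\lambda^n/\sqrt{2\pi n}$, to obtain for every $s\ge 0$
\[
\Pp\{\ln Z^n - n\ln\lambda > s\}\;\le\;\E Z^n\cdot e^{-(n\ln\lambda + s)}\;\le\;e^{-s}.
\]
Hence $\E[((\ln Z^n - n\ln\lambda)_+)^2] = \int_0^\infty 2s\Pp\{\ln Z^n - n\ln\lambda > s\}ds \le 2$, and from the elementary inequality $(a+b)_+^2\le 2a^2+2b_+^2$ with $a=n\ln\lambda$ we conclude
\[
\E\bigl[(\ln Z^n(B))_+^2\bigr]\le 2(n\ln\lambda)^2+4 = O(n^2).
\]

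For the negative part, the hypothesis $[0,1]^{n-1}\subset B$ gives the pointwise lower bound
\[
Z^n(B)\;\ge\; \int_{[0,1]^{n-1}}\prod_{k=0}^{n-1}g(x_{k+1}-x_k)e^{-F_k(x_k)}\,dx_1\cdots dx_{n-1}\;\ge\; g(1)^{\,n}\,\exp\!\Bigl(-\sum_{k=0}^{n-1}F_k^{*}(0)\Bigr),
\]
since $|x_{k+1}-x_k|\le 1$ on that cube (with $x_0=x_n=0$) forces $g(x_{k+1}-x_k)\ge g(1)$, and $e^{-F_k(x_k)}\ge e^{-F_k^{*}(0)}$ on $[0,1]$. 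Therefore
\[
(\ln Z^n(B))_-\;\le\; n\,|\ln g(1)|+\sum_{k=0}^{n-1}F_k^{*}(0).
\]
By assumption \ref{item:exponential-moment-for-maximum} the i.i.d.\ variables $F_k^{*}(0)$ have finite second moment, so $\E[(\sum_{k=0}^{n-1}F_k^{*}(0))^2]= n\,\Var F_0^{*}(0) + n^2(\E F_0^{*}(0))^2 = O(n^2)$, and applying $(a+b)^2\le 2a^2+2b^2$ once more gives $\E[(\ln Z^n(B))_-^2]=O(n^2)$.

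Adding the two estimates finishes the proof. The only place where the full assumption~\ref{item:exponential-moment-for-maximum} is needed is for $\E[F_0^{*}(0)^2]<\infty$ in the lower bound; everything else uses only Lemma~\ref{lem:expectation-of-Z} and elementary algebraic manipulations, so I do not anticipate any genuine obstacle. The conceptual point worth highlighting is that the containment $[0,1]^{n-1}\subset B\subset\R^{n-1}$ is exactly what lets one sandwich $\ln Z^n(B)$ between the global partition function (for the upper tail) and an explicit ``tube'' partition function (for the lower tail), making the estimate uniform in the choice of $B$.
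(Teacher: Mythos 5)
Your proof is correct, and it takes a genuinely different (and somewhat more elementary) route than the paper's. The paper splits $\E\ln^2 Z^n(B)$ into three regimes, $Z^n(B)\le\rho_0^n$, $\rho_0^n<Z^n(B)<\mu^n$, and $Z^n(B)\ge\mu^n$, treats the middle range trivially, handles the upper range by a dyadic Markov estimate against $\E Z^n=\lambda^n g_n(0)$, and handles the lower range by invoking Lemma~\ref{lem:partition-function-not-too-small}, a Cram\'er-type exponential lower-tail bound whose proof already relies on the tube lower bound $Z^n(B)\ge g(1)^n e^{-\sum_k F_k^*(0)}$. You instead decompose $\ln^2 Z^n(B)=(\ln Z^n(B))_+^2+(\ln Z^n(B))_-^2$, control the positive part directly by Markov applied to $Z^n\ge Z^n(B)$ (the same ingredient, but packaged as a continuous tail integral rather than a dyadic sum), and control the negative part by applying the tube lower bound \emph{pointwise} and then taking $L^2$ norms of the i.i.d.\ sum $\sum_k F_k^*(0)$. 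This short-circuits the exponential large-deviation machinery of Lemma~\ref{lem:partition-function-not-too-small}: you only need $\E[F_0^*(0)^2]<\infty$, not the full exponential moment of \ref{item:exponential-moment-for-maximum}. The paper's choice is natural in context because Lemma~\ref{lem:partition-function-not-too-small} is needed elsewhere anyway (e.g.\ Lemma~\ref{lem:large-deviation-for-Z-bar}), so reusing it costs nothing; your version is cleaner if one wants Lemma~\ref{lem:second-moment-growth-of-partition-function} in isolation. One small point of care in the positive part: the inequality $(a+b)_+^2\le 2a^2+2b_+^2$ you invoke does hold, via $(a+b)_+\le a_++b_+$ and $(a_++b_+)^2\le 2a_+^2+2b_+^2\le 2a^2+2b_+^2$, so there is no gap there.
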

\begin{proof}
  We will apply Lemma \ref{lem:partition-function-not-too-small} to control $\ln^2 Z^n(B)$ when $Z^n(B)$ is small, and when $Z^n(B)$ is large we will use Markov inequality.
  Taking any $\mu > \lambda \vee 1$, we split $\E \ln^{2}Z^n(B)$ into three parts
  \begin{align*}
    \E \ln^2Z^n(B) &= A_1 +A_2 + A_{3 } \\
    &= \E \ln^2Z^n(B)\ONE_{\{Z^n(B) \le \rho_0^n\}}
    + \E \ln^2Z^n(B) \ONE_{\{ \rho_0^n < Z^n(B) < \mu^n \}}\\
    &\quad + \E \ln^{2 } Z^n(B) \ONE_{\{ Z^n(B) \ge \mu^n   \}}.
  \end{align*}
  Clearly $A_2 \le n^2 (\ln^2\rho_0 \vee \ln^{2}\mu)$.
To estimate $A_1$, we apply Lemma \ref{lem:partition-function-not-too-small}:
\begin{align*}
  \E \ln^2 Z^2(B)\ONE_{\{  Z^n(B) \le \rho_0^n    \}}
  &= \sum_{m=0}^{\infty} \E \ln^2 Z^n(B) \ONE_{\{ \rho_0^n2^{-m-1}  < Z^n(B) \le  \rho_0^n2^{-m}   \}} \\
  &\le \sum_{m=0}^{\infty} \ln^2(\rho_0^n2^{-m-1}) \Pp \{ Z^n(B) \le \rho_0^n2^{-m} \}\\
  &\le \sum_{m=0}^{\infty}(n \ln \rho_0 - (m+1)\ln 2)^2 \beta^n 2^{-mr_0}\\
  &\le \beta^{n} \sum_{m=0}^{\infty} 2(n^2 \ln^2\rho_0 + (m+1)^2 \ln^2 2) 2^{-mr_0}\le Cn^2\beta^n,
\end{align*}
for some $C>0$ and all $n\in\N$.
For $A_3$, noting that $Z^{n}(B) \le Z^{n}$, we have
\begin{align*}
  A_3 &\le \sum_{k=1}^{\infty} \E \left[ \ln^2 Z^n(B) \ONE_{\{ \mu^{nk} \le Z^n(B) < \mu^{n(k+1)}\}}
        \right] \le \sum_{k=1}^{\infty} \ln^2\mu^{n(k+1)}\, \Pp \{ Z^n > \mu^{nk}\} \\
      & \le n^2 \sum_{k=1}^{\infty} (k+1)^2 \ln^2\mu \cdot \frac{\lambda^n}{\mu^{nk}} 
\le Cn^2,
\end{align*}
for some $C>0$ and all $n\in\N$, which completes the proof.
\end{proof}

Let us fix $R \in \N$ such that Lemma \ref{lem:large-deviation-for-Z-bar} holds and denote $Z^n(\Gamma(R))$ by $\tilde{Z}^n$.
\begin{lemma} \label{lem:truncated-log-partition-function-expectation}
For some constant $d_2 > 0$, 
\[
 0 \le \E \ln Z^n - \E \ln \tilde{Z}^n \le d_2,\quad n\in\N.
\] 
\end{lemma}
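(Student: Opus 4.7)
The lower bound $0 \le \E \ln Z^n - \E \ln \tilde Z^n$ is immediate from $\tilde Z^n \le Z^n$, so the task is to bound the difference from above by a constant independent of $n$. The idea is that on the high-probability event provided by Lemma~\ref{lem:large-deviation-for-Z-bar}, the ratio $Z^n/\tilde Z^n$ is extremely close to $1$, and on the complementary low-probability event we can afford a bound that grows only polynomially in $n$ thanks to the second-moment estimate in Lemma~\ref{lem:second-moment-growth-of-partition-function}.

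\textbf{Step 1: Splitting the expectation.} Assume $R \ge 1$ so that $[0,1]^{n-1}\subset \Gamma(R)$ and Lemma~\ref{lem:second-moment-growth-of-partition-function} applies to $\tilde Z^n = Z^n(\Gamma(R))$. Let
\[
A_n = \Bigl\{ Z^n(\Gamma(R)^c) \le 2^{-n} Z^n \Bigr\},
\]
so that $\Pp(A_n^c) \le e^{-r_1 n}$ for all $n \ge n_0$ by Lemma~\ref{lem:large-deviation-for-Z-bar}. Write
\[
\E\bigl(\ln Z^n - \ln \tilde Z^n\bigr) \;=\; \E\bigl(\ln Z^n - \ln \tilde Z^n\bigr)\ONE_{A_n} + \E\bigl(\ln Z^n - \ln \tilde Z^n\bigr)\ONE_{A_n^c}.
\]

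\textbf{Step 2: Good event.} On $A_n$, we have $\tilde Z^n = Z^n - Z^n(\Gamma(R)^c) \ge (1-2^{-n})Z^n$, so
\[
0 \;\le\; \ln Z^n - \ln \tilde Z^n \;\le\; -\ln(1-2^{-n}) \;\le\; 2^{-n+1}
\]
for $n\ge 1$. Hence the contribution of $A_n$ is at most $2^{-n+1}$.

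\textbf{Step 3: Bad event via Cauchy--Schwarz.} Since $\ln Z^n - \ln \tilde Z^n \ge 0$, we have
\[
(\ln Z^n - \ln \tilde Z^n)^2 \;\le\; 2\ln^2 Z^n + 2\ln^2 \tilde Z^n.
\]
Lemma~\ref{lem:second-moment-growth-of-partition-function} (applied with $B=\R^{n-1}$ and $B=\Gamma(R)$) gives $\E \ln^2 Z^n \le d_1 n^2$ and $\E \ln^2 \tilde Z^n \le d_1 n^2$. By the Cauchy--Schwarz inequality,
\[
\E\bigl(\ln Z^n - \ln \tilde Z^n\bigr)\ONE_{A_n^c} \;\le\; \bigl(\E(\ln Z^n - \ln \tilde Z^n)^2\bigr)^{1/2}\Pp(A_n^c)^{1/2} \;\le\; 2\sqrt{d_1}\, n\, e^{-r_1 n/2}.
\]

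\textbf{Step 4: Conclusion.} Combining the two estimates, for $n \ge n_0$,
\[
\E\ln Z^n - \E\ln \tilde Z^n \;\le\; 2^{-n+1} + 2\sqrt{d_1}\, n\, e^{-r_1 n/2},
\]
which is uniformly bounded in $n$. For the finitely many $n < n_0$, the expectation $\E(\ln Z^n - \ln \tilde Z^n)$ is finite (again by Lemma~\ref{lem:second-moment-growth-of-partition-function}), so enlarging the constant yields the desired $d_2$.

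The only mildly delicate point is verifying the absolute integrability used in Step~3 — specifically, that Lemma~\ref{lem:second-moment-growth-of-partition-function} is available for the truncated partition function. This is handled by choosing $R \ge 1$, which ensures $[0,1]^{n-1} \subset \Gamma(R)$ and thus places $\Gamma(R)$ in the admissible class of sets~$B$ for that lemma.
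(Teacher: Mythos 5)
Your proof is correct and takes essentially the same route as the paper: the lower bound is trivial, and for the upper bound you split on the high-probability event from Lemma~\ref{lem:large-deviation-for-Z-bar} (your $A_n$ is the complement of the paper's $\Lambda$), bound the contribution there by $-\ln(1-2^{-n})$, and control the contribution of the rare complementary event via Cauchy--Schwarz together with the second-moment bound of Lemma~\ref{lem:second-moment-growth-of-partition-function}. The two small points you add beyond the paper's write-up --- checking that $R\ge1$ guarantees $[0,1]^{n-1}\subset\Gamma(R)$ so Lemma~\ref{lem:second-moment-growth-of-partition-function} applies to $\tilde Z^n$, and handling the finitely many small $n$ by enlarging the constant --- are both correct and sensible housekeeping.
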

\begin{proof}
The first inequality is obvious since $\tilde{Z}^n \le Z^n$. 
Let $\Lambda = \{ \tilde{Z}^{n}/Z^n \le 1 - 2^{-n} \}$. By Lemma~\ref{lem:large-deviation-for-Z-bar}, $\Pp(\Lambda) \le e^{-r_1n}$.
We also have $\E \ln^2 \tilde{Z}^n \le d_1n^2$ and $\E \ln^2 Z^n \le d_1n^2$ by Lemma \ref{lem:second-moment-growth-of-partition-function}.
Therefore,
\begin{align*}
  \E \ln Z^n - \E\ln \tilde{Z}^{n}
  &\le \E \big(-\ln(\tilde{Z}^n/Z^n) \ONE_{\Lambda^c} \big) + \E (|\ln Z^n| + |\ln \tilde{Z}^n|)\ONE_{\Lambda} \\
  &\le -\ln(1-2^{-n}) + \sqrt{2(\E \ln^2 Z^n + \E \ln^2 \tilde{Z}^n)   } \sqrt{\Pp (\Lambda)} \\
  & \le \ln 2 + \sqrt{4d_1n^2 e^{-r_1n}}  \le d_2,
\end{align*}
and the lemma is proved.
\end{proof}

To obtain a concentration inequality for $\ln \tilde{Z}^n$, we will
apply Azuma's inequality:
\begin{lemma}
  \label{lem:azuma}
  Let $(M_k)_{0\le k\le N}$ be a martingale with respect to a
  filtration $(\Fc_k)_{k=0}^{N}$.
  Assume there are constants $c_k$, $1\le k\le N$ such that the
  increments $\Delta_k = M_k - M_{k-1}$ satisfy
  \begin{equation*}
    | \Delta_k  | \le c,\quad k=1,\dots, N.
  \end{equation*}
Then
\begin{equation*}
  \Pp \left\{ |M_N - M_0| \ge x \right\}  \le 2 \exp \left( \frac{-x^2}{ 2Nc^2} \right).
\end{equation*}
\end{lemma}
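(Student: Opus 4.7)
The plan is to prove this via the classical exponential moment method (Chernoff bound combined with Hoeffding's lemma for bounded martingale differences), which is the standard route to Azuma's inequality.

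First, I would fix $\lambda > 0$ and bound $\Pp\{M_N - M_0 \ge x\}$ by $e^{-\lambda x}\, \E e^{\lambda(M_N - M_0)}$ using Markov's inequality. To control the moment generating function, I would iteratively condition using the tower property: writing $M_N - M_0 = \sum_{k=1}^N \Delta_k$ and using $\Fc_{N-1}$-measurability of $M_{N-1}-M_0$, I get
\begin{equation*}
\E e^{\lambda(M_N - M_0)} = \E\!\left[ e^{\lambda(M_{N-1}-M_0)}\, \E\!\left[ e^{\lambda \Delta_N} \mid \Fc_{N-1}\right]\right].
\end{equation*}
The key estimate is then the conditional Hoeffding bound
\begin{equation*}
\E\!\left[ e^{\lambda \Delta_k} \mid \Fc_{k-1}\right] \le e^{\lambda^2 c^2/2},
\end{equation*}
which I would justify by the convexity of $t \mapsto e^{\lambda t}$ on $[-c,c]$: writing $t = \frac{c-t}{2c}(-c) + \frac{c+t}{2c}(c)$ and taking conditional expectation (using $\E[\Delta_k\mid\Fc_{k-1}]=0$) yields $\cosh(\lambda c)$, which is bounded by $e^{\lambda^2 c^2/2}$ via the Taylor expansion $\cosh(u) = \sum u^{2n}/(2n)! \le \sum (u^2/2)^n/n! = e^{u^2/2}$.

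Iterating this bound $N$ times gives $\E e^{\lambda(M_N - M_0)} \le e^{N\lambda^2 c^2/2}$, so that
\begin{equation*}
\Pp\{M_N - M_0 \ge x\} \le \exp\!\left(-\lambda x + \tfrac{1}{2} N\lambda^2 c^2\right).
\end{equation*}
Optimizing the right-hand side in $\lambda > 0$ with the choice $\lambda = x/(Nc^2)$ produces the bound $\exp(-x^2/(2Nc^2))$. Applying exactly the same argument to the martingale $(-M_k)$, which also has increments bounded by $c$, and then a union bound over the two one-sided events, gives the factor of $2$ in the statement.

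There is no real obstacle here — the result is Azuma's inequality in textbook form. The only subtle step is the conditional Hoeffding lemma, but that follows directly from convexity and the martingale difference property $\E[\Delta_k\mid \Fc_{k-1}]=0$; boundedness $|\Delta_k|\le c$ (rather than the $\Fc_{k-1}$-measurable bound $c_k$ originally mentioned in the setup) makes the iteration uniform and the optimization in $\lambda$ trivial.
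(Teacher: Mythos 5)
Your proof is correct and is the standard Chernoff/Hoeffding argument for Azuma's inequality. The paper itself does not prove Lemma~\ref{lem:azuma}; it is stated as a known fact and applied directly, so there is no paper proof to compare against. Your write-up supplies exactly the textbook justification: the conditional Hoeffding bound $\E[e^{\lambda\Delta_k}\mid\Fc_{k-1}]\le\cosh(\lambda c)\le e^{\lambda^2 c^2/2}$ via convexity of $t\mapsto e^{\lambda t}$ and $\E[\Delta_k\mid\Fc_{k-1}]=0$, the tower-property iteration giving $\E e^{\lambda(M_N-M_0)}\le e^{N\lambda^2 c^2/2}$, the choice $\lambda=x/(Nc^2)$, and the union bound over the two one-sided events for the factor $2$. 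You also correctly observe that although the lemma's setup mentions constants $c_k$, the hypothesis and conclusion only use the uniform bound $c$, which is all that is needed and is consistent with how the paper applies the lemma (with $c=2b\ln n$ in Lemma~\ref{lem:martingale-concentration-for-ZF-bar}).
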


To apply the Azuma inequality, we need to introduce an appropriate martingale with bounded increments.
The modified partition function $\tilde{Z}^{n}$ depends only on the potential process on $\{0,1,\dotsc,n-1\}\times [-Rn, Rn]$,
and we need an additional truncation of the potential on this set.

Let $b>4/\eta$, where $\eta$ is taken from condition \ref{item:exponential-moment-for-maximum}.
For $0 \le k \le n-1$ and $x \in [-Rn, Rn]$, we define (suppressing the dependence on $n$ for brevity)
\[\xi_k = \max\{F^{*}_k(j): j=-Rn, -Rn+1, ..., Rn-1 \},\quad k=0,\ldots,n,\] 
\begin{equation*}
  \bar{F}_k(x)
  = \begin{cases}
0, & \xi_k \ge b\ln n,\\
F_k(x), & \text{otherwise},
\end{cases}
\end{equation*}
and, setting  $x_0=x_n=0$,
\begin{equation*}
\tilde{Z}^n(\bar{F}) = \int_{|x_i| \le Rn} \prod_{j=0}^{n-1} g(x_{j+1}-x_j)
e^{-\bar{F}_j(x_j)} dx_1\ldots dx_{n-1}.
\end{equation*}

\begin{lemma}
  \label{lem:partition_function_of_omega_bar_and_omega}
 For sufficiently large $n\in\N$, the following holds true:
\begin{gather}
 \label{eq:item:3}
\E \exp \Bigl(  \frac{\eta}{2} \xi_k \ONE_{\{\xi_k \ge b\ln n\}}   \Bigr) \le 2,\\
\label{eq:item:4} \E \xi_k \le b\ln  n + 4/\eta,\\
\label{eq:item:8}  \Pp \{  | \ln \tilde{Z}^{n} - \ln \tilde{Z}^n(\bar{F}) | > x \}
    \le 2e^{-\eta x/2},\quad x > 0,\\
\label{eq:item:9} |\E \ln \tilde{Z}^n - \E\ln \tilde{Z}^n(\bar{F})| \le 4/\eta.
\end{gather}
\end{lemma}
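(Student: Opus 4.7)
The plan is to establish the four bullets in sequence, as each builds on the previous. I first record the basic tail decay of $\xi_k$: a union bound over the $2Rn$ unit intervals in $[-Rn,Rn]$ together with assumption~\ref{item:exponential-moment-for-maximum} gives $\Pp(\xi_k\ge t)\le 2Rn M_1 e^{-\eta t}$ for $t\ge 0$, where $M_1=\E e^{\eta F_0^*(0)}$. Substituting $t=b\ln n$ yields $\Pp(\xi_k\ge b\ln n)\le 2RM_1 n^{1-\eta b}$, which is a polynomial decay of degree $\eta b-1>3$ since $b>4/\eta$.

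For~\eqref{eq:item:3}, I split the expectation into the contributions from $\{\xi_k<b\ln n\}$ (bounded by $1$) and $\{\xi_k\ge b\ln n\}$. The latter, $\E[e^{(\eta/2)\xi_k}\ONE_{\{\xi_k\ge b\ln n\}}]$, is evaluated by integration by parts against the tail bound just derived, producing a contribution of order $n^{1-\eta b/2}$, which is $o(1)$ when $b>4/\eta$; hence the full expectation is at most~$2$ for large~$n$. For~\eqref{eq:item:4}, I write $\E\xi_k\le b\ln n + \E[\xi_k\ONE_{\{\xi_k\ge b\ln n\}}]$ (using $\xi_k\ge 0$) and convert the tail contribution via Jensen's inequality applied to $\phi(x)=e^x$, combined with~\eqref{eq:item:3}: this yields $\E[\xi_k\ONE_{\{\xi_k\ge b\ln n\}}]\le (2/\eta)\ln \E e^{(\eta/2)\xi_k\ONE_{\{\xi_k\ge b\ln n\}}}\le (2\ln 2)/\eta<4/\eta$.

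For~\eqref{eq:item:8}, the key deterministic observation is that $|\ln \tilde Z^n - \ln \tilde Z^n(\bar F)|\le S$, where $S=\sum_{k=0}^{n-1}\xi_k\ONE_{\{\xi_k\ge b\ln n\}}$. Indeed, $\bar F_k$ and $F_k$ coincide when $\xi_k<b\ln n$; on $\{\xi_k\ge b\ln n\}$ one has $\bar F_k\equiv 0$ and $|F_k(x_k)|\le \xi_k$ on the integration domain $\Gamma(R)$, so the pointwise ratio of integrands lies in $[e^{-\xi_k},e^{\xi_k}]$, and the same bound passes to the ratio of the integrals. Since the $F_k$ (hence the $\xi_k$) are i.i.d.\ across $k$, Markov's inequality gives
\[
\Pp(S>x)\le e^{-\eta x/2}\prod_{k=0}^{n-1}\E e^{(\eta/2)\xi_k\ONE_{\{\xi_k\ge b\ln n\}}} \le e^{-\eta x/2}\bigl(1+c\, n^{1-\eta b/2}\bigr)^n,
\]
and the product is bounded by~$2$ for sufficiently large $n$ because $b>4/\eta$ forces $n\cdot n^{1-\eta b/2}=n^{2-\eta b/2}\to 0$. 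This is precisely where the threshold $b>4/\eta$ and the ``sufficiently large $n$'' are used; it is also the main technical obstacle, in the sense that a coarser tail bound for $\xi_k$ would only yield an unwanted $e^{O(1)}$ prefactor instead of the absolute constant~$2$.

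Finally,~\eqref{eq:item:9} follows immediately from~\eqref{eq:item:8} by tail integration: $|\E\ln\tilde Z^n - \E\ln \tilde Z^n(\bar F)|\le \E|\ln\tilde Z^n - \ln\tilde Z^n(\bar F)|\le\int_0^\infty \min(1,2e^{-\eta x/2})\,dx$. The integrand is piecewise defined with crossover at $x=(2\ln 2)/\eta$, and the elementary computation gives $(2\ln 2)/\eta + 2/\eta = (2\ln 2+2)/\eta\le 4/\eta$, completing the proof.
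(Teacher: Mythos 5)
Your proof is correct and follows essentially the same route as the paper: a union bound over unit intervals plus assumption~\ref{item:exponential-moment-for-maximum} to control the tail of $\xi_k$, the threshold $b>4/\eta$ to make $\E e^{(\eta/2)\xi_k\ONE_{\{\xi_k\ge b\ln n\}}} = 1+O(n^{1-\eta b/2})$ close to~$1$, and Markov's inequality together with the i.i.d.\ structure of $(\xi_k)$ to get the exponential tail in~\eqref{eq:item:8}. One small refinement: your derivation of~\eqref{eq:item:4} via Jensen, $\E[\xi_k\ONE_{\{\xi_k\ge b\ln n\}}]\le(2/\eta)\ln\E e^{(\eta/2)\xi_k\ONE_{\{\xi_k\ge b\ln n\}}}\le(2\ln 2)/\eta$, is cleaner than the paper's tail-integration, which implicitly drops the boundary term $b\ln n\cdot\Pp(\xi_k\ge b\ln n)$ (harmless because of the ample slack, but your version sidesteps it).
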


\begin{proof}  Since $\xi_k$ is the maximum of $2Rn$ random variables with the same distribution, we have
  \begin{align}\notag
 \E \exp \Bigl( \frac{\eta}{2}\xi_k  \ONE_{\{\xi_k > b\ln n\}}\Bigr) 
  &\le  1 + \E e^{\frac{\eta}{2}\xi_k} \ONE_{\{\xi_k > b\ln n\}} \le 1 + \E \sum_{j=-Rn}^{Rn-1} e^{\frac{\eta}{2} F^{*}_k(j)} 
\ONE_{\{F^{*}_k(j) > b\ln n\}} \\
\notag
  &\le 1 + 2Rn  \E e^{ \frac{\eta}{2} F^{*}_k(0)} \ONE_{\{ F^{*}_k(0) > b\ln n\}} \le 1 + 2Rn \frac{\E e^{\eta F^{*}_k(0)}}{e^{ \frac{b\eta}{2} \ln n}} \\
  & \le 1 + \frac{c}{n^{\frac{b\eta}{2}-1}},
\label{eq:exp-moment-above-bln}
  \end{align}
  where $c = 2R \E e^{\eta F^{*}_k(0)}$ is a constant. Now \eqref{eq:item:3} follows from $b>4/\eta$.

  If $x > b\ln n$, then by Markov inequality and~\eqref{eq:item:3}, we have
\[
  \Pp \{\xi_k \ge x \} \le \Pp \{\xi_k \ONE_{\{ \xi_k \ge b\ln n\}} \ge x\} \le e^{-\eta x/2} 
\E \exp \Bigl( \frac{\eta}{2} \xi_k \ONE_{\{ \xi_k \ge b \ln n\}} \Bigr) \le 2e^{-\eta x/2}
\] for sufficiently large $n$.
This implies~\eqref{eq:item:4}:
\[
  \E \xi_k \le b \ln n + \E \xi_k \ONE_{\{\xi_k \ge b\ln n\}} 
  \le b \ln n  + \int_{b\ln n}^{\infty} \Pp \{\xi_k  \ge x\} \, dx \le b\ln
  n + \frac{4}{\eta}.
\]
It follows from the definition  of $\tilde{Z}(\bar{F})$ that
  \begin{equation*}
    |\ln \tilde{Z}^{n} - \ln \tilde{Z}^{n}( \bar{F})| \le
    \sum_{k=0}^{n-1} \xi_k\ONE_{\{\xi_k > b\ln n\}}.
  \end{equation*}
  By Markov inequality, the \iid property of $(\xi_k)$, and~\eqref{eq:exp-moment-above-bln}, we have
  \begin{align*}
\Pp \left\{ | \ln \tilde{Z}^{n} - \ln \tilde{Z}^n(\bar{F}) | > x \right\} 
  & \le \Pp \Bigl\{ \frac{\eta}{2}\sum_{k=0}^{n-1} \xi_k \ONE_{\{\xi_k > b\ln n\}} > \frac{\eta x}{2} \Bigr\} \\
  & \le e^{-\eta x/2} \E \exp \Bigl(\frac{\eta}{2} \sum_{k=0}^{n-1} \xi_k  \ONE_{\{\xi_k> b\ln n\}} \Bigr)\\
  & =  e^{-\eta x/2} \Big(\E \exp \big(  \frac{\eta}{2}\xi_0  \ONE_{\{\xi_0> b\ln n\}}\big) \Big)^{n} \\
    & \le e^{-\eta x/2} (1+ c/n^{\eta b/2-1})^{n}.
\end{align*}
Since $b>4/\eta$,~\eqref{eq:item:8} follows. It immediately implies
\begin{equation*}
\begin{split}
  |\E  \ln \tilde{Z}^{n} - \E\ln \tilde{Z}^n(\bar{F}) |
  &\le \E | \ln \tilde{Z}^n - \ln \tilde{Z}^n(\bar{F})|\\
  &= \int_0^{\infty} \Pp \{ | \ln \tilde{Z}^{n} - \ln \tilde{Z}^n(\bar{F}) | > x \} \, dx 
  \le 4/\eta,
\end{split}
\end{equation*}
so \eqref{eq:item:9} is also proved.
\end{proof}  

\begin{lemma}
\label{lem:martingale-concentration-for-ZF-bar}
For all $n\in\N$ and $x>0$,
  \begin{equation*}
    \Pp \left\{ |\ln \tilde{Z}^{n}(\bar{F}) - \E \ln
      \tilde{Z}^n(\bar{F}) | > x \right\} \le 2\exp \left\{
      -\frac{x^2}{8nb^2\ln^2 n} \right\}.
  \end{equation*}
\end{lemma}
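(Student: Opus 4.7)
The plan is to apply Azuma's inequality (the lemma cited immediately above the claim) to the Doob martingale of $\ln \tilde Z^n(\bar F)$ with respect to the filtration generated by the truncated potentials $\bar F_0, \bar F_1, \ldots, \bar F_{n-1}$, which are independent by assumption \ref{item:indep-in-time}. Define
\[
\Fc_k = \sigma(\bar F_0, \ldots, \bar F_{k-1}), \qquad M_k = \E[\ln \tilde Z^n(\bar F) \mid \Fc_k],
\]
so that $M_0 = \E \ln \tilde Z^n(\bar F)$ and $M_n = \ln \tilde Z^n(\bar F)$. With Azuma's inequality in mind, we need to show that the increments satisfy $|\Delta_k| = |M_k - M_{k-1}| \le c$ for a constant $c = 2b\ln n$; then the stated bound
\[
\Pp\{ |M_n - M_0| > x \} \le 2 \exp\bigl\{ -x^2/(8 n b^2 \ln^2 n) \bigr\}
\]
follows directly by taking $N = n$ and $c = 2b\ln n$ in the Azuma bound $2 \exp(-x^2/(2Nc^2))$.

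The key quantitative input is a deterministic bounded-difference estimate: if two realizations $\bar F$ and $\bar F'$ of the truncated potential differ only in the $k$-th coordinate, then
\[
|\ln \tilde Z^n(\bar F) - \ln \tilde Z^n(\bar F')| \le 2b \ln n.
\]
This holds because of the way we defined the truncation: when $\xi_k < b\ln n$, the bound $|\bar F_k(x)| = |F_k(x)| \le F_k^*(\lfloor x\rfloor) \le \xi_k < b \ln n$ is in force for every $x \in [-Rn, Rn]$, while when $\xi_k \ge b\ln n$ the truncated value is $\bar F_k(x) = 0$. So in all cases $\|\bar F_k\|_\infty \le b\ln n$ on $[-Rn, Rn]$. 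Since $\bar F_k(x_k)$ enters $\tilde Z^n(\bar F)$ only through the factor $e^{-\bar F_k(x_k)}$ inside the integral defining $\tilde Z^n$, replacing $\bar F_k$ by $\bar F_k'$ multiplies the integrand pointwise by a factor in $[e^{-2b\ln n}, e^{2b\ln n}]$, and hence multiplies $\tilde Z^n(\bar F)$ itself by such a factor. Taking logarithms gives the claimed $2b\ln n$ bound.

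To pass from this deterministic bounded-difference estimate to the martingale increment bound, I use the standard independence argument: since $\bar F_0, \ldots, \bar F_{n-1}$ are independent, we can write
\[
M_k = \E\bigl[ \Psi(\bar F_0, \ldots, \bar F_{n-1}) \mid \bar F_0, \ldots, \bar F_{k-1} \bigr], \qquad \Psi := \ln \tilde Z^n(\bar F),
\]
and represent $M_{k-1}$ by first conditioning on $\bar F_k$ and then averaging it out against an independent copy $\bar F_k'$. Then
\[
\Delta_k = \E\bigl[\, \Psi(\bar F_0,\ldots,\bar F_k,\ldots) - \Psi(\bar F_0,\ldots,\bar F_k',\ldots) \,\bigm|\, \Fc_k, \bar F_k' \bigr],
\]
where the inner difference is $\le 2b\ln n$ pointwise by the deterministic estimate above; taking conditional expectations preserves the bound, giving $|\Delta_k| \le 2b\ln n$ almost surely. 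The main (and really only) subtlety is the truncation bookkeeping that produces the $L^\infty$ bound on $\bar F_k$ on the relevant strip $[-Rn, Rn]$ — once that is in place, Azuma plugs in mechanically and yields exactly the stated inequality.
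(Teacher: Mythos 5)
Your proof is correct and follows essentially the same route as the paper: a Doob martingale for $\ln\tilde Z^n(\bar F)$ with respect to the filtration generated by the (truncated) potentials, the $L^\infty$ bound $\|\bar F_k\|_\infty \le b\ln n$ on $[-Rn,Rn]$ from the definition of the truncation, the standard independent-copy argument to convert that into the increment bound $|\Delta_k|\le 2b\ln n$, and then Azuma with $N=n$, $c=2b\ln n$. The paper conditions on $\sigma(F_0,\ldots,F_k)$ rather than $\sigma(\bar F_0,\ldots,\bar F_k)$, but since $\ln\tilde Z^n(\bar F)$ is a function of the $\bar F_i$'s only and these are independent, the two martingales coincide, so this is a cosmetic difference.
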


\begin{proof}
    Let us introduce the following martingale $(M_k,\Fc_k)_{-1\le k \le n-1}$:
\begin{equation*}
  M_k = \E ( \ln \tilde{Z}^n(\bar{F})\,|\, \Fc_k ), \quad -1 \le k \le n-1,
\end{equation*}
where 
\begin{equation*}
  \Fc_{-1}= \{\emptyset,\Omega \}, \quad
  \Fc_k = \sigma \big( F_{i}(x) : 0 \le i \le k \big), \quad  k=0,\dotsc,n-1.
\end{equation*}
If we can show that $|M_{k} - M_{k-1}| \le 2b\ln n$, $ 0 \le k \le n-1$, 
then the conclusion of the lemma follows immediately from Azuma's inequality (Lemma~\ref{lem:azuma}).

For a process $\bar G$, an  independent distributional copy of $\bar F$, we define 
\begin{multline*}
  \tilde{Z}^n ([\bar{F},\bar{G}]_k) = \int_{|x_i|\le Rn}
\prod_{i=0}^k g(x_{i+1}-x_i) e^{-\bar{F}_i(x_i)} \\ \cdot \prod_{i=k+1}^{n-1} g(x_{i+1}-x_i)
e^{-\bar{G}_i(x_i)} \delta_0(d x_0) dx_1\dotsm dx_{n-1} \delta_0(d x_n).
\end{multline*}
Denoting by $P_k$ the distribution of $\bar{F}_{k}(\cdot)$, we obtain 
\begin{equation*}
\begin{split}
&|M_{k} - M_{k-1}| 
\\=& \Bigg| \int \ln \tilde{Z}^n([\bar{F},\bar{G}]_k) \prod_{i=k+1}^{n-1} P_i\big( d
\bar{G}_i \big) -
\int \ln \tilde{Z}^n([\bar{F},\bar{G}]_{k-1}) \prod_{i=k}^{n-1} P_i\big( d \bar{G}_i \big) \Bigg|
\\
\le& \int \Big|  \ln \tilde{Z}^n([\bar{F},\bar{G}]_k) - \ln \tilde{Z}^n([\bar{F},\bar{G}]_{k-1}) \Big|
\prod_{i=k}^{n-1} P_i\big( d \bar{G}_i \big) \\
\le& \int \Big( \sup_{|x| \le Rn} |\bar{F}_k(x)| + \sup_{|x| \le Rn} | \bar{G}_k(x)| \Big)
\prod_{i=k}^{n-1} P_i\big( d \bar{G}_i \big) \le 2b\ln n,
\\
\end{split}
\end{equation*}
since $|\bar{F}_k(x)|$ and $|\bar{G}_k(x)|$ are bounded by  $b \ln n$.
This completes the proof.
\end{proof}

\begin{proof}[Proof of Lemma \ref{lem:free-energy-concentration-around-expectation}]
  Suppose $u \in \big( 3(d_2+4/\eta + 3), n \ln n \big] $, where $d_2$ is introduced in Lemma \ref{lem:truncated-log-partition-function-expectation}.
  Then 
\begin{align*}
  \Pp \big\{ |\ln Z^n -& \E \ln Z^n| \ge u \big\}
  \le \Pp \bigl\{ |\ln Z^n - \ln \tilde{Z}^{n}| \ge 1 \bigr\} \\
  &+  \Pp \bigl\{ | \ln \tilde{Z}^n - \ln \tilde{Z}^n(\bar{F}) | \ge \frac{u}{3} \bigr\}
  +  \Pp \bigl\{ |\ln \tilde{Z}^n(\bar{F}) - \E \tilde{Z}^n(\bar{F}) | \ge \frac{u}{3}\bigr\}\\
  &+  \Pp \bigl\{  |\E \ln \tilde{Z}^n(\bar{F}) - \E \ln \tilde{Z}^n| \ge 4/\eta +1 \bigr\}\\
  &+ \Pp \bigl\{ |\E \ln \tilde{Z}^n - \E \ln Z^n| \ge d_2 +1  \bigr\}.
\end{align*}
By \eqref{eq:item:9} and Lemma
\ref{lem:truncated-log-partition-function-expectation}, the last two terms equal 0.
By Lemma \ref{lem:large-deviation-for-Z-bar}, we have
\begin{equation*}
\Pp \big\{ |\ln Z^n - \ln \tilde{Z}^n | \ge |\ln (1-2^{-n}) | \; \big\} \le 2^{-r_1n},
\end{equation*}
which bounds the first term.
The second term is bounded by $2e^{-\eta u/6}$ by \eqref{eq:item:8}
and 
the third term is bounded by
$2e^{-u^2/72b^2n\ln n}$
by Lemma \ref{lem:martingale-concentration-for-ZF-bar}.
Combining all these estimates together, we obtain
\begin{equation*}
  \Pp \big\{ |\ln Z^n - \E \ln Z^n| \ge u \big\} \le 2^{-r_1n} + 2e^{-\frac{\eta u}{6}} + 2e^{ -
    \frac{u^2}{72b^2n \ln^2 n}}
  \le b_1 e^{-b_2 \frac{u^2}{n\ln^2 n}},
\end{equation*}
for some constants $b_1,b_2 > 0$, where in the last inequality we use $u \le n\ln n$.
\end{proof}

To prove Theorem~\ref{thm:concentration-of-free-energy}, we need to estimate $|\E \ln Z^n - \alpha_0n|$.

\begin{lemma}
\label{lem:small-probability-for-single-square-root-jump}
There are constants $R', r_2 > 0$ such that for sufficiently large $n$,
\begin{equation*}
  \Pp \Bigl\{ \mu_{0,0}^{0,n} \{ \gamma:  | \gamma_k -
    \gamma_{k+1}| \ge R' \sqrt{n} \} \ge 2^{-n}
  \Bigr\} \le e^{-r_2n},\quad 0 \le k \le n-1.
\end{equation*}
\end{lemma}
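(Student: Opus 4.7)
The plan is to mimic the strategy of Lemma~\ref{lem:large-deviation-for-Z-bar}: control the numerator $Z^n(A_{k,n})$ in expectation, convert this to a high-probability bound via Markov, and divide by a high-probability lower bound on $Z^n$, where
\[
A_{k,n} = \{\gamma : |\gamma_k-\gamma_{k+1}| \ge R'\sqrt{n}\}.
\]

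First, for $1\le k\le n-2$, Fubini, the \iid assumption~\ref{item:indep-in-time}, and Lemma~\ref{lem:expectation-of-Z} give
\begin{align*}
\E Z^n(A_{k,n})
&= \lambda^n \int\!\!\int_{|y-x|\ge R'\sqrt{n}} g_k(x)\, g(y-x)\, g_{n-k-1}(y)\, dx\, dy \\
&= \lambda^n \int_{|z|\ge R'\sqrt{n}} g(z)\, g_{n-1}(z)\, dz,
\end{align*}
where the last step uses the substitution $z = y-x$ together with the convolution identity $\int g_k(x) g_{n-k-1}(x+z)\,dx = g_{n-1}(z)$. Bounding $g_{n-1}(z)\le 1/\sqrt{2\pi(n-1)}$ and invoking Lemma~\ref{lem:tail-estimate-for-normal} with $a=2$, $b=R'\sqrt{n}$ yields
\[
\E Z^n(A_{k,n}) \;\le\; \lambda^n\cdot \frac{1}{\sqrt{2\pi(n-1)}}\cdot \frac{2}{R'\sqrt{2\pi n}}\, e^{-(R')^2 n/2}.
\]
The edge cases $k=0$ and $k=n-1$ produce the same type of estimate after pulling out the single factor $\E e^{-F_0(0)}=\lambda$ (resp.\ $\E e^{-F_{n-1}(x_{n-1})}=\lambda$) and reducing to a single Gaussian tail integral $\int_{|x|\ge R'\sqrt{n}} g(x)g_{n-1}(x)\,dx$, which is of the same form.

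Second, choose $R'$ large enough that $(R')^2/2 > \ln\lambda + \ln(2/\rho_0)$, where $\rho_0$ is the constant from Lemma~\ref{lem:partition-function-not-too-small}. Then there exists $\theta>0$ with
\[
\E Z^n(A_{k,n}) \;\le\; (\rho_0/2)^n e^{-\theta n}
\]
for all sufficiently large $n$, uniformly in $k\in\{0,\dots,n-1\}$, and Markov's inequality gives
\[
\Pp\{Z^n(A_{k,n}) \ge (\rho_0/2)^n\} \;\le\; e^{-\theta n}.
\]
Third, by Lemma~\ref{lem:partition-function-not-too-small} applied with $B=\R^{n-1}$ and $\rho=\rho_0$, there exists $\theta_2>0$ such that $\Pp\{Z^n\le \rho_0^n\}\le e^{-\theta_2 n}$. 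On the intersection of the two complementary events,
\[
\mu_{0,0}^{0,n}(A_{k,n}) \;=\; \frac{Z^n(A_{k,n})}{Z^n} \;\le\; \frac{(\rho_0/2)^n}{\rho_0^n} \;=\; 2^{-n},
\]
so the lemma follows with $r_2 = \min(\theta,\theta_2)/2$ (say), after absorbing constants into the choice of $n$ large.

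The proof is essentially an $L^1$-moment computation, and there is no serious obstacle; the only item requiring a bit of care is that the estimate $(R')^2/2 > \ln\lambda + \ln(2/\rho_0)$ must be strong enough to dominate \emph{both} the exponential $\lambda^n$ coming from $\E e^{-F_k}$ and the exponential $(2/\rho_0)^n$ needed to guarantee the ratio bound $2^{-n}$. This fixes the required size of $R'$ in terms of the model constants $\lambda$ and $\rho_0$.
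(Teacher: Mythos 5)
Your proof is correct and follows essentially the same strategy as the paper: bound $\E Z^n(A_k)$, apply Markov to get $\Pp\{Z^n(A_k)\ge(\rho_0/2)^n\}\le e^{-\theta n}$, and combine with the lower bound $\Pp\{Z^n\le\rho_0^n\}\le e^{-\theta_2 n}$ from Lemma~\ref{lem:partition-function-not-too-small}. The only cosmetic difference is in how the Gaussian tail is extracted from $\E Z^n(A_k)$: you use the convolution identity $g_k*g_{n-k-1}=g_{n-1}$ and then bound $g_{n-1}$ by its maximum, while the paper first bounds $g(x_0-x_1)\le 1/\sqrt{2\pi}$ and changes variables to increments $y_j=x_j-x_{j+1}$ so the remaining integral factors; both routes yield the same $e^{-(R')^2 n/2}$ decay.
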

\begin{proof}  Let $A_k = \{ \gamma: | \gamma_k - \gamma_{k+1}| \ge R' \sqrt{n} \}$.
  The random variables $\mu_{0,0}^{0,n}(A_{n-1})$ and $\mu_{0,0}^{0,0}(A_0)$ are identically
  distributed.
  So let us assume $k \neq 0$.
  
  By Markov inequality, for $\rho_0$ defined in Lemma \ref{lem:partition-function-not-too-small}, 
\begin{equation*}
\begin{split}
  &\quad \Pp \Big\{ Z^{0,n}_{0,0}(A_k) \ge \Big(\frac{\rho_0}{2} \Big)^n \Big\} 
 \le \Big( \frac{2}{\rho_0} \Big)^n \E Z^{0,n}_{0,0}(A_k) \\
  &\le \Big( \frac{2}{\rho_0}\Big)^n \lambda^n
  \int_{|x_k-x_{k+1}| \ge R' \sqrt{n}} \prod_{j=0}^{n-1} g(x_j-x_{j+1}) \delta_0(dx_0)dx_1\cdots
  dx_{n-1} \delta_0(dx_n) \\
  &\le \Big( \frac{2}{\rho_0}\Big)^n \lambda^n \frac{1}{\sqrt{2\pi}}
  \int_{|y_k| \ge R' \sqrt{n}} \prod_{j=1}^{n-1} g(y_j) dy_1 \cdots dy_{n-1} \\
  &=  \Big( \frac{2}{\rho_0}\Big)^n \lambda^n \frac{1}{\sqrt{2\pi}}
  \int_{|y_k| \ge R' \sqrt{n}} g(y_k) dy_k 
  \le \Big( \frac{2}{\rho_0}\Big)^n \lambda^n \frac{1}{2\pi R' \sqrt{n}}e^{-R'^2n/2}
  \le e^{-Cn}
\end{split}
\end{equation*}
for some constant $C>0$ and all $n$
if  $R'$ is chosen sufficiently large.
Here, in the third line, we use $g(x_0-x_1) \le 1/\sqrt{2\pi}$ and then a
change of variables $y_i = x_i-x_{i+1}$.
The lemma now follows from 
\begin{equation*}
  \Pp \big\{  \mu_{0,0}^{0,n}(A_k) \ge 2^{-n} \}
  \le \Pp \big\{ Z^{0,n}_{0,0} \le \rho_0^n \big\} + \Pp \Big\{ Z^{0,n}_{0,0}(A_k) \ge \Big(
  \frac{\rho_0}{2} \Big)^n \Big\}
\end{equation*}
and Lemma \ref{lem:partition-function-not-too-small}.
\end{proof}

\begin{lemma}
  \label{lem:doubling-argument-inequality}
  There is positive constant $b_4$ such that for sufficiently large $n$, 
  \begin{equation*}
    |\E \ln Z^{0,2n} - 2\E \ln Z^{0,n} | \le b_4 n^{1/2} \ln^2 n.
  \end{equation*}
\end{lemma}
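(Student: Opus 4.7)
The plan is to establish matching upper and lower bounds on $\E\ln Z^{0,2n} - 2\E\ln Z^{0,n}$ of order $n^{1/2}\ln^2 n$, both based on the Chapman--Kolmogorov-type factorization
\[
Z^{0,2n}(0,0) = \int_\R Z^{0,n}(0,y)\, Z^{n,2n}(y,0)\,dy,
\]
in which the two factors are independent by~\ref{item:indep-in-time} and each is controlled by Lemma~\ref{lem:shear-for-Z_v}: the process $\bar Z_n(y) = e^{y^2/(2n)} Z^{0,n}(0,y)$ is stationary in $y$ with marginal distribution that of $Z^{0,n}(0,0)$. In particular $\E \ln Z^{0,n}(0,y) = \E \ln Z^n - y^2/(2n)$, and the time-reversed analogue holds for $Z^{n,2n}(y,0)$.

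The lower bound on $\E\ln Z^{0,2n}$ is easy. Restricting the integral to $|y|\le \sqrt n$, applying Jensen's inequality with the uniform probability on $[-\sqrt n,\sqrt n]$, and taking expectations yields
\[
\E \ln Z^{0,2n} \ge 2\E\ln Z^n + \tfrac{1}{2}\ln(2\sqrt n) - \tfrac{1}{3},
\]
which is much stronger than the stated inequality in this direction.

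For the upper bound, fix $K = C\sqrt{n\ln n}$ with $C$ large and split $Z^{0,2n}(0,0) = T_1 + T_2$, where $T_1$ is the integral over $|y|\le K$ and $T_2$ over $|y|>K$. A first-moment computation using Lemma~\ref{lem:expectation-of-Z} and Lemma~\ref{lem:tail-estimate-for-normal}, combined with the lower bound from Lemma~\ref{lem:partition-function-not-too-small}, shows $\ln Z^{0,2n} \le \ln T_1 + 1$ off a super-polynomially small event whose contribution to $\E\ln Z^{0,2n}$ is absorbed using Lemma~\ref{lem:second-moment-growth-of-partition-function}. On the main event, since $-y^2/(2n)\le 0$,
\[
\ln T_1 \le \ln(2K) + \max_{|y|\le K} X_y + \max_{|y|\le K} Y_y,
\]
where $X_y = \ln \bar Z_n(y)$ and $Y_y$ is built analogously from the second time block; both are stationary in $y$ with marginals distributed as $\ln Z^n$. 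It therefore suffices to show $\E \max_{|y|\le K} X_y \le \E \ln Z^n + O(\sqrt n \ln^{3/2} n)$, and analogously for $Y_y$.

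For this last estimate I would discretize $[-K,K]$ at a spacing $\delta$ polynomial in $n^{-1}$, giving $N = O(\mathrm{poly}(n))$ grid points. For each grid point, Lemma~\ref{lem:free-energy-concentration-around-expectation} applied to $X_{y_i}\stackrel{d}{=}\ln Z^n$ gives exceedance probability $b_1 \exp\{-b_2 u^2/(n\ln^2 n)\}$, so a union bound with $u$ of order $\sqrt n \ln^{3/2} n$ controls the grid maximum. To pass from grid to continuum, use Lemma~\ref{lem:smoothness-of-partition-function} together with the identity $\partial_y \ln Z^{0,n}(0,y) = \E_{\mu^{0,n}_{0,y}}[x_{n-1}] - y$; Lemma~\ref{lem:main-monotonicity} bounds the polymer mean of $x_{n-1}$ at the extreme endpoints $y=\pm K$, and a shear-transformed version of Lemma~\ref{lem:large-deviation-for-Z-bar} controls it with high probability at intermediate $y$. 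The main obstacle is turning this high-probability derivative estimate into expectation-level oscillation control sharp enough to pass from the grid to the continuum without losing more than $n^{1/2}\ln^2 n$, which will require combining the derivative bound with the second-moment estimates of Lemma~\ref{lem:second-moment-growth-of-partition-function} to absorb the rare events where the derivative is large.
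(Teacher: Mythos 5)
Your lower bound is a genuinely different and cleaner route than the paper's: Jensen's inequality against the uniform measure on $[-\sqrt n,\sqrt n]$, combined with the shear identity $\E\ln Z^{0,n}(0,y)=\E\ln Z^n-y^2/(2n)$, directly gives $\E\ln Z^{0,2n}-2\E\ln Z^n\ge -O(1)$, whereas the paper does a more laborious path-surgery comparison between $Z^{0,2n}(B\cap C)$ and a product $Z^{0,n}(D^-)Z^{n,2n}(D^+)$. Your version is shorter and gets the same one-sided conclusion.

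The upper bound, however, has two problems. First, the choice $K=C\sqrt{n\ln n}$ is too small for the first-moment truncation to work. You need $\Pp\{T_2>(\rho_0/2)^{2n}\}$ to be summable, but by Markov and Lemma~\ref{lem:expectation-of-Z},
\[
\Pp\{T_2>(\rho_0/2)^{2n}\}\le \Bigl(\tfrac{2\lambda}{\rho_0}\Bigr)^{2n}\cdot O\bigl(e^{-K^2/n}\bigr),
\]
and with $K=C\sqrt{n\ln n}$ the Gaussian tail $e^{-K^2/n}=n^{-C^2}$ decays only polynomially, which cannot beat the exponential factor $(2\lambda/\rho_0)^{2n}$. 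You must take $K$ of order $n$ (the paper takes $K=2Rn$, encoded in the set $B$), which also means your sup is over an interval of length $O(n)$ with $O(n)$ integer points; the union bound and concentration step still go through, and in fact become essentially the same as the paper's $\E\max_k X_k$ estimate.

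Second, and more seriously, the grid-to-continuum step is a genuine gap, as you yourself flag. You want $\E\sup_{|y|\le K}\ln\bar Z_n(y)\le\E\ln Z^n+O(\sqrt n\ln^{3/2}n)$, and you propose to control the oscillation between grid points through the derivative $\partial_y\ln Z^{0,n}(0,y)=\E_{\mu^{0,n}_{0,y}}[x_{n-1}]-y$. But bounding $\E\sup_{|y|\le K}|\partial_y\ln\bar Z_n(y)|$ is exactly the same kind of sup-of-a-stationary-process problem you started with, and a cheap route through $\E\bigl|\E_{\mu^{0,n}_{0,0}}[x_{n-1}]\bigr|$ is not obviously $O(\mathrm{polylog}\,n)$ because of rare events where the polymer measure puts weight on distant $x_{n-1}$. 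The paper avoids ever forming a continuum supremum of partition-function ratios: it decomposes $Z^{0,2n}(B\cap C)$ according to $\gamma_n\in[k,k+1)$ and does a \emph{pathwise} surgery, moving $\gamma_n$ to the integer $k$, which is controlled because the restriction to the small-jump set $C$ and the local potential bound $F_n^*(k)$ give a deterministic factor $e^{K_3\sqrt n+2F_n^*(k)}$. That pointwise comparison of actions is what lets the paper reduce the continuous integral directly to the grid maximum $\max_k\ln Z^{0,n}(0,k)$ without needing to bound $\sup_y Z^{0,n}(0,y)/Z^{0,n}(0,[y])$. If you want to finish your version, you should replace the derivative argument with a path-surgery comparison on the event where jumps at time $n$ are $O(\sqrt n)$ (Lemma~\ref{lem:small-probability-for-single-square-root-jump}), which is precisely what the set $C$ in the paper is for.
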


\begin{proof}
  For $R, R'$ in Lemma \ref{lem:large-deviation-for-Z-bar} and Lemma
  \ref{lem:small-probability-for-single-square-root-jump},  define
  \begin{align*}
  B &= \{ \gamma:   \max_{1 \le i \le 2n-1}|\gamma_i| \le 2Rn \}, \\
  C &= \{ \gamma : |\gamma_n-\gamma_{n+1}| \le R'\sqrt{2n},\
      |\gamma_n-\gamma_{n-1}| \le R' \sqrt{2n}  \}.
\end{align*}
Using an argument similar to the proof of
Lemma \ref{lem:truncated-log-partition-function-expectation}, we  can obtain
\begin{equation}
\label{eq:truncation_does-not-change-much}
  |\E \ln Z^{0,2n} (B\cap C) - \E \ln Z^{0,2n}| \le d_3,
\end{equation}
for some constant $d_3$.
In fact, introducing
$
  \Lambda = \left\{ \frac{Z^{0,2n}(B\cap C)}{Z^{0,2n}} \le 1-3\cdot 2^{-n}
  \right\}$,
we can write for large $n$:
\begin{align*}
  & \E \ln  Z^{0,2n}  - \E \ln Z^{0,2n}(B\cap C) \\
  \le & \E \big( - \ln (Z^{0,2n}(B
  \cap C) / Z^{0,2n}) \ONE_{\Lambda^c} \big)
  + \E \big(|\ln Z^{0,2n}(B\cap C)| + | \ln Z^{0,2n}|\big) \ONE_{\Lambda} \\
  \le & -\ln(1-3\cdot 2^{-n}) + \sqrt{2 \big( \ln^2 Z^{0,2n}(B \cap C) + \ln^2 Z^{0,2n} \big)
  } \sqrt{\Pp(\Lambda)} \\
  \le & -\ln(1-3\cdot 2^{-n}) + \sqrt{4d_1n^2 e^{-K_2n}} \le d_3,
\end{align*}
where in the last line we used Lemma
\ref{lem:second-moment-growth-of-partition-function}
and the estimate $\Pp(\Lambda) \le e^{-K_2n}$ for some constant $K_2$ implied by Lemmas~\ref{lem:large-deviation-for-Z-bar} 
and~\ref{lem:small-probability-for-single-square-root-jump}.
Clearly, $\E \ln Z^{0,2n} > \E \ln Z^{0,2n}(B \cap C)$, so
(\ref{eq:truncation_does-not-change-much}) follows.

To prove the lemma, it is sufficient now to obtain upper and lower bounds on $\E \ln Z^{0,2n} (B\cap C)$.
First, we bound $\E \ln Z^{0,2n}(B\cap C)$ from below. Since $|\gamma_n| \le Rn$ on~$B$, we have 
\begin{align*}
  Z^{0,2n}(B\cap C)
\ge Z^{0,2n}(B\cap C \cap \{\gamma_n \in [0,1) \}). 
\end{align*}
Let us now compare the action of every path $\gamma$ in $B\cap C \cap \{\gamma_n \in [0,1) \}$ to the action
of the modified path $\bar \gamma$ defined by $\bar{\gamma}_n = 0$ and 
$\bar{\gamma}_j = \gamma_j$ for $j \neq n$. We recall that action was defined in~\eqref{eq:action}.
Since $|\gamma_{n+1} - \gamma_{n}| \le R' \sqrt{2n}$, $|\gamma_n-\gamma_{n-1}| \le R' \sqrt{2n}$, and $|\gamma_n| \le 1$,
we get
\begin{equation*}
\begin{split}
  |A^{0,2n}(\gamma) - A^{0,2n}(\bar \gamma)|
   &\le \frac{1}{2}\left| (\gamma_{n+1}-\gamma_n)^2-\gamma_{n+1}^2 +
    (\gamma_{n-1}-\gamma_n)^2 - \gamma_{n-1}^2
  \right|
  + 2F_{n}^{*}(0)
  \\ &\le 2R' \sqrt{2n} + 1 + 2F_{n}^{*}(0).
\end{split}
\end{equation*}
So, there is a constant $K_3>0$ such that 
\begin{equation}
\label{eq:Z-almost-a-product}
  Z^{0,2n}(B\cap C)\ge  Z^{0,n}(D^-)Z^{n,2n}(D^+) e^{-K_3 \sqrt{n} - 2F_{n}^{*}(0)},
\end{equation}
where
\begin{align*}
  D^+ &= \{\gamma: |\gamma_{n+1}| \le R' \sqrt{2n} + 1,\
        |\gamma_i| \le 2Rn,\ n+1 \le  i \le 2n-1 \}, \\
    D^- &= \{ \gamma: |\gamma_{n-1}| \le R' \sqrt{2n} + 1,\
        |\gamma_i| \le 2Rn,\ 1 \le  i \le n-1\}.
\end{align*}
Similarly to~\eqref{eq:truncation_does-not-change-much}, there is a constant $d_4$ such that
\begin{equation*}
  |\E \ln Z^{n}(D^{+}) - \E \ln Z^{n} | \le d_4, \quad
  |\E \ln Z^n(D^-) - \E \ln Z^n | \le d_4.
\end{equation*}
Combining this with~\eqref{eq:Z-almost-a-product}, we obtain
\begin{align*}
\E \ln  Z^{0,2n}(B \cap C)
  &\ge  \E \ln Z^{0,n}(D^-) + \E \ln Z^{n,2n}(D^+) -K_3 \sqrt{n} - 2\E F_{n}^{*}(0)
     \\
  &\ge  2\E \ln Z^{0,n} - 2d_4-K_3 \sqrt{n} - 2\E F_{n}^{*}(0),
\end{align*}
the desired lower bound.

Next, we bound $\E \ln Z^{0,2n}(B\cap C)$ from above. 
 Similarly to~\eqref{eq:Z-almost-a-product}, we compare actions of generic paths in $B\cap C$  to the actions of the modified paths that take integer
values at time $n$:
\begin{align*}
  Z^{0,2n}(B\cap C)
  &= \sum_{k=-2Rn}^{2Rn-1}  Z^{0,2n}(B\cap C\cap \{\gamma_n \in [k,k+1) \} ) \\
  &\le \sum_{k=-2Rn}^{2Rn-1} Z^{0,n}(0,k)Z^{n,2n}(k,0) e^{K_3 \sqrt{n} + 2F_{n}^{*}(k)} \\
  &\le 4Rn \max_k [Z^{0,n}(0,k)Z^{n,2n}(k,0)]
    e^{K_3 \sqrt{n} + 2\max_k F_{n}^{*}(k)}, 
\end{align*}
where the maxima are taken over $-2Rn \le k \le 2Rn-1$.
Taking logarithm and then expectation of both sides, we obtain
\begin{align*}
  &\quad \E \ln Z^{0,2n}(B\cap C) \\
  &\le \E \max_k \ln Z^{0,n}(0,k) + \E \max_k \ln Z^{n,2n}(k,0)
    + \ln(4Rn) + K_3 \sqrt{n} + 2 \E \max_k F_{n}^{*}(k) \\
  &\le \max_k \E \ln Z^{0,n}(0,k) + \E \max_k X_k
    + \max_k \E \ln Z^{n,2n}(k,0) + \E \max_k Y_k \\ & \hspace{9cm}+ K_{4}(\ln n + \sqrt{n} + 1) \\
  &\le 2\E\ln Z^{0,n} + \E \bigl[\max_k X_k+\max_k Y_k\bigr] + K_{4} (\ln n + \sqrt{n} + 1),
\end{align*}
for some constant $K_4>0$, where
\begin{equation*}
  X_k = \ln Z^{0,n}(0,k) - \E\ln Z^{0,n}(0,k), \quad
  Y_k = \ln Z^{n,2n}(k,0) - \E\ln Z^{n,2n}(k,0).
\end{equation*}
 In the second inequality, we used~\eqref{eq:item:4} to
 conclude 
\begin{equation*}
  \E \max_{-2Rn\le k \le 2Rn-1}F_{n}^{*}(k) \le b \ln (2n) + 4/\eta,
\end{equation*}
and in the third inequality, we used the fact that
\begin{equation*}
  \E \ln Z^{0,n}(0,k) \le \E \ln Z^{0,n}, \quad
  \E \ln Z^{n,2n}(k,0) \le \E \ln Z^{n,2n} = \E \ln Z^{0,n}.
\end{equation*}
It remains to bound $\E \max_k X_k$ and $\E \max_k Y_k$.
By the shear invariance, all $X_k$ and~$Y_k$  have the same distribution, so
\begin{equation*}
  \E X_n^2 = \E Y_n^2 = \E \ln^2 Z^n \le K_5n^2.
\end{equation*}
Let
\begin{equation*}
  \Lambda = \left\{ \max_k X_k \le rn^{1/2}\ln^{3/2} n, \quad \max_k Y_k \le rn^{1/2}\ln^{3/2}n \right\},
\end{equation*}
with $r$ to be determined. We have
\begin{align*}
  \E \left[\max_k X_k + \max_k Y_k\right]
  &\le \E \ONE_{\Lambda} (\max_kX_k + \max_kY_k) + \E \ONE_{\Lambda^{c}} (\max_kX_k+\max_kY_k) \\
  & \le 2rn^{1/2}\ln^{3/2}n + \sqrt{2\Pp(\Lambda^c) \E( \max_kX_k^2+\max_kY_k^2)} \\
  & \le 2rn^{1/2} \ln^{3/2}n + \sqrt{16\Pp(\Lambda^c) K_5Rn^3}.
\end{align*}
To bound the second term by a constant, we use Lemma~\ref{lem:free-energy-concentration-around-expectation}:
\begin{align*}
  \Pp (\Lambda^c) &\le \sum_{k=-2Rn}^{2Rn-1} \Biggl[\Pp \left\{ |\ln Z^{0,n}(0,k) - \E\ln Z^{0,n}(0,k)| \ge rn^{1/2}\ln^{3/2}n \right\}  \\
  &\quad + \Pp \left\{ |\ln Z^{n,2n}(k,0) - \E\ln Z^{n,2n}(k,0)| \ge rn^{1/2} \ln^{3/2}n \right\} \Biggr]\\
                  &\le 8Rn \Pp \left\{ |\ln Z^n - \E\ln Z^{n}| \ge rn^{1/2}\ln^{3/2}n \right\} \\
  & \le 8Rn b_1 \exp \{-b_2 r^2 \ln n \},
\end{align*}
and choose $r$ to ensure $b_2r^2>4$. This completes the proof.\end{proof}

We can now use the following straightforward adaptation of~Lemma~4.2 of~\cite{HoNe}  from real argument functions to sequences:
\begin{lemma}
\label{lem:doubling-argument-lemma}
Suppose that number sequences $(a_n)$ and $(g_n)$ satisfy the following conditions: $a_n/n \to \nu$ as $n
\to \infty$, $|a_{2n} - 2a_n| \le g_n$ for large $n$ and $\lim_{n\to \infty} g_{2n}/g_n = \psi < 2$.
Then for any $c > 1/(2-\psi)$ and for all large $n$ (depending on $c$), 
\begin{equation*}
|a_n - \nu n | \le c g_n.
\end{equation*}
\end{lemma}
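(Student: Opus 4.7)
\smallskip

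\noindent\textbf{Proof plan.} The idea is the standard doubling telescoping argument. Iterating the hypothesis $|a_{2n} - 2a_n| \le g_n$, I would first establish, for $n$ large enough and every $k \ge 1$,
\begin{equation*}
  \bigl| a_{2^k n} - 2^k a_n \bigr| \le \sum_{j=0}^{k-1} 2^{k-1-j}\, g_{2^j n},
\end{equation*}
by induction on $k$: the base case $k=1$ is the hypothesis, and the inductive step writes $a_{2^{k+1} n} - 2^{k+1} a_n = (a_{2^{k+1} n} - 2 a_{2^k n}) + 2(a_{2^k n} - 2^k a_n)$ and uses the doubling bound on the first summand. Dividing by $2^k n$ gives
\begin{equation*}
  \Bigl| \tfrac{a_{2^k n}}{2^k n} - \tfrac{a_n}{n} \Bigr|
   \le \tfrac{1}{n}\sum_{j=0}^{k-1} \tfrac{g_{2^j n}}{2^{j+1}}.
\end{equation*}

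Next, fix $\varepsilon>0$ small enough that $\tfrac{1}{2-\psi-\varepsilon}<c$ (possible since $c>1/(2-\psi)$). By hypothesis there exists $N=N(\varepsilon)$ such that $g_{2m}/g_m < \psi+\varepsilon$ for all $m\ge N$, hence by induction $g_{2^j n} \le (\psi+\varepsilon)^j g_n$ for every $j\ge 0$ whenever $n\ge N$. Summing the resulting geometric series,
\begin{equation*}
  \tfrac{1}{n}\sum_{j=0}^{k-1}\tfrac{g_{2^j n}}{2^{j+1}}
  \le \tfrac{g_n}{2n}\sum_{j=0}^{\infty}\Bigl(\tfrac{\psi+\varepsilon}{2}\Bigr)^{\!j}
  = \tfrac{g_n}{n(2-\psi-\varepsilon)}.
\end{equation*}

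Finally I would let $k\to\infty$. Since $a_m/m \to \nu$, the left-hand side above converges to $\bigl|\nu - a_n/n\bigr|$, so
\begin{equation*}
  \bigl|\nu - \tfrac{a_n}{n}\bigr| \le \tfrac{g_n}{n(2-\psi-\varepsilon)},
\end{equation*}
and multiplying by $n$ yields $|a_n - \nu n| \le \tfrac{g_n}{2-\psi-\varepsilon} \le c g_n$ for all $n \ge N$, enlarging $N$ if necessary so that both the doubling bound and the ratio estimate apply simultaneously. No step presents a genuine obstacle; the only care needed is to keep the ``for large $n$'' threshold uniform in the iterated indices $2^j n$, which is automatic because the thresholds only grow, not shrink, along the doubling sequence.
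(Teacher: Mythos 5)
Your proof is correct and follows essentially the same route as the paper: both telescope the doubling inequality along the dyadic sequence $n, 2n, 4n, \dots$, control $g_{2^jn}$ by a geometric series with ratio less than $2$, and pass to the limit $k \to \infty$ using $a_m/m \to \nu$. The only cosmetic difference is that the paper normalizes first (setting $b_n = a_n/n$, $h_n = g_n/(2n)$ so the recursion becomes $|b_{2n}-b_n|\le h_n$ immediately), whereas you iterate the unnormalized bound and divide by $2^k n$ afterward.
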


\begin{proof}
  Let $b_n = a_n/n$, $h_n = g_n/(2n)$.
  Then $|b_{2n} - b_n| \le h_n$ for large $n$ and $\lim_{n\to \infty } h_{2n}/h_n = \psi/2$.

  Since $\psi/2 < 1 - \frac{1}{2c} $, there is $N$ such that $h_{2m}/ h_m \le 1 -\frac{1}{2c}$ for all
  $m > N$.
  We can assume further that for $m > N$, the inequality $|b_{2m}-b_m| \le h_m$ holds.
  Let us now fix $n > N$.  Then for $k \ge 0$ we have $h_{2^kn} \le \bigl( 1-\frac{1}{2c} \bigr)^k h_{n}$.
  Therefore,
\begin{equation*}
  | b_n - b_{2^kn}| \le \sum_{i=0}^{k-1} |b_{2^{i+1}n} - b_{2^in} | \le \sum_{i=0}^{k-1} h_{2^in}
  \le 2c h_{n}.
\end{equation*}
We complete the proof by letting $k \to \infty$ in this estimate.
\end{proof}

\begin{proof}[Proof of Theorem \ref{thm:concentration-of-free-energy}]
It suffices to prove the inequality for $v = 0$.
Thanks to Lemma \ref{lem:doubling-argument-inequality} and Theorem \ref{thm:shape-function}, we can apply Lemma
\ref{lem:doubling-argument-lemma} to $a_n = \E Z^n$, $g_n = b_4 n^{1/2} \ln^2n$, $\nu = \alpha_0$
and $\psi =\sqrt{2}$ to obtain 
\begin{equation}
  \label{eq:expectation-deviation-from-linear}
|\E Z^n - \alpha_0n| \le c n^{1/2} \ln^2n
\end{equation}
for some constant $c$ and sufficiently large $n$.

Then, for all sufficiently large $n$ and $u \in (2cn^{1/2}\ln^2n, n\ln n]$,  we have
\begin{equation*}
  \Pp \bigl\{  |Z^n - \alpha_0n| \ge u
  \bigr\}
  \le \Pp \bigl\{ |Z^n - \E Z^n| \ge u/2 \bigr\} \le b_1 \exp \left(  - b_2 \frac{u^2}{4n\ln^2n} \right),
\end{equation*}
by Lemma~\ref{lem:free-energy-concentration-around-expectation} and~(\ref{eq:expectation-deviation-from-linear}).
\end{proof}

\section{Straightness and tightness}
\label{sec:delta-straightness-and-tightness}

\subsection{Straightness} \label{sec:delta-straightness}
The notion of $\delta$-straightness of paths in random environments
was introduced in~\cite{Ne}. In this section, we prove an analogous straightness property for the positive temperature polymer measure case.  
Some of the results of this section may be interpreted in terms of bounds on the transversal fluctuation exponent. It is expected that the fluctuations
of a typical polymer path of length~$n$ are of order $n^\xi$ for some $\xi$, and the KPZ scalings predict $\xi=2/3$.
Our results show that $\xi\le 3/4$.

We begin with some notation. For a path $\gamma$ and $n\in \Z$, we define
\[ \gamma^{\rm out}(n) = \{ (m,\gamma_m):\ m>n\}.\]
For $v>0$, we define
\[\Co(v) = \{ (n,x)\in\N\times \R:\ |x|\leq n v\}.\]
For  $(n,x)\in \N\times \R$ and $\eta>0$, we define:
\begin{equation*}
{\rm Co}(n,x,\eta) = \{ (m,y)\in \N\times \R\ :\ |y/m - x/n| \le\eta \}.
\end{equation*}

\medskip

Let us fix $\delta \in (0,1/4)$.
\begin{theorem}
  \label{thm:delta-straightness-for-polymer-measure} 
  For any $v > 0$ and $\alpha \in (0,1-2\delta)$,
  there are events $\Omega_1^{(n)} = \Omega_1^{(n)}(\delta,v,\alpha)$ and a constant $Q=Q(\delta)$ with the following properties: 
\begin{enumerate}
\item\label{item:13} For all $\beta \in (0,  1-4\delta)$ and sufficiently large $n$ (depending on $\beta$), 
  \begin{equation*}
\Pp (\Omega_1^{(n)}) \ge 1 - e^{-n^{\beta}};
\end{equation*}

\item\label{item:14} on the event $\Omega_1^{(n)}$ the following is true: for any terminal measure $\nu$ and for all $N\in\N$
satisfying $N/2 > n$, 
  \begin{equation}
    \label{eq:delta-straightness-for-polymer-measure}
    \mu_{0,\nu}^{0,N} \big\{ \gamma : \exists (k,\gamma_k) \in \mathrm{Co}(v),\ N/2 \ge k \ge n,\ \gamma^{\text{out}} \not\subset \mathrm{Co}(k,\gamma_k,Qk^{-\delta}) \big\}
    \le e^{-n^{\alpha}}.
  \end{equation}
\end{enumerate}
\end{theorem}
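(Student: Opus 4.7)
The plan is to implement a Newman-type $\delta$-straightness argument \cite{Ne}, combining the strict concavity of the shape function $\alpha(v)=\alpha_0-v^2/2$ from Theorem~\ref{thm:shape-function} with the concentration estimate from Theorem~\ref{thm:concentration-of-free-energy}. The guiding heuristic is that a polymer visiting $(k,\gamma_k)\in\Co(v)$ and then leaving the narrow cone $\Co(k,\gamma_k,Qk^{-\delta})$ at some later time must pay an excess free-energy cost of order $k^{1-2\delta}$; since $\delta<1/4$, this strictly beats the free-energy fluctuations of order $k^{1/2+\epsilon}$.

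First I would discretize into unit squares the starting cubes $(k,j)\in [n,N/2]_{\ZZ}\times\ZZ$ with $|j|\le vk+1$, as well as the candidate exit cubes $(m,i)\in (k,N]_{\ZZ}\times\ZZ$ with $|i/m-j/k|>Qk^{-\delta}$. Then I would define the good event $\Omega_{1}^{(n)}$ as the event that, for every pair $a<b$ in the time grid and every pair of admissible unit intervals $I,J$, the log-partition function $\ln Z^{a,b}(I,J)$ lies within $\phi(b-a):=(b-a)^{1/2+\epsilon'}\ln^{2}(b-a)$ of the shape-function value $(b-a)\alpha((c_{J}-c_{I})/(b-a))$, where $c_I,c_J$ are the interval centers. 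Theorem~\ref{thm:concentration-of-free-energy} applied to each of the polynomially-many such pairs, together with a union bound, yields $\Pp(\Omega_{1}^{(n)})\ge 1-e^{-n^{\beta}}$ for any $\beta<1-4\delta$, provided $\epsilon'$ is chosen in $\bigl(\beta/2,(1-4\delta)/2\bigr)$.

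On $\Omega_{1}^{(n)}$, for any terminal $z$ I would write the polymer weight of paths passing through the two cubes as
\[
\mu^{0,N}_{0,z}\{\gamma_k\in[j,j+1),\,\gamma_m\in[i,i+1)\}=\frac{Z^{0,k}(0,[j,j+1))\,Z^{k,m}([j,j+1),[i,i+1))\,Z^{m,N}([i,i+1),z)}{Z^{0,N}(0,z)},
\]
and, via the parallelogram identity, express the log of this ratio up to tolerance $O(\phi(k)+\phi(m-k)+\phi(N-m)+\phi(N))$ as minus the excess quadratic cost $\tfrac{N-k}{2(m-k)(N-m)}(y-y^{*})^{2}+\tfrac{N}{2k(N-k)}(x-x^{*})^{2}$, where $x^{*},y^{*}$ are the straight-line values at times $k$ and $m$ for the path from $(0,0)$ to $(N,z)$. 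The exit constraint forces at least one of the two excess terms to be at least $c(v,Q)\,k^{1-2\delta}$: if $z/N$ is close to $x/k$, then $y^{*}$ sits inside the narrow cone, so $|y-y^{*}|\gtrsim m Q k^{-\delta}$ produces the dominant contribution; if $z/N$ is far from $x/k$, then the first term already yields $\gtrsim k^{1-2\delta}$ even with $y=y^*$. Since the resulting upper bound $\exp(-c_* k^{1-2\delta})$ is uniform in $z$, integrating against the pushforward of $\nu$ yields the same bound for $\mu^{0,N}_{0,\nu}$, and summing over the polynomially-many discretization pairs produces the claimed $e^{-n^{\alpha}}$ for any $\alpha<1-2\delta$.

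The main obstacle will be controlling the excess-versus-fluctuation comparison uniformly in three delicate regimes: (i) small $m-k$, where the excess factor $m^{2}k^{-2\delta}/(m-k)$ is large but the concentration inequality of Theorem~\ref{thm:concentration-of-free-energy} requires the time interval to exceed $c_0$, so short middle legs must be handled by a separate a priori estimate built directly from the Gaussian kernel and the integrability bounds of Section~\ref{sec:properties-of-partition-function}; (ii) $z$ with $z/N$ far outside $\Co(v)$, where the dominant excess is shifted to the outer leg and one must verify uniformity in $z\in\R$; and (iii) matching the fluctuation scale $\phi(k)$ against the excess $k^{1-2\delta}$ across all discretization scales $k\in[n,N/2]$, which is precisely what pins down the admissible exponents $\beta<1-4\delta$ and $\alpha<1-2\delta$ after the union bound.
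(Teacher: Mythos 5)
The high-level philosophy you invoke (curvature of the shape function versus sub-diffusive free-energy fluctuations, Newman-style) is indeed the right engine, and your exponent bookkeeping in the last paragraph correctly pins down $\beta<1-4\delta$ and $\alpha<1-2\delta$. But there is a structural flaw in the way you propose to set up the good event.

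You define $\Omega_1^{(n)}$ via concentration of $\ln Z^{a,b}(I,J)$ over a discretized grid of admissible pairs, and then bound the polymer weight of a bad path by a ratio whose numerator and denominator both contain $Z^{m,N}(I,z)$ and $Z^{0,N}(0,z)$. These two objects depend on the horizon $N$ and the terminal point $z$. However, the theorem requires a single event $\Omega_1^{(n)}$ — fixed before $N$ and $\nu$ are chosen — on which the estimate~\eqref{eq:delta-straightness-for-polymer-measure} holds for \emph{every} $N>2n$ and \emph{every} terminal measure $\nu$. You cannot build $N$- or $z$-dependent concentration events into $\Omega_1^{(n)}$ (there are uncountably many $z$ and unboundedly many $N$), and there is no union bound over a set that is not even fixed when $\Omega_1^{(n)}$ is defined. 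The regimes you flag in (ii) hint at this difficulty but don't resolve it.

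The paper sidesteps this entirely by never touching partition functions over $[m,N]$ or $[0,N]$ at all. The decisive trick is the Markovian conditioning identity: if one can show $\mu_{0,y}^{0,k}(B)\le e^{-n^\alpha}$ for all endpoints $y$ in a suitable range (a statement about polymers over $[0,k]$ only), then for any $N$, any $z$, and any $\nu$,
\begin{equation*}
Z^{0,N}_{0,z}(B)=\int Z^{0,k}(0,y,B)\,Z^{k,N}(y,z)\,dy\le e^{-n^\alpha}\int Z^{0,k}(0,y)\,Z^{k,N}(y,z)\,dy=e^{-n^\alpha}Z^{0,N}_{0,z},
\end{equation*}
so $\mu_{0,\nu}^{0,N}(B)\le e^{-n^\alpha}$ with no reference to $N$ or $z$ whatsoever. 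That is why Lemma~\ref{lem:polymer-measure-of-Hnx-is-small} is phrased in terms of the point-to-point straightness events $A^{0,m}_{0,y}$ with $m\le 2n$, not partition-function concentration over $[0,N]$. On top of that, the paper needs the geometric multi-scale patching of Lemma~\ref{lem:geometric-condition-of-staying-in-a-cone} (a dyadic chain of parallelogram-exit events) to convert a single-scale parallelogram bound into the cone statement you are trying to prove directly. Your plan, by working with one $(k,m)$ pair at a time and a single parallelogram identity, would need both of these devices to close the argument: the Markovian reduction to remove the $(N,z)$ dependence, and the multi-scale geometric lemma to go from bounded-look-ahead escape bounds to a bound on escaping the semi-infinite cone.

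Also, a minor point: the displayed formula for $\mu^{0,N}_{0,z}\{\gamma_k\in I,\gamma_m\in J\}$ is not an identity — the correct expression has a nested double integral over $x\in I$, $y\in J$ of $Z^{0,k}(0,x)Z^{k,m}(x,y)Z^{m,N}(y,z)$, which does not factor into a product of interval-to-interval partition functions. This only affects order-one constants for unit cubes, so it is harmless as an order-of-magnitude heuristic, but it should not be stated with an equals sign.
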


For $(m,x), (n,y) \in \Z \times \R$ with $m<n$, we define $[(m,x),(n,y)]$ to be the constant velocity path 
connecting $(m,x)$ and $(n,y)$, i.e.,  $[(m,x),(n,y)]_k=x+\frac{k-m}{n-m}(y-x)$ for $k\in [m,n]_{\Z}$.
For any $\alpha > 0$, let us define the event $A_{x,y}^{m,n} = A_{x,y}^{m,n}(\alpha)$:
\begin{equation*}
  A_{x,y}^{m,n} = 
  \Big\{
    \mu_{x,y}^{m,n} \bigl\{ \max_{k \in I(m,n)} |\gamma_k - [(m,x),(n,y)]_k| \ge (n-m)^{1-\delta} \bigr\} \le \exp\big(-(n-m)^{\alpha}\big) \Big\},
\end{equation*}
where $I(m,n) = [\frac{3m+n}{4}, \frac{m+3n}{4}]_{\Z}$.
By translation and shear invariance  all $A_{x,y}^{m,n}$ have the same probability for fixed $m$ and $n$.
We also define the event $\bar{A}_{p,q}^{m,n} = \bar{A}_{p,q}^{m,n}(\alpha)$ for $p, q \in \Z$ to be 
\begin{equation*}
\bar{A}_{p,q}^{m,n} = \bigcap \big\{  A_{x,y}^{m,n} : x \in [p,p+1], y \in [q,q+1]
\big\}. 
\end{equation*}
This is a measurable event since, by continuity, the intersection on the right-hand side may be restricted to
 rational points $x,y$.
Also, all $\bar{A}_{p,q}^{m,n}$ have the same probability for fixed $m$ and $n$.

\begin{lemma}\label{lem:concentration-of-polymer-measure-in-delta-rectangle}
  Let $\alpha \in (0,1-2\delta)$ and $\beta \in (0,1-4\delta)$. Then
  \begin{equation*}
    \Pp \big(   \bar{A}_{0,0}^{0,n}(\alpha) \big) \ge 1- e^{-n^{\beta}}
  \end{equation*}
for sufficiently large $n$.
\end{lemma}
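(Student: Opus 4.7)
The plan is to exploit the tension between the extra kinetic cost incurred by a path that deviates by $n^{1-\delta}$ somewhere in the middle quarter $I(0,n)$ and the size of the free-energy fluctuations controlled by Theorem~\ref{thm:concentration-of-free-energy}. Writing $L_k=[(0,x),(n,y)]_k$ and letting $B_{x,y}$ denote the deviation event appearing in the definition of $A^{0,n}_{x,y}$, the quadratic shape function $\alpha(v)=\alpha_0-v^2/2$ combined with $k(n-k)\le n^2/4$ on $I(0,n)$ yields, for every $|z-L_k|\ge n^{1-\delta}$,
\[
\alpha((z-x)/k)k+\alpha((y-z)/(n-k))(n-k)-\alpha((y-x)/n)n=-\frac{n(z-L_k)^2}{2k(n-k)}\le -2n^{1-2\delta}.
\]
This quadratic gain will ultimately dominate every relevant fluctuation term.

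First I reduce the uniformity over $(x,y)\in[0,1]^2$ in $\bar A^{0,n}_{0,0}$ to two canonical endpoint pairs. Since $L_k(x,y)\in[0,1]$ for every $k\in[0,n]$ and $x,y\in[0,1]$, we have $\{|\gamma_k-L_k|\ge n^{1-\delta}\}\subset\{\gamma_k\ge n^{1-\delta}-1\}\cup\{\gamma_k\le-(n^{1-\delta}-1)\}$. By Lemma~\ref{lem:main-monotonicity}, $\mu^{0,n}_{x,y}\{\gamma_k\ge a\}\le\mu^{0,n}_{1,1}\{\gamma_k\ge a\}$ and $\mu^{0,n}_{x,y}\{\gamma_k\le-a\}\le\mu^{0,n}_{0,0}\{\gamma_k\le-a\}$, so it suffices to prove the analogous estimate, with the slightly smaller threshold $n^{1-\delta}-1$, for the fixed endpoint pairs $(0,0)$ and $(1,1)$; the loss of $1$ is absorbed by working with a marginally smaller~$\delta$ throughout.

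For fixed endpoints I decompose the bad partition function by the deviant middle coordinate:
\[
Z^{0,n}_{x,y}(B_{x,y})\le\sum_{k\in I(0,n)}\sum_{j\in\Z,\,|j-L_k|\ge n^{1-\delta}-1}\int_j^{j+1}Z^{0,k}_{x,z}Z^{k,n}_{z,y}\,dz.
\]
Lemma~\ref{lem:smoothness-of-partition-function} together with the linear growth bound on $F$ supplied by~\ref{item:exponential-moment-for-maximum} lets me bound each inner integral by $\mathrm{poly}(n)\cdot Z^{0,k}_{x,j}Z^{k,n}_{j,y}$ on a highly likely event. Theorem~\ref{thm:concentration-of-free-energy}, applied after a shear via Lemma~\ref{lem:shear-for-Z_v} with $u=C n^{(1+\beta)/2}\ln n$, then yields
\[
\Pp\bigl\{\,|\ln Z^{0,k}_{x,j}-\alpha((j-x)/k)k|>u\,\bigr\}\le c_1 e^{-c_2 n^{\beta}},
\]
and analogous concentration holds for $Z^{k,n}_{j,y}$ and, as a lower bound, for $Z^{0,n}_{x,y}$.

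A union bound over the $O(n^3)$ triples $(k,j)$ with $k\in I(0,n)$ and $|j|\le n^2$ keeps the total exceptional probability at most $\mathrm{poly}(n)\cdot e^{-c_2 n^\beta}\le e^{-n^\beta}$ for large~$n$. Paths leaving the strip $\max_k|\gamma_k|\le n^2$ are discarded by Gaussian tail estimates in the spirit of Lemma~\ref{lem:large-deviation-for-Z-bar}, contributing a super-exponentially small error. On the intersection of all these favorable events,
\[
\mu^{0,n}_{x,y}(B_{x,y})\le \mathrm{poly}(n)\cdot e^{-2n^{1-2\delta}+O(n^{(1+\beta)/2}\ln^2 n)}\le e^{-n^\alpha},
\]
because $\alpha<1-2\delta$ and $\beta<1-4\delta$ together force $(1+\beta)/2<1-2\delta$, so the curvature gain beats both the fluctuation term and the target exponent. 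The main obstacle is precisely this last balance: the validity range $u\in(c_3 n^{1/2}\ln^{3/2}n,\,n\ln n]$ of the concentration inequality must simultaneously accommodate a union bound over polynomially many intermediate points $j$ and keep the accumulated error safely below $2n^{1-2\delta}$, which is the quantitative reason for the hypothesis $\delta<1/4$.
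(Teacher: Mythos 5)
Your overall strategy mirrors the paper's: reduce to fixed endpoints via the monotone coupling Lemma~\ref{lem:main-monotonicity}, decompose the bad event by the midpoint $(k,\gamma_k)$ into unit intervals $[j,j+1]$, exploit the curvature of the shape function to get the $-2n^{1-2\delta}$ gain, and apply Theorem~\ref{thm:concentration-of-free-energy} together with a polynomial union bound over $(k,j)$. The arithmetic balancing $u\sim n^{(1+\beta)/2}\ln n$ against $n^{1-2\delta}$ is also correct, and is essentially the role played in the paper by the threshold $n^{\theta}$ with $\theta\in(\tfrac{\beta'+1}{2},1-2\delta)$.

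There is, however, a genuine gap in the step where you claim that $\int_j^{j+1}Z^{0,k}_{x,z}Z^{k,n}_{z,y}\,dz\le\mathrm{poly}(n)\,Z^{0,k}_{x,j}Z^{k,n}_{j,y}$ on a highly likely event, citing Lemma~\ref{lem:smoothness-of-partition-function} and the linear growth of $F$. Lemma~\ref{lem:smoothness-of-partition-function} is a purely qualitative smoothness statement with no quantitative modulus, and the linear growth bound $|F_k(x)|\le c(|x|+1)$ is linear in the spatial location, so shifting $\gamma_k$ from $z\in[j,j+1]$ to $j$ could in principle change $F_k(\gamma_k)$ by an $O(1)$ amount times the size of $F_k$ near $j$, which a priori can be of order $n$ on the strip $|j|\le Rn$. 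More seriously, the kinetic action cost of moving $\gamma_k$ by $O(1)$ is $\tfrac12\big|(z-j)(2\gamma_{k+1}-z-j)\big|+\tfrac12\big|(z-j)(2\gamma_{k-1}-z-j)\big|$, which is of order $n$ whenever $\gamma_{k\pm1}$ is at distance $O(n)$ from $\gamma_k$ -- i.e.\ on exactly the part of path space you have not discarded. Since $n\gg n^{1-2\delta}$, this swamps the curvature gain and the argument breaks. The paper cures this by additionally intersecting with (i) the event $C$ (increments $|\gamma_k-\gamma_{k\pm1}|\le R'\sqrt n$ with overwhelming probability, from Lemma~\ref{lem:small-probability-for-single-square-root-jump}) and (ii) the event $D$ ($\max_{|j|\le Rn}F_k^*(j)\le\sqrt n$, from~\ref{item:exponential-moment-for-maximum}); together these make the modification cost $O(\sqrt n)$, and $n^{1/2}\ll n^{1-2\delta}$ is the hidden reason the lemma works only for $\delta<1/4$. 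Your balance argument attributes $\delta<1/4$ solely to the free-energy fluctuation term; that alone does not close the gap, because the implicit midpoint-modification cost is what actually dominates for $\delta$ near $1/4$.
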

\begin{proof} Lemma~\ref{lem:main-monotonicity} implies
$\mu_{0,0}^{0,n}\preceq\mu_{x,y}^{0,n}\preceq \mu_{1,1}^{0,n}$ for $x,y\in[0,1]$. 
Therefore, it suffices 
to check that for any $\alpha' \in (0,1-2\delta)$, $\beta' \in (0,1-4\delta)$, all
sufficiently large~$n$ and all~$k\in [n/4 ,3n/4]$, 
  \begin{equation*}
    \Pp \Bigl\{ \mu_{0,0}^{0,n} \{ \gamma  : |\gamma_k| > n^{1-\delta}-1  \}
     \ge e^{-n^{\alpha'}} 
     \Bigr\} \le e^{-n^{\beta'}}.
   \end{equation*}
So let us prove this estimate.
Let 
   \begin{align*}
            B &= \{ \gamma : \max_{1\le i \le n-1} |\gamma_i| \le Rn \},\\
       C &= \{ \gamma : |\gamma_k - \gamma_{k+1} | \le R' \sqrt{n},\
       |\gamma_k - \gamma_{k-1} | \le R' \sqrt{n} \},
     \end{align*}
     for $R, R'$ given in Lemma \ref{lem:large-deviation-for-Z-bar} and Lemma
     \ref{lem:small-probability-for-single-square-root-jump}.
     By these two lemmas, the complement of the event 
        $A=\{\mu_{0,0}^{0,n}(B^c\cup C^c)  \le 3\cdot 2^{-n} \}$ 
     is small:
     \begin{multline}
       \label{eq:control-for-A}
       \Pp(A^c) \le  \Pp \big\{ \mu_{0,0}^{0,n}(B^c) \ge 2^{-n} \big\} + \Pp \big\{
       \mu_{0,0}^{0,n}(C^c) \ge 2\cdot 2^{-n} \big\}  \le
       2^{-r_1n} + 2\cdot 2^{-r_2n}.              
     \end{multline}
     Let $D = \bigl\{ \max\limits_{|j|\le Rn} F_{k}^{*}(j) \le \sqrt{n} \bigr\}$.
     By \ref{item:exponential-moment-for-maximum} and Markov inequality,
\begin{equation}
  \label{eq:1}
  \Pp (D^c) \le \sum_{|j|\le Rn} \Pp \{ F_{k}^{*}(j) \ge \sqrt{n} \}
  \le (2Rn+1)e^{-\eta \sqrt{n}} \E e^{\eta F_{k}^{*}(0)}
  \le e^{-k_1 \sqrt{n}},
\end{equation}
for some constant $k_1>0$ and all sufficiently large $n$.
Let us also fix a number $\theta \in \Big(  \frac{\beta'+1}{2}, 1-2\delta \Big)$, and 
for $|j|\le Rn$, define 
\begin{equation*}
\begin{split}
  F^{+}_j &= \{ |\ln Z^{k,n}(j,0) - \alpha(n-k,j) | \le n^{\theta} \}, \\
  F^-_j   & = \{ |\ln Z^{0,k}(0,j) - \alpha(k,j) | \le n^{\theta} \}, \\
  F &= \{ |\ln Z^{0,n}(0,0) - \alpha(n,0) | \le  n^{\theta} \},\\
 \hat F&= F\cap \bigcap_{|j|\le Rn} F^{+}_j \cap \bigcap_{|j|\le Rn} F^{-}_j,
\end{split}
\end{equation*}
where 
\begin{equation}
\label{eq:space-time-shape}
\alpha(n,x) = \alpha_0n - \frac{x^2}{2n}.
\end{equation}
Let  $\hat{k} = \min\{k,n-k\}$.  By Theorem~\ref{thm:concentration-of-free-energy},
if $\hat{k}\ge n/4$, then
for some $\theta'\in(\beta',2\theta - 1)$ and sufficiently large $n$,
\begin{equation}
  \label{eq:3}
  \Pp(\hat F) \le (4Rn+3)e^{-n^{\theta'}}.
\end{equation}
On the event
$\Lambda = A\cap D \cap \hat F$, we can control $\mu_{0,0}^{0,n}$.
From (\ref{eq:control-for-A}), (\ref{eq:1}), (\ref{eq:3}) we get
\begin{equation*}
  \Pp (\Lambda^c) \le e^{-r_1n } + 2\cdot e^{-r_2n} + e^{-k_1 \sqrt{n}} + (4Rn+3)e^{-n^{\theta'}}
  \le e^{-n^{\beta'}}.
\end{equation*}
Now it suffices to show that for sufficiently large $n$,
\begin{equation}
  \label{eq:Lambda-means-straightness}
  \Lambda \subset \bigl\{ \mu_{0,0}^{0,n} \{ |\gamma_k| \ge n^{1-\delta} - 1 \}
  < e^{-n^{\alpha'}} \bigr\}.
\end{equation}
On $A$, we have
\begin{equation}
  \label{eq:5}
     \mu_{0,0}^{0,n}\{ |\gamma_k| \ge n^{1-\delta}-1 \} 
\le \mu_{0,0}^{0,n} \big( \{ |\gamma_k| \ge n^{1-\delta}-1 \} \cap B \cap C \big) + 3\cdot 2^{-n}.
\end{equation}
For every path in $B \cap C$, we can move $\gamma_k$ to $[\gamma_k]$ to obtain a new path.
The difference between integrating over the old paths and over the new ones can be estimated as follows.
Since all the paths are in $C$, the kinetic action will not change by more than $2R'\sqrt{n}+1$, and
since we are on the set $D$, the potential will change by at most $2\max_{|j| \le Rn}
F_{k}^{*}(j)\le 2 \sqrt{n}$.
The integral over modified paths can be viewed as a sum of partition functions over integer endpoints.
Therefore, we obtain
\begin{equation*}
  \mu^{0,n}_{0,0}\big\{ \{|\gamma_k| \ge n^{1-\delta} -1 \} \cap B \cap C \big\}
  \le e^{(2R'+3) \sqrt{n}} \bigl( Z^{0,n}_{0,0} \bigr)^{-1} \sum_{j \in J } Z^{0,k}(0,j) Z^{k,n}(j,0),
\end{equation*}
where $J = [n^{1-\delta}-1, Rn]_\Z$.
On $\hat F$, we have
\begin{align*}
&\quad   \mu_{0,0}^{0,n} \big\{ \{ |\gamma_k| \ge n^{1-\delta} -1\} \cap B \cap C \big\} \\
&\le e^{(2R'+3) \sqrt{n} + n^{\theta} - \alpha_0 n} \sum_{j \in J} 
    \exp \Big(2n^{\theta} +  \alpha_0k - \frac{j^2}{2k} + \alpha_0 (n-k)- \frac{j^2}{2(n-k)} \Big) \\
  & \le e^{ (2R'+3) \sqrt{n} + 3n^{\theta} } \sum_{|j| \ge n^{1-\delta}-1} \exp \Big( -\frac{j^2}{2k}- \frac{j^2}{2(n-k)} \Big) \\
  &\le e^{(2R'+3) \sqrt{n} + 3n^{\theta}} \sum_{|j| \ge n^{1-\delta}-1} \exp \Big(-\frac{2j^2}{n}\Big) \\
&\le k_2\exp\big( (2R'+3) \sqrt{n} + 3n^{\theta} - 2n^{1-2\delta} \big)\\
  &\le \exp (-k_3 n^{1-2\delta} )
\end{align*}
for some positive constants $k_2$, $k_3$ and sufficiently large $n$.
In the last inequality, we use $\theta <1-2\delta$.
We obtain~\eqref{eq:Lambda-means-straightness} from this and (\ref{eq:5}), and the lemma follows.
\end{proof}

Let us introduce
\begin{equation*}
  \mathcal{C}(n,x,L) := \left\{ (m,y) \in \ZR:\ m \in \{n+1,...,2n\},\ \left| y - \frac{m}{n}x \right| \le L
  \right\},
\end{equation*}
a parallelogram of width $2L$ with one pair of sides parallel to the $x$-coordinate axis and the other one parallel to $[(n,x),(2n,2x)]$.
We define the lateral sides by 
\begin{equation*}
  \partial_S^{\pm} \mathcal{C}(n,x,L) := \left\{
    (m,y) \in \ZR:\ m \in \{ n+1, ...,2n \} \text{ and } y - \frac{m}{n}x = \pm L
  \right\},
\end{equation*}
and let $\partial_S \mathcal{C}(n,x,L) = \partial_S^{+} \mathcal{C}(n,x,L) \cup \partial_S^{-}
\mathcal{C}(n,x,L)$.
Let us define $H(n,x)$ to be 
\begin{equation*}
  H(n,x) = \left\{ \gamma: \gamma_n \in [x,x+1],\ \exists m \in \{ n+1, ..., 2n\},\ \left| \gamma_m - \frac{m}{n}x \right| > 6n^{1-\delta} \right\}.
\end{equation*}
Then for $N \ge 2n$, $\mu_{0,\nu}^{0,N}\big( H(n,x) \big)$ is the polymer measure of the paths that start at $(0,0)$, 
pass near $(n,x)$ but exit the parallelogram $\mathcal{C}(n,x,6n^{1-\delta})$ 
at a time between $n+1$ and $2n$.
The constant 6 is chosen to be compatible with Lemma
\ref{lem:concentration-of-polymer-measure-in-delta-rectangle}. 

\begin{lemma}
  \label{lem:polymer-measure-of-Hnx-is-small}
Given $\alpha \in (0, 1-2\delta)$,
if $A_{0,y}^{0,m}(\alpha)$ holds for all $(m,y) \in \mathcal{C}(n,x,6n^{1-\delta}) \cap \{ m \ge 3n/2 \}$, then for any $N \ge 2n$ and any 
terminal measure $\nu$,
  \begin{equation}
    \label{eq:polymer-measure-of-Hnx-is-small}
    \mu_{0,\nu}^{0,N} \big( H(n,x) \big) \le  2n\exp(-n^{\alpha} ).
  \end{equation}
\end{lemma}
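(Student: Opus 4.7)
The plan is to prove the bound by a union bound over the exit time $m\in\{n+1,\ldots,2n\}$ and over the two lateral sides of the parallelogram $\mathcal{C}(n,x,6n^{1-\delta})$. Writing
\[
E_m^{\pm}:=\{\gamma_n\in[x,x+1],\ \pm(\gamma_m-(m/n)x)>6n^{1-\delta}\},
\]
one has $H(n,x)\subset\bigcup_{m=n+1}^{2n}(E_m^{+}\cup E_m^{-})$, so it suffices to show $\mu_{0,\nu}^{0,N}(E_m^{\pm})\le\exp(-n^{\alpha})$ for every $m$ and every sign; summing over the $n$ exit times and the two signs then yields the claimed $2n\exp(-n^{\alpha})$ bound.

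For each $m$ I would choose a control time $m^{\ast}\in[3n/2,2n]$ at which the hypothesis supplies $A_{0,y}^{0,m^{\ast}}$ for $y$ in the parallelogram slice: take $m^{\ast}=m$ when $m\ge 3n/2$, and $m^{\ast}$ slightly larger than $m$ (for instance $m^{\ast}=\max(\lceil (4/3)m\rceil,3n/2)$) otherwise, arranged so that $m\in I(0,m^{\ast})$. Applying the Markov property at time $m^{\ast}$ I write
\[
\mu_{0,\nu}^{0,N}(E_m^{\pm})=\int \mu_{0,y}^{0,m^{\ast}}(E_m^{\pm})\,\rho_{m^{\ast}}(dy),
\]
where $\rho_{m^{\ast}}$ is the marginal of $\gamma_{m^{\ast}}$ under $\mu_{0,\nu}^{0,N}$, and I split the integral according to whether $y$ lies inside the slice $\{y:|y-(m^{\ast}/n)x|\le 6n^{1-\delta}\}$ or not.

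For $y$ inside the slice, the hypothesis gives $A_{0,y}^{0,m^{\ast}}$, so with probability $\ge 1-\exp(-(m^{\ast})^{\alpha})$ one has $|\gamma_m-(m/m^{\ast})y|<(m^{\ast})^{1-\delta}$; the triangle inequality with $|y-(m^{\ast}/n)x|\le 6n^{1-\delta}$ then yields
\[
|\gamma_m-(m/n)x|<(m^{\ast})^{1-\delta}+(m/m^{\ast})\cdot 6n^{1-\delta},
\]
which, by the constants chosen compatibly with Lemma~\ref{lem:concentration-of-polymer-measure-in-delta-rectangle} ($\delta<1/4$ and $m^{\ast}\ge 3n/2$), is strictly below $6n^{1-\delta}$, precluding $E_m^{\pm}$ on the good event. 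For $y$ outside the slice on the $+$ side, $y>(m^{\ast}/n)x+6n^{1-\delta}$, the stochastic monotonicity of Lemma~\ref{lem:main-monotonicity} gives $\mu_{0,y}^{0,m^{\ast}}\{\gamma_n\le x+1\}\le \mu_{0,y_{0}}^{0,m^{\ast}}\{\gamma_n\le x+1\}$ with $y_{0}=(m^{\ast}/n)x+6n^{1-\delta}$ on the slice boundary; since $y_{0}$ belongs to the closed slice, $A_{0,y_{0}}^{0,m^{\ast}}$ applies and concentrates $\gamma_n$ around $(n/m^{\ast})y_{0}\ge x+3n^{1-\delta}$ within a window $(m^{\ast})^{1-\delta}<2n^{1-\delta}$, so the lower tail probability $\mu_{0,y_{0}}^{0,m^{\ast}}\{\gamma_n\le x+1\}$ is bounded by $\exp(-(m^{\ast})^{\alpha})\le\exp(-n^{\alpha})$ (using $n^{1-\delta}>3$ for large $n$). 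The $-$ side is handled symmetrically.

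The main obstacle will be the delicate constant bookkeeping in the triangle-inequality step, namely ensuring that $(m^{\ast})^{1-\delta}+(m/m^{\ast})\cdot 6n^{1-\delta}$ can be made strictly smaller than $6n^{1-\delta}$ by a suitable choice of $m^{\ast}$ uniformly for every $m\in\{n+1,\ldots,2n\}$; for $m$ lying in the intermediate range near $3n/2$, where both $m/m^{\ast}$ is close to $3/4$ and $m^{\ast}$ is close to $2n$, the verification is tight and may require either splitting the parallelogram slice into a strict inner core and a boundary annulus treated via monotonicity, or iterating the $A$-events at several scales, with the remaining slack provided by the strict inequality $\delta<1/4$.
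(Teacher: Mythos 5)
The decomposition you propose (union over exit time $m\in\{n+1,\ldots,2n\}$ with the single threshold $6n^{1-\delta}$, conditioning at an auxiliary time $m^\ast$) goes through for $m\ge 3n/2$ with $m^\ast=m$, where the exit event forces $\gamma_{m^\ast}$ outside the slice and your monotonicity argument controlling $\gamma_n$ is sound. But the ``inside the slice'' subcase for $n<m<3n/2$ has a quantitative gap that the proposal does not close, and which you yourself flag. You need
\[
(m^\ast)^{1-\delta}<6\Bigl(1-\frac{m}{m^\ast}\Bigr)n^{1-\delta}.
\]
With $m^\ast=\lceil(4/3)m\rceil\vee(3n/2)$ one has $1-m/m^\ast\le 1/4$, so the right-hand side is at most $\tfrac32 n^{1-\delta}$, while for $m$ near $3n/2$ one has $m^\ast$ near $2n$ and hence $(m^\ast)^{1-\delta}$ near $2^{1-\delta}n^{1-\delta}$; but $2^{1-\delta}>2^{3/4}>3/2$ for every $\delta\in(0,1/4)$. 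The inequality therefore fails on a range of $m$ of length comparable to $n$, and shrinking $\delta$ makes matters worse, not better.

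Your suggested repair --- splitting the slice at time $m^\ast$ into an inner core $\{|y-(m^\ast/n)x|\le 4n^{1-\delta}\}$ and an annulus, treating the annulus and the exterior by comparison via Lemma~\ref{lem:main-monotonicity} to the endpoint at level $4n^{1-\delta}$ and then invoking the corresponding $A$-event to control $\gamma_n$ --- does in fact work: in the inner core the triangle inequality now gives $(m^\ast)^{1-\delta}+4(m/m^\ast)n^{1-\delta}<2n^{1-\delta}+4n^{1-\delta}=6n^{1-\delta}$. But this repair reconstructs by hand, at a floating time $m^\ast$, the two-threshold structure that the paper builds directly into the covering. The paper covers $H(n,x)$ by events $B_k^{\pm}$ (exit at level $4n^{1-\delta}$, $3n/2\le k\le 2n$) and $C_k^{\pm}$ (exit at level $6n^{1-\delta}$, $n<k<3n/2$, \emph{coupled with} $|\gamma_{2n}-2x|\le 4n^{1-\delta}$). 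For $C_k^{\pm}$ it conditions at the fixed time $2n$: monotonicity reduces the bound to $\mu_{0,f(2n)}^{0,2n}\{\gamma_k\ge g(k)\}$ with $f(2n)=2x+4n^{1-\delta}$, $g(k)=(k/n)x+6n^{1-\delta}$, and a single application of $A_{0,f(2n)}^{0,2n}$ then suffices because $g(k)-[(0,0),(2n,f(2n))]_k=6n^{1-\delta}-2(k/n)n^{1-\delta}>3n^{1-\delta}>(2n)^{1-\delta}$ for $k<3n/2$, with $k\in I(0,2n)$. The decisive idea your proposal lacks is this coupling of the early exit with the position at the fixed time $2n$, which replaces a delicate optimization over $m^\ast$ with one clean $A$-event at a time that is always admissible.
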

\begin{proof}
  Let
\begin{align*}
  B_k^+ &= \big\{ \gamma:\ \gamma_n \in [x,x+1],\ \gamma_k -\frac{k}{n}x > 4n^{1-\delta} \big\}, \quad 3n/2 \le k \le 2n, \\
    B_k^- &= \big\{ \gamma:\ \gamma_n \in [x,x+1],\ \gamma_k -\frac{k}{n}x < -4n^{1-\delta} \big\}, \quad 3n/2 \le k \le 2n , \\
  C_k^+ &= \big\{ \gamma:\ \gamma_n \in [x,x+1],\ \gamma_{2n} -2x \le 4n^{1-\delta},\ \gamma_k - \frac{k}{n}x > 6n^{1-\delta} \big\}, \quad n < k < \frac{3n}{2}, \\
  C_k^{-} &= \big\{ \gamma:\,\gamma_n \in [x,x+1],\,\gamma_{2n} -2x \ge - 4n^{1-\delta},\,\gamma_k - \frac{k}{n}x <- 6n^{1-\delta} \big\},\  n< k < \frac{3n}{2}.  
\end{align*}
Then $H(n,x) \subset \bigcup_k B_k^{\pm} \cup \bigcup_k C_k^{\pm}$.
We need to estimate $\mu_{0,\nu}^{0,N}(B_k^{\pm})$ and $\mu_{0,\nu}^{0,N}(C_k^{\pm})$.

First, we look at $B_k^+$.
By monotonicity, if  $ y  > kx/n + 4n^{1-\delta} := f(k)$, then 
\begin{equation*}
  \mu_{0,y}^{0,k}(B_k^+)\le \mu_{0,y}^{0,k}\{\gamma: \gamma_n \le x+1 \} \le \mu_{0,f(k)}^{0,k}\{
  \gamma: \gamma_n \le x+1\}.
\end{equation*}
Since
\begin{equation*}
 \big[ (0,0),(k,f(k) ) \big]_n - (x +1) = \frac{4n^{2-\delta}}{k} - 1 \ge \frac{4n^{2-\delta}}{2n} - 1 > (2n)^{1-\delta} \ge k^{1-\delta}
\end{equation*}
for sufficiently large $n$,
we have
\begin{equation*}
  \{ \gamma:  \gamma_n \le x+1 \}
  \subset
  \Big\{ \gamma  : \max_{j \in [k/4, 3k/4]} \Big|\gamma_j - \big[(0,0),(k,f(k))\big]_{j}\Big| \ge k^{1-\delta} \Big\}.
\end{equation*}
By assumption, $A_{0,f(k)}^{0,k}$ holds. Therefore, if $y \ge f(k)$, then
\begin{equation*}
  \mu_{0,y}^{0,k}(B_k^+)\le e^{-k^{\alpha}} \le e^{-n^{\alpha}},
\end{equation*}
and hence 
\begin{equation*}
  Z^{0,k}(0, y,B_k^+) \le \exp( -n^{\alpha} ) Z^{0,k}(0,y).
\end{equation*}
Since every path in $B_k^+$ visits some $y \ge f(k)$ at time $k$, we have
\begin{align*}
  Z^{0,N}(0, z, B_k^+)
  &= \int_{y \ge f(k)} Z^{0,k}(0,y,B_k^+) Z^{k,N}(y,z) d y \\
  &\le \int_{y \ge f(k)} \exp(-n^{\alpha}) Z^{0,k}(0,y) Z^{k,N}(y,z) dy \\
  & \le \exp(-n^{\alpha}) Z^{0,N}(0, z).
\end{align*}
Therefore, $\mu_{0,z}^{0,N}(B_k^+) \le \exp(-n^{\alpha})$ for all $z$.
Integrating over $z$ we obtain
\begin{equation*}
  \mu_{0,\nu}^{0,N}(B_k^+) = \int \mu_{0,z}^{0,N}(B_k^+) \nu(dz) \le \exp(-n^{\alpha}).
\end{equation*}
Next we look at $C_k^+$.
By monotonicity, if $y \le f(2n)$, then
\begin{equation*}
  \mu_{0,y}^{0,2n}(C_k^+) \le \mu_{0,y}^{0,2n}\{\gamma: \gamma_k \ge g(k) \}
  \le \mu_{0,f(2n)}^{0,2n}\{ \gamma: \gamma_k \ge g(k) \},
\end{equation*}
where $g(k) = kx/n+6n^{1-\delta}$.
Since
\begin{equation*}
  \big[(0,0), (2n,f(2n))\big]_k - g(k) = 2k n^{-\delta} - 6n^{1-\delta} <  -(2n)^{1-\delta},
\end{equation*}
we have  
\begin{equation*}
  \{ \gamma : \gamma_k \ge g(k) \}
  \subset
  \Big\{ \gamma  : \max_{j \in [n/2, 3n/2]} \Big|\gamma_j -
  \big[(0,0),(2n,f(2n) )\big]_j \Big| \ge (2n)^{1-\delta} \Big\}.
\end{equation*}
Since $A_{0,f(2n)}^{0,2n}$ holds, we have 
\begin{equation*}
\mu_{0,y}^{0,2n}(C_k^+) \le \exp\big(-(2n)^{\alpha}\big) \le \exp( -n^{\alpha} )
\end{equation*}
for all $y \le f(k)$.
Similarly to the case of $B_k^+$, it follows that $\mu_{0,\nu}^{0,N}(C_k^+) \le \exp(-n^{\alpha})$.

The same argument shows that 
\begin{equation*}
  \mu_{0,\nu}^{0,N}(B_k^-) \le \exp(-n^{\alpha}), \quad
  \mu_{0,\nu}^{0,N}(C_k^-) \le \exp( -n^{\alpha}).
\end{equation*}
We can now write 
\begin{align*}
  & \quad   \mu_{0,\nu}^{0,N}\big(H(n,x)\big) \\
  & \le \sum_{n< k < 3n/2} \left[
  \mu_{0,\nu}^{0,N}(C_k^+) + \mu_{0,\nu}^{0,N}(C_k^-) \right] + \sum_{3n/2 \le k \le 2n} \left[ \mu_{0,\nu}^{0,N}(B_k^+) +
     \mu_{0,\nu}^{0,N}(B_k^-) \right] \\
  &    \le 2n \exp (-n^{\alpha}),
\end{align*}
and the lemma follows.
\end{proof}

The following lemma is a geometric fact.
\begin{lemma}
  \label{lem:geometric-condition-of-staying-in-a-cone}
  Let $N/2 \ge M$.
  If a path $\gamma\notin H(n,[\gamma_n])$ (where $[\cdot]$ denotes the integer part) for all $n\in [M,N/2]_\Z$, then
  \begin{equation*}
    \gamma^{\text{out}}(n) \subset \mathrm{Co}(n, \gamma_n, Qn^{-\delta}),\quad n\in [M,N/2]\cap\Z,
  \end{equation*}
  for  a constant $Q$ depending only on $\delta$.
\end{lemma}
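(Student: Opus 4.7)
\medskip

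\noindent\textbf{Proof plan.} The strategy is a standard doubling/telescoping argument. I will first convert the hypothesis $\gamma\notin H(n,[\gamma_n])$ into a bound on the slope increment between $n$ and $m$ for $m\in(n,2n]$, then chain these increments along a dyadic sequence of scales and sum a geometric series.

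\emph{Step 1: one-scale slope bound.} Fix $n\in[M,N/2]_{\Z}$. By definition of $H(n,[\gamma_n])$, the assumption yields, for every $m\in\{n+1,\dots,2n\}$,
\[
\bigl|\gamma_m-\tfrac{m}{n}[\gamma_n]\bigr|\le 6n^{1-\delta}.
\]
Since $|[\gamma_n]-\gamma_n|<1$ and $m\le 2n$, we get $|\tfrac{m}{n}[\gamma_n]-\tfrac{m}{n}\gamma_n|<2$, hence
\[
\bigl|\gamma_m-\tfrac{m}{n}\gamma_n\bigr|\le 6n^{1-\delta}+2.
\]
Dividing by $m\ge n$, this gives the one-scale estimate
\[
\Bigl|\tfrac{\gamma_m}{m}-\tfrac{\gamma_n}{n}\Bigr|\le \tfrac{6n^{1-\delta}+2}{m}\le 7\,n^{-\delta},\qquad m\in(n,2n],
\]
provided $n$ is at least some absolute constant (small $n$ can be absorbed into $Q$).

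\emph{Step 2: telescoping across dyadic scales.} Given $n\in[M,N/2]_{\Z}$ and $m>n$ with $m\le N$ (the relevant regime, since $\gamma$ is defined up to time $N$), set $n_0=n$ and $n_{i+1}=2n_i$ as long as $n_i\le N/2$; let $k$ be the largest index with $n_k\le N/2$, so that $m\le 2n_k\le N$. For each $i=0,\dots,k-1$ we have $n_i\in[M,N/2]_{\Z}$, so Step~1 applies and yields
\[
\Bigl|\tfrac{\gamma_{n_{i+1}}}{n_{i+1}}-\tfrac{\gamma_{n_i}}{n_i}\Bigr|\le 7\,n_i^{-\delta}=7\,(2^i n)^{-\delta}.
\]
Applying Step~1 once more at scale $n_k$ (using $m\in(n_k,2n_k]$) gives
\[
\Bigl|\tfrac{\gamma_m}{m}-\tfrac{\gamma_{n_k}}{n_k}\Bigr|\le 7\,n_k^{-\delta}.
\]
Summing by the triangle inequality and using the geometric series $\sum_{i\ge 0}2^{-i\delta}=1/(1-2^{-\delta})$,
\[
\Bigl|\tfrac{\gamma_m}{m}-\tfrac{\gamma_n}{n}\Bigr|
\le 7\,n^{-\delta}\sum_{i=0}^{k}2^{-i\delta}
\le \frac{7}{1-2^{-\delta}}\,n^{-\delta}.
\]
Setting $Q=Q(\delta)$ to be this constant (enlarged if necessary to absorb the finite exceptional small-$n$ regime) gives $(m,\gamma_m)\in\Co(n,\gamma_n,Qn^{-\delta})$ for every $m\in(n,N]_{\Z}$, i.e.\ $\gamma^{\mathrm{out}}(n)\subset\Co(n,\gamma_n,Qn^{-\delta})$.

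\emph{Main point to watch.} The only real content is choosing the telescoping chain so that each intermediate scale $n_i$ lies in the range $[M,N/2]_{\Z}$ where the hypothesis is available, and checking that a single additional application at the top scale covers the interval $(n_k,m]$; after that, the estimate is a clean geometric sum and $Q$ depends only on $\delta$ through the factor $1/(1-2^{-\delta})$.
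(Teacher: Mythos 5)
Your strategy (telescoping the one-scale slope bound along dyadic scales) is the same as the paper's, but Step~2 contains a genuine gap: the assertion that ``$m\le 2n_k\le N$'' is false in general; only $2n_k\le N$ is guaranteed. From $n_k\le N/2<n_{k+1}=2n_k$ you get $N/4<n_k\le N/2$, hence $N/2<2n_k\le N$, and nothing rules out $m\in(2n_k,N]$. For example, with $N=16$, $n=3$ one has $n_1=6\le 8<12=n_2$, so $k=1$ and $2n_k=12$, while $m$ may be any of $13,\dots,16$. For such $m$ the final application of Step~1 ``at scale $n_k$ using $m\in(n_k,2n_k]$'' does not apply, and the chain from $n$ never reaches $m$.

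The paper closes exactly this gap with one extra, non-dyadic step: for $m$ beyond the top dyadic scale it introduces $\bar m=[(m+1)/2]$, which satisfies $n_k<\bar m\le 2n_k$ and $\bar m<m\le 2\bar m$ with $\bar m\le N/2$ (up to a harmless parity edge case), and applies the one-scale estimate first from $n_k$ to $\bar m$ and then from $\bar m$ to $m$. This contributes two further terms of order $n^{-\delta}$ to the geometric sum, so $Q$ still depends only on $\delta$. With this repair your argument is complete; your one-scale estimate, the absorption of the $\pm 1$ rounding, and the geometric-series computation all essentially match the paper's.
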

\begin{proof}
  Let $k = [\log_2 N/n] $  so that $2^k n \le N < 2^{k+1}n$.
  Since $N \ge 2n$, we have $k \ge 1$.
  Let $x_l = \gamma_{2^ln}$ and $i_l = [x_l]$ for $ 0 \le l \le k$.

If $0 \le l' \le k-1$, then,
  since $\gamma \not\in H(2^{l'}n, i_{l'})$, we have
  \begin{equation*}
    \bigg| \frac{i_{l'}}{2^{l'}n} - \frac{x_{l'+1}}{2^{l'+1}n} \bigg|
    \le \frac{6(2^{l'}n)^{1-\delta}}{2^{l'+1}n} =  3\cdot 2^{-l'\delta}n^{-\delta} .
  \end{equation*}
Since $|i_{l'+1} - x_{l'+1}| <  1$, we have
\begin{equation*}
  \left| \frac{i_{l'}}{2^{l'}n}  - \frac{i_{l'+1}}{2^{l'+1}n}\right|
  \le 3 \cdot 2^{-l'\delta} n^{-\delta} + \frac{1}{2^{l'+1}n}
  < 3 \cdot 2^{-l'\delta} n^{-\delta} + 2^{-l'\delta}n^{-\delta} = 
  4\cdot 2^{-l'\delta}n^{-\delta}.
\end{equation*}
Summing over $0 \le l' \le l-1$ and using the triangle inequality we obtain
\begin{equation}\label{eq:distance-between-diadic-points}
  \left| \frac{i_0}{n} - \frac{i_{l}}{2^{l}n} \right|
  \le  4 \cdot n^{-\delta}\sum_{l'=0}^{l-1} 2^{-l'\delta}
  \le c n^{-\delta}
\end{equation}
for all $0 \le l \le k$, where $c = 4\sum_{l'=0}^{\infty} 2^{-l'\delta}$.

If $n \le m \le 2^kn $, let $l = [\log_2 \frac{m-1}{n}]$ so that $2^ln < m \le 2^{l+1}n$.
Since $\gamma \not\in H(2^ln, i_l)$,  we have
\begin{equation*}
  \left| \frac{i_l}{2^ln} - \frac{\gamma_m}{m} 
  \right| \le
  \frac{6(2^ln)^{1-\delta}}{m}
\le 6  n^{-\delta}.
\end{equation*}
Combining this with (\ref{eq:distance-between-diadic-points}) we find that
\begin{equation}
\label{eq:distant-n-to-m-small}
  \left| \frac{i_0}{n} -  \frac{\gamma_m}{m} \right| \le (c+6) n^{-\delta}.
\end{equation}

If  $2^k n < m \le N$, let $\bar{m}= [\frac{m+1}{2}]$ so that $n \le \bar{m}\le 2^kn$ and $\bar{m} < m \le
2\bar{m}$.
Then $\gamma \not\in H(\bar{m}, [\gamma_{\bar{m}}])$ implies
\begin{equation*}
  \left| \frac{[\gamma_{\bar{m}}]}{\bar{m}} - \frac{\gamma_m}{m} \right|
  \le \frac{6(\bar{m})^{1-\delta}}{m}
  \le 6\bar{m}^{-\delta} 
  \le 6 n^{-\delta}.
\end{equation*}
Combining this with (\ref{eq:distant-n-to-m-small}) we obtain
\begin{align}\label{eq:distance-n-to-m-big}
 \left| \frac{i_0}{n} - \frac{\gamma_{m}}{m} \right|
  &\le \left| \frac{i_0}{n} - \frac{\gamma_{\bar{m}}}{\bar{m}}\right|  +
    \left| \frac{\gamma_{\bar{m}}}{\bar{m}} - \frac{[\gamma_{\bar{m}}]}{\bar{m}} \right|    +
    \left| \frac{[\gamma_{\bar{m}}]}{\bar{m}} - \frac{\gamma_m}{m} \right|
  \\ \nonumber
  &\le (c+6)n^{-\delta} + \frac{1}{\bar{m}} + 6n^{-\delta} 
  \le (c+13) n^{-\delta}.
\end{align}
Clearly,
\begin{equation}\label{eq:move-initial-point}
  \left| \frac{\gamma_n}{n} - \frac{i_0}{n} \right|  \le n^{-\delta}.
\end{equation}
Combining (\ref{eq:distant-n-to-m-small}), (\ref{eq:distance-n-to-m-big}), and
(\ref{eq:move-initial-point}) we obtain that for every $m\in [n, N]_{\Z}$,
\begin{equation*}
   \left| \frac{\gamma_n}{n}  - \frac{\gamma_m}{m} \right| \le (c+14)n^{-\delta}.
\end{equation*}
This completes the proof, with $Q=c+14$.
\end{proof}

\bigskip

\begin{proof}[Proof of Theorem \ref{thm:delta-straightness-for-polymer-measure}]
  We fix $\delta \in (0,1/4)$, $v > 0$, $\alpha \in (0,1-2\delta)$, and $\beta \in (0,1-4\delta)$.
  Let $\alpha' = (\alpha + 1-2\delta)/2$ and $\beta' = (\beta + 1-4\delta)/2$.
  Then Lemma~\ref{lem:concentration-of-polymer-measure-in-delta-rectangle} implies that for
  sufficiently large $n$,
  \begin{equation*}
    \Pp \Big( \big(\bar{A}_{0,l}^{0,n}(\alpha')\big)^c\Big) \le \exp(-n^{\beta'}),  \quad \forall l \in \Z.
  \end{equation*}
   Let $v'= v+ 7$.
  We have 
  \begin{equation*}
    \sum_{m \ge n} \sum_{|l| \le v'm+1} \Pp \Big( \big (\bar{A}_{0,l}^{0,m}(\alpha')\big)^c  \Big)
    \le \sum_{m \ge n} (2v'm+3) \exp(-m^{\beta'}) < e^{-n^{\beta}}
  \end{equation*}
for sufficiently large $n$. Therefore, if we define
\begin{equation*}
\Omega_1^{(n)} = \bigcap_{m \ge n} \bigcap_{|l| \le v'm+1} \bar{A}_{0,l}^{0,m}(\alpha')
\end{equation*}
depending only on $v$, $\delta$ and $\alpha$, then $\Pp (\Omega_1^{(n)}) \ge 1-e^{-n^{\beta}}$ for large $n$.

Now let us fix any  $\omega \in \Omega_1^{(n)}$. We have then $\omega \in A^{0,m}_{0,x}(\alpha')$ for all $|x| \le v'm$ and $m \ge n$.
Let us also fix $N > 2n$ and a terminal measure $\nu$.
We want to show that~(\ref{eq:delta-straightness-for-polymer-measure}) holds true.
  
If $(k,z) \in \Co(v) \cap \{k \ge n\}$, then $(k, [z] ) \in \Co(v+1)$, so $\mathcal{C}(k,[z],6n^{1-\delta}) \subset \Co(v') $.
Therefore, by Lemma \ref{lem:polymer-measure-of-Hnx-is-small}, for $\omega \in \Omega_1^{(n)}$ and such $(k,z)$, we have
  \begin{equation*}
    \mu_{0,\nu}^{0,N}\big( H(k,[z]) \big) \le 2k \exp(-k^{\alpha'}).
  \end{equation*}
By Lemma \ref{lem:geometric-condition-of-staying-in-a-cone}, the set of paths
  \begin{equation*}
    \left\{
      \gamma : \exists (k,\gamma_k) \in\Co(v), N/2 \ge k \ge n, \gamma^{\text{out}}(k) \not\subset \Co(k,\gamma_k,Qk^{-\delta})
      \right\}
  \end{equation*}
  is contained in
  \begin{equation*}
    \bigcup_{N/2\ge k\ge n, y = [z], |z| \le kv } H(k,y).
  \end{equation*}
  Therefore,
\begin{align*}
  &\quad \mu_{0,\nu}^{0,N} \left\{
    \gamma : \exists (k,\gamma_k) \in \mathrm{Co}(v), N/2 \ge k \ge n, \gamma^{\text{out}}(k) \not\subset \mathrm{Co}(k,\gamma_k,Qk^{-\delta})
    \right\} \\
  &\le \sum_{N/2 \ge k\ge n, y = [z], |z| \le kv} \mu_{x,\nu}^{0,N}\big(  H(k,y) \big) \\
  &\le \sum_{k=n}^{\infty}  (2kv+2) \cdot 2k \exp(-k^{\alpha'})\le \exp(-n^{\alpha})
\end{align*}
for sufficiently large $n$.
This completes the proof.
\end{proof}

\subsection{Tightness of polymer measures}\label{sec:compactness-of-polymer-measures}
In this section, we will use Theorem~\ref{thm:delta-straightness-for-polymer-measure}  
on $\delta$-straightness to prove tightness
of families of polymer measures constructed for a fixed endpoint and a well-behaved sequence of terminal measures.

 Let $(m,x)\in\ZR$. For a sequence  $(n_k)$  and  a family of Borel probability measures $(\nu_k)$ on $\R$, 
we say that the family of polymer measures $ \big( \nu_{x,\nu_k}^{m,n_k} \big)$ is tight if for every
$n\ge m$ and every 
$\varepsilon>0$, there is a compact set $K \subset \R^{n-m+1}$ such that for all $n_k > n$,
\begin{equation*}
\mu_{x,\nu_k}^{m,n_k} \pi_{m,n}^{-1} (\R^{n-m+1}\setminus K) \le \varepsilon.
\end{equation*}

\begin{theorem}
  \label{thm:tightness-of-polymer-measure} There is a full measure set $\Omega'$ such that for 
all~$\omega \in \Omega'$ the following holds: if a  sequence  $(n_k)$ and a family of probability measures $(\nu_k)$ satisfy
\begin{equation}
\label{eq:tightness-for-terminal-measures}
\lim_{c \to \infty} \sup_k \nu_k \big( [-cn_k,cn_k]^c ) = 0,
\end{equation}
then  for all $(m,x)\in\Z\times\R$,
$\big(\mu_{x,\nu_k}^{m,n_k}\big)$ is tight.
\end{theorem}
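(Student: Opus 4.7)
The plan is to reduce to point-to-point polymer measures via Lemma~\ref{lem:main-monotonicity} (monotonicity) and then apply Theorem~\ref{thm:delta-straightness-for-polymer-measure} ($\delta$-straightness) to propagate the control of the terminal endpoint back to the time-$n$ marginal, with Theorem~\ref{thm:concentration-of-free-energy} (free energy concentration) as a complementary ingredient. Since tightness of the restriction $\pi_{m,n}$ to a compact in $\R^{n-m+1}$ follows from tightness of each coordinate marginal at times $m,m+1,\dotsc,n$, it suffices to establish tightness at a single intermediate time $n$.

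First, using stationarity of the potential, I would reduce to $(m,x) = (0,0)$ and study the time-$n$ marginal of $\mu_{0,\nu_k}^{0,n_k}$. Given $\varepsilon > 0$ and a fixed $n$, the hypothesis \eqref{eq:tightness-for-terminal-measures} provides $c > 0$ with $\sup_k \nu_k([-cn_k,cn_k]^c) < \varepsilon/2$. Disintegrating $\mu_{0,\nu_k}^{0,n_k} = \int \mu_{0,z}^{0,n_k}\,\nu_k(dz)$, the contribution from $|z|>cn_k$ to $\mu_{0,\nu_k}^{0,n_k}\{|\gamma_n|>R\}$ is at most $\varepsilon/2$ uniformly in $k$; for $z \in [-cn_k,cn_k]$ the stochastic dominance in Lemma~\ref{lem:main-monotonicity} reduces the estimation to $\mu_{0,\pm cn_k}^{0,n_k}$ on the appropriate one-sided tail.

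Next I would apply Theorem~\ref{thm:delta-straightness-for-polymer-measure} to $\mu_{0,\nu_k}^{0,n_k}$ directly (valid since the straightness bound holds for any terminal measure), choosing a fixed $v > c$ and setting $n_0 = n$. The desired full-measure set $\Omega'$ is obtained by intersecting the events $\Omega_1^{(n)}(\delta,v,\alpha)$ via Borel--Cantelli, since $1-\Pp(\Omega_1^{(n)}) \le e^{-n^\beta}$ is summable. On $\Omega'$ the bad event from straightness has polymer measure at most $e^{-n^\alpha}$. On its complement, any path satisfying $|\gamma_n| \le vn$ (so $(n,\gamma_n) \in \Co(v)$) has its outgoing part contained in $\Co(n,\gamma_n,Qn^{-\delta})$; evaluating at $m = n_k$ gives $|\gamma_{n_k}/n_k - \gamma_n/n| \le Qn^{-\delta}$, and combined with $|\gamma_{n_k}| \le cn_k$ this forces $|\gamma_n| \le cn + Qn^{1-\delta}$.

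The main obstacle is controlling paths with $|\gamma_n| > vn$, which skip $\Co(v)$ at time $n$ and are not directly handled by straightness. For these I would use Theorem~\ref{thm:concentration-of-free-energy} together with shear invariance (Lemma~\ref{lem:shear-for-Z_v}) to estimate the partition function ratio
\[
\frac{Z^{0,n}(0,y)\,Z^{n,n_k}(y,z)}{Z^{0,n_k}(0,z)},
\]
which concentrates around a Gaussian density in $y$ with mean $(n/n_k)z$ and variance of order $n$; integrated over $|y|>vn$ with $|z|\le cn_k$ this gives an exponential-in-$(v-c)^2$ bound on the cone-escape event. Combining the three bounds --- the $\nu_k$-tail contribution, the straightness bad event, and the cone-escape estimate --- yields uniform tightness of the time-$n$ marginal of $\mu_{0,\nu_k}^{0,n_k}$, as required.
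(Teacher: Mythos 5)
Your skeleton — split the time-$n$ marginal into the $\nu_k$-tail contribution, the in-cone contribution controlled by straightness, and the cone-escape contribution — matches the overall structure of the paper's argument, which routes through Theorem~\ref{thm:all-space-time-point-compactness} and ultimately Lemma~\ref{lem:delta-straighness-imply-compactness}. The monotonicity reduction, the use of the hypothesis~\eqref{eq:tightness-for-terminal-measures}, and the Borel--Cantelli construction of $\Omega'$ from the events $\Omega_1^{(n)}$ are all on the right track, and the in-cone case is handled correctly.

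The gap is in the cone-escape case, where $|\gamma_n|>vn$ but $|\gamma_{n_k}|\le cn_k$. You propose to estimate the density
\[
\frac{Z^{0,n}(0,y)\,Z^{n,n_k}(y,z)}{Z^{0,n_k}(0,z)}
\]
by applying Theorem~\ref{thm:concentration-of-free-energy} to each factor and integrating the resulting approximate Gaussian. The problem is that the concentration inequality controls $\ln Z^{n,n_k}(y,z)$ and $\ln Z^{0,n_k}(0,z)$ only to within fluctuations of order $n_k^{1/2}\ln^{3/2}n_k$, and these errors do \emph{not} cancel in the ratio if you only know concentration. The Gaussian gain from the curvature of the shape function is of size $\asymp(v-c)^2 n$, so as soon as $n_k$ is allowed to be much larger than $n^2$ (and it must be, since $(n_k)$ is arbitrary), the uncontrolled fluctuations overwhelm the gain. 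The difference $\ln Z^{n,n_k}(y,z)-\ln Z^{0,n_k}(0,z)$ does in fact converge as $n_k\to\infty$ — but that is the content of the Busemann/partition-function-ratio result (Theorem~\ref{thm:ratio-partition-function-intro}), which in the paper is proved \emph{using} the tightness theorem, so invoking it here would be circular.

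The paper circumvents this by never applying concentration on the long time interval $[n,n_k]$. Instead, Lemma~\ref{lem:delta-straighness-imply-compactness} introduces a stopping time $T$ (the first return of the path to the cone $\Co(v_1)$ after time $n$) and decomposes the cone-escape event into cases, each controlled by a path-based estimate on a polymer measure $\mu_{0,z}^{0,m}$ over a \emph{short} time scale $m$: the straightness events $\Omega_1^{(n)}$ applied at time $T-1$; the linear-box events $B^{0,m}_{0,z}$ from Lemma~\ref{lem:uniform-small-probability-outside-a-linear-box} for the ``macroscopic jump'' case $T\le N/2$, $(T-1,\gamma_{T-1})\notin\Co(v_2)$; and the middle-half concentration events $A^{0,N}_{0,z}$ from Lemma~\ref{lem:concentration-of-polymer-measure-in-delta-rectangle} for the case $T>N/2$. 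Because each of these bounds is stated for the polymer measure over a time interval $[0,m]$ matching the stopping time, the error terms remain $O(2^{-m})$ or $O(e^{-m^\alpha})$ and can be summed over $m\ge n$, with no dependence on $n_k$. That stopping-time decomposition, together with the $B$-events of Lemma~\ref{lem:uniform-small-probability-outside-a-linear-box}, is the piece missing from your argument.
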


We will derive this theorem from the following result:

\begin{theorem}
  \label{thm:all-space-time-point-compactness}
 There is a full measure set $\Omega'$ such that for every $\omega \in \Omega'$ the
 following holds:  if $(m,x) \in \Z \times \R$,  $v'\in \R$, and $0\le u_0 <u_1$,
  then there is a random constant
\begin{equation*}
n_0= n_0\big(\omega,m,[x], [|v'|+u_1], [(u_1-u_0)^{-1}] \big)
\end{equation*}
(where $[\cdot]$ denotes the integer part) such that
\begin{equation}
  \label{eq:remote-control-all-space-time-all-space-time} 
    \mu_{x,\nu}^{m,N} \big\{ \gamma: |\gamma_{m+n}  - v'n| \ge u_1n \big\}
  \le 4 \nu \big( [ (v'-u_0)N, (v'+u_0)N]^c \big) + 6e^{-\sqrt{n}}
\end{equation}
and 
\begin{multline}
  \label{eq:compactness-control-by-terminal-measure-all-space-time}
  \mu_{x,\nu}^{m,N} \big\{ \gamma: \max_{1 \le i \le n}|\gamma_{m+i} - v'i| \ge (u_1+R+1)n \big\} \\
  \le 4 \nu \big( [ (v'-u_0)N, (v'+u_0)N]^c \big) + 8e^{-\sqrt{n}},
\end{multline}
hold true for any terminal measure $\nu$ and $(N-m)/2 \ge n \ge n_0$.
Here, we use~$R$ that has been introduced in Lemma \ref{lem:large-deviation-for-Z-bar}.
\end{theorem}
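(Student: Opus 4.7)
The plan is to combine the $\delta$-straightness from Theorem~\ref{thm:delta-straightness-for-polymer-measure} with the endpoint monotonicity of Lemma~\ref{lem:main-monotonicity}, and to reduce uniformity in the continuous parameters $(m,x,v',u_0,u_1)$ to a countable union of events indexed by the integer tuple in the statement.

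First I would construct $\Omega'$ and $n_0$. Fix $\delta\in(0,1/4)$ and $\alpha\in(1/2,1-2\delta)$ so that $e^{-n^\alpha}\le e^{-\sqrt n}$ for large~$n$. For each integer tuple $\tau=(m,i,j,\kappa)\in\Z\times\Z\times\N\times\N$, apply Theorem~\ref{thm:delta-straightness-for-polymer-measure} to the shifted environment $\theta^{m,i}\omega$ with cone slope $v=j+1$, obtaining events $\Omega_1^{(n)}(\tau)$ whose complements have probability at most $e^{-n^\beta}$ for some $\beta>0$. Borel--Cantelli yields a full-measure $\Omega'$ on which, for every~$\tau$, only finitely many $\Omega_1^{(n)}(\tau)$ fail; intersect also with the full-probability sets from Lemma~\ref{lem:large-deviation-for-Z-bar} and Theorem~\ref{thm:concentration-of-free-energy}. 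Define $n_0(\omega,\tau)$ as the threshold beyond which all required events hold and $Qn^{-\delta}<1/\kappa$.

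Decompose $\{|\gamma_{m+n}-v'n|\ge u_1 n\}=A^+\cup A^-$, with $A^\pm=\{\pm(\gamma_{m+n}-v'n)\ge u_1n\}$. Since $A^+$ is increasing and $A^-$ decreasing in the natural order on paths, Lemma~\ref{lem:main-monotonicity} allows one to replace the real starting point $x$ by $[x]+1$ for $A^+$ and by $[x]$ for $A^-$. Then conditioning on the terminal point and applying monotonicity in $y$ yields
\begin{equation*}
\mu_{x,\nu}^{m,N}(A^+)\le\mu_{[x]+1,y_*}^{m,N}(A^+)+\mu_{[x]+1,\nu}^{m,N}\pi_N^{-1}\bigl((y_*,\infty)\bigr),
\end{equation*}
with $y_*=(v'+u_0)N$. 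The first summand is a point-to-point polymer with terminal slope $v'+u_0$: the event $A^+$ forces the slope at $m+n$ to exceed $v'+u_1$, contradicting $\delta$-straightness once $Qn^{-\delta}<u_1-u_0$; this contributes at most $e^{-\sqrt n}$. The second summand, the probability that the polymer endpoint exits $J:=[(v'-u_0)N,(v'+u_0)N]$ to the right, is bounded by $2\nu((y_*,\infty))$ through monotonicity in~$\nu$, the quadratic decay of the shape function (Theorem~\ref{thm:shape-function}), and free-energy concentration (Theorem~\ref{thm:concentration-of-free-energy}). Treating $A^-$ symmetrically produces the full prefactor $4\nu(J^c)+6e^{-\sqrt n}$ in~\eqref{eq:remote-control-all-space-time-all-space-time}.

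For the pathwise maximum~\eqref{eq:compactness-control-by-terminal-measure-all-space-time}, condition on $\gamma_{m+n}$: the polymer on $[m,m+n]$ is then point-to-point, so Lemma~\ref{lem:large-deviation-for-Z-bar} applied at this scale gives $\max_{1\le i\le n}|\gamma_{m+i}-\ell_i|\le Rn$ outside a set of exponentially small mass, where $\ell$ linearly interpolates between $\gamma_m=x$ and $\gamma_{m+n}$. Combining with~\eqref{eq:remote-control-all-space-time-all-space-time} yields the $(u_1+R+1)n$ threshold and two additional $e^{-\sqrt n}$ terms. The main technical obstacle is the endpoint tail bound $\mu\pi_N^{-1}(J^c)\le 2\nu(J^c)$, which rests on uniform control of partition-function ratios $Z^{m,N}_{x,y}/Z^{m,N}_{x,y'}$ via the quadratic shape function and its concentration, executed uniformly in~$\nu$ while respecting the countable-union structure of~$\Omega'$.
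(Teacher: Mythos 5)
Your high-level plan — reduce to an integer-indexed countable family via monotonicity, control the endpoint marginal with a split through $y_*=(v'+u_0)N$, and invoke $\delta$-straightness — is a reasonable reading of the paper's strategy, and the reduction $\mu_{x,\nu}^{m,N}(A^+)\le\mu_{[x]+1,y_*}^{m,N}(A^+)+\nu((y_*,\infty))$ is correct (note that the second term is \emph{exactly} $\nu((y_*,\infty))$ because the marginal of $\mu_{[x]+1,\nu}^{m,N}$ at time $N$ is $\nu$ by definition; your claim that one needs the shape function and free-energy concentration to bound it by $2\nu((y_*,\infty))$ is a confusion — no such tools are used or needed there).

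The real gap is in your bound on $\mu_{[x]+1,y_*}^{m,N}(A^+)$. Theorem~\ref{thm:delta-straightness-for-polymer-measure} only penalizes paths $\gamma$ for which \emph{there exists} a time $k\in[n,N/2]$ with $(k,\gamma_k)\in\Co(v)$ and $\gamma^{\mathrm{out}}(k)\not\subset\Co(k,\gamma_k,Qk^{-\delta})$. Your contradiction argument at $k=m+n$ goes through \emph{only} when $(m+n,\gamma_{m+n})$ already lies in the reference cone $\Co(v)$: outside $\Co(v)$, no hypothesis of the theorem is triggered and no conclusion is available. But nothing a priori confines $\gamma_{m+n}$ to $\Co(v)$ — indeed that confinement is part of what the theorem is trying to establish — so the event $A^+\cap\{(m+n,\gamma_{m+n})\notin\Co(v)\}$ is left unbounded by your argument. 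The paper fills this hole (in the proof of Lemma~\ref{lem:delta-straighness-imply-compactness}) by introducing the stopping time $T$, the first time after $n$ that the path re-enters $\Co(v_1)$, and then splitting into five events: one governed by $\delta$-straightness applied at $T-1$ when $\gamma_{T-1}$ is in the annulus $\Co(v_2)\setminus\Co(v_1)$, one using the excursion events $B_{x,y}^{m,n}$ at scale $T$ when $\gamma_{T-1}\notin\Co(v_2)$ (so the jump into the cone was huge), one using $\delta$-straightness at scale $N$ when the path never re-enters the cone before $N/2$, and two bounded trivially by $\nu$ of the tail set. Your scheme only captures the analogue of $A_1$; $A_3$ and $A_4$ are missing, and they are not afterthoughts — they are precisely the cases where the path is outside $\Co(v)$ at time $m+n$.

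A secondary gap: Lemma~\ref{lem:large-deviation-for-Z-bar} is a probability bound at a single scale $n$, not a full-measure event, and it is stated for a single pair of integer endpoints. To upgrade it to the uniform, almost sure, all-scales statement required here, one needs the $\bar B_{p,q}^{m,n}$ events ranging over all integer endpoints inside a linear cone plus a Borel--Cantelli argument, as the paper does in Lemmas~\ref{lem:uniform-small-probability-outside-a-linear-box} and~\ref{lem:sets-for-remote-control-to-holds}; merely intersecting with ``the full-probability set from Lemma~\ref{lem:large-deviation-for-Z-bar}'' does not make sense since no such set exists. Once~\eqref{eq:remote-control-all-space-time-all-space-time} is in place, your derivation of~\eqref{eq:compactness-control-by-terminal-measure-all-space-time} via conditioning on $\gamma_{m+n}$ and the excursion events is essentially what Lemma~\ref{lem:sets-for-remote-control-to-holds} does, so that part of your outline is fine modulo the bookkeeping just described.
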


\begin{proof}[Derivation of Theorem~\ref{thm:tightness-of-polymer-measure} from Theorem~\ref{thm:all-space-time-point-compactness}]
  We take $\Omega'$ from the statement of Theorem~\ref{thm:all-space-time-point-compactness} and fix an arbitrary  $\omega\in\Omega'$.
 Given any $\varepsilon>0$,  by (\ref{eq:tightness-for-terminal-measures}), there is $c$ such that
$\nu_{k}\big( [-cn_k,cn_k]^c \big) \le \varepsilon$
for all $k$.
Choosing $v'=0$, $u_0=c$, $u_1=2c$ in Theorem~\ref{thm:all-space-time-point-compactness}, we see that if 
\begin{equation*}
n \ge n_0\big(\omega, m,[x], [2c], [c^{-1}] \big) \vee \ln^2 \varepsilon,
\end{equation*}
then, due to (\ref{eq:compactness-control-by-terminal-measure-all-space-time}),
\begin{equation*}
  \mu_{x,\nu_k}^{m,n_k} \Bigl\{ \gamma: \max_{m \le i \le m+n} |\gamma_i| \ge (2c+R+1)n \Bigr\}
  \le 4 \nu_k \big( [-cn_k, cn_k]^c \big) + 8 e^{-\sqrt{n}}
  \le 12 \varepsilon
\end{equation*}
for all $n_k \ge m+ 2n$, and tightness follows.
\end{proof}

Now we turn to the proof of Theorem \ref{thm:all-space-time-point-compactness}.

Using the constant $R$ introduced in Lemma \ref{lem:large-deviation-for-Z-bar}, we define the events
\begin{equation*}
B_{x,y}^{m,n} = \Big\{ \mu_{x,y}^{m,n} \big\{  \gamma: \max_{m\le k \le n} \big| \gamma_k - [(m,x),(n,y)]_k\big|
    \ge (R+1)(n-m) \big\}  \le 2^{-n+1} \Big\}
\end{equation*}
and 
\begin{equation*}
\bar{B}_{p,q}^{m,n} = \bigcap \big\{  B_{x,y}^{m,n}: x \in [p,p+1], y \in [q,q+1]\}.
\end{equation*}
These are measurable events since, by continuity, the intersection on the right-hand side may be restricted to
 rational points $x,y$.
 Also, for fixed $m$ and $n$, all $\bar{B}_{p,q}^{m,n}$ have the same probability.

\begin{lemma}
  \label{lem:uniform-small-probability-outside-a-linear-box}
If $n$ is   sufficiently large, then
\begin{equation*}
\Pp   \Big(  \big( \bar{B}_{0,0}^{0,n}\big)^c \Big) \le 2e^{-r_1n}.
\end{equation*}
\end{lemma}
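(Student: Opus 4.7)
The plan is to reduce the claim to two applications of Lemma~\ref{lem:large-deviation-for-Z-bar}, bridged by the main monotonicity lemma (Lemma~\ref{lem:main-monotonicity}) and the translation invariance of the polymer model (Lemma~\ref{lem:shear-for-Z_v}). The key observation is that the continuum of endpoints $(x,y)\in[0,1]^2$ collapses to the two extreme pairs $(0,0)$ and $(1,1)$ under stochastic dominance, so the $\bar B$-event is essentially a two-point event.

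First I would exploit the geometry of the straight line. For $x,y\in[0,1]$, the path $k\mapsto [(0,x),(n,y)]_k=x+k(y-x)/n$ stays in $[0,1]$, so the inclusion
\[
\{|\gamma_k-[(0,x),(n,y)]_k|\ge (R+1)n\}\subseteq \{\gamma_k\ge (R+1)n\}\cup\{\gamma_k\le 1-(R+1)n\}
\]
holds for every $k$ (and hence also for the corresponding $\max_k$ events). This turns a two-sided, endpoint-dependent deviation event into a union of one-sided, endpoint-free events on the path values.

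Second, by the main monotonicity lemma, $\mu_{0,0}^{0,n}\preceq\mu_{x,y}^{0,n}\preceq\mu_{1,1}^{0,n}$ for all $x,y\in[0,1]$. Since the event $\{\max_k\gamma_k\ge a\}$ is coordinatewise nondecreasing and $\{\min_k\gamma_k\le -a\}$ is coordinatewise nonincreasing, stochastic dominance yields
\[
\mu_{x,y}^{0,n}\{\max_k\gamma_k\ge (R+1)n\}\le \mu_{1,1}^{0,n}\{\max_k\gamma_k\ge (R+1)n\},
\]
\[
\mu_{x,y}^{0,n}\{\min_k\gamma_k\le 1-(R+1)n\}\le \mu_{0,0}^{0,n}\{\min_k\gamma_k\le 1-(R+1)n\},
\]
uniformly in $x,y\in[0,1]$. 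The second bound is trivially majorised by $\mu_{0,0}^{0,n}\{\max_k|\gamma_k|\ge Rn\}$ as soon as $n\ge 1$. For the first, I would apply the translation invariance part of Lemma~\ref{lem:shear-for-Z_v} (shift by $(0,1)$) to see that $\mu_{1,1,\omega}^{0,n}\{\max_k\gamma_k\ge (R+1)n\}$ has, as a function of $\omega$, the same law as $\mu_{0,0,\omega}^{0,n}\{\max_k\gamma_k\ge (R+1)n-1\}$, which is again bounded by $\mu_{0,0,\omega}^{0,n}\{\max_k|\gamma_k|\ge Rn\}$.

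Finally, two applications of Lemma~\ref{lem:large-deviation-for-Z-bar} together with a union bound give, on an event of probability at least $1-2e^{-r_1 n}$, that both of the controlling quantities are strictly less than $2^{-n}$. Adding the two one-sided contributions produces $\mu_{x,y}^{0,n}\{\max_k|\gamma_k-[(0,x),(n,y)]_k|\ge (R+1)n\}<2\cdot 2^{-n}=2^{-n+1}$ for every $x,y\in[0,1]$ simultaneously, which is exactly the occurrence of $\bar B_{0,0}^{0,n}$. No step looks like a serious obstacle; the only thing to be careful about is that the uniformity in $(x,y)\in[0,1]^2$ is delivered for free by the monotone coupling rather than requiring a covering argument or a continuity estimate.
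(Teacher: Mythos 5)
Your proposal is correct and is essentially the expansion of the paper's one-line proof, which reads: ``The lemma follows from Lemma~\ref{lem:large-deviation-for-Z-bar} and the fact that $\mu_{0,0}^{0,n}\preceq\mu_{x,y}^{0,n}\preceq \mu_{1,1}^{0,n}$ for $x,y\in[0,1]$.'' Your decomposition of the two-sided deviation event into an increasing upward event (controlled by $\mu_{1,1}^{0,n}$) and a decreasing downward event (controlled by $\mu_{0,0}^{0,n}$), followed by translation invariance and a union bound, is exactly how the factor~$2$ in the bound $2e^{-r_1 n}$ arises.
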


\begin{proof}
The lemma follows from Lemma \ref{lem:large-deviation-for-Z-bar} and the fact that
$\mu_{0,0}^{0,n}\preceq\mu_{x,y}^{0,n}\preceq \mu_{1,1}^{0,n}$ for $x,y\in[0,1]$. 
\end{proof}

\begin{lemma}
  \label{lem:delta-straighness-imply-compactness}
Suppose $\delta \in (0,1/4)$, $n \in \N$, $v' \in \R$, $0\le v_0<v_1$, and $v_1 > v_0 + Qn^{-\delta}$.
If $\omega$ satisfies the following properties:
\begin{enumerate}
  \item\label{item:6} $\omega \in A_{0,z}^{0,m}(\frac{1}{2})$ for all  $m \ge n$ and $|z-v'm| \le v_0m$,
\item\label{item:7} $\omega \in B_{0,z}^{0,m}$ for all $m \ge n$ and $|z-v'm| \le v_1m$,
\item\label{item:5} $\omega \in \Omega_1^{(n)}(\delta,|v'|+v_1+R+2,\frac{1}{2})$,
\end{enumerate}
then for any $N \ge 2n$ and terminal measure $\nu$, 
\begin{equation*}
  \mu_{0,\nu}^{0,N} \big\{ \gamma: |\gamma_n-v'n| \ge v_1n \big\}
  \le 2\nu \big([(v'-v_0)N, (v'+v_0)N]^c\big) + 3e^{-\sqrt{n}}.
\end{equation*}
\end{lemma}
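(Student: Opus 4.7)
The plan is to split $\mu_{0,\nu}^{0,N}$ by the terminal point $\gamma_N$, reduce via the monotonicity of Lemma \ref{lem:main-monotonicity} to polymer measures with fixed endpoints at the two extremes of $A = [(v'-v_0)N, (v'+v_0)N]$, and then close with the $\delta$-straightness of condition~(3). Writing $y^\pm = (v'\pm v_0) N$ and using that $\gamma_n$ under $\mu_{0,y}^{0,N}$ is stochastically monotone in the endpoint $y$, one gets
\[
\mu_{0,\nu}^{0,N}\{\gamma_n - v'n \ge v_1 n\} \le \mu_{0,y^+}^{0,N}\{\gamma_n - v'n \ge v_1 n\} + \nu(\{y > y^+\}),
\]
and a symmetric bound for the lower tail with $y^-$. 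The two $\nu$-remainders together contribute $\nu(A^c)+\nu(A^c)=2\nu(A^c)$, which accounts for the factor $2$ in the target estimate.

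The core step is a short application of Theorem \ref{thm:delta-straightness-for-polymer-measure} to $\mu_{0,y^+}^{0,N}$: on an event of probability at least $1-e^{-\sqrt n}$ (guaranteed by condition~(3), since $N/2>n$), whenever $(n,\gamma_n)\in\Co(v)$ with $v = |v'|+v_1+R+2$ one has $\gamma^{\text{out}}(n)\subset\Co(n,\gamma_n,Qn^{-\delta})$, so in particular $|\gamma_N/N-\gamma_n/n|\le Qn^{-\delta}$. Since $\gamma_N=y^+$ forces $\gamma_N/N=v'+v_0$, this gives $|\gamma_n/n-(v'+v_0)|\le Qn^{-\delta}$, and the standing hypothesis $v_1>v_0+Qn^{-\delta}$ then yields $\gamma_n-v'n<v_0 n+Qn^{1-\delta}<v_1 n$, contradicting the upper-tail event. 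A short case check (using that $v>|v'|+v_1$) shows $\{\gamma_n-v'n\ge v_1 n\}\cap\{|\gamma_n|>vn\}=\{\gamma_n>vn\}$ regardless of the sign of $v'$, so I obtain
\[
\mu_{0,y^+}^{0,N}\{\gamma_n-v'n\ge v_1 n\}\le e^{-\sqrt n}+\mu_{0,y^+}^{0,N}\{\gamma_n>vn\},
\]
with a symmetric bound for the lower tail.

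The hardest part---and the place where conditions~(1) and~(2) are actually used---is bounding the escape probability $\mu_{0,y^\pm}^{0,N}\{\pm\gamma_n>vn\}$ by $\tfrac{1}{2}e^{-\sqrt n}$. Heuristically this is very small, because under $\mu_{0,y^+}^{0,N}$ the path has typical slope $v'+v_0<v$ and an excursion all the way up to $\gamma_n>vn$ incurs a quadratic action penalty of order $n$. The plan is to combine the two conditions: $A_{0,y^+}^{0,N}(\tfrac12)$ from condition~(1) localizes $\gamma_{N/2}$ within $N^{1-\delta}$ of $y^+/2=(v'+v_0)N/2$, which for large $n$ places $(N/2,\gamma_{N/2})$ safely inside $\Co(v)$, while $B_{0,y^+}^{0,N}$ from condition~(2) rules out large linear-scale detours. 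One then propagates this localization inward from $N/2$ down to $n$ by iterating condition~(1) at dyadic scales $N/2^k$ (invoking the appropriate $A_{0,z}^{0,N/2^k}(\tfrac12)$ after conditioning on the successive values of $\gamma_{N/2^k}$), in the spirit of the proof of Lemma \ref{lem:geometric-condition-of-staying-in-a-cone}; alternatively, one combines Theorem \ref{thm:shape-function} with the concentration inequality of Theorem \ref{thm:concentration-of-free-energy} to control the ratio of partition functions that expresses $\mu_{0,y^+}^{0,N}\{\gamma_n>vn\}$ and extract the Gaussian tail $\exp(-(v-v'-v_0)^2 n/2)$. Adding up the three $e^{-\sqrt n}$ contributions---one from the upper-tail straightness, one from the lower-tail straightness, and one from the combined escape events---produces $3e^{-\sqrt n}$, completing the bound $2\nu(A^c)+3e^{-\sqrt n}$.
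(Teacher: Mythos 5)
Your initial reduction---splitting $\mu_{0,\nu}^{0,N}$ over the terminal point, using the stochastic monotonicity of $\mu_{0,y}^{0,N}$ in $y$ (Lemma~\ref{lem:main-monotonicity}) to pass to the two extreme endpoints $y^{\pm}=(v'\pm v_0)N$, and then applying $\delta$-straightness to the paths for which $(n,\gamma_n)\in\Co(v)$---is a legitimate alternative to the paper's decomposition, and the case check relating $\{\gamma_n-v'n\ge v_1n\}\cap\{|\gamma_n|>vn\}$ to $\{\gamma_n>vn\}$ is fine. (A minor slip: the two $\nu$-remainders are $\nu\{y>y^+\}+\nu\{y<y^-\}=\nu(A^c)$, not $2\nu(A^c)$; your bound is actually slightly stronger than claimed.)

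However, the ``escape probability'' step $\mu_{0,y^{\pm}}^{0,N}\{\pm\gamma_n>vn\}\le\tfrac12 e^{-\sqrt n}$ is not actually proved, and neither route you sketch closes the gap. Route~2 (Theorem~\ref{thm:concentration-of-free-energy}) is a \emph{probabilistic} statement about the environment, whereas Lemma~\ref{lem:delta-straighness-imply-compactness} is a \emph{deterministic} implication from the assumed events (1)--(3); you cannot simply invoke the concentration inequality inside that implication. Route~1 (dyadic iteration of condition~(1)) runs into an accumulation problem: condition~(1) only gives $A_{0,z}^{0,m}(\tfrac12)$ for $|z-v'm|\le v_0m$. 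At the top scale $m=N$, $z=y^+$ sits exactly on the boundary $|z-v'N|=v_0N$, so after one application the intermediate point $\gamma_{N/2}$ is only known to be within $N^{1-\delta}$ of $y^+/2$, i.e.\ $|\gamma_{N/2}-v'N/2|\le v_0 N/2+N^{1-\delta}$, which \emph{exceeds} $v_0\cdot(N/2)$. Thus condition~(1) is no longer applicable at the next scale and the induction does not start. Similarly, $B_{0,y^+}^{0,N}$ from condition~(2) only bounds $|\gamma_n-ny^+/N|$ by $(R+1)N$, which is of order $N$, not order $n$, and therefore says nothing useful about $\{\gamma_n>vn\}$ when $N\gg n$. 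So neither condition~(1) nor condition~(2) applied at the top scale controls the event $\{\gamma_n>vn\}$, and the bootstrapping fails.

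What the paper does instead is to introduce the stopping time $T=\inf\{m\ge n:(m,\gamma_m)\in\Co(v_1)\}\wedge N$, decompose $\{|\gamma_n-v'n|\ge v_1n\}$ according to $T\le N/2$ or $T>N/2$, the position of $(T-1,\gamma_{T-1})$ relative to $\Co(v_1)$ and $\Co(v_2)$, and the terminal point. The point of $T$ is that when the path first re-enters $\Co(v_1)$ at time $m=T$, the endpoint $\gamma_T$ is automatically within $v_1 T$ of $v'T$, so the $B$-event $B_{0,\gamma_T}^{0,T}$ from condition~(2) is applicable \emph{at that scale}; the deviation of $\gamma_{T-1}$ from the line $[(0,0),(T,\gamma_T)]$ then gives the linear-scale excursion needed by $B$. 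Conditions~(1) and~(3) handle the remaining cases by controlling the deviation at $k=[N/2]$ from the line to $\gamma_N$, and the excursion from $\Co(v_2)\setminus\Co(v_1)$ back to $[(v'-v_0)N,(v'+v_0)N]$, respectively. This sidesteps precisely the need for the escape bound you identify as hard; the stopping-time decomposition is the missing idea in your proposal.
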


\begin{proof}  We will only give a proof for the case of $v'=0$. The extension to a general $v'$ is straightforward.

Let $v_2 = v_1 + R + 2$ and fix $N \ge 2n$.
Define $T=T(\gamma)=\inf\{ m \ge n: (m,\gamma_m) \in \Co(v_1)\}\wedge N$. 
Since  $|\gamma_n| \ge nv_1$ implies $(T-1,\gamma_{T-1}) \not\in \Co(v_1)$, we can
partition the set of paths
$B = \{ \gamma: |\gamma_n| \ge nv_1 \}$
into the union of $A_i$, where
\begin{align*}
  A_1 &= \big\{ \gamma: T \le [N/2],\ (T-1,\gamma_{T-1}) \in \Co(v_2)\setminus \Co(v_1),\ \gamma_N \in
        [- v_0N, v_0N]   \big\}, \\
  A_2 &= \big\{ \gamma: T \le [N/2],\ (T-1,\gamma_{T-1}) \in \Co(v_2)\setminus\Co(v_1),\ \gamma_N \in [-v_0N, v_0N]^{c}   \big\}, \\
  A_3 &= \big\{ \gamma: T \le [N/2],\ (T-1,\gamma_{T-1}) \not\in \Co(v_2) \big\}, \\
  A_4 &= \big\{ \gamma: T > [N/2],\ \gamma_N \in [-v_0N, v_0N] \big\},\\
  A_5 &= \big\{ \gamma: T > [N/2],\ \gamma_N \in [-v_0N, v_0N]^c \big\}. 
\end{align*}
Let us estimate weights assigned to sets $A_i$ by the polymer measure.
Inequalities
\begin{equation}
  \label{eq:A2-and-A5}
  \mu_{0,\nu}^{0,N}(A_2) \le \nu \big( [-v_0N, v_0N]^c \big),
  \quad
    \mu_{0,\nu}^{0,N}(A_5) \le \nu \big( [-v_0N, v_0N]^c \big)
\end{equation}
are obvious. Since
\begin{equation*}
A_1
  \subset
  \left\{
    \gamma : \exists (k,\gamma_k) \in \Co(v_2),\ N/2 \ge k \ge n,\ \gamma^{\text{out}}(k) \not\subset \Co(k,z,Qk^{-\delta})
    \right\}
\end{equation*}
(the condition is satisfied by $k=T-1$), and $\omega \in \Omega_1^{(n)}(\delta,v_2,\frac{1}{2})$,
we have
\begin{equation}
  \label{eq:A1}
  \mu_{0,\nu}^{0,N}(A_1) \le e^{-\sqrt{n}} .
\end{equation}
Since $A_3 \subset \big\{|\gamma_T| \le v_1T \big\}$ and for $n\ge R+2$ and $|z| \le v_1m$ we have
\begin{align*}
  \mu_{0,z}^{0,m}(A_3 \cap \{T = m\}) &\le
  \mu_{0,z}^{0,m} \Big\{ \gamma: \max_{1 \le k \le m} \big| \gamma_k - [(0,0),(m,z)]_k \big| \ge (R+2)(m-1) \Big\}
\\ &\le
  \mu_{0,z}^{0,m} \Big\{ \gamma: \max_{1 \le k \le m} \big| \gamma_k - [(0,0),(m,z)]_k \big| \ge (R+1)m \Big\}
\end{align*}
(the condition is satisfied for $k=T-1$),
$\omega \in B_{0,z}^{0,m}$ will imply
\begin{equation*}
\mu_{0,z}^{0,m} \bigl( A_3 \cap \{ T=m \} \bigr) \le 2^{-m+1}, 
\end{equation*}
and hence
\begin{equation*}
\mu_{0, \nu}^{0,N} \bigl( A_{3} \cap \{ T=m \} \bigr) \le 2^{-m+1}.
\end{equation*}
Therefore,
\begin{equation}
  \label{eq:A3} 
  \mu_{0,\nu}^{0,N}(A_3) \le  \sum_{m=n+1}^{[N/2]} 2^{-m+1}  \le 2^{-n+1}.
\end{equation}
Since $A_4 \subset \big\{ |\gamma_N| \le v_0N \big\}$ and $T > [N/2]$ implies that
$([N/2],\gamma_{[N/2]}) \not\in \Co(v_1)$, we have
\begin{equation*}
  \mu_{0,z}^{0,N} (A_4)
  \le
 \mu_{0,z}^{0,N}   \Big\{ \gamma: \max_{N/4 \le k \le 3N/4} |\gamma_k - [(0,0),(N,z)]_k| \ge \frac{Q}{3}N^{1-\delta} \Big\}
\end{equation*}
for $|z| \le v_0N$
(the condition is satisfied for $k = [N/2]$ ).
Therefore, $\omega \in A^{0,N}_{0,z}(\frac{1}{2})$ will imply (due to the choice of~$Q$ in the proof of Lemma~\ref{lem:geometric-condition-of-staying-in-a-cone})
\begin{equation*}
\mu_{0,z}^{0,N} (A_4  ) \le e^{-\sqrt{N}} \le e^{-\sqrt{n}}
\end{equation*}
 and hence
 \begin{equation}
   \label{eq:A4}
  \mu_{0,\nu}^{0,N}(A_4) = \int_{-v_0N}^{v_0N} \mu_{0,z}^{0,N}(A_4) \nu(dz) \le e^{-\sqrt{n}}.
 \end{equation}
The conclusion of the lemma follows from  (\ref{eq:A2-and-A5}), (\ref{eq:A1}), (\ref{eq:A3}), and (\ref{eq:A4}).
\end{proof}

\begin{lemma}
\label{lem:sets-for-remote-control-to-holds}
Let $\delta \in (0,1/4)$,  $v > 0$ and $(m,q) \in \Z \times \Z$.
Then there is an event $\Omega_2^{(n)} = \Omega_2^{(n)}(\delta,v, m, q)$ with the following properties.
\begin{enumerate}
\item\label{item:12} For all  $\beta \in (0,
  1-4\delta)$ and sufficiently large $n$ (depending on $\beta$), we have
\begin{equation}
\label{eq:Omega-2-small-probability}
\Pp \big( (\Omega_{2}^{(n)} )^{c} \big ) \le e^{-n^{\beta}}.
\end{equation} 
\item\label{item:15} Let $v' \in \R$, $0 \le v_0<v_1$.
  If $\omega \in \Omega^{(n)}_2$, $|v'| +
  v_1\le v$, $Qn^{-\delta} <  v_1-v_0$, $N/2 \ge s \ge n$, then for any terminal measure $\nu$,
  \begin{multline}
  \label{eq:remote-control}
    \mu_{q,\nu}^{m,m+N} \big\{ \gamma: |\gamma_{m+s} - q - v'n| \ge v_1n \big\} \\
  \le 2\nu \big( [ q+(v'-v_0)N, q+(v'+v_0)N]^c \big) + 3e^{-\sqrt{s}}
\end{multline}
and
\begin{multline}
  \label{eq:compactness-control-by-terminal-measure}
  \mu_{q,\nu}^{m,m+N} \big\{ \gamma: \max_{1 \le i \le s}|\gamma_{m+i} - q - v'i| \ge (v_1+R+1)s \big\} \\
  \le 2 \nu \big( [ q+(v'-v_0)N, q+(v'+v_0)N]^c \big) + 4e^{-\sqrt{s}}.
\end{multline}
\end{enumerate}
\end{lemma}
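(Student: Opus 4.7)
The plan is to define $\Omega_2^{(n)}(\delta, v, m, q)$ as the $\theta^{m, q}$-pullback of an intersection of events that feed directly into the hypotheses of Lemma~\ref{lem:delta-straighness-imply-compactness}. Specifically, I would set
\[
\widetilde\Omega_2^{(n)} = \Omega_1^{(n)}(\delta, v+R+2, \tfrac{1}{2}) \cap \bigcap_{k \ge n}\bigcap_{|l| \le vk+1} \bigl(\bar A_{0, l}^{0, k}(\tfrac{1}{2}) \cap \bar B_{0, l}^{0, k}\bigr),
\]
and $\Omega_2^{(n)}(\delta, v, m, q) = (\theta^{m, q})^{-1}(\widetilde \Omega_2^{(n)})$. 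By stationarity of $\Pp$, the probability of $(\Omega_2^{(n)})^c$ equals that of $(\widetilde \Omega_2^{(n)})^c$, and a union bound, using Theorem~\ref{thm:delta-straightness-for-polymer-measure} (with $\alpha = 1/2$), Lemma~\ref{lem:concentration-of-polymer-measure-in-delta-rectangle} (with $\alpha = 1/2$), and Lemma~\ref{lem:uniform-small-probability-outside-a-linear-box}, majorizes it by
\[
e^{-n^{\beta_1}} + \sum_{k \ge n}(2vk + 3)\bigl(e^{-k^{\beta'}} + 2 e^{-r_1 k}\bigr)
\]
for any $\beta_1, \beta' \in (\beta, 1-4\delta)$; this is dominated by $e^{-n^\beta}$ for large $n$.

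For \eqref{eq:remote-control}, I would apply Lemma~\ref{lem:delta-straighness-imply-compactness} with $n$ there replaced by $s$ (and after shifting all paths by $(m, q)$). Its three hypotheses are inherited from $\widetilde\Omega_2^{(n)}$: any $z$ with $|z - v'k| \le v_1 k$ satisfies $|z| \le (|v'| + v_1)k \le vk$, so $|[z]| \le vk + 1$ and the needed $A_{0, z}^{0, k}(\tfrac{1}{2})$ and $B_{0, z}^{0, k}$ are contained in the corresponding $\bar A, \bar B$ events; and $\Omega_1^{(n)}(\delta, v+R+2, \tfrac{1}{2}) \subset \Omega_1^{(s)}(\delta, |v'|+v_1+R+2, \tfrac{1}{2})$ by monotonicity of the defining intersection in both arguments. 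The condition $v_1 > v_0 + Q s^{-\delta}$ follows from $s \ge n$.

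For \eqref{eq:compactness-control-by-terminal-measure}, denote the event on the left by $H$, and let $E = \{|\gamma_{m+s} - q - v's| \ge v_1 s\}$. The first term $\mu_{q,\nu}^{m,m+N}(E)$ is bounded by \eqref{eq:remote-control}, and the Markov property of the polymer measure at time $m+s$ gives
\[
\mu_{q, \nu}^{m, m+N}(H \cap E^c) = \int_{|y - q - v's| < v_1 s} \mu_{q, y}^{m, m+s}(H)\,\rho(dy),
\]
where $\rho$ is the time-$(m+s)$ marginal of $\mu_{q, \nu}^{m, m+N}$. For such $y$, the straight segment $[(m, q), (m+s, y)]_{m+i}$ differs from $q + v'i$ by $(i/s)(y - q - v's)$, hence by less than $v_1 s$; since $|y - q - v's| < v_1 s$ forces $|[y - q]| \le vs + 1$, the event $B_{q, y}^{m, m+s}$ (contained in $\widetilde \Omega_2^{(n)}$ after translation) bounds the deviation of $\gamma$ from the straight segment by $(R+1)s$ off a $\mu_{q, y}^{m, m+s}$-set of size at most $2^{-s+1}$. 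The triangle inequality yields $\mu_{q, y}^{m, m+s}(H) \le 2^{-s+1}$, and combining with \eqref{eq:remote-control} (and absorbing $2^{-s+1} \le e^{-\sqrt s}$ for large $s$) produces \eqref{eq:compactness-control-by-terminal-measure}.

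The main technical obstacle is the bookkeeping needed to ensure that a single event $\Omega_2^{(n)}$ depending only on $(\delta, v, m, q)$ simultaneously controls the fine-grained hypotheses of Lemma~\ref{lem:delta-straighness-imply-compactness} for every admissible $(v', v_0, v_1, s)$ with $|v'| + v_1 \le v$ and $s \ge n$. Once this is correctly arranged, the substance of the argument is already contained in the earlier lemmas and theorem.
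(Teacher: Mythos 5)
Your proof is correct and essentially identical to the paper's: the same event $\Omega_2^{(n)}$ (intersection of $\Omega_1^{(n)}$ with the $\bar A$, $\bar B$ events over all $k\ge n$ and relevant integer positions), the same union bound for the probability estimate, the same application of Lemma~\ref{lem:delta-straighness-imply-compactness} for~\eqref{eq:remote-control}, and the same decomposition over $E$ and $E^c$ (using the $B$ events to control $\mu_{q,y}^{m,m+s}$) for~\eqref{eq:compactness-control-by-terminal-measure}; you merely spell out the Markov factorization and the $\theta^{m,q}$-pullback that the paper leaves implicit behind its ``without loss of generality $(m,q)=(0,0)$''.
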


\begin{proof}
Without loss of generality, we can assume $(m,q) = (0,0)$. Let 
\begin{equation*}
  \Omega^{(n)}_2  = \Omega_1^{(n)}(\delta,v+R+2,1/2) \cap
  \bigg[  \bigcap_{s \ge n, |p| \le vs+1} \Big( \bar{A}_{0,p}^{0,s}(1/2) \cap \bar{B}_{0,p}^{0,s}
  \Big)
  \bigg].
\end{equation*}
We claim that such events have the desired properties.
In fact,  (\ref{eq:Omega-2-small-probability}) follows from Theorem \ref{thm:delta-straightness-for-polymer-measure}, Lemma
 \ref{lem:concentration-of-polymer-measure-in-delta-rectangle}, and Lemma
 \ref{lem:uniform-small-probability-outside-a-linear-box}; (\ref{eq:remote-control})
 follows from Lemma \ref{lem:delta-straighness-imply-compactness}.
It remains to show that if $\omega \in \Omega^{(n)}_2$, then
(\ref{eq:compactness-control-by-terminal-measure}) holds.

Fix $s \ge n$ and assume $\omega \in \Omega^{(n)}_2$.
Let 
\begin{equation*}
A = \big\{ \gamma : \ \max_{1 \le i \le s} |\gamma_i-v'i| \ge (v_1+R+1)s \big\}.
\end{equation*}
If $|z-v's| \le v_1s$,  then 
\begin{equation*}
\mu_{0,z}^{0,s} (A )
\le
\mu_{0,z}^{0,s}  \{ \gamma: \max_{1 \le i \le s} | \gamma_i - [(0,0), (s,z)]_i | \ge (R+1)s\}.
\end{equation*}
Since $\omega \in B_{0,z}^{0,s}$, we have  $\mu_{0,z}^{0,s} (A) \le 2^{-s+1}$ 
for all $|z-v's| \le v_1s $. Therefore,
\begin{equation*}
\mu_{0,\nu}^{0,N} \bigl(  A \cap \{ |\gamma_s - v's| \le v_1s  \} \bigr) \le 2^{-s+1} \le e^{-\sqrt{s}}.
\end{equation*}
Then (\ref{eq:compactness-control-by-terminal-measure}) follows from this and  (\ref{eq:remote-control}).
\end{proof}

\begin{proof}[Proof of Theorem \ref{thm:all-space-time-point-compactness}]
We fix $\delta \in (0, \frac{1}{4})$.
By (\ref{eq:Omega-2-small-probability}) and the Borel--Cantelli Lemma, with probability one, for all
$(m,q) \in \Z \times \Z$ and $M \in \N$, $\Omega_2^{(n)}(\delta, M, m, q)$ hold for $n > n_1$,
where $n_1=n_1(\omega, m,q, M)$ is a random constant depending on $m,q$ and $M$.

  Fix $0 \le u_0 < u_{1}$.
  Let 
\begin{equation*}
v_0 = \frac{2u_0 + u_1}{3}, \quad v_1 = \frac{u_0 + 2u_1}{3}.
\end{equation*}
Suppose $x \in [q,q+1]$ for some $q \in \Z$ and $ |v'| + u_1 \le M$ for some $M \in \N$.
Lemma~\ref{lem:main-monotonicity} implies $\mu_{q,\nu}^{m,N} \preceq \mu_{x,\nu}^{m,N} \preceq
 \mu_{q+1,\nu}^{m,N}$.
 Combined with Lemma \ref{lem:sets-for-remote-control-to-holds}, 
 we see that if 
\begin{equation*}
n \ge   n_1(\omega,m,q,M) \vee n_1(\omega,m,q+1,M)  \vee \Bigl( \frac{3Q}{|u_1-u_0|}
\Bigr)^{\frac{1}{\delta}} \vee \bigg( \frac{3(|q|+1 + M|m|)}{|u_1-u_0|} + |m| \bigg),
\end{equation*}
then for any $N - m \ge 2n $ and any terminal measure $\nu$, we have
\begin{align*}
   &\mu_{x,\nu}^{m,N}  \big\{ \gamma: |\gamma_{m+n} - v'n| \ge u_1n \big\}
    \\ \le& \sum_{p=q}^{q+1} \mu_{p,\nu}^{m,N}  \big\{ \gamma: |\gamma_{m+n} - p-v'n| \ge v_1n)
      \big\} \\
    \le & 2 \sum_{p=q}^{q+1} \nu \bigl( [p+(v'-v_0)(N-m), p+(v' + v_0) (N-m)]^c \bigr) + 6e^{-\sqrt{n}} \\
    \le &4 \nu \bigl( [(v'-u_0)N, (v'+u_0)N]^c \bigr) + 6e^{-\sqrt{n}}.
\end{align*}
Here, the first inequality uses $u_1n \ge v_{1}n + q+1$, which is implied by $n \ge \frac{|q|+1}{u_1-v_1} = \frac{3(|q|+1)}{|u_1-u_0|}$, the third inequality uses
$q+1 + (v'+v_0)(N-m) \ge (v'+u_0) N$, i.e., $q+1+(v_0-u_0)N \ge m (v'+v_0)$, which is implied by $N-m \ge n \ge \frac{mM + |q|+1}{v_0-u_0} = \frac{3(mM+|q|+1)}{|u_1-u_0|}$.
Similarly,
\begin{align*}
&\quad  \mu_{x,\nu}^{m,N} \big\{ \gamma: \max_{1 \le i \le n}|\gamma_{m+i} - v'i| \ge (u_1+R+1)n
\big\} \\
& \le \sum_{p=q}^{q+1} \mu_{p,\nu}^{m,N} \big\{ \gamma: \max_{1 \le i \le n}|\gamma_{m+i} - p-v'i| \ge (v_1+R+1)n
\big\}
\\
& \le 2\sum_{p=q}^{q+1} \nu \bigl( [p + (v'-v_0)(N-m), p+(v'+v_0)(N-m)]^c \bigr) + 8e^{-\sqrt{n}} \\
    &\le 4 \nu \bigl( [(v'-u_0)N, (v'+u_0)N]^c \bigr) + 8e^{-\sqrt{n}}.
\end{align*}
This completes the proof.
\end{proof}

\section{Infinite-volume polymer measures }
\label{sec:infinite-volume-polymer-measure}

In this section, we prove most claims of the Theorem~\ref{thm:thermodynamic-limit} on thermodynamic limits of polymer measures. 
We prove the existence and uniqueness part, and we prove that finite time
horizon polymer measures converge to the infinite volume polymer measures in the sense of weak convergence
of finite-dimensional distributions. We will prove the stronger total variation convergence in 
Section~\ref{sec:global-solutions-and-ratios-of-partition-functions}.

We begin with the following useful statement:

 \begin{lemma}\label{lem:limit-of-polymers-is-a-polymer} For all $\omega\in\Omega$,
 if a sequence of polymer measures has a weak limit, then the limiting measure is also a polymer measure.  
 \end{lemma}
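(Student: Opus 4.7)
The plan is first to reduce to a finite-horizon statement, then to exploit weak continuity of the map $y\mapsto\mu^{m,n}_{x,y}$ together with weak convergence of the terminal marginals. The lemma should be applied in two settings: polymer measures on $S^{m,n}_{x,*}$ and infinite volume polymer measures on $S^{m,+\infty}_x$. In the latter case, if $\mu_k\to\mu$ weakly (product topology on $S^{m,+\infty}_x$), continuity of the restriction $\pi_{m,n}$ yields $\mu_k\pi_{m,n}^{-1}\to\mu\pi_{m,n}^{-1}$ weakly, and each $\mu_k\pi_{m,n}^{-1}$ is a polymer measure on $S^{m,n}_{x,*}$ by the DLR/Markov property. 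So it suffices to prove: if $\mu_k=\mu^{m,n}_{x,\nu_k}$ are polymer measures on $S^{m,n}_{x,*}$ (with $m<n$, $x\in\R$ fixed) and $\mu_k\to\mu$ weakly, then $\mu$ is a polymer measure on $S^{m,n}_{x,*}$.

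Given this reduction, the natural candidate for the terminal measure is $\nu:=\mu\pi_n^{-1}$; since $\pi_n$ is continuous, $\nu_k=\mu_k\pi_n^{-1}\to\nu$ weakly. The claim is that $\mu=\mu^{m,n}_{x,\nu}$. For any bounded continuous test function $f$ on $S^{m,n}_{x,*}$, define
\[
h_f(y):=\int f\,d\mu^{m,n}_{x,y}=\frac{1}{Z^{m,n}_{x,y}}\int f(x,x_{m+1},\dotsc,x_{n-1},y)\prod_{k=m}^{n-1}g(x_{k+1}-x_k)e^{-F_k(x_k)}\,dx_{m+1}\cdots dx_{n-1}.
\]
By Fubini, for any terminal measure $\eta$ one has $\int f\,d\mu^{m,n}_{x,\eta}=\int h_f\,d\eta$. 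Once $h_f$ is shown to be bounded and continuous, weak convergence of $\nu_k$ gives $\int f\,d\mu_k=\int h_f\,d\nu_k\to\int h_f\,d\nu=\int f\,d\mu^{m,n}_{x,\nu}$, while $\int f\,d\mu_k\to\int f\,d\mu$ by hypothesis; since $f$ is arbitrary, $\mu=\mu^{m,n}_{x,\nu}$.

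The only technical step is verifying that $h_f$ is bounded and continuous in $y$. Boundedness is immediate since $\mu^{m,n}_{x,y}$ is a probability measure, so $|h_f(y)|\le\|f\|_\infty$. For continuity, $Z^{m,n}_{x,y}$ is $C^\infty$ and strictly positive in $y$ by Lemma~\ref{lem:smoothness-of-partition-function}, so it suffices to establish continuity of the numerator. I would use dominated convergence: for $y$ ranging in a bounded neighborhood, the factor $g(y-x_{n-1})$ can be bounded uniformly by an integrable function of $x_{n-1}$, and combining this with the Gaussian decay in the intermediate coordinates and the sublinear growth of $F$ on $\Omega$ (property~\eqref{eq:def-of-Omega}), one constructs an integrable dominator independent of $y$ from the neighborhood. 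This is essentially the estimate used in the proof of Lemma~\ref{lem:smoothness-of-partition-function}; the remainder of the argument is formal manipulation of weak convergence.
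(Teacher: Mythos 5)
Your argument is correct and takes essentially the same route as the paper's: after reducing to the finite-horizon case and identifying $\nu=\mu\pi_n^{-1}$ as the weak limit of the terminal marginals $\nu_k=\mu_k\pi_n^{-1}$, both proofs hinge on showing that $y\mapsto\int f\,d\mu^{m,n}_{x,y}$ is bounded and continuous (your $h_f$ is the paper's $G$), which via weak convergence of the $\nu_k$ forces $\mu=\mu^{m,n}_{x,\nu}$. The only cosmetic difference is that the paper restricts to products of nonnegative compactly supported continuous test functions, allowing bounded rather than dominated convergence in the continuity step.
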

\bpf It is sufficient to prove the statement of the lemma for finite volume polymer measures. 
We need to prove that $\mu^{m,n}_{x,\nu_k}$
weakly converges to $\mu^{m,n}_{x,\nu}$ if $m,n\in\Z$, $x\in\R$, and $(\nu_k)$ is a  sequence of distributions on~$\R$, weakly convergent to a distribution $\nu$. 

It suffices to check that if $f(x_{m+1},\ldots,x_n)=f_{m+1}(x_{m+1})\ldots f_{n-1}(x_{n-1})f_{n}(x_{n})$ for continuous nonnegative functions $f_{m+1},\ldots,f_n$ with bounded support,
then
\begin{multline*}
 \lim_{k\to\infty}\int\mu^{m,n}_{x,\nu_k}(x_m,\ldots, dx_n)f(x_{m+1},\ldots,x_n)\\=\int\mu^{m,n}_{x,\nu}(dx_m,\ldots, dx_n)f(x_{m+1},\ldots,x_n).
\end{multline*}
Since
\[
\int\mu^{m,n}_{x,\nu}(dx_m,\ldots, dx_n)f(x_{m+1},\ldots,x_n)=\int \nu(dx_n) G(x_n),
\]
where
\[
 G(x_n)=\int \mu_{x,x_n}^{m,n}(dx_m,\ldots, dx_n)f(x_{m+1},\ldots,x_n),
\]
we need to show that $G$ is a continuous function.  The latter follows from the definition of $\mu_{x,x_n}^{m,n}$,
continuity of  $Z^{m,n}_{x,x_n}$ (see Lemma~\ref{lem:smoothness-of-partition-function}) and $g(x_{n}-x_{n-1}) f_n(x_{n})$ with respect to~$x_n$,  and the bounded convergence theorem.
\epf

In addition to the terminology and notation from Section~\ref{sec:polymers}, we say that LLN with slope $v\in\R$ holds {\it for an increasing sequence of times} $(n_k)$ and a sequence 
of Borel measures $(\nu_k)$ on $\R$ if
for all $\delta>0$,
\[
 \lim_{k\to\infty} \nu_k([(v-\delta)n_k,(v+\delta) n_k])=1.
\]

\begin{lemma}\label{lem:existence-of-polymer-measures-and-convergence-along-subsequences} For every
  $\omega \in \Omega'$ (introduced in Theorem \ref{thm:all-space-time-point-compactness}) the
  following holds true. 
For any $(m,x)\in \Z \times\R$, for any $v\in\R$ , any time sequence $(n_k)$ and any sequence of measures $(\nu_k)$ 
satisfying LLN with slope $v$, there is an increasing subsequence $(k_i)_{i\in\N}$  such that $\mu^{m,n_{k_i}}_{x,\nu_{k_i}}$ 
converges in the sense of weak convergence of finite-dimensional distributions to a measure $\mu$ on $S_x^{m,+\infty}$.  The limiting measure
$\mu$ is a polymer measure supported on $S_x^{m,\infty}(v)$.
\end{lemma}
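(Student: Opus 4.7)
The plan is to combine three ingredients developed earlier: tightness of polymer measures (Theorem~\ref{thm:tightness-of-polymer-measure}), closedness of the class of polymer measures under weak limits (Lemma~\ref{lem:limit-of-polymers-is-a-polymer}), and the remote-control estimate (Theorem~\ref{thm:all-space-time-point-compactness}). First I would verify that LLN with slope $v$ implies the tail hypothesis $\lim_{c\to\infty}\sup_k \nu_k([-cn_k, cn_k]^c) = 0$ required by Theorem~\ref{thm:tightness-of-polymer-measure}: for $c > |v|+1$, the inclusion $[(v-1)n_k,(v+1)n_k]\subset[-cn_k,cn_k]$ together with the LLN makes this tail arbitrarily small for all large $k$, while the finitely many remaining $k$ are absorbed by enlarging $c$. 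The tightness theorem then yields, for each fixed $n \ge m$, tightness of $\mu^{m,n_k}_{x,\nu_k}\pi^{-1}_{m,n}$ on $\R^{n-m+1}$, and a standard diagonal extraction over $n$ produces a subsequence $(k_i)$ along which every finite-dimensional marginal converges weakly. Continuity of the coordinate projections ensures the limits form a consistent family, so Kolmogorov's extension theorem produces a probability measure $\mu$ on $S^{m,+\infty}_x$ realizing them.

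To identify $\mu$ as a polymer measure, I would use the Markov-like structure of the polymer distributions: the projection of $\mu^{m,n_k}_{x,\nu_k}$ onto its first $n-m+1$ coordinates is itself a point-to-measure polymer measure $\mu^{m,n}_{x,\tilde\nu_k}$ for an explicit terminal measure $\tilde\nu_k$ given by a partition-function-weighted push-forward of $\nu_k$ (immediate from the factorization of the unnormalized density into $Z^{m,n}_{x,x_n}$ times $p^{m,n}_{x,x_n}$ times $Z^{n,n_k}_{x_n,y}$). Hence every finite-window projection $\mu \pi^{-1}_{m,n}$ is a weak limit of polymer measures, and Lemma~\ref{lem:limit-of-polymers-is-a-polymer} identifies it as a polymer measure. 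Since this holds for every $n$, the measure $\mu$ satisfies the DLR condition and so is an infinite-volume polymer measure on $S^{m,+\infty}_x$.

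It remains to prove $\mu(S^{m,+\infty}_x(v)) = 1$. For each $\varepsilon > 0$ and $n \in \N$, consider the open one-coordinate event $B_{n,\varepsilon} = \{\gamma: |\gamma_{m+n} - vn| > \varepsilon n\}$, and apply Theorem~\ref{thm:all-space-time-point-compactness} with $v'=v$, $u_0 = \varepsilon/2$, $u_1 = \varepsilon$. For $n$ at least as large as the random threshold $n_0(\omega, m, [x], [|v|+\varepsilon], [2/\varepsilon])$ from that theorem, and any $k$ with $n_k \ge 2n$, estimate \eqref{eq:remote-control-all-space-time-all-space-time} gives
\[
  \mu^{m,n_k}_{x,\nu_k}(B_{n,\varepsilon}) \le 4\,\nu_k\bigl([(v-\tfrac{\varepsilon}{2})n_k,(v+\tfrac{\varepsilon}{2})n_k]^c\bigr) + 6 e^{-\sqrt n},
\]
and the LLN hypothesis forces the first term to vanish as $k\to\infty$. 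Since $B_{n,\varepsilon}$ is open and depends on a single coordinate, the Portmanteau theorem applied to the weakly convergent one-dimensional marginals yields $\mu(B_{n,\varepsilon}) \le 6 e^{-\sqrt n}$ for all sufficiently large $n$. Summability of this bound together with the Borel--Cantelli lemma implies $\mu(\limsup_n B_{n,\varepsilon}) = 0$, and intersecting over $\varepsilon = 1/j$ gives $\gamma_{m+n}/n \to v$ for $\mu$-almost every $\gamma$, which is the required SLLN. The main delicate point is the joint bookkeeping of parameters: $\varepsilon$ is fixed first, then $n$ is chosen large enough to meet the random threshold $n_0$ and the straightness constraint implicit in Theorem~\ref{thm:all-space-time-point-compactness}, and only afterwards $k \to \infty$ is taken along the extracted subsequence.
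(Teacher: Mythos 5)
Your proof is correct and follows essentially the same route as the paper: tightness from Theorem~\ref{thm:tightness-of-polymer-measure} together with Prokhorov (your diagonal/Kolmogorov elaboration), identification of the limit as a polymer measure via Lemma~\ref{lem:limit-of-polymers-is-a-polymer}, and the SLLN via the remote-control bound~\eqref{eq:remote-control-all-space-time-all-space-time}, the LLN hypothesis, and Borel--Cantelli. Your extra details (the explicit factorization of the finite-window marginal as $\mu^{m,n}_{x,\tilde\nu_k}$, the Portmanteau step) merely spell out what the paper leaves implicit.
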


\begin{proof}
  Since  $(\nu_k)$ satisfies  LLN with slope $v$, 
   (\ref{eq:tightness-for-terminal-measures}) is satisfied.
  By Theorem \ref{thm:tightness-of-polymer-measure}, the sequence 
  $(\mu_{x,\nu_k}^{m,n_k})$ forms a  tight family,  so by the Prokhorov theorem, there is a converging subsequence of this sequence.
 Let $\mu$ be the limiting measure of some subsequence $(\mu_{x,\nu_{k_i}}^{m,n_{k_i}})$. It is an infinite volume polymer measure
due to Lemma~\ref{lem:limit-of-polymers-is-a-polymer}. Let us prove that for every $\varepsilon > 0$,
\begin{equation}
\label{eq:tails-summable}
\sum_{n=1}^{\infty} \mu\pi^{-1}_{m+n}\big([(v-\varepsilon)n , (v+\varepsilon) n]^c\big) < \infty.
\end{equation}
The Borel--Cantelli lemma will imply then that $\mu$ is supported on $S_x^{m,+\infty}(v)$. Fixing $\varepsilon>0$,  for
sufficiently large $n$ and $n_{k_i}-m > 2n$, we derive from  (\ref{eq:remote-control-all-space-time-all-space-time}):
\begin{equation*}
  \mu_{x,\nu_{k_i}}^{m,n_{k_i}} \pi^{-1}_{m+n} \big([(v-\varepsilon)n, (v+\varepsilon)n]^c \big)
  \le 4\nu_{k_i} \big(  [(v-\varepsilon/2)n_{k_i}, (v+\varepsilon/2) n_{k_i}]^c \big) + 6e^{-\sqrt{n}}.
\end{equation*}
Since $(\nu_k)$ satisfies LLN with slope $v$, taking the limit $k_i \to \infty$ and using the weak
convergence of finite-dimensional distributions of $(\mu_{x,\nu_{k_i}}^{n,n_{k_i}})$, we find 
\begin{equation*}
  \mu \pi^{-1}_{m+n} \big( [ (v-\varepsilon)n, (v+\varepsilon)n]^c \big)
  \le 6 e^{-\sqrt{n}}.
\end{equation*}
Therefore \eqref{eq:tails-summable} holds, and the proof is complete.
\end{proof}

\bigskip

Our next goal is uniqueness of a polymer measure with given endpoint and slope. 

Let $m\in\Z$ and let $\mu_1$ and $\mu_2$ be two measures on $S^{m,+\infty}$. We say that $\mu_1$ is stochastically dominated by $\mu_2$ if $\mu_1\pi_{m,n}^{-1}$ is stochastically dominated by $\mu_2\pi_{m,n}^{-1}$
for all finite $n>m$.

\begin{lemma}
\label{lem:infinite-volume-monotonicity-wrt-slope} 
Let $v_1<v_2$ and $(m,x)\in\ZR$.
If $\mu_1$ and $\mu_2$ are polymer measures on $S_x^{m,+\infty}$  satisfying LLN with slopes $v_1$ and $v_2$, respectively, then $\mu_2$ stochastically dominates $\mu_1$.
\end{lemma}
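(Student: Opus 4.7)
The plan is to exploit the DLR property of $\mu_1,\mu_2$, the endpoint monotonicity of point-to-point polymer measures (Lemma~\ref{lem:main-monotonicity}), and the LLN hypothesis to separate the two terminal distributions at large times. Fix $n>m$ and set $\nu_N^{(i)}:=\mu_i\pi_N^{-1}$. The DLR property gives, for every $N>n$,
\[
\mu_i\pi_{m,n}^{-1}=\mu_{x,\nu_N^{(i)}}^{m,N}\pi_{m,n}^{-1},\qquad i=1,2.
\]
Pick any $c\in(v_1,v_2)$. By the LLN hypothesis, $p_N^{(1)}:=\nu_N^{(1)}\bigl((-\infty,cN]\bigr)$ and $p_N^{(2)}:=\nu_N^{(2)}\bigl([cN,\infty)\bigr)$ both tend to $1$ as $N\to\infty$.

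For $N$ large enough that $p_N^{(1)},p_N^{(2)}>0$, I introduce the normalized restrictions
\[
\tilde\nu_N^{(1)}=\nu_N^{(1)}|_{(-\infty,cN]}/p_N^{(1)},\qquad \tilde\nu_N^{(2)}=\nu_N^{(2)}|_{[cN,\infty)}/p_N^{(2)}.
\]
These are probability measures supported on half-lines separated by $cN$, so a trivial comparison of their cumulative distribution functions yields $\tilde\nu_N^{(1)}\preceq\tilde\nu_N^{(2)}$. Applying part~2 of Lemma~\ref{lem:main-monotonicity} with the same starting point $(m,x)$, I get
\[
\mu_{x,\tilde\nu_N^{(1)}}^{m,N}\pi_{m,n}^{-1}\ \preceq\ \mu_{x,\tilde\nu_N^{(2)}}^{m,N}\pi_{m,n}^{-1}.
\]

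Next I would pass to the limit $N\to\infty$. Since a point-to-measure polymer measure is the average of the Markov kernel $y\mapsto \mu_{x,y}^{m,N}\pi_{m,n}^{-1}$ against the terminal distribution, and averaging against a Markov kernel contracts total variation,
\[
\bigl\|\mu_{x,\nu_N^{(i)}}^{m,N}\pi_{m,n}^{-1}-\mu_{x,\tilde\nu_N^{(i)}}^{m,N}\pi_{m,n}^{-1}\bigr\|_{TV}\ \le\ \|\nu_N^{(i)}-\tilde\nu_N^{(i)}\|_{TV}\ \xrightarrow[N\to\infty]{}\ 0.
\]
The left-hand measure equals $\mu_i\pi_{m,n}^{-1}$ for every $N>n$ by the DLR identity displayed above, so $\mu_{x,\tilde\nu_N^{(i)}}^{m,N}\pi_{m,n}^{-1}\to \mu_i\pi_{m,n}^{-1}$ in total variation, hence weakly. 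Stochastic dominance is closed under weak limits (either via the coupling description of Lemma~\ref{lem:dominance-by-coupling}, or by testing against coordinatewise nondecreasing bounded continuous functions and approximating), so passing to the limit in the dominance above yields $\mu_1\pi_{m,n}^{-1}\preceq\mu_2\pi_{m,n}^{-1}$. Since $n>m$ is arbitrary, this is the required stochastic dominance of $\mu_1$ by $\mu_2$.

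The only points requiring minor care are the standard facts that averaging against a Markov kernel contracts total variation and that stochastic dominance is preserved under weak limits; everything else is an immediate consequence of LLN and the monotonicity already established in Section~\ref{sec:monotonicity}, so I do not foresee any serious obstacle.
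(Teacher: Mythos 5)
Your argument is correct and follows essentially the same strategy as the paper's: condition each terminal distribution so that the conditioned measures are stochastically ordered, invoke the main monotonicity lemma for the resulting point-to-measure polymers, and then pass to the limit using that the conditioning is asymptotically negligible by the LLN (controlled in total variation) and that stochastic dominance is preserved under weak limits (the paper's Lemma~\ref{lem:convergence-preserves-dominance}). The only cosmetic difference is that you condition on the disjoint half-lines $(-\infty,cN]$ and $[cN,\infty)$, making the dominance of the conditioned terminal measures immediate, whereas the paper conditions on the bounded windows $[(v_i-\delta)k,(v_i+\delta)k]$; both separations serve the same purpose.
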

To prove this lemma, we need the following obvious auxiliary statement.
\begin{lemma}\label{lem:convergence-preserves-dominance} Suppose $(\mu_1^{k})_{k\in\N}$ and $(\mu_2^{k})_{k\in\N}$ are sequences of probability measures
converging weakly to probability measures $\mu_1$ and~$\mu_2$, respectively, and  such that 
$\mu_1^k$ is dominated by $\mu_2^k$ for all $k\in\N$. Then $\mu_1$ is dominated by $\mu_2$.
\end{lemma}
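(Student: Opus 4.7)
The plan is to combine Strassen's monotone coupling characterization (Lemma~\ref{lem:dominance-by-coupling}) with a routine tightness argument. The main appeal of this approach is that it bypasses any direct approximation of bounded coordinatewise nondecreasing test functions (which need not be continuous) by continuous ones.

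First I would invoke the ``only if'' direction of Lemma~\ref{lem:dominance-by-coupling}: for each $k\in\N$, the assumption $\mu_1^k\preceq\mu_2^k$ yields a coupling $\pi^k$ on $\R^d\times\R^d$ with marginals $\mu_1^k$ and $\mu_2^k$ that is concentrated on the set
\[
D=\{(x,y)\in\R^d\times\R^d:\ x\preceq y\},
\]
which is closed in the product topology. Next, since each of the sequences $(\mu_1^k)$ and $(\mu_2^k)$ is weakly convergent, both are tight: given $\varepsilon>0$, there exist compact sets $K_1,K_2\subset\R^d$ such that $\mu_i^k(\R^d\setminus K_i)<\varepsilon/2$ for all $k$ and $i=1,2$. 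Then
\[
\pi^k\bigl((\R^d\times\R^d)\setminus(K_1\times K_2)\bigr)\le\mu_1^k(\R^d\setminus K_1)+\mu_2^k(\R^d\setminus K_2)<\varepsilon,
\]
so $K_1\times K_2$ is a compact set in $\R^d\times\R^d$ containing mass at least $1-\varepsilon$ under every $\pi^k$; hence $(\pi^k)$ is tight.

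By Prokhorov's theorem I would extract a weakly convergent subsequence $\pi^{k_j}\Rightarrow\pi$. Continuity of the coordinate projections, combined with uniqueness of weak limits of the marginal sequences, identifies the marginals of $\pi$ as $\mu_1$ and $\mu_2$. Since $D$ is closed, the Portmanteau theorem gives
\[
\pi(D)\ge\limsup_{j\to\infty}\pi^{k_j}(D)=1,
\]
so $\pi$ is a monotone coupling of $\mu_1$ and $\mu_2$. Applying the ``if'' direction of Lemma~\ref{lem:dominance-by-coupling} to $\pi$ yields $\mu_1\preceq\mu_2$.

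There is no real obstacle here; the only point that deserves care is the tightness of the couplings, which is immediate from tightness of the marginals via the product-set bound shown above. If one preferred to avoid Strassen entirely, an alternative would be to mollify an arbitrary bounded coordinatewise nondecreasing $f$ against a smooth probability density supported in the negative orthant $[-\varepsilon,0]^d$, observe that the convolution is continuous, bounded, and still coordinatewise nondecreasing, pass to the limit under weak convergence, and then send $\varepsilon\downarrow 0$ by bounded convergence; but the coupling route is shorter and uses only the tools already assembled in the paper.
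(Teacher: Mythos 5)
Your proof is correct. Note that the paper itself does not supply a proof of this lemma---it is introduced with the phrase ``we need the following obvious auxiliary statement'' and left unproved---so there is no written argument in the paper to compare against. The route you take (Strassen's coupling characterization, already available in the paper as Lemma~\ref{lem:dominance-by-coupling}, plus tightness of the coupling sequence from tightness of the marginals, Prokhorov to extract a convergent subsequence, Portmanteau on the closed set $D=\{x\preceq y\}$ to see the limit coupling is monotone, then Strassen again) is a clean and complete justification. The key observations are all in order: weak convergence on $\R^d$ gives tightness of each marginal sequence, the union bound $\pi^k\bigl((K_1\times K_2)^c\bigr)\le\mu_1^k(K_1^c)+\mu_2^k(K_2^c)$ transfers tightness to the couplings, $D$ is closed, and the limit coupling has the correct marginals because the coordinate projections are continuous and weak limits are unique. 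Your alternative sketch via mollification against a density supported in the negative orthant (to stay within the class of coordinatewise nondecreasing test functions) is also valid and arguably closer to what ``obvious'' would mean to an author who does not want to invoke Strassen; it correctly handles the issue that the defining test class consists of bounded monotone functions, which need not be continuous and so cannot be used directly with weak convergence.
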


\bpf[Proof of Lemma~\ref{lem:infinite-volume-monotonicity-wrt-slope}]
Let us take any $\delta>0$ satisfying  $v_1+\delta< v_2-\delta$,
denote \[
 \mu_{i,k}:=\mu_i\pi_{k}^{-1},\quad i=1,2,\quad k>m, 
\]
and introduce
$\mu_{i,k,\delta}$ as $\mu_{i,k}$ conditioned on $[(v_i-\delta)k,(v_i+\delta) k]$.
Then  $\mu_{1,k,\delta}$ is dominated by $\mu_{2,k,\delta}$. Using Lemma~\ref{lem:main-monotonicity} on monotonicity, we obtain that 
$\mu_{x,\mu_{1,k,\delta}}^{m,k}$ is dominated by $\mu_{x,\mu_{2,k,\delta}}^{m,k}$.
Therefore,
$\mu_{x,\mu_{1,k,\delta}}^{m,k}\pi_{m,r}^{-1}$ is dominated by $\mu_{x,\mu_{2,k,\delta}}^{m,k}\pi_{m,r}^{-1}$, for any 
$r$ between $m$ and $k$. 
Since, in addition, the LLN assumption implies
\[
\lim_{k\to\infty}\|\mu_{i}\pi_{m,r}^{-1}-\mu_{x,\mu_{i,k,\delta}}^{m,k}\pi_{m,r}^{-1}\|_{TV}=\lim_{k\to\infty}\|\mu_{i,k}-\mu_{i,k,\delta}\|_{TV}=0,\quad  i=1,2,
\]
Lemma~\ref{lem:convergence-preserves-dominance} implies that  $\mu_{1}\pi_{m,r}^{-1}$ is dominated by $\mu_{2}\pi_{m,r}^{-1}$.
\epf

\begin{lemma} \label{lem:uniqueness-and-convergence-at-rationals}Let $v\in\R$.
Then there is a set $\tilde \Omega_v$ of probability~$1$ such that the following holds on $\tilde \Omega_v$:
\begin{enumerate}[1.]
 \item \label{it:uniqueness-at-rationals} For every point $(m,x)\in\Z\times\Q$, the set $\Pc_x^{m,+\infty}(v)$ 
 of all polymer measures on $S_x^{m,+\infty}$ satisfying SLLN with slope $v$,
 contains exactly one element that we denote by~$\mu_{x}^{m,+\infty}(v)$. 
 \item \label{it:convergence-of-polymer-measures-at-rationals} For every point $(m,x)\in\Z\times\Q$ and for every sequence of measures $(\nu_n)$ satisfying LLN with slope $v$, 
 $\mu_{x,\nu_n}^{m,n}$ weakly converges to $\mu_{x}^{m,+\infty}(v)$. 
\end{enumerate}
\end{lemma}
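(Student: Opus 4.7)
The plan is to construct $\tilde\Omega_v$ as a countable intersection of full-measure events, obtain existence of $\mu_x^{m,+\infty}(v)$ from Lemma~\ref{lem:existence-of-polymer-measures-and-convergence-along-subsequences}, derive uniqueness from a sandwich argument based on slope monotonicity, and deduce convergence from uniqueness and tightness.

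First I would take $\Omega'$ from Theorem~\ref{thm:all-space-time-point-compactness} and intersect it with the full-measure events on which, for each rational triple $(m,x,q) \in \Z \times \Q \times \Q$, Lemma~\ref{lem:existence-of-polymer-measures-and-convergence-along-subsequences} yields at least one polymer measure $\hat\mu_{m,x,q} \in \Pc_x^{m,+\infty}(q)$, obtained for instance as a weak subsequential limit of $\mu_{x,\delta_{\lfloor qn\rfloor}}^{m,n}$. Since there are only countably many such triples, the resulting set $\tilde\Omega_v$ still has full measure. For existence at slope $v$, I would apply Lemma~\ref{lem:existence-of-polymer-measures-and-convergence-along-subsequences} directly with $n_k = k$ and $\nu_k = \delta_{\lfloor vk \rfloor}$, which trivially satisfies LLN with slope $v$, to produce a particular measure $\mu_x^{m,+\infty}(v) \in \Pc_x^{m,+\infty}(v)$ for each rational starting point $(m,x)$.

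For uniqueness, let $\mu, \mu' \in \Pc_x^{m,+\infty}(v)$ be arbitrary. By Lemma~\ref{lem:infinite-volume-monotonicity-wrt-slope}, for any rationals $q < v < q'$,
\begin{equation*}
\hat\mu_{m,x,q} \preceq \mu, \mu' \preceq \hat\mu_{m,x,q'}.
\end{equation*}
I would take rational sequences $q_k \uparrow v$ and $q_k' \downarrow v$. Applying Lemma~\ref{lem:infinite-volume-monotonicity-wrt-slope} pairwise shows that $(\hat\mu_{m,x,q_k})$ is stochastically increasing and $(\hat\mu_{m,x,q_k'})$ is stochastically decreasing. Since both families are tight by Theorem~\ref{thm:tightness-of-polymer-measure} (their terminal measures concentrate at slopes bounded uniformly in $k$, so condition~\eqref{eq:tightness-for-terminal-measures} is satisfied), each converges (on a subsequence) in the sense of finite-dimensional distributions to polymer measures $\bar\mu_-$ and $\bar\mu_+$ by Lemma~\ref{lem:limit-of-polymers-is-a-polymer}, and $\bar\mu_- \preceq \mu, \mu' \preceq \bar\mu_+$. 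Theorem~\ref{thm:all-space-time-point-compactness} applied to the sandwiching measures forces the marginals $\bar\mu_\pm\pi_n^{-1}$ to concentrate, for each fixed $n$, inside windows centered at $vn$ whose widths are controlled uniformly by the sandwich. The main obstacle — and where the bulk of the work lies — is closing this sandwich: to show $\bar\mu_- = \bar\mu_+$, which then forces $\mu = \mu' = \bar\mu_\pm$. My approach would be to combine the stochastic monotonicity with the exponential remote-control estimate~\eqref{eq:remote-control-all-space-time-all-space-time} to argue that, at each fixed time $n$, the $n$-th marginals of $\hat\mu_{m,x,q_k}$ and $\hat\mu_{m,x,q_k'}$ satisfy bounds that differ only by terms controlled by $|q_k - q_k'|$ and a super-polynomially small error, and hence coincide in the limit. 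The quadratic form of the shape function $\alpha(v) = \alpha_0 - v^2/2$ established in Theorem~\ref{thm:shape-function} should play a key role here, ensuring that slight perturbations of the slope produce vanishing effects on the localized part of the polymer measure.

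Finally, for convergence in part~\ref{it:convergence-of-polymer-measures-at-rationals}, I would argue that tightness of $(\mu_{x,\nu_n}^{m,n})$ from Theorem~\ref{thm:tightness-of-polymer-measure} ensures every subsequence admits a further subsequence converging in finite-dimensional distributions; by Lemma~\ref{lem:existence-of-polymer-measures-and-convergence-along-subsequences} every such limit belongs to $\Pc_x^{m,+\infty}(v)$; by the uniqueness just established every such limit must equal $\mu_x^{m,+\infty}(v)$. The standard subsequence argument then promotes this to weak convergence of the full sequence.
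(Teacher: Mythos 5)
Your construction of existence and your treatment of the weak convergence in part~2 (tightness, subsequential limits belonging to $\Pc_x^{m,+\infty}(v)$, then the standard subsequence criterion) match the paper and are fine. The issue is the uniqueness argument, where there is a genuine gap that the paper's proof resolves by a fundamentally different mechanism.

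You correctly identify that the crux is to show $\bar\mu_- = \bar\mu_+$ for the sandwiching limits coming from rational slopes $q_k \uparrow v$ and $q_k' \downarrow v$. But your proposed route to close this — invoking the remote-control estimate~\eqref{eq:remote-control-all-space-time-all-space-time} and the quadratic shape function to show the marginals of $\hat\mu_{m,x,q_k}$ and $\hat\mu_{m,x,q_k'}$ differ by something controlled by $|q_k - q_k'|$ — does not work. Those estimates control how far polymer paths wander, i.e.\ they are tightness/localization bounds; they give no quantitative modulus of continuity of the infinite-volume measure as a function of the slope $v$. In fact, continuity of $v\mapsto\mu_x^{m,+\infty}(v)$ at the particular $v$ you care about is essentially equivalent to the uniqueness you are trying to prove: a stochastic ``jump'' at slope $v$ is exactly the failure mode. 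Nothing in the tightness machinery rules that out for a fixed $\omega$, and one should expect such jumps to occur at a random (but at most countable) exceptional set of slopes.

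The paper's proof does not attempt to control the gap quantitatively. Instead it exploits the structure of the problem across slopes simultaneously: for each quantile level $\alpha$ it defines the interval $I_\alpha(v) = (q_\alpha^-(v), q_\alpha^+(v))$ of possible $\alpha$-quantiles over all measures in $\Pc_x^{m,+\infty}(v)$, observes by Lemma~\ref{lem:infinite-volume-monotonicity-wrt-slope} that these intervals are ordered (hence disjoint) in $v$ so at most countably many can be nonempty for a given $\omega$, and then uses invariance under the shear transformations $L^v$ (Lemma~\ref{lem:shear-for-Z_v}) to conclude that $p := \Pp\{I_\alpha(v)\neq\emptyset\}$ is independent of $v$. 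A Fubini argument over a probability density in $v$ — the Howard--Newman trick — then forces $p=0$. This zero-one/ergodicity mechanism is the ingredient your proposal is missing, and without it the sandwich cannot be closed. If you want to keep your sandwich framing, you would need to replace the hand-waved quantitative bound with exactly this shear-invariance argument applied to the event $\{\bar\mu_- \neq \bar\mu_+\}$.
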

This lemma is weaker than Theorem~\ref{thm:thermodynamic-limit} in two ways: its statements hold only for rational spatial locations, and only weak convergence is claimed.
We study the irrational points later in this section, and prove the total variation convergence in Section~\ref{sec:global-solutions-and-ratios-of-partition-functions}.

\bpf
Let us fix a point $(m,x)$. By Lemma~\ref{lem:existence-of-polymer-measures-and-convergence-along-subsequences},
for each $v$, the set $\Pc_x^{m,+\infty}(v)$ is non-empty. For any $\mu\in \Pc_x^{m,+\infty}(v)$ and any $k>m$, the measure $\mu\pi_{k}^{-1}$ 
is equivalent to Lebesgue measure (in the sense of absolute continuity),
so for any $\alpha\in(0,1)$ the quantile $q_\alpha(\mu)$ at level $\alpha$ is uniquely defined 
by~$\mu\pi_{k}^{-1}(-\infty, q_\alpha(\mu)]=\alpha$.
So let us define
\begin{align*}
 q_\alpha^-(v)&=\inf\{q_\alpha(\mu): \mu\in \Pc_x^m(v)\},\\
 q_\alpha^+(v)&=\sup\{q_\alpha(\mu): \mu\in \Pc_x^m(v)\}.
\end{align*}
Let us prove that with probability 1, $q_\alpha^-=q_\alpha^+$. Due to 
Lemma~\ref{lem:infinite-volume-monotonicity-wrt-slope}, if $v_1<v_2$, then 
$q_\alpha^-(v_1)\le q_\alpha^+(v_1)\le q_\alpha^-(v_2)\le q_\alpha^+(v_2)$. Therefore, with probability~$1$, there may be at most
countably many nonempty intervals $I_\alpha(v)=(q_\alpha^-(v),q_\alpha^+(v))$. On the other hand, space-time shear transformations
map polymer measures into polymer measures (on finite or infinite paths), so 
$\Pp\{I_\alpha(v)\ne\emptyset\}=p$ does not depend on~$v$.
Therefore, we can apply arguments similar to those in~\cite{kickb:bakhtin2016} and going back to Lemma~6 in~\cite{HoNe3}. We take an arbitrary probability density $f$ on $\R$
and write
\[
p=\int_\R \Pp\{I_\alpha(v)\ne\emptyset\}f(v)dv
=\int_\R \E \ONE_{\{I_\alpha(v)\ne\emptyset\}}f(v)dv
=\E \int_\R \ONE_{\{I_\alpha(v)\ne\emptyset\}}f(v)dv =0,
\]
since $I_\alpha(v)\ne\emptyset$ can be true for at most countably many $v$. 
So, for any $v\in\R$, $\Pp\{I_\alpha(v)\ne\emptyset\}=0$. This immediately implies that for every $v\in\R$,
\[
 \Pp\{q_\alpha^-(v)= q_\alpha^+(v)~\text{for all}~\alpha\in\Q \}=1.
\]
So, for any $\mu_1,\mu_2\in \Pc_x^{m,+\infty}(v)$, the rational quantiles of~$\mu_1\pi_{k}^{-1}$ 
and~$\mu_2\pi_{k}^{-1}$ coincide. Therefore, $\mu_1\pi_{k}^{-1}=\mu_2\pi_{k}^{-1}$.
 In turn, this implies $\mu_1\pi_{m,k}^{-1}=\mu_1\pi_{m,k}^{-1}$. Since this is true for all $k$,
we conclude that $\mu_1=\mu_2$.

So we have proved that for a fixed point $(m,x)\in\ZR$, with probability~$1$, a polymer measure with specified asymptotic
slope is unique. We denote that measure by $\mu_x^{m,+\infty}(v)$. By countable additivity, this uniqueness statement holds true for all $(m,x)\in\Z\times\Q$ at once on a common set $\tilde\Omega_v$ of measure 1, and part~\ref{it:uniqueness-at-rationals} is proved.

To prove the second part, we fix any $\omega\in\tilde \Omega_v$ and use a compactness argument. 
Lemma~\ref{lem:existence-of-polymer-measures-and-convergence-along-subsequences} implies that from any subsequence
$(\mu_{x,\nu_n}^{m,n})$ one can choose a convergent subsubsequence. Part~\ref{it:uniqueness-at-rationals} 
of this lemma implies that all these partial limits must coincide with  $\mu_x^{m,+\infty}(v)$. Therefore, the entire sequence converges to
$\mu_x^{m,\infty}(v)$, which completes the proof of the lemma. \epf

\begin{lemma} Let $v\in\R$. On $\tilde\Omega_v$, for every $m\in\Z$ and points $x_1,x_2\in\Q$ satisfying $x_1<x_2$,  
$\mu_{x_1}^{m,+\infty}(v)$ is dominated by $\mu_{x_2}^{m,+\infty}(v)$.  
\end{lemma}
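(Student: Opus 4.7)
The plan is to obtain the dominance of the two infinite-volume measures as a weak limit of the corresponding finite-volume dominance supplied by Lemma~\ref{lem:main-monotonicity}. More precisely, I would work on $\tilde\Omega_v$ from Lemma~\ref{lem:uniqueness-and-convergence-at-rationals}, fix $m\in\Z$ and rational points $x_1<x_2$, and choose one common sequence of terminal measures $(\nu_k)_{k>m}$ satisfying LLN with slope~$v$; for instance $\nu_k=\delta_{kv}$ will do.

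First, for every $k>m$, part~3 of the main monotonicity Lemma~\ref{lem:main-monotonicity} (applied with common terminal measure $\nu_k$ and starting points $x_1\le x_2$) yields
\[
\mu_{x_1,\nu_k}^{m,k}\ \preceq\ \mu_{x_2,\nu_k}^{m,k}
\]
as measures on $\R^{k-m+1}$. Stochastic dominance is preserved by coordinatewise nondecreasing maps, and the projection $\pi_{m,n}$ is such a map, so for any fixed finite $n>m$ and all $k>n$ we also have
\[
\mu_{x_1,\nu_k}^{m,k}\pi_{m,n}^{-1}\ \preceq\ \mu_{x_2,\nu_k}^{m,k}\pi_{m,n}^{-1}.
\]

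Next, I would invoke part~\ref{it:convergence-of-polymer-measures-at-rationals} of Lemma~\ref{lem:uniqueness-and-convergence-at-rationals} separately at the rational endpoints $x_1$ and $x_2$: as $k\to\infty$, the two sides of the last display converge weakly to $\mu_{x_1}^{m,+\infty}(v)\pi_{m,n}^{-1}$ and $\mu_{x_2}^{m,+\infty}(v)\pi_{m,n}^{-1}$, respectively. By Lemma~\ref{lem:convergence-preserves-dominance}, stochastic dominance is preserved under weak limits, so
\[
\mu_{x_1}^{m,+\infty}(v)\pi_{m,n}^{-1}\ \preceq\ \mu_{x_2}^{m,+\infty}(v)\pi_{m,n}^{-1}
\]
for every $n>m$. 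By the very definition of stochastic dominance for measures on $S_x^{m,+\infty}$ adopted just before Lemma~\ref{lem:infinite-volume-monotonicity-wrt-slope}, this is exactly the statement $\mu_{x_1}^{m,+\infty}(v)\preceq\mu_{x_2}^{m,+\infty}(v)$.

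There is no serious obstacle here: existence and uniqueness of the infinite-volume measures for all rational endpoints simultaneously are already built into $\tilde\Omega_v$, so no additional probability-one reductions are required. The only mildly delicate point to keep in mind is that one must use the \emph{same} sequence $(\nu_k)$ for the two endpoints $x_1$ and $x_2$ in order to invoke the endpoint-monotonicity part of Lemma~\ref{lem:main-monotonicity} at each finite horizon; the subsequent passage to the thermodynamic limit is then automatic from the two cited lemmas.
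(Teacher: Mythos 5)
Your proposal is correct and follows essentially the same approach as the paper: fix a common terminal measure $\nu_n=\delta_{vn}$, invoke finite-volume endpoint-monotonicity from Lemma~\ref{lem:main-monotonicity}, pass to the limit via Lemma~\ref{lem:uniqueness-and-convergence-at-rationals}, and conclude using that stochastic dominance is preserved under weak limits (Lemma~\ref{lem:convergence-preserves-dominance}). You simply spell out the finite-dimensional-marginal bookkeeping a bit more explicitly than the paper does, but there is no substantive difference.
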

\bpf
By Lemma~\ref{lem:uniqueness-and-convergence-at-rationals}, for $i\in\{1,2\}$, the sequence of measures 
$(\mu_{x_i,vn}^{m,n})_{n>m}$ converges to $\mu_{x_i}^{m,+\infty}(v)$ as $n\to\infty$. Since
for every $n$, $\mu_{x_1,vn}^{m,n}$ is dominated by $\mu_{x_2,vn}^{m,n}$, the limiting measures are also related by
stochastic dominance.
\epf

\begin{lemma}\label{lem:squeezing-any-endpoint-of-one-sided-polymer-measures-between-rational-ones} 
For every $v$, the following holds on $\Omega_v$. For every $m\in\Z$, every $x\in\R$ and every $x_-,x_+\in\Q$ such that
$x_-<x<x_+$, every measure in $\Pc_x^{m,+\infty}(v)$ is dominated by the (unique) measure $\mu_{x_+}^{m,+\infty}(v)$ in $\Pc_{x_+}^{m,+\infty}(v)$ 
and dominates the (unique)
measure $\mu_{x_-}^{m,+\infty}(v)$ in $\Pc_{x_-}^{m,+\infty}(v)$.
\end{lemma}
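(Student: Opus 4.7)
The plan is to derive this sandwich property directly from the DLR condition satisfied by any polymer measure, combined with the monotonicity of finite-volume measures in their endpoints (Lemma~\ref{lem:main-monotonicity}) and the convergence of finite-volume measures to the unique infinite-volume measure at rational endpoints (Lemma~\ref{lem:uniqueness-and-convergence-at-rationals}). Accordingly, I would set $\Omega_v \subseteq \tilde\Omega_v$ (enlarging only by a null set if needed so that Theorem~\ref{thm:all-space-time-point-compactness} also holds), fix $\omega\in\Omega_v$, $m\in\Z$, $x\in\R$, rationals $x_-<x<x_+$, and an arbitrary $\mu\in\Pc_x^{m,+\infty}(v)$. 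To prove $\mu_{x_-}^{m,+\infty}(v)\preceq\mu\preceq\mu_{x_+}^{m,+\infty}(v)$ it suffices, by definition, to check for each fixed $k>m$ that the projections onto $S^{m,k}_{*,*}$ obey the analogous chain of stochastic dominance.

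First I would translate the DLR property into the useful identity $\mu\pi_{m,n}^{-1}=\mu_{x,\nu_n}^{m,n}$ valid for every $n>k$, where $\nu_n:=\mu\pi_n^{-1}$. Since $\mu\in\Pc_x^{m,+\infty}(v)$ satisfies SLLN with slope $v$, dominated convergence applied to $\ONE_{\{|\gamma_n/n-v|\le\delta\}}$ implies that the sequence $(\nu_n)$ satisfies LLN with slope $v$. This is the only input connecting $\mu$ itself (for which we have no uniqueness) to terminal measures for which the thermodynamic limit has already been identified.

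Next, for each $n>k$, Lemma~\ref{lem:main-monotonicity} applied to the common terminal measure $\nu_n$ gives
\[
\mu_{x_-,\nu_n}^{m,n}\ \preceq\ \mu_{x,\nu_n}^{m,n}\ \preceq\ \mu_{x_+,\nu_n}^{m,n},
\]
and projecting onto $S^{m,k}_{*,*}$ preserves stochastic dominance. The middle projection is precisely $\mu\pi_{m,k}^{-1}$, independent of $n$. Because $x_\pm$ are rational, Lemma~\ref{lem:uniqueness-and-convergence-at-rationals}(2) tells us that $\mu_{x_\pm,\nu_n}^{m,n}$ converges to $\mu_{x_\pm}^{m,+\infty}(v)$ in the sense of weak convergence of finite-dimensional distributions as $n\to\infty$; in particular the $k$-dimensional projections converge weakly. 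Passing to the limit and invoking Lemma~\ref{lem:convergence-preserves-dominance} (stochastic domination is preserved under weak limits) yields
\[
\mu_{x_-}^{m,+\infty}(v)\pi_{m,k}^{-1}\ \preceq\ \mu\pi_{m,k}^{-1}\ \preceq\ \mu_{x_+}^{m,+\infty}(v)\pi_{m,k}^{-1},
\]
which is exactly what is needed. Since $k$ was arbitrary, the full stochastic dominance of one-sided measures follows.

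There is no substantial obstacle here: all three ingredients (DLR, endpoint monotonicity for finite volumes, convergence at rational endpoints) have been established. The only point that deserves a line of care is checking that SLLN for $\mu$ transfers to LLN for the sequence of marginals $(\nu_n)$, so that Lemma~\ref{lem:uniqueness-and-convergence-at-rationals} may be applied to the approximating sequences $\mu_{x_\pm,\nu_n}^{m,n}$; beyond that the argument is just an ordered passage to the limit.
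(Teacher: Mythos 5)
Your argument is correct and follows the same route as the paper's: set $\nu_n=\mu\pi_n^{-1}$, use the DLR consistency to identify $\mu\pi_{m,n}^{-1}=\mu_{x,\nu_n}^{m,n}$, observe that SLLN for $\mu$ gives LLN for $(\nu_n)$, apply Lemma~\ref{lem:main-monotonicity} for the finite-volume sandwich, and pass to the limit via Lemma~\ref{lem:uniqueness-and-convergence-at-rationals} and Lemma~\ref{lem:convergence-preserves-dominance}. The only difference is that you spell out the intermediate steps more explicitly than the paper does.
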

\bpf We take an arbitrary measure $\mu\in\Pc_x^{m,+\infty}(v)$ and denote $\nu_n=\mu\pi_{n}^{-1}$, $n>m$. Since $\nu_n$ satisfy
LLN with slope $v$, $\mu_{x_-,\nu_n}^{m,n}$ and $\mu_{x_+,\nu_n}^{m,n}$ converge, 
by Lemma~\ref{lem:uniqueness-and-convergence-at-rationals}, to $\mu_{x_-}^{m,+\infty}(v)$ and $\mu_{x_+}^{m,+\infty}(v)$, respectively. Since
$\mu_{x,\nu_n}^{m,n}$ coincides with $\mu\pi_{m,n}^{-1}$, the lemma follows from the dominance relation 
on the pre-limiting measures.
\epf

So now we know that for any $x$, the measures in $\Pc_x^{m,+\infty}(v)$ are squeezed between measures 
$\mu_{x_-}^{m,+\infty}(v)$, $x_-\in\Q\cap(-\infty,x)$ and $\mu_{x_+}^{m,+\infty}(v)$, $x_+\in\Q\cap(-\infty,x)$. Now we need to show that
there is a unique measure with this property. 

\begin{lemma}\label{lem:equicontinuity-of-polymer-measures-wrt-endpoint}
Let $v\in\R$,  $m,k\in\Z$, $k>m$,  $r\in\N$, $y\in\Q$, and a sequence of measures $\nu_n$ satisfying
LLN with slope $v$. Then there is an event $\Omega_{v,m,k,r,y}$ of probability~$1$ such that on that event,
the family of functions $f_n:[-r,r]\cap\Q\to\R$, $n>k$, defined by
\begin{equation*}
f_n(x)=\mu_{x,\nu_n}^{m,n}\pi_{k}^{-1}((-\infty,y]) 
\end{equation*}
is uniformly equicontinuous on $[-r,r]\cap\Q$.

\end{lemma}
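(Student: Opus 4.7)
The plan is to combine monotonicity of polymer measures in the endpoint (Lemma~\ref{lem:main-monotonicity}), the uniqueness and convergence result at rational starting points (Lemma~\ref{lem:uniqueness-and-convergence-at-rationals}), and a P\'olya-type uniform convergence argument applied to the monotone sequence $(f_n)$. First, by Lemma~\ref{lem:main-monotonicity} each $f_n$ is non-increasing in $x$, and by Lemma~\ref{lem:smoothness-of-partition-function} it is smooth, hence uniformly continuous on the compact interval $[-r,r]$. On the full-measure event $\tilde\Omega_v$ from Lemma~\ref{lem:uniqueness-and-convergence-at-rationals},
\[
f_n(x)\longrightarrow f(x):=\mu_x^{m,+\infty}(v)\pi_k^{-1}((-\infty,y])
\]
for every rational $x\in[-r,r]$, and the limit $f$ is monotone non-increasing on $[-r,r]\cap\Q$.

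The heart of the argument is to show that $f$ extends continuously to $[-r,r]$. I will argue that for any $x_0\in[-r,r]\cap\Q$ and any $x_j\in[-r,r]\cap\Q$ with $x_j\to x_0$, the infinite-volume polymer measures $\mu_{x_j}^{m,+\infty}(v)$ converge to $\mu_{x_0}^{m,+\infty}(v)$ in the sense of weak convergence of finite-dimensional distributions. Applying Theorem~\ref{thm:all-space-time-point-compactness} with $v'=v$, $u_0=\varepsilon/2$, $u_1=\varepsilon$, $\nu=\delta_{vN}$, and observing that $[x_j]$ lies in the finite set $\{-r,\dots,r\}$, I obtain a threshold $n_0^\star$ independent of $j$ and $N$ for which
\[
\mu_{x_j,vN}^{m,N}\pi_{m+n}^{-1}\bigl([(v-\varepsilon)n,(v+\varepsilon)n]^c\bigr)\le 6e^{-\sqrt n},\qquad n\ge n_0^\star,\ N\ge 2n.
\]
Passing $N\to\infty$ via Lemma~\ref{lem:uniqueness-and-convergence-at-rationals} (the complement is open, so the bound survives in the weak limit) transfers this to $\mu_{x_j}^{m,+\infty}(v)$, giving tightness in $j$. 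Any subsequential weak limit is a polymer measure by Lemma~\ref{lem:limit-of-polymers-is-a-polymer}, starts at $x_0$, inherits the tail bound, and hence by Borel--Cantelli satisfies SLLN with slope $v$; the uniqueness part of Lemma~\ref{lem:uniqueness-and-convergence-at-rationals} then identifies it with $\mu_{x_0}^{m,+\infty}(v)$, so the entire sequence converges. Because the DLR property together with absolute continuity of each $\mu_{x_0,z}^{m,n}\pi_k^{-1}$ (Lemma~\ref{lem:smoothness-of-partition-function}) forces $\mu_{x_0}^{m,+\infty}(v)\pi_k^{-1}$ to be atomless, weak convergence of marginals gives $f(x_j)\to f(x_0)$. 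Hence $f$ is continuous on $[-r,r]\cap\Q$ and admits a continuous monotone extension to $[-r,r]$.

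For the final step, monotonicity of $f_n$ and continuity of $f$ let me sandwich $f_n(x)$ between its values at nearby rational points, extending the pointwise convergence from rationals to all of $[-r,r]$; P\'olya's theorem on monotone convergence to a continuous limit then yields uniform convergence $f_n\to f$ on $[-r,r]$. Combined with uniform continuity of $f$, this produces equicontinuity of the tail $(f_n)_{n>N}$ for some $N$, and since each of the finitely many remaining $f_n$ is itself uniformly continuous on $[-r,r]$ by Lemma~\ref{lem:smoothness-of-partition-function}, taking the minimum modulus of continuity across the family yields the required uniform equicontinuity. The event $\Omega_{v,m,k,r,y}$ is just the intersection of $\tilde\Omega_v$ with the full-measure event underlying Theorem~\ref{thm:all-space-time-point-compactness}. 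The main obstacle is the continuity step for $f$: ruling out jumps at rationals requires combining the uniform tail bound of Theorem~\ref{thm:all-space-time-point-compactness}, the uniqueness at rational endpoints from Lemma~\ref{lem:uniqueness-and-convergence-at-rationals}, and the no-atom property of the limiting marginal via DLR, simultaneously.
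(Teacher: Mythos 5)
Your proposal takes a genuinely different route from the paper's, and it contains a gap that cannot be filled with the tools available at this point in the paper. The pivotal step is the claim that ``$f$ is continuous on $[-r,r]\cap\Q$ and admits a continuous monotone extension to $[-r,r]$.'' Your argument only establishes continuity of $f$ at \emph{rational} points: the identification of the subsequential weak limit with $\mu_{x_0}^{m,+\infty}(v)$ relies on the uniqueness part of Lemma~\ref{lem:uniqueness-and-convergence-at-rationals}, which at this stage is available only for $x_0\in\Q$. A monotone function on $\Q\cap[-r,r]$ that is continuous at every rational can still have a jump at an irrational point and hence fail to extend continuously; and if $f$ has such a jump, the P\'olya argument fails, because one cannot find rational subdivision points straddling the jump with small oscillation of $f$. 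Ruling out precisely these jumps at irrationals is the content of Lemma~\ref{lem:continuity-of-one-sided-polymer-measures-wrt-endpoint}, which the paper \emph{derives from} the lemma you are proving, and which in turn feeds Lemma~\ref{lem:uniqueness-of-polymer-measures} to get uniqueness at all real endpoints. Your route therefore implicitly assumes the very conclusion it is trying to reach.

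There is also a secondary issue: you invoke Lemma~\ref{lem:limit-of-polymers-is-a-polymer} for a sequence of infinite-volume polymer measures with \emph{varying} starting points $x_j$, but that lemma is stated and proved for fixed endpoint and varying terminal measures; extending it to varying $x$ requires a joint continuity argument in $(x,\nu)$ that you have not supplied. The paper's actual proof sidesteps all of this: it fixes $\varepsilon>0$, truncates the terminal measure to $[-Ln,Ln]$ via LLN, truncates the first $k$ steps to a compact set $[-R,R]^k$ via tightness, and then observes that perturbing the starting point $x_0\mapsto x_0'$ by $\delta$ only changes the first-step factor $e^{-F_0(x_0)}g(x_1-x_0)$ by a multiplicative factor $K_\delta$ that tends to $1$ uniformly over $|x_1|\le R$. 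This gives a modulus of equicontinuity for the entire family $(f_n)$ by a direct pointwise comparison, with no limit measures, no P\'olya theorem, and no reliance on uniqueness at irrational points.
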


\bpf Without loss of generality, we assume that $m=0$. To prove the uniform equicontinuity, we will check that for every $\eps\in(0,1/2)$, there is
$\delta>0$ such that 
\begin{equation}
\label{eq:equicontinuity-wrt-endpoint}
f_n(x_0)-f_n(x'_0)\le 6\eps,\quad |x_0|,|x'_0|\le r,\ |x_0-x'_0|\le \delta.
\end{equation}
First, we use LLN for $(\nu_n)$ to find $L>0$ such that
\begin{equation}
\label{eq:sublinear-growth-of-nu_n}
 \nu_n(\R\setminus [-Ln,Ln])<\eps,\quad n\in\N.
\end{equation}
Then we use monotonicity and tightness to find $R>|y|$ such that
\begin{equation}
\label{eq:uniform-compactness-in-starting-point-and-linearly-growing-endpoint}
 \mu_{x,y}^{0,n}\pi_{1,k}^{-1}(\R^k\setminus B_R^k)<\eps,\quad x\in[-r,r],\ n\in\N,\ y\in [-Ln,Ln],
\end{equation}
where $B_R^k=[-R,R]^k$. 
Inequality~\eqref{eq:sublinear-growth-of-nu_n} implies
\begin{align*}
f_n(x_0)&=\int_{\R}\nu_n(dw) \mu_{x_0,w}^{0,n}\pi^{-1}_{1,k}(\R^{k-1}\times(-\infty,y])
\\
&\le \int_{[-Ln,Ln]}\nu_n(dw) \mu_{x_0,w}^{0,n}\pi^{-1}_{1,k}(\R^{k-1}\times(-\infty,y])+\eps.
\end{align*}
Introducing $B_R^k(y)=[-R,R]^{k-1}\times [-R,y]$ and $B_{Ln}=[-Ln,Ln]$, we can use~\eqref{eq:uniform-compactness-in-starting-point-and-linearly-growing-endpoint} to write
\begin{align*}
 f_n(x_0)&\le \int_{B_{Ln}}\nu_n(dw) \mu_{x_0,w}^{0,n}\pi^{-1}_{1,k}(B_R^k(y))+2\eps,
\\
& \le \int_{B_{Ln}}\nu_n(dw)\frac{
\int_{B_R^k(y)}  Z(x_0,\ldots,x_k,w)\,dx_1\ldots dx_k
}{\int_{B_R^k}  Z(x_0,\ldots,x_k,w)\,dx_1\ldots dx_k
}+2\eps,
\end{align*}
where
\[
 Z(x_0,\ldots,x_k,w)=e^{-F_0(x_0)}g(x_1-x_0)\cdot 
\prod_{i=1}^{k-1} e^{-F_i(x_i)}g(x_{i+1}-x_{i})\cdot Z_{x_k,w}^{k,n}.
\]

For every $\delta>0$, let us define 
\[
K_\delta=\sup\left\{\frac{e^{-F(x'_0)}g(x_1-x'_0)}{e^{-F(x_0)}g(x_1-x_0)}:\ 
|x_0|,|x'_0|\le r,\ |x_0-x'_0|\le \delta,\ |x_1|\le R\right\}. 
\]
Then $\lim_{\delta\downarrow 0}K_\delta =1$ with probability $1$. Also, we can continue the above sequence of inequalities, assuming
$|x_0-x'_0|\le \delta$:

\begin{align*}
f_n(x_0) &\le  
K^2_\delta
\int_{B_{Ln}}\nu_n(dw)\frac{
\int_{B_R^k(y)}  Z(x'_0,x_1\ldots,x_k,w)\,dx_1\ldots dx_k
}{\int_{B_R^k} Z(x'_0,x_1\ldots,x_k,w) \,dx_1\ldots dx_k
}+2\eps
\\ &\le   
K^2_\delta
\int_{\R}\nu_n(dw)\frac{
\int_{\R^{k-1}\times (-\infty,y]} Z(x'_0,x_1\ldots,x_k,w) \,dx_1\ldots dx_k
}{(1-\eps)\int_{\R^k}  Z(x'_0,x_1\ldots,x_k,w) \,dx_1\ldots dx_k
}+2\eps
\\ &\le \frac{K^2_\delta}{1-\eps}f_n(x'_0)+2\eps.
\end{align*}
Therefore, if $\delta$ is chosen so that $K^2_\delta\le 1+\eps$, we obtain
\[
 f_n(x_0)-f_n(x'_0)\le \left(\frac{K^2_\delta}{1-\eps}-1\right)f_n(x'_0)+2\eps
 \le \frac{K^2_\delta}{1-\eps}-1+2\eps\le \frac{1+\eps}{1-\eps}-1+2\eps\le 6\eps,
\]
and~\eqref{eq:equicontinuity-wrt-endpoint} holds.
\epf

\begin{lemma}\label{lem:continuity-of-one-sided-polymer-measures-wrt-endpoint}
Let $v\in\R$,  $m,k\in\Z$, $k>m$,  $r\in\N$, $y\in\Q$.
On $\tilde\Omega_v\cap \Omega_{v,m,k,r,y}$,
the function $f:[-r,r]\cap\Q\to\R$, defined by
\begin{equation}
f(x)=\mu_{x}^{m,+\infty}(v)\pi_{k}^{-1}((-\infty,y]), 
\end{equation}
is uniformly continuous on $[-r,r]\cap\Q$.
\end{lemma}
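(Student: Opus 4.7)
The strategy is to transfer the uniform equicontinuity of the finite-volume functionals $f_n$ from Lemma~\ref{lem:equicontinuity-of-polymer-measures-wrt-endpoint} to the infinite-volume limit $f$ by means of pointwise convergence along rational endpoints, which was established in Lemma~\ref{lem:uniqueness-and-convergence-at-rationals}. The hypothesis $y \in \Q$ and the fact that $\mu_x^{m,+\infty}(v)\pi_k^{-1}$ is absolutely continuous (part of Theorem~\ref{thm:thermodynamic-limit}, proved in the existence portion of the present section) will allow us to convert weak convergence into convergence of the distribution function at the point $y$.

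First, fix any sequence of probability measures $(\nu_n)_{n>m}$ satisfying LLN with slope $v$, for instance $\nu_n = \delta_{\lfloor vn \rfloor}$. On the event $\Omega_{v,m,k,r,y}$ (which pertains to this chosen sequence) the functions
\[
f_n(x) = \mu_{x,\nu_n}^{m,n}\pi_k^{-1}\bigl((-\infty,y]\bigr), \quad n > k,
\]
are uniformly equicontinuous on $[-r,r]\cap\Q$: given $\varepsilon>0$, there exists $\delta>0$ (independent of $n$) such that $|f_n(x)-f_n(x')|\le 6\varepsilon$ whenever $x,x'\in[-r,r]\cap\Q$ and $|x-x'|\le\delta$.

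Next, on $\tilde\Omega_v$ we invoke Lemma~\ref{lem:uniqueness-and-convergence-at-rationals}(\ref{it:convergence-of-polymer-measures-at-rationals}): for each fixed rational $x\in[-r,r]\cap\Q$, the measures $\mu_{x,\nu_n}^{m,n}$ converge weakly (in the sense of finite-dimensional distributions) to $\mu_x^{m,+\infty}(v)$. In particular, the marginals $\mu_{x,\nu_n}^{m,n}\pi_k^{-1}$ converge weakly to $\mu_x^{m,+\infty}(v)\pi_k^{-1}$ on $\R$. Since the latter is absolutely continuous with respect to Lebesgue measure (hence gives zero mass to the singleton $\{y\}$), the Portmanteau theorem yields
\[
\lim_{n\to\infty} f_n(x) = \mu_x^{m,+\infty}(v)\pi_k^{-1}\bigl((-\infty,y]\bigr) = f(x), \quad x\in[-r,r]\cap\Q.
\]

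Finally, passing to the limit $n\to\infty$ in the uniform equicontinuity bound gives
\[
|f(x)-f(x')|\le 6\varepsilon, \quad x,x'\in[-r,r]\cap\Q,\ |x-x'|\le\delta,
\]
which is precisely uniform continuity of $f$ on $[-r,r]\cap\Q$. No step is a real obstacle here; the only mildly delicate point is the use of absolute continuity of the marginal to upgrade weak convergence to pointwise convergence of the distribution function at $y$, but this is already available from the existence portion of Theorem~\ref{thm:thermodynamic-limit}.
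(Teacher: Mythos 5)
Your proof is correct and follows the paper's argument: choose a fixed sequence $(\nu_n)$ satisfying LLN with slope $v$, invoke the uniform equicontinuity of $f_n$ from Lemma~\ref{lem:equicontinuity-of-polymer-measures-wrt-endpoint}, and pass to the pointwise limit $f_n(x)\to f(x)$ on rationals. The paper states the convergence $f_n(x)\to f(x)$ without elaboration; you usefully make explicit that it comes from part~2 of Lemma~\ref{lem:uniqueness-and-convergence-at-rationals} together with absolute continuity of the marginal $\mu_x^{m,+\infty}(v)\pi_k^{-1}$, which makes $y$ a continuity point.
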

\bpf Let us choose any sequence $(\nu_n)$ satisfying LLN with slope $v$ and define $f_n$ as in Lemma~\ref{lem:equicontinuity-of-polymer-measures-wrt-endpoint}. 
The statement follows then from that lemma since $\lim_{n\to\infty}f_n(x)=f(x)$ for
$x\in [-r,r]\cap\Q$.
\epf

We can now prove the complete uniqueness and weak convergence claims of Theorem~\ref{thm:thermodynamic-limit}: 
\begin{lemma}\label{lem:uniqueness-of-polymer-measures}
  Let $v\in\R$. Then on $\Omega_v=\tilde\Omega_v\cap \bigcap_{m,k,r,y}\Omega_{v,m,k,r,y}$,
\begin{enumerate}[1.]
 \item \label{it:uniqueness-at-reals} For any point $(m,x)\in\Z\times\R$, the set $\Pc_x^{m,+\infty}(v)$ 
 of all polymer measures on $S_x^{m,+\infty}$ satisfying SLLN with slope $v$,
 contains exactly one element, $\mu_{x}^{m,+\infty}(v)$. 
 \item \label{it:convergence-of-polymer-measures-at-reals} For any point $(m,x)\in\Z\times\R$ and 
 for every sequence of measures $(\nu_n)$ satisfying LLN with slope $v$, 
 $\mu_{x,\nu_n}^{m,n}$ converges to $\mu_{x}^{m,+\infty}(v)$ weakly. 
\end{enumerate}

\end{lemma}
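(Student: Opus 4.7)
The plan is to upgrade the uniqueness and convergence of Lemma~\ref{lem:uniqueness-and-convergence-at-rationals} from rational endpoints to arbitrary real endpoints by sandwiching between rational ones, using both the stochastic-dominance sandwich of Lemma~\ref{lem:squeezing-any-endpoint-of-one-sided-polymer-measures-between-rational-ones} and the $x$-continuity of one-dimensional CDFs provided by Lemma~\ref{lem:continuity-of-one-sided-polymer-measures-wrt-endpoint}.

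For uniqueness, I would fix $(m,x) \in \Z \times \R$ and any $\mu \in \Pc_x^{m,+\infty}(v)$, and choose rational approximations $x_j^- \uparrow x$ and $x_j^+ \downarrow x$. The squeezing lemma gives, on $\Omega_v$,
\[
\mu_{x_j^-}^{m,+\infty}(v) \preceq \mu \preceq \mu_{x_j^+}^{m,+\infty}(v),
\]
which, for any $k > m$ and $y \in \Q$, reads
\[
\mu_{x_j^+}^{m,+\infty}(v)\pi_k^{-1}((-\infty,y]) \le \mu\pi_k^{-1}((-\infty,y]) \le \mu_{x_j^-}^{m,+\infty}(v)\pi_k^{-1}((-\infty,y]).
\]
Picking $r \in \N$ with $\{x, x_1^-, x_1^+\} \subset [-r,r]$, Lemma~\ref{lem:continuity-of-one-sided-polymer-measures-wrt-endpoint} forces both outer bounds to coalesce as $j \to \infty$, so $\mu\pi_k^{-1}((-\infty,y])$ is determined for $y \in \Q$ and hence for all $y$ by right continuity of CDFs. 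Thus the marginal $\mu\pi_k^{-1}$ is uniquely determined, and the DLR/polymer property $\mu\pi_{m,k}^{-1} = \mu^{m,k}_{x,\,\mu\pi_k^{-1}}$ then fixes all finite-dimensional distributions of $\mu$, and hence $\mu$ itself. Existence of at least one element of $\Pc_x^{m,+\infty}(v)$ follows from Lemma~\ref{lem:existence-of-polymer-measures-and-convergence-along-subsequences} applied, for example, to $\nu_n = \delta_{vn}$.

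For the convergence part, let $(\nu_n)$ satisfy LLN with slope $v$. Theorem~\ref{thm:tightness-of-polymer-measure} provides tightness of $(\mu_{x,\nu_n}^{m,n})$. Any subsequential limit (in the sense of finite-dimensional distributions) is a polymer measure by Lemma~\ref{lem:limit-of-polymers-is-a-polymer}, and the tail estimate established inside the proof of Lemma~\ref{lem:existence-of-polymer-measures-and-convergence-along-subsequences} shows it lies in $\Pc_x^{m,+\infty}(v)$. By the uniqueness just proved, every subsequential limit coincides with $\mu_x^{m,+\infty}(v)$, and the full sequence therefore converges to it.

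The only genuinely nontrivial ingredient is the continuity of $x \mapsto \mu_x^{m,+\infty}(v)\pi_k^{-1}((-\infty,y])$ at rational $x$, which is already in hand via Lemma~\ref{lem:continuity-of-one-sided-polymer-measures-wrt-endpoint}; once that is available, both claims reduce to a soft compactness-plus-sandwich argument, with the only further subtlety being the upgrade from uniqueness of one-dimensional marginals to uniqueness of the full measure, handled by the DLR property.
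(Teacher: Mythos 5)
Your argument is correct and follows essentially the same route as the paper's proof: sandwich the unknown marginal between the marginals of $\mu_{x_-}^{m,+\infty}(v)$ and $\mu_{x_+}^{m,+\infty}(v)$ for rational $x_\mp$, invoke the continuity from Lemma~\ref{lem:continuity-of-one-sided-polymer-measures-wrt-endpoint} to collapse the sandwich, then upgrade from one-dimensional marginals to the full measure via the polymer (DLR) property, and finally get part~2 by tightness plus uniqueness of subsequential limits. The only organizational difference is that you invoke Lemma~\ref{lem:squeezing-any-endpoint-of-one-sided-polymer-measures-between-rational-ones} directly to obtain the stochastic dominance sandwich, whereas the paper re-derives the sandwich inline by working with the pre-limiting measures $\mu_{x_\pm,\nu_n}^{m,n}$ where $\nu_n=\mu\pi_n^{-1}$, passing to the limit using Lemma~\ref{lem:uniqueness-and-convergence-at-rationals} and absolute continuity of the marginals; these are equivalent, and your version is arguably cleaner.
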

\bpf The second part follows from the first one and the compactness argument explained in the proof of 
Lemma~\ref{lem:uniqueness-and-convergence-at-rationals}.

To prove the first part, it is sufficient to fix $(m,x)\in\ZR$ and check that for every $k>m$, the marginal measure 
$\nu_k=\mu\pi_{k}^{-1}$ does not depend on $\mu\in\Pc_x^{m,+\infty}(v)$. For that, it suffices to see that for every choice of $y\in\Q$,
$\nu_k((-\infty,y])$
does not depend on $\mu\in\Pc_x^{m,+\infty}(v)$.

If $x_-<x<x_+$, then
$\mu_{x_-,\nu_n}^{m,n}$ is dominated by $\mu_{x,\nu_n}^{m,n}$ which
is dominated by  $\mu_{x_+,\nu_n}^{m,n}$.  Therefore, 
for every $y\in\R$,
\[
\mu_{x_-,\nu_n}^{m,n}\pi_{k}^{-1}((-\infty,y])\ge \mu_{x,\nu_n}^{m,n}\pi_{k}^{-1}((-\infty,y])\ge \mu_{x_+,\nu_n}^{m,n}\pi_{k}^{-1}((-\infty,y]).  
\]
Since $\mu_{x,\nu_n}^{m,n}\pi_{k}^{-1}=\nu_k$, we obtain
\begin{equation}
\mu_{x_-,\nu_n}^{m,n}\pi_{k}^{-1}((-\infty,y])\ge \nu_k((-\infty,y])\ge \mu_{x_+,\nu_n}^{m,n}\pi_{k}^{-1}((-\infty,y]).  
\label{eq:nu_k-squeezed-1}
\end{equation}

If additionally $x_-,x_+\in\Q$, then   
f.d.d.'s of $\mu_{x_-,\nu_n}^{m,n}$ and $\mu_{x_+,\nu_n}^{m,n}$ weakly converge to those of $\mu^{m,+\infty}_{x_-}(v)$ and $\mu^{m,+\infty}_{x_+}(v)$,
due to Lemma~\ref{lem:uniqueness-and-convergence-at-rationals} since $(\nu_n)_{n>m}$ satisfies LLN with slope~$v$. Since marginals of
both $\mu^{m,+\infty}_{x_-}(v)$ and $\mu^{m,+\infty}_{x_+}(v)$ are absolutely continuous, \eqref{eq:nu_k-squeezed-1} implies
\[
\mu^{m,+\infty}_{x_-}(v)((-\infty,y])\ge \nu_k((-\infty,y])\ge \mu^{m,+\infty}_{x_+}(v)((-\infty,y]).   
\]
Lemma~\ref{lem:continuity-of-one-sided-polymer-measures-wrt-endpoint} implies that
\[
 \inf_{x_-\in \Q\cap (-\infty,x)} \mu^{m,+\infty}_{x_-}(v)((-\infty,y]) = \sup_{x_+\in \Q\cap (x,+\infty)} \mu^{m,+\infty}_{x_+}(v)((-\infty,y]).
\]
Denoting this common value by $c$, we conclude that  the value of $\nu_k((-\infty,y])$ is uniquely defined and equals~$c$,  
which completes the proof.
\epf

\section{Existence and uniqueness of global solutions and basins of pullback
  attraction}
\label{sec:global-solutions-and-ratios-of-partition-functions}

The main goal of this section is to prove Theorems~\ref{thm:global_solutions} and~\ref{thm:pullback_attraction} on global solutions of the randomly kicked Burgers equation. These solutions will be constructed and studied via the pullback procedure with the help of backward polymer measures. 
We also prove the total variation convergence part in the thermodynamic limit Theorem~\ref{thm:thermodynamic-limit}.

Our main result on thermodynamic limit for polymer measures, Theorem~\ref{thm:thermodynamic-limit}, was naturally stated and proved in terms of forward polymer measures defined on $S^{m,+\infty}_x(v)$ for endpoints
$(m,x)\in\ZR$ and slopes $v$. A direct counterpart of this result holds for thermodynamic limit polymer measures
defined on spaces $S^{-\infty,m}_x(v)$ 
of backward infinite sequences $y:\{\ldots,m-2,m-1,m\}\to\R$ such that $y_m=x$
and $\lim_{n\to-\infty}(y_n/n)=v$.
 All the definitions, notation, and intermediate results can be straightforwardly adapted to this case. 
We do not introduce these adaptations one-by-one since all of them are unambiguously defined by the context.
When dealing with backward
polymer measures we will still refer to results obtained for forward polymer measures, although formally they
are not equivalent due to the asymmetry in the definition of path energy near the path endpoints in~\eqref{eq:path-energy}. 
Moreover, at times we abuse the notation by using the same symbols for objects constructed for forward and backward versions.

\smallskip
A function $u(n,x)=u_{\omega}(n,x)$ is a global solution of the Burgers equation if the version
of the Hopf--Cole transform defined by
\begin{equation*}
V(n,x) = e^{-U(n,x)} = e^{-\int_0^x u(n,y) dy},\quad (n,x)\in\ZR,
\end{equation*}
 satisfies, for all integers $m<n$ and all $x\in\R$,
\begin{equation}
\label{eq:evolution-for-V}
V(n,x) = C_{m,n}[\Xi_{\omega}^{m,n} V(m,\cdot)](x) := C_{m,n}\int Z^{m,n}(y,x) V(m,y) dy,
\end{equation}
 where $(C_{m,n})$ is a random family of constants such that
$C_{m,n}C_{n,k}=C_{m,k}$, $m<n<k$.
We need to introduce the normalizing constants $C_{m,n}$ for consistency with the identity $V(n,0)=1$ holding for all $n$, because we fix the the lower limit of integration to be zero when defining the Hopf--Cole transform.

The following computation shows that, given any $v\in\R$ and $N\in\Z$, the functions 
\begin{equation}
\label{eq:Hopf-Cole-for-prelimit}
V_{v}^{N}(n,x) =
Z^{N,n}_{Nv,x}/Z^{N,n}_{Nv,0},\quad n>N,\ x\in\R, 
\end{equation}
and constants 
\begin{equation}
\label{eq:Normalizing-constants-product-cocycle}
C^N_{v,m,n} = Z^{N,m}_{Nv,0}/Z^{N,n}_{Nv,0},\quad m<n, 
\end{equation}
satisfy (\ref{eq:evolution-for-V}) for $n> m >N$: 
\begin{equation*}
\begin{split}
  C_{v,m,n}^N [\Xi_{\omega}^{m,n}V^N_{v}(m,\cdot)](x)
  &= C_{v,m,n}^{N}\int Z^{m,n}(y,x) V_{v}^N(m,y) dy \\
  &= Z^{N,m}_{Nv,0}/Z^{N,n}_{Nv,0} \int  Z^{N,m}_{Nv,y}   Z^{m,n}_{y,x}
  /Z^{N,m}_{Nv,0}\, dy\\
  &= (Z^{N,n}_{Nv,0})^{-1} \int   Z^{N,m}_{Nv,y}   Z^{m,n}_{y,x}\, dy \\
  &= (Z^{N,n}_{Nv,0})^{-1}Z^{N,n}_{Nv,x} = V^N_{v}(n,x).
\end{split}
\end{equation*}
Therefore, a natural guess for the Hopf--Cole transform of global solutions will be 
$V(n,x)=V_v(n,x)=\lim\limits_{N\to -\infty}V^N_v(n,x)$, along with normalizing constants given by $C_{m,n}= C_{v,m,n}=\lim\limits_{N \to
  -\infty}C^N_{v,m,n}$.
This leads to the study of the limits of partition function ratios.
On the other hand,  let $u^N_{v}(n,x) = -\frac{\partial}{\partial x}\ln
V^N_v(n,x)$ be the inverse Hope--Cole transform of $V^{N}_v(n,x)$.
We find that
\begin{equation}
\label{eq:approx-solution-via-polymer-measures}
\begin{split}
  u^N_v(n,x) &= -\frac{\partial}{\partial x} \ln \int_{\R}
  \frac{1}{\sqrt{2\pi}}e^{-(x-y)^2/2 - F_{n-1}(y)} Z^{N,n-1}(Nv,y) dy \\
  &=  \frac{ \int_{\R} (x-y)\frac{1}{\sqrt{2\pi}}e^{-(x-y)^2/2-F_{n-1}(y)} Z^{N,n-1}(Nv,y)
    dy
  }{ Z^{N,n}(Nv,x)} \\
  &= \int_{\R} (x-y) \mu_{Nv,x}^{N,n} \pi_{n-1}^{-1}(dy).
\end{split}
\end{equation}
Taking the limit $N \to -\infty$, we expect the global solution to be 
\begin{equation}
\label{eq:global-solution-via-marginal-polymer-measures}
u_v(n,x) = \int_{\R} (x-y) \mu_x^{-\infty,n}(v) \pi_{n-1}^{-1}(dy),
\end{equation}
where $\mu_x^{-\infty,n}(v)$ is the backward polymer measure, the weak limit 
of $\mu_{Nv,x}^{N,n}\pi_{n-1}^{-1}$ as $N\to-\infty$.
To justify this answer, we actually need a stronger statement than weak convergence, namely,
a statement on convergence of the associated densities.

The convergence of densities is closely related to convergence of partition function ratios, since the density of $\mu_{Nv,x}^{N,n}\pi_m^{-1}$ is precisely
\begin{equation*}
\frac{d \mu_{\nu_N,x}^{N,n}\pi_m^{-1} }{ d\Leb }(y) =  \frac{Z_{Nv, y}^{N,m}}{Z_{Nv,x}^{N,n}}Z_{y,x}^{m,n}.
\end{equation*}
In Section~\ref{sec:limits-of-partition-function-ratios}, we will show that both convergences are uniform on compact sets.
The existence of global solutions is then established in Section~\ref{sec:existence-of-global-solution}.

The uniqueness of global solutions relies on the uniqueness of infinite volume polymer measures with
any given
slope $v$.

Suppose $u_v(n,x) \in \mathbb{H'}(v,v)$ is a global solution and $V_v(n,x)$ is its Hopf--Cole transform.
For fixed $(n,x) \in \Z \times \R$, we can define a ``backward'' point-to-line polymer measure $\bar{\mu}^{-\infty,n}_{x}$ on the set 
$S_x^{-\infty,n}$ of paths $\gamma:\{...,n-2,n-1,n\}
\to \R$ with $\gamma(n)=x$ 
:
\begin{align}  \label{eq:def-of-mu-bar}\nonumber
&\quad \bar{\mu}_x^{-\infty,n}(A_{n-k} \times ... \times A_{n-1} \times A_n) \\
&= \frac{ \int_{A_{n-k}} dx_{n-k} \cdots \int_{A_{n-1}} dx_{n-1} \int_{A_n} \delta_x(dx_n) \quad V_v(n-k,x_{n-k})
  \prod\limits_{i=n-k}^{n-1} Z^{i,i+1}_{x_i, x_{i+1}}
}{\int_{\R} V_v(n-k, x_{n-k}) Z^{n-k,n}_{x_{n-k},x} dx_{n-k}
}.
\end{align}
This definition is
consistent for different choices of $k$ since $V_v(n,x)$ satisfies (\ref{eq:evolution-for-V}).
Then the global solution $u_{v}(n,x)$ is uniquely determined by $\bar{\mu}_x^{-\infty,n}$ through
\begin{equation}\label{eq:solution-through-pt-to-line-measure}
u_{v}(n,x) = \int_{\R} (x-y) \bar{\mu}_x^{-\infty,n} \pi_{n-1}^{-1}(dy).
\end{equation}
We will show that the measures $\bar{\mu}_x^{-\infty,n}$ satisfy LLN with slope~$v$. This will allow us to conclude that they are are uniquely defined by the potential and coincide with~$\mu_x^{-\infty,n}(v)$, 
so the global solution in $\HH'(v,v)$ is also uniquely defined by the potential and coincides with 
$u_v$, see~(\ref{eq:global-solution-via-marginal-polymer-measures}). 
This is done in Section~\ref{sec:uniqueness-of-global-solutions}.

In Section~\ref{sec:pullback} we show that global solutions are also pullback attractors.
We also generalize the result on convergence of density functions to certain point-to-line polymer measures.

\subsection{Limits of partition function ratios}
\label{sec:limits-of-partition-function-ratios}

Let $\Omega'_v = \Omega' \cap \Omega_v$, where $\Omega'$ and $\Omega_v$ have been introduced in Theorems~\ref{thm:all-space-time-point-compactness} and~\ref{thm:thermodynamic-limit}.
In this section, we will prove the following two theorems on $\Omega'_v$:
\begin{theorem}
\label{thm:convergence-of-partition-function-ratios}
For all $\omega \in \Omega_v'$, there is a function $G_v\bigl( (n_1,x_1), (n_2,x_2) \bigr)>0$ such
that for any sequence $(y_N)$ satisfying $\lim\limits_{N \to  -\infty} y_N/N = v$, we have
\begin{equation*}
\lim\limits_{N\to -\infty}  Z_{y_N,x_1}^{N,n_1} /Z_ {y_N, x_2}^{N, n_2} = G_{v}\bigl(
(n_1,x_1), (n_2,x_2) \bigr).
\end{equation*}
For  $n_1,n_2$ fixed and  $x_1,x_2$ restricted to a compact set, the convergence is uniform. 
\end{theorem}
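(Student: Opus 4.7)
The plan is to express the partition function ratio as a ratio of pointwise marginal densities of a single backward polymer measure, and then upgrade weak convergence of these measures (from the backward analogue of Theorem~\ref{thm:thermodynamic-limit}) to locally uniform convergence of densities.

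Choose any auxiliary time $n>\max(n_1,n_2)$ and reference point $x\in\R$. The time-$n_i$ marginal ($i=1,2$) of the backward polymer measure $\mu^{N,n}_{y_N,x}$ has the Lebesgue density
\[
\rho^N_{n_i}(y) \;=\; \frac{Z^{N,n_i}_{y_N,y}\,Z^{n_i,n}_{y,x}}{Z^{N,n}_{y_N,x}}.
\]
Evaluating at $y=x_i$ and dividing the two identities yields
\[
\frac{Z^{N,n_1}_{y_N,x_1}}{Z^{N,n_2}_{y_N,x_2}} \;=\; \frac{Z^{n_2,n}_{x_2,x}}{Z^{n_1,n}_{x_1,x}}\cdot\frac{\rho^N_{n_1}(x_1)}{\rho^N_{n_2}(x_2)}.
\]
Hence it suffices to prove that for each $i$, $\rho^N_{n_i}$ converges locally uniformly on $\R$ to the density $\rho^\infty_{n_i}$ of $\mu^{-\infty,n}_x(v)\pi_{n_i}^{-1}$.

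By the backward analogue of Theorem~\ref{thm:thermodynamic-limit} (applied to the initial data $\delta_{y_N}$, which trivially satisfies backward LLN with slope $v$ since $y_N/N\to v$), the marginals $\mu^{N,n}_{y_N,x}\pi_{n_i}^{-1}$ converge weakly to $\mu^{-\infty,n}_x(v)\pi_{n_i}^{-1}$ as $N\to-\infty$. To upgrade weak convergence to pointwise convergence of densities I will establish equicontinuity of $\{\rho^N_{n_i}\}_N$ on compact sets. Differentiating under the integral via Lemma~\ref{lem:smoothness-of-partition-function} yields
\[
\partial_y\log Z^{N,n_i}_{y_N,y} \;=\; \E_{\mu^{N,n_i}_{y_N,y}}\bigl[\gamma_{n_i-1}-y\bigr],
\]
i.e.\ the logarithmic derivative equals the expected backward one-step increment from $y$. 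The distribution of $\gamma_{n_i-1}$ under $\mu^{N,n_i}_{y_N,y}$ has density proportional to $Z^{N,n_i-1}_{y_N,\cdot}\,g(y-\cdot)\,e^{-F_{n_i-1}(\cdot)}$, and the Gaussian factor $g(y-\cdot)$ localizes the mass near $y$. A uniform-in-$N$ bound on the displacement $|\E[\gamma_{n_i-1}-y]|$ for $y$ in a compact set follows from the tightness/straightness estimates of Theorem~\ref{thm:all-space-time-point-compactness} applied to the backward polymer measure. Together with the $N$-independent analogue for $\partial_y\log Z^{n_i,n}_{y,x}$, this bounds $|\partial_y\log\rho^N_{n_i}(y)|$ uniformly on compacts, hence gives equicontinuity of $\{\rho^N_{n_i}\}_N$. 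By Arzel\`a--Ascoli, every subsequence has a further locally uniformly convergent subsequence; its limit must agree with $\rho^\infty_{n_i}$ by the weak convergence, and so the full sequence converges locally uniformly.

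Substituting into the ratio formula gives
\[
G_v((n_1,x_1),(n_2,x_2)) \;=\; \frac{Z^{n_2,n}_{x_2,x}}{Z^{n_1,n}_{x_1,x}}\cdot\frac{\rho^\infty_{n_1}(x_1)}{\rho^\infty_{n_2}(x_2)};
\]
positivity of $G_v$ follows from absolute continuity of the infinite-volume finite-dimensional distributions (Theorem~\ref{thm:thermodynamic-limit}), independence of the auxiliary choice of $(n,x)$ follows from the cocycle structure of the partition functions, and locally uniform convergence of the densities in $y$ yields the claimed uniform convergence of the ratio on compacts in $(x_1,x_2)$. The hard part is the equicontinuity step: one must translate the large-scale thermodynamic information into a one-step estimate and rule out drift of the backward-step expectation as $N\to-\infty$. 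The rest is substitution and standard compactness.
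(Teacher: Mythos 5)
Your proposal is essentially the same strategy as the paper's: reduce the partition-function ratio to a ratio of marginal densities of a backward polymer measure, prove locally uniform convergence of those densities by a precompactness argument, and identify the limit via the weak convergence furnished by (the backward version of) Theorem~\ref{thm:thermodynamic-limit}. The one genuine divergence is in how equicontinuity is obtained: you differentiate $\ln Z^{N,n_i}_{y_N,y}$ in the endpoint and bound the resulting polymer-mean displacement, whereas the paper works with truncated partition functions $\bar Z$ and estimates Lipschitz constants of $\ln\bar g^N$ directly. The authors explicitly flag your route as available and simpler --- ``we could use the spatial smoothness of $F$ to simplify some of the arguments, but we prefer to give a version of the proof that does not even assume continuity of $F$'' --- so you have rediscovered the simplification they deliberately declined in favor of weaker regularity hypotheses.

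Two places where your sketch is a bit thin. First, the displacement bound you want is not a direct corollary of Theorem~\ref{thm:all-space-time-point-compactness}: that theorem gives control at time $m+n$ only for $n\ge n_0$ with $n_0$ random and possibly large, whereas you need the one-step marginal at $n_i-1$ with a tail uniform in $N$. The paper extracts exactly this in Lemma~\ref{lem:small-first-step-almost-surely-all-range} (via Lemmas~\ref{lem:one-step-control} and~\ref{lem:small-probability-for-first-large-step-outside-linear-box-all-r}), and you need a comparable extrapolation from the ``large $n$'' regime to the one-step marginal, plus integration of the tail to pass from a probability bound to a bound on $|\E[\gamma_{n_i-1}-y]|$. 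Second, positivity of $G_v$ does not follow from absolute continuity of the infinite-volume marginals alone (a density can vanish at isolated points); it follows from the uniform Lipschitz bound on $\log\rho^N$ together with the fact that $\rho^N$ integrates to $1$, which gives a uniform positive lower bound on $\rho^N$ on compacts, and hence on the limit $\rho^\infty$ --- this is exactly how the paper argues (uniform boundedness of $\ln g^N$ in Lemma~\ref{lem:precompactness-of-gN}). The same integration-to-one argument also upgrades the Lipschitz bound on $\log\rho^N$ to a uniform bound on $\rho^N$, which you need before invoking Arzel\`a--Ascoli. These are fillable, but should be made explicit.
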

\begin{theorem}
\label{thm:convergence-of-density-functions}
For all $\omega \in \Omega_v'$ the following is true.
Suppose a family of probability measures $\bigl( \nu_N \bigr)$
satisfies the following conditions:
\begin{enumerate}
\item\label{item:23} for some $c>0$ and all sufficiently large $|N|$, 
  \begin{equation}
\label{eq:finite-support-assumption}
\nu_N \big( [-c|N|, c|N|]^c \big)=0;
\end{equation} 
\item\label{item:24} $\bigl( \nu_N \bigr)$
  satisfies LLN with slope $v$ as $N\to -\infty$.
\end{enumerate}
Let $N<m<n$ and let $f^N_{n,m}(x,\cdot)$ be the density of $\mu_{\nu_N,x}^{N,n} \pi_m^{-1}$, namely,
\begin{equation*}
f^N_{n,m}(x,y) = \int_{-c|N|}^{c|N|} \frac{Z^{N,m}(z,y)Z^{m,n}(y,x)}{Z^{N,n}(z,x)} \nu_N(dz).
\end{equation*}
Then $f_{n,m}^N(x,y)$ converges uniformly in $x$ and $y$ on compact sets to $f_{v,n,m}(x,y)$ as $N \to
-\infty$, where $f_{v,n,m}(x,\cdot)$ is the density of $\mu_x^{-\infty,n}(v) \pi_m^{-1}$ and can be expressed as 
\begin{equation}
  \label{eq:expression-of-polymer-measure-density}
  f_{v,n,m}(x,y) = Z^{m,n}_{y,x} G_v \bigl( (m,y), (n,x) \bigr).
\end{equation}
\end{theorem}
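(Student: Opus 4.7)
The plan is to factor $f^N_{n,m}(x,y) = Z^{m,n}(y,x)\,R_N(x,y)$ with
\[
R_N(x,y) = \int \frac{Z^{N,m}(z,y)}{Z^{N,n}(z,x)}\,\nu_N(dz),
\]
and reduce the theorem to proving $R_N(x,y)\to G_v((m,y),(n,x))$ uniformly for $(x,y)$ in compact subsets of $\R^2$. The identification with the density $f_{v,n,m}$ of $\mu_x^{-\infty,n}(v)\pi_m^{-1}$ and formula~\eqref{eq:expression-of-polymer-measure-density} will then follow from the weak convergence $\mu^{N,n}_{\nu_N,x}\pi_m^{-1}\to\mu_x^{-\infty,n}(v)\pi_m^{-1}$ guaranteed by Theorem~\ref{thm:thermodynamic-limit} together with the uniqueness of limits.

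First I would upgrade Theorem~\ref{thm:convergence-of-partition-function-ratios} from pointwise to uniform convergence by a standard subsequence/contradiction argument: for every compact $K\subset\R^2$ and every $\epsilon>0$ there exist $\delta>0$ and $N_0$ such that
\[
\left|\frac{Z^{N,m}(z,y)}{Z^{N,n}(z,x)} - G_v\bigl((m,y),(n,x)\bigr)\right|<\epsilon
\]
for all $N<N_0$, all $z$ with $|z-vN|\le\delta|N|$, and all $(x,y)\in K$. Splitting the integral defining $R_N(x,y)$ into contributions from the good region $A_\delta^N=\{z:|z-vN|\le\delta|N|\}$ and the bad region $B_\delta^N=[-c|N|,c|N|]\setminus A_\delta^N$, the good-region contribution equals $G_v((m,y),(n,x))\,\nu_N(A_\delta^N)+O(\epsilon)$ uniformly on $K$, which tends to $G_v((m,y),(n,x))$ since $\nu_N(A_\delta^N)\to 1$ by the LLN hypothesis.

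The main obstacle is controlling the bad-region contribution. My strategy is to establish a deterministic-in-$N$ bound
\[
\sup_{z\in[-c|N|,c|N|],\,(x,y)\in K}\frac{Z^{N,m}(z,y)}{Z^{N,n}(z,x)} \le C(K)
\]
for all sufficiently large $|N|$, after which the bad-region integral is dominated by $C(K)\,\nu_N(B_\delta^N)\to 0$. To prove the uniform bound, I would pass to the full-measure event $\bigcap_{v'\in\Q\cap[-c,c]}\Omega'_{v'}$, on which the pointwise conclusion of Theorem~\ref{thm:convergence-of-partition-function-ratios} holds for every rational slope, and argue by contradiction: a violating subsequence $(N_k,z_k,(x_k,y_k))$ admits limits $z_k/N_k\to v^*\in[-c,c]$ and $(x_k,y_k)\to(x_0,y_0)\in K$; for rational $v^*$ the convergence to the finite value $G_{v^*}((m,y_0),(n,x_0))$ directly contradicts the explosion of the ratio. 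The remaining difficulty is the irrational case, which can be resolved either by sandwiching $v^*$ between nearby rationals using the monotonicity of point-to-point polymer measures in their endpoints (Lemma~\ref{lem:main-monotonicity}) together with the multiplicative cocycle identity $G_{v'}((m,y),(n,x))=G_{v'}((m,y),(k,w))\,G_{v'}((k,w),(n,x))$, or by invoking the shape function (Theorem~\ref{thm:shape-function}) together with the free-energy concentration bounds (Theorem~\ref{thm:concentration-of-free-energy}): a direct computation of the deterministic leading terms gives $\log\bigl(Z^{N,m}(z,y)/Z^{N,n}(z,x)\bigr)\approx -(n-m)(\alpha_0+(v^*)^2/2)+v^*(x-y)+o(1)$, which is uniformly bounded over the compact range of parameters.

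Combining the good- and bad-region estimates produces $R_N\to G_v$ uniformly on compact sets in $(x,y)$, and multiplying by the continuous factor $Z^{m,n}(y,x)$ yields the uniform convergence of $f^N_{n,m}$ to $Z^{m,n}(y,x)G_v((m,y),(n,x))$. Since $f^N_{n,m}(x,\cdot)$ is the density of $\mu^{N,n}_{\nu_N,x}\pi_m^{-1}$ and the latter converges weakly to $\mu_x^{-\infty,n}(v)\pi_m^{-1}$ by Theorem~\ref{thm:thermodynamic-limit}, uniqueness of limits identifies the uniform limit with $f_{v,n,m}(x,y)$, thereby establishing~\eqref{eq:expression-of-polymer-measure-density} and completing the proof.
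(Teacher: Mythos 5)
Your proposal takes a logically circular route: it treats Theorem~\ref{thm:convergence-of-partition-function-ratios} (pointwise convergence of partition-function ratios along any sequence with $y_N/N\to v$) as an independently available tool and then promotes it to uniformity in $z$ by a subsequence argument. In the paper, however, Theorems~\ref{thm:convergence-of-partition-function-ratios} and~\ref{thm:convergence-of-density-functions} are proved \emph{together}: the ratio convergence is \emph{deduced} from the density convergence by specializing to $\nu_N=\delta_{y_N}$, and both rest on Lemma~\ref{lem:precompactness-of-gN}, a precompactness statement for the family $\big(\ln g^N_{n,m}\big)$ in $C(K)$. You cannot invoke Theorem~\ref{thm:convergence-of-partition-function-ratios} to prove Theorem~\ref{thm:convergence-of-density-functions} without first supplying an independent proof of the former, and the bulk of the work is precisely in establishing the precompactness that underlies both.

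The second concrete gap is the uniform bound
\[
\sup_{z\in[-c|N|,c|N|],\ (x,y)\in K}\frac{Z^{N,m}(z,y)}{Z^{N,n}(z,x)}\le C(K)
\]
that your bad-region argument relies on. The concentration route you sketch cannot produce this: Theorem~\ref{thm:concentration-of-free-energy} controls $\ln Z^{N,m}(z,y)$ and $\ln Z^{N,n}(z,x)$ around their means only up to errors of order $|N|^{1/2}\ln^{3/2}|N|$, which is unbounded as $N\to -\infty$. Applied separately to numerator and denominator, the concentration bound yields a ratio bound like $\exp\!\big(O(|N|^{1/2}\ln^{3/2}|N|)\big)$, not a constant; the cancellation of fluctuations between the two (highly correlated) partition functions is exactly what needs a nontrivial argument. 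The monotonicity route is not worked out; it is unclear how Lemma~\ref{lem:main-monotonicity} and the cocycle identity for $G_{v'}$ would yield a \emph{pointwise, uniform-in-$z$} upper bound on a partition-function ratio. In contrast, the paper sidesteps pointwise control of the ratio entirely: it works with the \emph{integrated} quantity $g^N_{n,m}(x,y)=\int (Z^{N,m}_{z,y}/Z^{N,n}_{z,x})\,\nu_N(dz)$, truncates the partition functions in space using the straightness/tightness estimates (Theorem~\ref{thm:all-space-time-point-compactness} via Lemma~\ref{lem:one-step-control}), proves a Lipschitz estimate in $(x,y)$ for the truncated version, and then obtains uniform boundedness from the normalization $\int \bar f^N(x,y')\,dy'=1$ combined with that Lipschitz bound. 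Your decomposition $f^N=Z^{m,n}\cdot R_N$ is the same as the paper's, but the subsequent control of $R_N$ requires this precompactness machinery rather than a good/bad-region split driven by pointwise ratio bounds.
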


We can derive part~\ref{it:thermodynamic-limit} of Theorem~\ref{thm:thermodynamic-limit} from
Theorem~\ref{thm:convergence-of-density-functions}.
Combined with Section~\ref{sec:infinite-volume-polymer-measure}, this completes the proof of
Theorem~\ref{thm:thermodynamic-limit}.

\begin{proof}[Proof of (the time-reversed version of) part~\ref{it:thermodynamic-limit} of Theorem~\ref{thm:thermodynamic-limit} ] 
  Let us take the full measure set $\Omega'_v$.
  For every $\omega \in \Omega'_v$, our goal is to show that for any $(n,x) \in \ZR$ and $(\nu_N)$ satisfying LLN with slope $v$,
  $\mu_{\nu_N, x}^{N,n}\pi_m^{-1}$ converges to $\mu_x^{-\infty,n}(v)\pi_m^{-1}$ in total variation for all $m<n$.

 Let $c > |v| + 1$.  Denoting the conditioning of $\nu_N$ on $[-c|N|, c|N|]$ by $\tilde{\nu}_{N}$, we get
\begin{align*}
  &\| \mu_{\nu_N, x}^{N,n}\pi_m^{-1} - \mu_x^{-\infty,n}(v)\pi_m^{-1} \|_{\text{TV}}
\\
  \le &  \| \mu_{\nu_N, x}^{N,n}\pi_m^{-1} - \mu_{\tilde{\nu}_N, x}^{N,n}\pi_m^{-1}  \|_{\text{TV}} +   \|
        \mu_{\tilde{\nu}_N, x}^{N,n}\pi_m^{-1} - \mu_x^{-\infty,n}(v)\pi_m^{-1} \|_{\text{TV}} \\
  \le & \| \nu_N - \tilde{\nu}_N \|_{\text{TV}} +   \|  \mu_{\tilde{\nu}_N, x}^{N,n}\pi_m^{-1} - \mu_x^{-\infty,n}(v)\pi_m^{-1} \|_{\text{TV}}.
\end{align*}
The first term goes to $0$ since $(\nu_N)$ satisfies LLN with slope~$v$. 
To see that the second term goes to $0$, we notice that $\bigl( \tilde{\nu}_N \bigr)$ satisfies LLN with slope $v$
and~(\ref{eq:finite-support-assumption}), so we can apply Theorem~\ref{thm:convergence-of-density-functions} to conclude
that the densities of~$\mu_{\tilde{\nu}_N, x}^{N,n}\pi_m^{-1}$ converge to that of~$\mu_x^n(v)\pi_m^{-1}$, uniformly on
any compact set, which implies convergence in total variation.
This completes the proof.
\end{proof}

In what follows, we could use the spatial smoothness of $F$ to simplify some of the arguments, but we prefer to give a version of the proof that
does not even assume continuity of $F$.
Let us define
\begin{equation}\label{eq:definition-of-gN}
g^N_{n,m}(x,y) = \int_{-c|N|}^{c|N|} \frac{Z^{N,m}(z,y)}{Z^{N,n}(z,x)} \nu_{N}(dz),
\end{equation}
so that
\begin{equation}
  \label{eq:relation-between-f-and-g}
f_{n,m}^N(x,y) = Z^{m,n}_{y,x} g^N_{n,m}(x,y),
\end{equation}
then $g^N_{n,m}(x,y)$ is continuous in both $x$ and $y$ as the following lemma shows.
\begin{lemma}
\label{lem:continuous-of-g}
If  $\nu_N$ satisfies (\ref{eq:finite-support-assumption}), then  $g^N_{n,m}(x,y)$ is continuous in $x$ and $y$.
\end{lemma}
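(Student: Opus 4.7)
The plan is to reduce the continuity statement to a straightforward application of the dominated convergence theorem, after making one algebraic observation that bypasses the potential discontinuity of $F_N$ in the spatial variable.

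The key observation is that while the individual partition functions $Z^{N,m}(z,y)$ and $Z^{N,n}(z,x)$ may fail to be continuous in $z$ (since they depend on $F_N(z)$ as a multiplicative factor, and we are not assuming continuity of $F$), this dependence cancels in the \emph{ratio}. Concretely, for any $l > N$ one can write $Z^{N,l}(z,w) = e^{-F_N(z)} \tilde Z^{N,l}(z,w)$, where $\tilde Z^{N,N+1}(z,w) = g(w-z)$ and, for $l > N+1$,
\[
\tilde Z^{N,l}(z,w) = \int_{\R} g(u-z) Z^{N+1,l}(u,w)\,du.
\]
In either case, $\tilde Z^{N,l}(z,w)$ is jointly $C^\infty$ and strictly positive in $(z,w)$: smoothness in $w$ follows from Lemma~\ref{lem:smoothness-of-partition-function}, and smoothness in $z$ follows by differentiation under the integral using the Gaussian kernel $g$, whose derivatives have polynomial weight that is controlled by the moment estimates already developed in the proof of that lemma (combined with the linear growth bound on $F_k$ coming from the definition \eqref{eq:def-of-Omega} of $\Omega$).

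Substituting this factorization into \eqref{eq:definition-of-gN}, the $e^{-F_N(z)}$ terms cancel and we obtain
\[
g^N_{n,m}(x,y) = \int_{-c|N|}^{c|N|} \frac{\tilde Z^{N,m}(z,y)}{\tilde Z^{N,n}(z,x)}\,\nu_N(dz),
\]
whose integrand is now jointly continuous (in fact $C^\infty$) in $(x,y,z)$ and strictly positive.

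Given $(x_0,y_0)$, fix a compact neighborhood $K \subset \R^2$ of $(x_0,y_0)$. Since $\nu_N$ is supported in the compact set $[-c|N|, c|N|]$ by assumption \eqref{eq:finite-support-assumption}, and the integrand is continuous on the compact product $K \times [-c|N|, c|N|]$, it is uniformly bounded there. The dominated convergence theorem applied to any sequence $(x_k,y_k) \to (x_0,y_0)$ in $K$ then gives $g^N_{n,m}(x_k,y_k) \to g^N_{n,m}(x_0,y_0)$, completing the proof. There is no real obstacle here; the only subtlety is the cancellation of $e^{-F_N(z)}$, which is what makes the argument work without any continuity assumption on $F$.
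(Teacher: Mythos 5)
Your proof is correct and takes essentially the same route as the paper. The paper simply invokes Lemma~\ref{lem:smoothness-of-partition-ratio} (the corollary of Lemma~\ref{lem:smoothness-of-partition-function}), which already encapsulates exactly the cancellation of $e^{-F_N(z)}$ in the ratio that you re-derive explicitly via the factorization $Z^{N,l}(z,w)=e^{-F_N(z)}\tilde Z^{N,l}(z,w)$; after that, both arguments conclude by joint continuity of the integrand on $K\times[-c|N|,c|N|]$ together with bounded (dominated) convergence against the compactly supported $\nu_N$.
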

\begin{proof}
  By Lemma \ref{lem:smoothness-of-partition-ratio},  $h(x,y,z) = Z^{N,m}(z,y)/Z^{N,n}(z,x)$ is
  uniformly continuous in $x,y$, and $z$ on any compact set, so the lemma follows
  from~\eqref{eq:definition-of-gN}.
\end{proof}

The next lemma shows that  if $n<m$, then for any compact set $K \subset \R \times \R$, the family $\bigl( \ln g^N_{n,m}(\cdot,\cdot) \bigr)_{N<m}$ is precompact 
in $\mathcal{C}(K)$, the space of continuous functions on $K$.
From this we will derive Theorems~\ref{thm:convergence-of-partition-function-ratios} and~\ref{thm:convergence-of-density-functions}.

\begin{lemma}
\label{lem:precompactness-of-gN}
Let $\omega \in \Omega'$ and $K$ be a compact subset of $\R\times\R$. 
If  measures $\nu_{N}$ satisfy (\ref{eq:finite-support-assumption}), 
then for any $m,n\in\Z$ satisfying $m<n$, the family $\bigl( \ln g^N_{n,m}(\cdot,\cdot) \bigr)_{N<m}$ is precompact in $\mathcal{C}(K)$.
\end{lemma}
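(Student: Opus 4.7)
The plan is to apply Arzelà--Ascoli: I will verify that $(\ln g^N_{n,m})_{N<m}$ is uniformly bounded and uniformly equicontinuous on $K$. The central tool will be the one-step differentiation identity
\[\partial_y \ln Z^{N,m}(z,y) \;=\; \int (w-y)\,\mu^{N,m}_{z,y}\pi_{m-1}^{-1}(dw),\]
obtained by differentiating $Z^{N,m}(z,y)=\int g(y-w)e^{-F_{m-1}(w)}Z^{N,m-1}(z,w)\,dw$ under the integral sign (justified by Lemma~\ref{lem:smoothness-of-partition-function}), together with the analogous formula for $\partial_x \ln Z^{N,n}(z,x)$. Substituting these into the definition~\eqref{eq:definition-of-gN} of $g^N_{n,m}$ and swapping integrals will express $\partial_y \ln g^N_{n,m}$ and $\partial_x \ln g^N_{n,m}$ as $z$-weighted averages of one-step polymer means. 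Thus equicontinuity reduces to uniform (in $(x,y)\in K$, $|z|\le c|N|$ and $|N|$ large) first-moment bounds on $\mu^{N,m}_{z,y}\pi_{m-1}^{-1}$ and $\mu^{N,n}_{z,x}\pi_{n-1}^{-1}$.

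To get these uniform moment bounds I will exploit that the density of $\mu^{N,m}_{z,y}\pi_{m-1}^{-1}$ is proportional to $g(y-w)e^{-F_{m-1}(w)}Z^{N,m-1}(z,w)$, so the super-exponential decay of the Gaussian kernel $g(y-w)$ must dominate the growth of the remaining factors in $w$. First, condition~\ref{item:exponential-moment-for-maximum} combined with Borel--Cantelli will yield an almost-sure polynomial bound $e^{|F_{m-1}(w)|}\le C_1(\omega)(1+|w|)^{p}$ valid for all $w\in\R$. Second, for the partition-function factor I will use the exact expectation formula of Lemma~\ref{lem:expectation-of-Z} together with Markov's inequality and Borel--Cantelli over integer lattice sites, plus the shear/shift invariance of Lemma~\ref{lem:shear-for-Z_v}, to obtain an almost-sure Gaussian-type envelope on $Z^{N,m-1}(z,\cdot)$ that is uniform in $N\to-\infty$ and $z$ in the relevant linearly-growing window. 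Combined with $g(y-w)$, this should yield sub-Gaussian tails for the one-step marginal, hence uniformly bounded first moments, and therefore uniform Lipschitz control of $\ln g^N_{n,m}$ on $K$.

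Uniform boundedness on $K$ then follows by fixing a single reference point $(x_0,y_0)\in K$ and bounding $\ln g^N_{n,m}(x_0,y_0)$: using the Chapman--Kolmogorov-type decomposition $Z^{N,n}(z,x_0)=\int Z^{N,m}(z,w)Z^{m,n}(w,x_0)\,dw$ and restricting to $|w-y_0|\le 1$, the local Lipschitz control on $\ln Z^{N,m}(z,\cdot)$ from the previous step, together with positivity and continuity of $Z^{m,n}(\cdot,x_0)$, will produce constants $c_1,c_2>0$ (independent of $z,N$) such that $c_1\le Z^{N,m}(z,y_0)/Z^{N,n}(z,x_0)\le c_2$; integrating against $\nu_N$ then gives $\ln g^N_{n,m}(x_0,y_0)\in[\ln c_1,\ln c_2]$. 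The hard part of the program will be the uniform almost-sure Gaussian-type envelope for $Z^{N,m-1}(z,\cdot)$: the expectations point to the right envelope, but transferring this into an almost-sure pointwise bound uniform in both the unbounded parameter $z$ and the limit $N\to-\infty$ requires a careful summability argument over a lattice discretization, combined with local regularity of $Z^{N,m-1}(z,\cdot)$ (itself controlled by the same one-step identity). Once these uniform tail bounds are in hand, Arzelà--Ascoli closes the argument.
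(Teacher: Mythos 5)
Your proposal is a genuinely different route from the paper's.  The paper deliberately sidesteps differentiability: it remarks before the lemma that the proof is given ``without even assuming continuity of $F$,'' and works with truncated partition functions $\bar Z^{N,m}_{z,y}$, $\bar Z^{m,n}_{y,x}$, $\bar Z^{N,n}_{z,x}$ (integration windows $[-rs_2,rs_2]$, $[-rs_1,rs_1]$ at times $m-1$, $n-1$, $m$).  It shows $\ln\bar g^N$ is within $\varepsilon$ of $\ln g^N_{n,m}$ via Lemma~\ref{lem:one-step-control}, and then — crucially — gets Lipschitz control and boundedness of $\ln\bar g^N$ by an explicit computation on the Gaussian quadratic exponents over the bounded windows, together with the normalization $\int_{-rs_1}^{rs_1}\bar f^N(x,y')\,dy'=1$.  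By contrast, you differentiate $\ln Z^{N,m}(z,\cdot)$ (using Lemma~\ref{lem:smoothness-of-partition-function}, so implicitly the smoothness of $F$) to express $\partial_y\ln g^N_{n,m}$ as a weighted average of one-step polymer means, and then try to bound those means directly.  Both approaches reduce the lemma to a uniform tail bound on $\mu^{N,m}_{z,y}\pi^{-1}_{m-1}$ over $|z|\le c|N|$, $y$ compact, $N\to-\infty$ — exactly the content of Lemma~\ref{lem:one-step-control}.  The paper invokes it as a black box; you attempt to reprove it.

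This is where there is a genuine gap.  Your plan for the uniform moment bound — expectation formula (Lemma~\ref{lem:expectation-of-Z}) plus Markov plus Borel--Cantelli over a lattice, plus the a.s.\ polynomial bound on $e^{|F_{m-1}(w)|}$ from~\ref{item:exponential-moment-for-maximum} — only produces an \emph{upper} envelope on $Z^{N,m-1}(z,\cdot)$, since Markov's inequality is one-sided.  But the one-step marginal density is a \emph{normalized} expression $g(y-w)e^{-F_{m-1}(w)}Z^{N,m-1}(z,w)\big/Z^{N,m}(z,y)$, and to bound its tail you must control the denominator from below, uniformly in $z$ over a window of width $\sim c|N|$ and in $N\to-\infty$.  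Equivalently, what is really needed is two-sided control of the ratio $Z^{N,m-1}(z,w)/Z^{N,m-1}(z,y)$.  Concentration of each $\ln Z^{N,m-1}(z,\cdot)$ individually (Theorem~\ref{thm:concentration-of-free-energy}) gives a fluctuation of order $|N|^{3/4}$, and those errors do not cancel in the ratio; the paper overcomes exactly this obstacle with the tightness/$\delta$-straightness machinery (Theorem~\ref{thm:delta-straightness-for-polymer-measure} $\Rightarrow$ Theorem~\ref{thm:all-space-time-point-compactness} $\Rightarrow$ Lemma~\ref{lem:one-step-control}), which controls the polymer measure directly rather than the partition function.  Without importing that input, your ``careful summability argument over a lattice discretization'' does not close.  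The same issue resurfaces in your uniform-boundedness step: the upper bound $Z^{N,m}(z,y_0)/Z^{N,n}(z,x_0)\le c_2$ from Chapman--Kolmogorov restricted to $|w-y_0|\le1$ is fine, but the lower bound $\ge c_1$ requires a global (in $w$) envelope $Z^{N,m}(z,w)\lesssim e^{L|w-y_0|}Z^{N,m}(z,y_0)$, which again is a two-sided ratio bound.  If you instead cite Lemma~\ref{lem:one-step-control} (or the underlying Theorem~\ref{thm:all-space-time-point-compactness}) to get the uniform first-moment/tail control, your differentiation-based argument can be completed, at the cost of the smoothness assumption the paper avoids.
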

Note that for a compact set $K$ and a family of positive functions $h^N$, the precompactness of  $\bigl( \ln h^N \bigr)$ implies that of $\bigl( h^N \bigr)$ in $\mathcal{C}(K)$.

\begin{proof}[Proof of Theorems \ref{thm:convergence-of-partition-function-ratios} and
  \ref{thm:convergence-of-density-functions}] We fix $m < n$ and let $\omega \in \Omega'_v$.
  Using Lemma \ref{lem:precompactness-of-gN} and the standard diagonal procedure, we can find a
  subsequence $\bigl( g^{N_k}_{n,m}(x,y) \bigr)$  converging to some $\tilde{g}(x,y)$, uniformly on any compact set.
  Since $\ln Z^{m,n}(x,y)$ is bounded on every compact set,  by (\ref{eq:relation-between-f-and-g}), we see that $f^{N_k}_{n,m}(x,y)$
  converges to $\tilde{f}(x,y) = Z^{m,n}(x,y)\tilde{g}(x,y)$ uniformly on compact sets. 
  
  Let us now identify the limit of any  subsequence $\bigl( f^{N_k}_{m,n}(x,y) \bigr)$ converging uniformly on all compact
  sets.
  On $\Omega_v$, if $(\nu_{N_k})$ satisfies LLN with slope $v$, then $\mu_{\nu_{N_k}, x}^{N_k,n} \pi_m^{-1}$ converge weakly to $\mu_x^{-\infty,n}(v)\pi_m^{-1}$.
  Hence $ \tilde{f}(x,\cdot)$ must equal $f_{v,n,m}(x,\cdot)$,  the density of $\mu_x^{-\infty,n}(v) \pi_m^{-1}$. So, the only
  possible limit of any subsequence of $\bigl( f^N_{n,m}\bigr)$, is $f_{v,n,m}$, and we obtain that $f^N_{n,m}$ converges to $f_{v,n,m}$, uniformly on any compact set.
  By (\ref{eq:relation-between-f-and-g}), $g^N_{n,m}$ also converges to $\tilde{g}(x,y) =\bigl(
  Z^{m,n}_{y,x} \bigr)^{-1} f_{v,n,m}(x,y)$, uniformly on any compact set. 

  Next we will show  \eqref{eq:expression-of-polymer-measure-density} and Theorem \ref{thm:convergence-of-partition-function-ratios}.

  Let $(y_N)$ be such that $y_N/N\to v$.
Then $\nu_N = \delta_{y_N}$ satisfy (\ref{eq:finite-support-assumption}), so 
\begin{equation*}
g^N_{n,m}(x,y) = Z^{N,m}_{y_N,y}/ Z^{N,n}_{y_N,x} \to \bigl( Z^{m,n}_{y,x} \bigr)^{-1} f_{v,n,m}(x,y),
\end{equation*}
where the convergence is uniform on any compact set.
We denote the limit by $G_v \bigl( (m,y), (n,x) \bigr)$.
By Lemma \ref{lem:precompactness-of-gN}, we know that $\bigl(  \ln g^N_{n,m} \bigr)$ is uniformly
bounded, hence $G_v$ is strictly positive.
This proves \eqref{eq:expression-of-polymer-measure-density} and Theorem \ref{thm:convergence-of-partition-function-ratios} for $n_1 < n_2$.

For $n_1 \ge n_2$, we can simply use the following two identities:
\begin{equation*}
  \lim_{N\to -\infty} \frac{Z^{N,n_1}_{y_N,x_1}}{Z^{N,n_2}_{y_N,x_2}} =
   \Big( \lim_{N\to -\infty} \frac{Z^{N,n_2}_{y_N,x_2}}{Z^{N,n_1}_{y_N,x_1}}  \Big)^{-1}
\end{equation*}
and 
\begin{equation*}
  \lim_{N\to -\infty} \frac{Z^{N,n_1}_{y_N,x_1}}{Z^{N,n_2}_{y_N,x_2}}
  =   \lim_{N\to -\infty} \frac{Z^{N,n_1}_{y_N,x_1}}{Z^{N,n_3}_{y_N,x_3}}
    \lim_{N\to -\infty} \frac{Z^{N,n_3}_{y_N,x_3}}{Z^{N,n_2}_{y_N,x_2}}.
\end{equation*}
\end{proof}

To prove Lemma \ref{lem:precompactness-of-gN}, we need the following corollary of Theorem
\ref{thm:all-space-time-point-compactness}:
\begin{lemma}
\label{lem:one-step-control}
Let $\omega\in \Omega'$.
For $(l,q) \in \Z \times \Z$ and  $c > 0$, there is a constant $n_1=n_1(l,q,c)$ such that for any $ s > n_1$, 
any $x \in [q,q+1]$, any terminal measure~$\nu$, any $s'\in[1,s]_{\Z}$, and any $N\in \Z$ such that $N
\le l - 2s$, we have
\begin{equation}
  \label{eq:one-step-large-small-probability}
  \mu_{\nu, x}^{N,l} \pi_{l-s'}^{-1} \big( [-(2c+R+1)s, (2c+R+1)s]^c \big)
  \le 4\nu \big( [-c|N|,c|N|]^c \big) + 8e^{-\sqrt{s}}. 
\end{equation}
\end{lemma}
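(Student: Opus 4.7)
The plan is to derive this lemma by invoking the backward analog of bound~\eqref{eq:compactness-control-by-terminal-measure-all-space-time} in Theorem~\ref{thm:all-space-time-point-compactness}, specialized to the parameters $v'=0$, $u_0=c$, $u_1=2c$, and $n=s$. As the authors note at the end of Section~\ref{sec:polymers}, statements for polymer measures with initial distribution~$\nu$ and terminal point~$x$ (``backward'' measures) follow from their forward counterparts by straightforward modifications, since the minor asymmetry in the action~\eqref{eq:action} merely moves a single potential term between endpoints. Accordingly, I would state once and for all the backward version of Theorem~\ref{thm:all-space-time-point-compactness} and invoke it here.

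Concretely, the choice $v'=0$ reflects that we want to bound how far $\gamma_{l-s'}$ strays from~$0$ for a path terminating at $x\in[q,q+1]$; the additional fixed offset of order~$|q|$ is absorbed into the random constant~$n_1$. The choice $u_0=c$ matches the hypothesis on the initial measure~$\nu$, while $u_1=2c$ produces the prefactor $(u_1+R+1)=(2c+R+1)$ appearing in~\eqref{eq:one-step-large-small-probability}. The constraint $N\le l-2s$ corresponds exactly to the ``$(N-m)/2\ge n$'' condition in Theorem~\ref{thm:all-space-time-point-compactness}, and the random constant $n_1(l,q,c)$ is the backward analog of $n_0(\omega, l, q, [2c], [c^{-1}])$, finite on~$\Omega'$. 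The backward analog of~\eqref{eq:compactness-control-by-terminal-measure-all-space-time} then yields
\[
\mu_{\nu,x}^{N,l}\bigl\{\gamma:\ \max_{1\le s'\le s}|\gamma_{l-s'}|\ge (2c+R+1)s\bigr\}\le 4\nu\bigl([-c|N|,c|N|]^c\bigr)+8e^{-\sqrt{s}},
\]
where $|N|$ enters the bound on~$\nu$ because in the backward setup the measure sits at time~$N$ whose distance $|l-N|$ from the terminal time is comparable to~$|N|$ for $|N|$ large (taking $n_1\ge 2|l|$ ensures $s>n_1$ forces $|N|\ge 2s\ge 2|l|$). Since the maximum over $s'\in[1,s]_{\Z}$ dominates the marginal at any single time $l-s'$, the estimate~\eqref{eq:one-step-large-small-probability} follows at once.

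The main (and essentially only) obstacle is the notational bookkeeping of time reversal: one must verify that reversing time converts $\mu_{\nu,x}^{N,l}$ into a forward polymer measure to which Theorem~\ref{thm:all-space-time-point-compactness} applies, and that the terminal-measure bound $4\nu([(v'-u_0)N,(v'+u_0)N]^c)$ there translates precisely into $4\nu([-c|N|,c|N|]^c)$ here. Once this correspondence is verified, no further estimation is required.
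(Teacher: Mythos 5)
Your proposal is correct and follows essentially the same route as the paper: both invoke the backward version of \eqref{eq:compactness-control-by-terminal-measure-all-space-time} from Theorem~\ref{thm:all-space-time-point-compactness} with $v'=0$, $u_0=c$, $u_1=2c$, and set $n_1(l,q,c)$ to be the corresponding backward $n_0$. Your additional remarks on absorbing the $|q|$-offset into $n_1$ and on the maximum over $s'$ dominating each single marginal are exactly the bookkeeping the paper leaves implicit.
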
 
\begin{proof}  We apply the backward version of Theorem~\ref{thm:all-space-time-point-compactness},
taking $v' = 0, u_0 = c, u_1 = 2c$ and letting $n_1(l,q,c) = n_0(\omega, l, q, [c],[c^{-1}])$, so
\eqref{eq:one-step-large-small-probability} is directly implied by the backward version of~\eqref{eq:compactness-control-by-terminal-measure-all-space-time}.    
\end{proof}

\begin{proof}[Proof of Lemma \ref{lem:precompactness-of-gN}]
Let us fix a compact set $K = [p,p+1]\times [-k,k]$ and times $m < n$.
We denote $r = 2c+R+1$ and, for $\varepsilon \in (0, 1/2)$, define
\begin{equation*}
s_1 = \max\Big\{ n-m,\, n_1 (n,p,c),\,   \frac{k}{r},\, \ln^2 \frac{\varepsilon}{64}\Big\}
\end{equation*}
and
\begin{equation*}
  s_2 = \max \Big\{ n_1(m,i,c):\ |i| \le rs_1 +1\Big\} \vee \ln^2\frac{\varepsilon}{64}.
\end{equation*}
We will need several truncated integrals: 
\begin{align*}
&\bar{Z}^{N,m}_{z,y} = \int_{-rs_2}^{rs_2} Z^{N,m-1}_{z,w}
  Z^{m-1,m}_{w,y} dw  =\int_{-rs_2}^{rs_2} Z^{N,m-1}_{z,w}
                        \frac{1}{\sqrt{2\pi}}e^{-\frac{(w-y)^2}{2} -  F_{m-1}(w)} dw,  
  \\
& \bar{Z}^{m,n}_{y,x} = 
\begin{cases}
 Z^{m,n}_{y,x},&  m=n-1,  \\
\int_{-rs_1}^{rs_1} Z^{m,n-1}_{y,w} Z^{n-1,n}_{w,x} dw
  ,&  m < n-1,
\end{cases}
\\ 
 & \bar{Z}^{N,n}_{z,x} = \int_{-rs_1}^{rs_1}
                        \bar{Z}^{N,m}_{z,y} \bar{Z}^{m,n}_{y,x} dy,
                         \\
& \bar{g}^N(x,y) = \int_{-c|N|}^{c|N|} \frac{ \bar{Z}^{N,m}_{z,y}}{
  \bar{Z}^{N,n}_{z,x}}  \nu_N(dz),
\end{align*}
where, according to \eqref{eq:Z}, $Z^{n-1,n}_{w,x}=\frac{1}{\sqrt{2\pi}} e^{-\frac{(w-x)^2}{2} - F_{n-1}(w)}$ in the definition of $\bar Z^{m,n}_{y,x}$.

We also let $h_{\varepsilon}^N = \ln \bar{g}^N$, $N<m$, and define  
  $\tilde{K} = [p,p+1]\times[-rs_1,rs_1] \supset K$.
If we prove that for every $\eps>0$,
\begin{equation}\label{eq:eps-approximation-of-g}
    |\ln g^N_{n,m}(x,y) - h_{\varepsilon}^N(x,y) | \le \varepsilon,\quad (x,y) \in \tilde{K},
  \end{equation}
and $\big( h_{\varepsilon}^N\big)$ is precompact in $\mathcal{C}(\tilde{K})$, then the lemma will follow  since, given any $\eps>0$, we will be able to use an $\eps$-net for $(h_\eps^N)$ to construct a $2\eps$-net for $(\ln g_{n,m}^N)$.

  Let $N \le \min\{n-2s_1, m-2s_2\}$.
  If $|y| \le rs_1$ and $|z| \le cN$, then, from  $\delta_z ([-c|N|, c|N|]^c) = 0$
and \eqref{eq:one-step-large-small-probability} with $(l,s',s,w,\nu)
= (m,1,s_2, y, \delta_z)$, we obtain
\begin{equation*}
  1 - \frac{\bar{Z}_{z,y}^{N,m}}{Z_{z,y}^{N,m}} 
  \le \mu_{z,y}^{N,m} \pi_{m-1}^{-1} ([-rs_2, rs_2]^c) \le
    8e^{-\sqrt{s_2}} \le \varepsilon/8.
  \end{equation*}
Then  using the elementary inequality $|\ln(1+x)| \le 2|x|$ for $|x| \le
1/2$ we find
\begin{equation}
  \label{eq:estimate-on-above}
e^{-\varepsilon/4} \le \bar{Z}_{z,y}^{N,m} / Z_{z,y}^{N,m}  \le 1.
\end{equation}
Let 
\begin{equation*}
\tilde{Z}^{N,n}_{z,x} = \int_{-rs_1}^{rs_1}Z^{N,m}_{z,y} \bar{Z}^{m,n}_{y,x} dy.
\end{equation*}
Then (\ref{eq:estimate-on-above}) implies
\begin{equation}
  \label{eq:ratio-Z-tilde-and-Z-bar}
1\le \tilde{Z}_{z,x}^{N,n} / \bar{Z}_{z,x}^{N,n} \le e^{\varepsilon/4}.
\end{equation}
Similarly, if $x \in [p,p+1]$ and $|z| \le cN$, by (\ref{eq:one-step-large-small-probability}) with
$(l,s',s,w,\nu) = (n,1,s_1, x, \delta_z)$ and $(n,n-m, s_1,x,\delta_z)$, we obtain
\begin{equation*}
 1- \frac{ \tilde{Z}_{z,x}^{N,n} }{Z_{z,x}^{N,n}}
  \le \mu_{z,x}^{N,n} \pi_{n-1}^{-1} ([-rs_1, rs_1]^c) + \mu_{z,x}^{N,n} \pi_m^{-1}([-rs_1,rs_1]^c)
 \le 16e^{-\sqrt{s_1}} \le \varepsilon/4.
\end{equation*}
Therefore, 
\begin{equation}
  \label{eq:estimate-on-below}
e^{-\varepsilon/2} \le \tilde{Z}_{z,x}^{N,n}/ Z_{z,x}^{N,n} \le e^{\varepsilon/2}.
\end{equation}
Combining (\ref{eq:estimate-on-above}), (\ref{eq:ratio-Z-tilde-and-Z-bar}) and (\ref{eq:estimate-on-below}) we obtain 
\begin{equation*}
e^{-\varepsilon} \le \bar{g}^N(x,y) / g^N_{n,m}(x,y) \le e^{\varepsilon},
\end{equation*}
and (\ref{eq:eps-approximation-of-g}) follows.

For any $|w| \le rs_2$ and $y,y' \in [-rs_1, rs_1]$, we have
\begin{equation*}
\bigg| \frac{(y-w)^2}{2} - \frac{(y'-w)^2}{2} \bigg| \le r(s_1+s_2) |y-y'|.
\end{equation*}
Hence 
\begin{equation*}
 \big| \ln \bar{Z}^{N,m}_{z,y}- \ln \bar{Z}_{z,y'}^{N,m}
\big| \le r(s_1+s_2) |y-y'|.
\end{equation*}
Similarly, for all $x,x' \in [p,p+1]$, we have
\begin{equation*}
\big| \ln \bar{Z}^{N,n}_{z,x} - \ln \bar{Z}^{N,n}_{z,x'} \big| \le     (rs_1 + |p| + 1 )|x-x'|. 
\end{equation*}
Combining these two inequalities we see that
\begin{equation}\label{eq:Lip-continuous}
  | h_{\varepsilon}^N(x,y) - h^N_{\varepsilon}(x',y') | \le L (|x-x'|+|y-y'|)
\end{equation}
for $L = r(s_1+s_2) + |p| + 1$.
So,  $h_{\varepsilon}^N$ are uniformly Lipschitz continuous and hence equicontinuous on $\tilde{K}$.
It remains to show that $h_{\varepsilon}^{N}$ are uniformly bounded.
Let
\begin{equation*}
 \bar{f}^N(x,y) = \bar{g}^N(x,y)  \bar{Z}^{m,n}_{y,x} =  \int_{-c|N|}^{c|N|} \frac{ \bar{Z}^{N,m}_{z,y} \bar{Z}^{m,n}_{y,x}}{
  \bar{Z}^{N,n}_{z,x}}  \nu_N(dz).
\end{equation*}
Then for fixed $x$, 
\begin{equation*}
\int_{-rs_1}^{rs_1} \bar{f}^N(x,y') dy' = 1.
\end{equation*}
By (\ref{eq:Lip-continuous}), we have for $y,y' \in [-rs_1,rs_1]$, 
\begin{equation*}
\bar{g}^N(x,y) e^{-L\cdot 2rs_1} \le \bar{g}^{N}(x,y') \le \bar{g}^N(x,y) e^{L\cdot 2rs_1}.
\end{equation*}
Let $M$ be the supremum of $|\ln \bar{Z}^{m,n}_{\,\cdot,\, \cdot}|$ on $\tilde{K}$.
Then
\begin{equation*}
\bar{f}^N(x,y) e^{-L\cdot2rs_1-2M} \le \bar{f}^{N}(x,y') \le \bar{f}^N(x,y) e^{L\cdot 2rs_1+2M}.
\end{equation*}
Integrating this inequality over $y'\in [-rs_1, rs_1]$ gives us
\[
2rs_1\bar{f}^N(x,y) e^{-L\cdot 2rs_1-2M} \le 
1 
\le 2rs_1 \bar{f}^N(x,y) e^{L\cdot 2rs_1+2M}.
\]
and hence $\ln \bar{f}^N(x,y)$  are uniformly bounded on $\tilde{K}$.
Therefore, $$h^N_{\varepsilon}(x,y) = \ln \bar{f}^N(x,y) - \ln \bar{Z}^{m,n}_{y,x} $$ are also uniformly bounded.
\end{proof}

\subsection{Existence of global solutions}
\label{sec:existence-of-global-solution}
In this section, for every $v\in\R$, we will prove the existence of global solutions on a full measure set
$\tilde{\Omega} \cap \Omega_v'$. Here, $\Omega_v'$ has been introduced in the beginning
of Section~\ref{sec:limits-of-partition-function-ratios} and $\tilde{\Omega}$ is introduced
in the following lemma controlling  the tail of $\mu_{Nv,x}^{N,n} \pi_{n-1}^{-1}$.
\begin{lemma}
  \label{lem:small-first-step-almost-surely-all-range}
  There is a full measure set $\bar{\Omega}$ on which for every $c > 0$ and $(n,q) \in \Z\times
  \Z$, there are constants $a_1, a_2,L_0 > 0$ and  $N_0$ 
  depending on $c$, $n$ and $q$ such that 
  \begin{equation}
\label{eq:small-first-step-almost-surely-all-range}
\mu_{\nu, x}^{N, n} \pi_{n-1}^{-1} \bigl( [-L,L]^c \bigr) \le 4\nu \bigl( [-c|N|,c|N|]^c \bigr) +
a_1 e^{-a_2 \sqrt{L}}
\end{equation}
for any $N \le N_0$, $L \ge L_0$, $x \in [q,q+1]$ and any terminal measure $\nu$.
\end{lemma}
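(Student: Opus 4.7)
The strategy is to split the proof into two regimes depending on the size of $L$ relative to $n-N$. I would take $\bar{\Omega} = \Omega'$, the full-measure event from Theorem~\ref{thm:all-space-time-point-compactness}. For fixed $c>0$ and $(n,q)\in\Z\times\Z$, set $r = 2c+R+1$ and let $n_1 = n_1(n,q,c)$ be the constant from Lemma~\ref{lem:one-step-control}. Tentatively take $L_0 = r(n_1+1)$, $N_0 = n - 2(n_1+1)$, and $a_2 = 1/\sqrt{r}$, fixing $a_1$ at the end.

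In the first regime $L_0 \le L \le r(n-N)/2$, the natural step is to apply Lemma~\ref{lem:one-step-control} directly with $s = \lfloor L/r \rfloor$, $s' = 1$, $l = n$, and $w = x$. The hypotheses $s > n_1$ and $N \le n - 2s$ follow from $L \ge L_0$ and $L \le r(n-N)/2$. Since $rs \le L$, one has the inclusion $[-L,L]^c \subseteq [-rs,rs]^c$, and the lemma yields
$$\mu_{\nu,x}^{N,n}\pi_{n-1}^{-1}\bigl([-L,L]^c\bigr) \le 4\nu\bigl([-c|N|,c|N|]^c\bigr) + 8e^{-\sqrt{\lfloor L/r\rfloor}},$$
which matches the desired bound once $a_1$ is taken at least $8e$ (to absorb rounding).

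For the complementary regime $L > r(n-N)/2$, the path length $n-N$ is constrained by $n-N < 2L/r$, so the final Gaussian transition dominates. I would use the explicit density
$$\mu_{\nu,x}^{N,n}\pi_{n-1}^{-1}(dy) = \frac{Z_{\nu,y}^{N,n-1}\, g(x-y)\, e^{-F_{n-1}(y)}}{Z_{\nu,x}^{N,n}}\, dy,$$
bounding the numerator integrated over $\{|y|>L\}$ by $g(L-q-1)\int e^{-F_{n-1}(y)}Z_{\nu,y}^{N,n-1}\,dy$, where $g(L-q-1) \le C e^{-L^2/3}$ for $L$ large. For the denominator I would restrict the defining integral to $|y-x|\le 1$, where $g(x-y)\ge g(1)$ and $e^{-F_{n-1}(y)}$ is bounded below by a random constant on $\bar{\Omega}$. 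The remaining ratio of integrals of $Z^{N,n-1}_{\nu,\cdot}$ over $\{|y-x|\le 1\}$ versus the full line is controlled by first splitting $\nu$ into its restrictions $\nu_1$ to $[-c|N|,c|N|]$ and $\nu_2$ to its complement (absorbing the $\nu_2$-contribution into the $4\nu(\cdot)$ term), then applying stochastic dominance (Lemma~\ref{lem:main-monotonicity}) to reduce to canonical $\nu_1$.

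The main obstacle is securing this uniform-in-$N$ lower bound on the denominator in the second regime. On $\bar{\Omega}$, one can combine Lemma~\ref{lem:partition-function-not-too-small} with a Borel--Cantelli argument to obtain an almost-sure estimate of the form $Z^{N,n-1}(z,y) \ge \rho_0^{n-N}$ uniformly for $(z,y)$ in compact neighborhoods of appropriate lattice points. Since in the second regime both $n-N$ and the effective support $[-c|N|,c|N|]$ of $\nu_1$ are of order $O(L)$, the Gaussian factor $e^{-L^2/3}$ dominates any polynomial-in-$L$ factors arising from these corrections, giving a super-exponential-in-$L$ bound that is far tighter than $a_1 e^{-a_2\sqrt{L}}$. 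Matching constants across the two regimes then yields the claimed $N_0$, $a_1$, $a_2$, and $L_0$.
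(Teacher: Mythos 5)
Your first regime ($L_0 \le L \le r(n-N)/2$) is exactly the paper's first step, applying Lemma~\ref{lem:one-step-control} with $s=\lfloor L/r\rfloor$; that part is correct. Your second regime, however, follows a genuinely different route from the paper's and, as written, has a concrete gap. The paper handles $L$ large relative to $n-N$ via Lemma~\ref{lem:small-probability-for-first-large-step-outside-linear-box-all-r}, a uniform-in-$(y,x,r)$ almost-sure concentration of $\mu_{y,x}^{N,n}$ around the straight segment $[(N,y),(n,x)]$, applied at $y=\pm c|N|$ and closed using monotonicity. You instead try to bound the density of $\mu_{\nu,x}^{N,n}\pi_{n-1}^{-1}$ directly by comparing the integral over $\{|y|>L\}$ against the partition function restricted to $\{|y-x|\le 1\}$.

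The gap is in your denominator lower bound. You assert an almost-sure uniform estimate $Z^{N,n-1}(z,y)\ge\rho_0^{n-N}$ for $(z,y)$ near lattice points with $|z|\le c|N|$, $y$ near $[q,q+1]$. This is false when $|z-y|$ is large: by shear invariance (Lemma~\ref{lem:shear-for-Z_v}), $Z^{N,n-1}(z,y)\stackrel{d}{=}e^{-(z-y)^2/(2(n-1-N))}Z^{0,n-1-N}(0,0)$, so the best one can extract from Lemma~\ref{lem:partition-function-not-too-small} plus Borel--Cantelli is a lower bound of the form $\rho^{n-N}e^{-(z-y)^2/(2(n-1-N))}$. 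In your second regime $|z-y|$ can be of order $c|N|\sim c(n-N)$ while $n-N<2L/r$, so the missing factor $e^{(z-y)^2/(2(n-1-N))}$ that enters the ratio can be as large as $e^{\Theta(L)}$ --- not \emph{polynomial-in-$L$} as you claim. Since the Gaussian kernel still contributes $g(L-|q|-1)\approx e^{-L^2/2}$, the exponential correction is in fact absorbed and the conclusion survives, but neither the stated lower bound nor the claim about the size of the correction is correct, and the complementary uniform-over-$z$ upper bound on $\int Z^{N,n-1}_{z,y}e^{-F_{n-1}(y)}\,dy$ is also left unaddressed. In effect, once fixed, your second regime reproduces from scratch the estimates that the paper packages inside Lemmas~\ref{lem:small-probability-for-first-large-step-outside-linear-box} and~\ref{lem:small-probability-for-first-large-step-outside-linear-box-all-r}; the paper's route --- proving a uniform-in-endpoints polymer-measure concentration once and then invoking it --- does the bookkeeping, including the Gaussian correction encoded in $[(N,y),(n,x)]$, for you, and also cleanly bridges the range $r(n-N)/2 < L < 2R_0(n-N)$ where Lemma~\ref{lem:one-step-control} no longer applies but the straight-line lemma has not yet kicked in.
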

A proof of the lemma will be given at the end of this section.

Let us fix $v\in\R$ and assume that $\omega \in \bar{\Omega} \cap \Omega_v'$ throughout this section.

Let us define $u^N_v(n,x)$, its Hopf--Cole
transform $V^N_v(n,x)$, and the constants $C^N_{v,m,n}$
by~\eqref{eq:Hopf-Cole-for-prelimit},~\eqref{eq:Normalizing-constants-product-cocycle}, and \eqref{eq:approx-solution-via-polymer-measures}.
We can use the function $G_v$ introduced in Theorem \ref{thm:convergence-of-partition-function-ratios} to define 
\begin{equation*}
  V_v(n,x) = G_v \bigl( (n,x), (n,0) \bigr) 
,\quad  C_{v,m,n} = G_v \bigl( (m,0), (n,0)  \bigr).
\end{equation*}
\begin{lemma}
  \label{lem:burgers-in-HC-satisfied}
The functions $V_v(n,x)$ and constants $C_{v,m,n}$ satisfy \eqref{eq:evolution-for-V}.
\end{lemma}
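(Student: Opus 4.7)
The plan is to establish \eqref{eq:evolution-for-V} for $V_v$ and $C_{v,m,n}$ without ever passing to a limit inside a spatial integral, relying instead on the multiplicative cocycle property of $G_v$ together with the explicit density formula \eqref{eq:expression-of-polymer-measure-density} coming from Theorem~\ref{thm:convergence-of-density-functions}.

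First I would record the cocycle identity
\[
G_v\big((a_1,x_1),(a_2,x_2)\big)\cdot G_v\big((a_2,x_2),(a_3,x_3)\big)=G_v\big((a_1,x_1),(a_3,x_3)\big),
\]
which is immediate from Theorem~\ref{thm:convergence-of-partition-function-ratios}: each $G_v((a_i,x_i),(a_j,x_j))$ is defined as the limit of the ratio $Z^{N,a_i}_{y_N,x_i}/Z^{N,a_j}_{y_N,x_j}$ along any sequence with $y_N/N\to v$, so the identity follows from multiplicativity of those ratios. Applying this identity twice yields, on the one hand,
\[
C_{v,m,n}\,V_v(m,y)=G_v\big((m,0),(n,0)\big)\,G_v\big((m,y),(m,0)\big)=G_v\big((m,y),(n,0)\big),
\]
and on the other hand,
\[
G_v\big((m,y),(n,0)\big)=G_v\big((m,y),(n,x)\big)\,G_v\big((n,x),(n,0)\big)=V_v(n,x)\,G_v\big((m,y),(n,x)\big).
\]

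Next I would invoke \eqref{eq:expression-of-polymer-measure-density}, which expresses the density $f_{v,n,m}(x,\cdot)$ of the probability marginal $\mu_x^{-\infty,n}(v)\pi_m^{-1}$ as $f_{v,n,m}(x,y)=Z^{m,n}_{y,x}\,G_v\big((m,y),(n,x)\big)$. Combining this with the two displays above gives the pointwise identity
\[
C_{v,m,n}\,Z^{m,n}(y,x)\,V_v(m,y)=V_v(n,x)\,f_{v,n,m}(x,y),
\]
and integrating over $y\in\R$, using that $f_{v,n,m}(x,\cdot)$ is a probability density and hence $\int_\R f_{v,n,m}(x,y)\,dy=1$, produces
\[
C_{v,m,n}\int_\R Z^{m,n}(y,x)\,V_v(m,y)\,dy=V_v(n,x),
\]
which is exactly \eqref{eq:evolution-for-V}. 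In this way all the real work has been done upstream in Theorems~\ref{thm:convergence-of-partition-function-ratios} and~\ref{thm:convergence-of-density-functions}; the main (and only) difficulty at the level of this lemma is to recognize the cocycle structure and unwind the various occurrences of $G_v$ so that they assemble into the density $f_{v,n,m}(x,\cdot)$, after which the statement is a one-line consequence of total mass being one.
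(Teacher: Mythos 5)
Your proposal is correct and follows essentially the same route as the paper: reduce via the multiplicativity of $G_v$ to the identity $\int_{\R} Z^{m,n}_{y,x}\,G_v\bigl((m,y),(n,x)\bigr)\,dy=1$, and then observe that the integrand is the density of $\mu_x^{-\infty,n}(v)\pi_m^{-1}$ by \eqref{eq:expression-of-polymer-measure-density}, so it integrates to one. The paper compresses the cocycle manipulation into a single appeal to Theorem~\ref{thm:convergence-of-partition-function-ratios}, whereas you spell it out, but the argument is the same.
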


\begin{proof}
  Fix $m<n$ and $x$.
  We want to show 
\begin{equation*}
  G_v \bigl(  (n,x), (n,0) \bigr)=
  G_v \bigl( (m,0), (n,0) \bigr)  \int Z^{m,n}(y,x) G_v \bigl(  (m,y), (m,0) \bigr)dy,
  \end{equation*}
which, by Theorem \ref{thm:convergence-of-partition-function-ratios}, is equivalent to 
\begin{equation*}
1 = \int Z^{m,n}_{y,x} G_v \bigl( (m,y), (n,x) \bigr)dy.
\end{equation*}
This identity is true because by Theorem \ref{thm:convergence-of-density-functions}, the integrand
is the density of $\mu_x^{-\infty,n}(v) \pi_m^{-1}$.
\end{proof}

Let $f^N_{v,n,n-1}(x,y)$ be the density of $\mu_{Nv,x}^{N,n}\pi_{n-1}^{-1}$.
Then~\eqref{eq:approx-solution-via-polymer-measures} rewrites as
\begin{equation*}
u^N_v(n,x) = \int_{\R} (x-y) f_{v,n,n-1}^N(x,y) dy.
\end{equation*}
Recalling that we expect the global solution to be given by (\ref{eq:global-solution-via-marginal-polymer-measures}), we use the limiting density $f_{v,n,n-1}(x,y)$ from Theorem \ref{thm:convergence-of-density-functions}
to define
\begin{equation*}
u_v(n,x) = \int_{\R} (x-y) f_{v,n,n-1}(x,y) dy.
\end{equation*}
\begin{lemma}
\label{lem:convergence-of-velocity}
The functions $u_v^N(n,\cdot)$ converge  to  $u_v(n,\cdot)$ as
$N \to -\infty$, uniformly on compacts sets,  and the Hopf--Cole transform of $u_v(n,\cdot)$ is $V_v(n,\cdot)$.
\end{lemma}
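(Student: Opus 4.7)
My plan is to establish the pointwise convergence $u_v^N(n,x) \to u_v(n,x)$ by splitting the integral defining $u_v^N$ into a compact part and a tail part, controlling the tail uniformly in $N$ via Lemma~\ref{lem:small-first-step-almost-surely-all-range}, and then using the density convergence from Theorem~\ref{thm:convergence-of-density-functions} on the compact part. The Hopf--Cole statement will follow from this convergence combined with the convergence $V_v^N \to V_v$ already encoded in Theorem~\ref{thm:convergence-of-partition-function-ratios}.

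Fix a compact set $K \subset \R$ for $x$, and choose $c > |v|$. Apply Lemma~\ref{lem:small-first-step-almost-surely-all-range} with terminal measure $\nu = \delta_{Nv}$, which for $|N|$ large satisfies $\delta_{Nv}([-c|N|,c|N|]^c)=0$. Choosing an integer $q$ with $K \subset [q,q+1]\cup\ldots$ (covering $K$ by finitely many unit intervals, one for each $q$), the lemma gives constants so that for all $x\in K$, all sufficiently negative $N$, and all sufficiently large $L$,
\[
\mu_{Nv,x}^{N,n}\pi_{n-1}^{-1}\bigl([-L,L]^c\bigr) \le a_1 e^{-a_2\sqrt{L}}.
\]
From this bound and a standard layer-cake computation,
\[
\int_{|y|>L} |x-y| f^N_{v,n,n-1}(x,y)\,dy \;\le\; \sum_{k\ge L}(k+|x|+1)\,a_1 e^{-a_2\sqrt{k}} \;\xrightarrow[L\to\infty]{}\;0,
\]
uniformly for $x\in K$ and $N$ sufficiently negative. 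Passing to the pointwise limit $f^N_{v,n,n-1}\to f_{v,n,n-1}$ and applying Fatou's lemma to each annulus, the same tail bound holds for $f_{v,n,n-1}$, hence the same uniform vanishing for the tail integral defining $u_v$.

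For the bounded portion $|y|\le L$, Theorem~\ref{thm:convergence-of-density-functions} gives uniform convergence $f^N_{v,n,n-1}(x,y)\to f_{v,n,n-1}(x,y)$ on the compact set $K\times[-L,L]$, so
\[
\int_{|y|\le L}(x-y)f^N_{v,n,n-1}(x,y)\,dy \;\longrightarrow\; \int_{|y|\le L}(x-y)f_{v,n,n-1}(x,y)\,dy
\]
uniformly in $x\in K$. Combining the tail estimate with this compact-set convergence yields $u_v^N(n,\cdot)\to u_v(n,\cdot)$ uniformly on $K$, which is the first assertion. The tail estimate is the main obstacle, and the key reason the argument goes through is that Lemma~\ref{lem:small-first-step-almost-surely-all-range} provides the stretched-exponential decay $e^{-a_2\sqrt{L}}$, which more than compensates for the linear factor $|x-y|$.

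For the Hopf--Cole identification, recall from \eqref{eq:approx-solution-via-polymer-measures} that $u_v^N(n,x) = -\partial_x \ln V_v^N(n,x)$, so integrating from $0$ to $x$ and using $V_v^N(n,0)=1$ gives
\[
\ln V_v^N(n,x) \;=\; -\int_0^x u_v^N(n,y)\,dy.
\]
By Theorem~\ref{thm:convergence-of-partition-function-ratios}, $V_v^N(n,\cdot)\to V_v(n,\cdot)$ uniformly on compact sets, and since $V_v=G_v(\,\cdot\,,(n,0))$ is strictly positive and continuous, $\ln V_v^N(n,\cdot)\to \ln V_v(n,\cdot)$ uniformly on compact sets. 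Taking $N\to -\infty$ on both sides, the first part of the proof gives $\int_0^x u_v^N(n,y)dy \to \int_0^x u_v(n,y)dy$ uniformly on compact sets, so $\ln V_v(n,x) = -\int_0^x u_v(n,y)\,dy$, i.e., $V_v(n,\cdot)$ is the Hopf--Cole transform of $u_v(n,\cdot)$, completing the proof.
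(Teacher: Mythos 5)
Your proof is correct and follows essentially the same route as the paper: tail control of the $(x-y)$-weighted integral via Lemma~\ref{lem:small-first-step-almost-surely-all-range}, uniform density convergence on compacts from Theorem~\ref{thm:convergence-of-density-functions}, and passage to the limit in the identity $V_v^N(n,x)=e^{-\int_0^x u_v^N(n,x')\,dx'}$. Your write-up is somewhat more explicit about the layer-cake estimate and the Fatou argument transferring the tail bound to the limiting density, but these are exactly the details the paper leaves implicit.
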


\begin{proof}
  Let $q \in \Z$.
  Lemma \ref{lem:small-first-step-almost-surely-all-range} implies that for some constants $a_1,a_2, L_0$ 
and~$N_0$, 
\begin{equation*}
\mu_{Nv,x}^{N,n} \pi_{n-1}^{-1}\bigl( [-L, L]^c \bigr) = \int_{ |y| >  L} f^N_{v,n,n-1}(x,y) dy \le
a_1 e^{-a_2 \sqrt{L}},\quad x\in[q,q+1],
\end{equation*}
for all $N \le N_0$ and $L \ge L_0$, if we take $c > |v|$.
Moreover, by Theorem \ref{thm:convergence-of-density-functions}, 
$f_{v,n,n-1}^N(x,y)$ converges to $f_{v,n,n-1}(x,y)$ uniformly on compact sets.
Therefore $u_v^N(n,\cdot)$ converges to $u_v(n,\cdot)$ uniformly on $[q,q+1]$.

Since $u^N_v(n,\cdot)$ and $V^N_v(n,\cdot)$ converge to $u_v(n,\cdot)$ and $V^N(n,\cdot)$ on compact
sets, taking the limit  $N \to -\infty$ on both sides of 
\begin{equation*}
V^N_v(n,x) = e^{-\int_0^x u^N_v(n,x')dx' },
\end{equation*}
we see that $V_v(n,x)$ is the Hopf--Cole transform of $u_v(n,x)$.
\end{proof}

To show that $u_v(n,\cdot)\in \mathbb{H}'(v,v)$, we  need the following lemma which we will prove in the end of this section.
\begin{lemma}
\label{lem:uniform-integrability-of-uN}
Given $n\in\Z$ and a compact set $K\subset\R$, the family of random variables $\big\{ u^N_v(n,x):\ N<n,\ x\in K\big\}$ is uniformly
integrable.
\end{lemma}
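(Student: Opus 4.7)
The plan is to bound $|u^N_v(n,x)|$ pointwise almost surely by an integrable random envelope, so that uniform integrability follows by domination. First, I cover the compact $K$ by finitely many unit intervals $[q,q+1]$ with $q\in\Z$ and treat each separately. By Jensen's inequality applied to the convex function $|\cdot|$,
\[
|u^N_v(n,x)| \le \int_{\R}|x-y|\,\mu^{N,n}_{Nv,x}\pi^{-1}_{n-1}(dy) = \int_0^\infty \mu^{N,n}_{Nv,x}\pi^{-1}_{n-1}\bigl(\{y:|x-y|>L\}\bigr)\,dL,
\]
so it suffices to control the tail of the polymer measure marginal uniformly in $N$ and $x\in[q,q+1]$.

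For this control I will invoke the backward version of Lemma~\ref{lem:small-first-step-almost-surely-all-range} with $c>|v|$ and $\nu=\delta_{Nv}$. Since $Nv\in[-c|N|,c|N|]$ for all $|N|$ large enough, the first term of the estimate vanishes, yielding on a full measure set random constants $a_1,a_2,L_0,N_0$ (depending on $\omega,n,q,v$) such that
\[
\mu^{N,n}_{Nv,x}\pi^{-1}_{n-1}([-L,L]^c) \le a_1 e^{-a_2\sqrt{L}}, \qquad N\le N_0,\ x\in[q,q+1],\ L\ge L_0.
\]
Integrating this tail against $dL$ and taking the maximum over the finite cover of $K$ produces an almost-sure bound $|u^N_v(n,x)|\le C(\omega)$ for every $N\le N_0(\omega)$ and $x\in K$. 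For the residual range $N_0(\omega)<N<n$, smoothness of $Z^{N,n}_{Nv,x}$ in $x$ from Lemma~\ref{lem:smoothness-of-partition-function} combined with the monotonicity of polymer measures in the endpoints (Lemma~\ref{lem:main-monotonicity}) gives a continuous, hence bounded-on-$K$, estimate. Since this residual range is finite once $\omega$ is fixed, the bounds combine into a single almost-sure envelope $M(\omega)\ge\sup_{N<n,x\in K}|u^N_v(n,x)|$.

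Finally, I will verify that $M\in L^1$, which yields uniform integrability by dominated convergence. The random quantities $L_0(\omega)$ and $N_0(\omega)$ in Lemma~\ref{lem:small-first-step-almost-surely-all-range} arise via Borel--Cantelli from the stretched-exponential tail estimate $\Pp\bigl((\Omega_2^{(n)})^c\bigr)\le e^{-n^\beta}$ of Theorem~\ref{thm:all-space-time-point-compactness}, so they possess moments of every order, and consequently so does $C(\omega)$. The hard part will be controlling the contribution of the random residual range $(N_0(\omega),n)$ to $M$; my plan is to introduce a deterministic threshold $N_*$ with $\Pp(N_0>N_*)$ arbitrarily small, split the verification of uniform integrability into the high-probability event $\{N_0\le N_*\}$ (where the $C(\omega)$ bound controls the entire family) and its small-probability complement, and on the latter estimate each of the finitely many $u^N_v(n,\cdot)$ for $N\in(N_*,n)$ directly using assumption~\ref{item:exponential-moment-for-maximum} and the $L^2$ bounds on $\ln Z^{N,n}$ from Lemma~\ref{lem:second-moment-growth-of-partition-function}.
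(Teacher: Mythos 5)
Your overall strategy is to build an integrable random envelope $M(\omega)\ge\sup_{N<n,\,x\in K}|u^N_v(n,x)|$ and invoke domination. That criterion for uniform integrability is valid, but the route is genuinely different from the paper's and, as sketched, has a real gap.

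The paper avoids the envelope altogether. It first reduces $x\in K$ to a single $x$ via the monotonicity of $x\mapsto u^N_v(n,x)-x$ (Lemma~\ref{lem:monotonicity_of_x-ux}), then uses shear invariance to reduce to $v=0$, $(n,x)=(0,0)$. The decisive step is then to work with the \emph{probabilistic} tail estimates for $\mu^{N,0}_{0,0}\pi_{-1}^{-1}$ directly: Lemma~\ref{lem:small-probability-of-first-large-step} and Lemma~\ref{lem:small-probability-for-first-large-step-outside-linear-box} give, for every $N$ and every $L\ge (R+2)s_0$ with \emph{deterministic} $s_0,c_1,\dots,c_4$,
\begin{equation*}
\Pp\Big\{\int_{|y|>L}f^N(y)\,dy> c_1e^{-c_2\sqrt L}\Big\}\le c_3e^{-c_4\sqrt L}.
\end{equation*}
Since the masses $\int_{|y|>L}f^N\,dy$ are always $\le 1$, this yields $\sup_N\E[(u^N)^2]<\infty$ by a direct computation, hence uniform integrability. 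No event-dependent cutoffs, no residual range.

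Your proposal instead invokes the almost-sure Lemma~\ref{lem:small-first-step-almost-surely-all-range}, whose cutoffs $L_0(\omega)$, $N_0(\omega)$ are random. You correctly identify the two obligations this creates, but neither is discharged. (i) You assert that $L_0$ and $N_0$ have moments of all orders because they arise from Borel--Cantelli applied to a stretched-exponential probability bound; this is plausible but is not proved in the paper and would need to be carried out, including tracking that the $\alpha_1,\alpha_2$ in~\eqref{eq:small-first-step-almost-surely-all-range} can be made deterministic and how $L_0$ relates to the random $n_1$ of Lemma~\ref{lem:one-step-control}. (ii) The residual range $(N_0(\omega),n)$ is the real obstruction: on the small-probability event where $N_0$ is very negative, you must bound $\E\bigl[\max_{N_0<N<n}\sup_{x\in K}|u^N_v(n,x)|\bigr]$, and your suggestion to use~\ref{item:exponential-moment-for-maximum} together with the $L^2$ bound on $\ln Z^{N,n}$ (Lemma~\ref{lem:second-moment-growth-of-partition-function}) does not connect: $u^N_v(n,x)$ is the first moment of the polymer marginal at time $n-1$, not a function of $\ln Z^{N,n}$, and a bound on $\E\ln^2 Z^{N,n}$ does not control it. Any workable completion of this step essentially requires a uniform-in-$N$ moment estimate on $u^N$, which is precisely what the paper establishes by the deterministic-constant tail estimates --- making this branch of your argument circular. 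In short: switch from the a.s.\ envelope (Lemma~\ref{lem:small-first-step-almost-surely-all-range}) to the probabilistic tail estimates (Lemmas~\ref{lem:small-probability-of-first-large-step} and~\ref{lem:small-probability-for-first-large-step-outside-linear-box}), and the argument closes cleanly without any residual range.
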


\begin{proof}[Proof of the existence part of Theorem \ref{thm:global_solutions}]
By  Lemmas   \ref{lem:burgers-in-HC-satisfied} and  \ref{lem:convergence-of-velocity}, $u_v(n,x)$ is a
global solution.
It remains to show that $u_v(n,\cdot) \in \mathbb{H}'(v,v)$.
All the other properties are easy to check.

Lemma \ref{lem:uniform-integrability-of-uN} implies that 
\begin{equation}\label{eq:L-one-limit}
\lim_{N\to -\infty} \E u_v^N(n,x) = \E u_v(n,x).
\end{equation}
By Lemma \ref{lem:shear-for-Z_v}, for any $(m_1,x_1)$ and $(m_2,x_2)$
such that $m_1<m_2$, we have
\begin{equation*}
  Z^{m_1,m_2}(x_1,x_2) \stackrel{d}{=} e^{-\frac{(x_1-x_2)^2}{2(m_2-m_1)}} Z^{0,m_2-m_1}(0,0).
\end{equation*}
Taking logarithm and then expectation, we obtain
\begin{equation*}
\E \ln Z^{m_1,m_2}(x_1,x_2) = -\frac{(x_1-x_2)^2}{2(m_2-m_1)} + \E \ln Z^{0,m_2-m_1}(0,0),
\end{equation*}
so
\begin{equation}
  \label{eq:expectation-of-V}
\E \ln V^N_v(n,x)  = \E \ln Z^{N,n}(Nv,x) - \E \ln Z^{N,n}(Nv,0) = 
-\frac{x(x-2Nv)}{2(n-N)}.
\end{equation}
For any $N$, by Hopf--Cole transform we have 
\begin{equation*}
\int_0^{x} u_v^N(n,x')dx' = - \ln V^N_v(n,x).
\end{equation*} 
Taking expectation of both sides, using the Fubini theorem and (\ref{eq:expectation-of-V}), we obtain
\begin{equation*}
\int_0^x \E u_v^N(n,x') dx' = \frac{x(x-2Nv)}{2(n-N)}.
\end{equation*}
Taking the limit $N \to -\infty$ and using \eqref{eq:L-one-limit}, we obtain
\begin{equation*}
\int_0^x \E u_v(n,x')dx' = vx.
\end{equation*}
By stationary of $u_v(n,\cdot)$, the left hand side is $x \cdot \E u_v(n,0)$.
Therefore,  $\E u_v(n,0)=v$ and  hence by ergodic theorem $u_v(n,\cdot) \in \mathbb{H}'(v,v)$.
\end{proof}

Now we turn to the proofs of Lemma \ref{lem:small-first-step-almost-surely-all-range} and Lemma
\ref{lem:uniform-integrability-of-uN}.
The following lemma  generalizes Lemma \ref{lem:large-deviation-for-Z-bar}:
\begin{lemma}
\label{lem:small-probability-for-first-large-step-outside-linear-box}
There are constants $d_1>0$ and $R_0$ such that for all $r \ge R_0$, $(m,p), (n,q) \in \Z
\times \Z$ ($n-m\ge 2$), with probability at least $1- e^{-d_1 r|m-n|}$,
\begin{equation*} \mu_{y,x}^{m,n} \Big\{ \gamma: \max_{m\le i \le n} | \gamma_{i}-[(m,y), (n,x)]_i| \ge
r|n-m| \Big\} \le e^{-r|m-n|}
\end{equation*} for all $x \in [q,q+1], y \in [p,p+1]$.
\end{lemma}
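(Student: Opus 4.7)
The plan is to scale up Lemma \ref{lem:large-deviation-for-Z-bar} with a free parameter $r$ and to uniformize it over $(y,x)\in[p,p+1]\times[q,q+1]$, by combining monotonicity to reduce to integer corners, translation and shear invariance to reduce to $\mu_{0,0}^{0,N}$, and the same Markov-plus-concentration scheme that underlies Lemma \ref{lem:large-deviation-for-Z-bar}. Write $N=n-m$ and $\tilde L_i=[(m,p),(n,q)]_i$. Since the affine weights $1-\tfrac{i-m}{N}$ and $\tfrac{i-m}{N}$ are in $[0,1]$ and sum to $1$, one has $\tilde L_i\le[(m,y),(n,x)]_i\le\tilde L_i+1$ throughout the unit box. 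Hence
\[
\{\max_i|\gamma_i-[(m,y),(n,x)]_i|\ge rN\}\subset A^+\cup A^-,
\]
where $A^+=\{\max_i(\gamma_i-\tilde L_i)\ge rN\}$ is coordinatewise nondecreasing in the path and $A^-=\{\min_i(\gamma_i-\tilde L_i)\le -rN+1\}$ is coordinatewise nonincreasing. By Lemma \ref{lem:main-monotonicity}, $\mu_{y,x}^{m,n}(A^+)\le\mu_{p+1,q+1}^{m,n}(A^+)$ and $\mu_{y,x}^{m,n}(A^-)\le\mu_{p,q}^{m,n}(A^-)$, uniformly in $(y,x)$, reducing the problem to a bound for two fixed integer-endpoint measures.

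For the integer-endpoint bound, space-time translation invariance (part~(1) of Lemma \ref{lem:shear-for-Z_v}) combined with the stationarity of $\Pp$ reduces, in joint distribution with $\omega$, to $\mu_{0,q-p,\omega}^{0,N}$ with line $i\mapsto iv$, where $v=(q-p)/N$. Applying part~(4) of Lemma \ref{lem:shear-for-Z_v}, and using that $\Omega$ of \eqref{eq:def-of-Omega} is $L^v$-invariant and $\Pp$ is $L^v$-preserving, the shear identifies this further (in joint distribution with $\omega$) with $\mu_{0,0,\omega}^{0,N}$ and flattens the line to the horizontal axis. It is therefore enough to find $d'_1,R'_0>0$ such that, for all $s\ge R'_0$ and $N\ge 2$,
\begin{equation*}
\Pp\bigg\{\mu_{0,0,\omega}^{0,N}\Big\{\max_{0\le i\le N}|\gamma_i|\ge sN\Big\}>e^{-sN}\bigg\}\le e^{-d'_1sN}.
\end{equation*}

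With $\Gamma(s)=\{(x_1,\dots,x_{N-1}):|x_i|\le sN\}$, the quantity of interest is $Z^N(\Gamma(s)^c)/Z^N$. Fubini, Lemma \ref{lem:expectation-of-Z}, the convolution property of Gaussian kernels, the bound $\tfrac{1}{2k}+\tfrac{1}{2(N-k)}\ge\tfrac{2}{N}$ for $1\le k\le N-1$, and Lemma \ref{lem:tail-estimate-for-normal} yield $\E Z^N(\Gamma(s)^c)\le C\lambda^N N e^{-2s^2N}$. Combining Markov's inequality on $Z^N(\Gamma(s)^c)$ at level $e^{-(1+a)sN}$ with the lower-tail bound of Lemma \ref{lem:partition-function-not-too-small} applied at $z=e^{-asN}$, which gives $\Pp\{Z^N\le e^{-asN}\}\le(\beta/\rho_0^{r_0})^Ne^{-ar_0 sN}$, the event above has probability at most
\begin{equation*}
CN\exp\bigl(N(\log\lambda+(1+a)s-2s^2)\bigr)+\exp\bigl(N(\log(\beta/\rho_0^{r_0})-ar_0 s)\bigr).
\end{equation*}
Fix $a>d'_1/r_0$ and then $s$ larger than a threshold depending only on the model constants $\lambda,\beta,\rho_0,r_0$: each term is bounded by $e^{-d'_1 sN}$ for $N\ge 2$, which supplies $d'_1$ and $R'_0$, and hence after undoing the reductions the desired constants $d_1$ and $R_0$ in the lemma statement.

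The main technical obstacle, as anticipated, is the uniformization over the unit square of endpoints: the event $\{\max_i|\gamma_i-L_i|\ge rN\}$ is not coordinatewise monotone in the path because $L_i$ itself depends on the endpoints, so Lemma \ref{lem:main-monotonicity} can be applied only after splitting the absolute deviation into its two one-sided parts and absorbing the $O(1)$ discrepancy between $L_i$ and $\tilde L_i$ into the threshold, forcing the final constants $d_1$ and $R_0$ to be chosen with a small margin relative to $d'_1$ and $R'_0$. A minor compatibility point is that $\Omega$ is invariant under both the space-time shifts $\theta^{n,x}$ and the shear $L^v$ (as recorded around \eqref{eq:def-of-shear}), which is what legitimizes the distributional reduction to $\mu_{0,0,\omega}^{0,N}$.
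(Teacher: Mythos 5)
Your proposal is correct and follows essentially the same route as the paper's proof: reduce to $m=p=q=0$ by shear/translation invariance, split the two-sided deviation event into a coordinatewise nondecreasing part and a nonincreasing part so that the monotone-coupling Lemma~\ref{lem:main-monotonicity} can be applied at the integer corners of the unit square, and then combine the Markov bound on $\E Z^n(\Gamma(s)^c)$ (as in the proof of Lemma~\ref{lem:large-deviation-for-Z-bar}) with the lower-tail bound of Lemma~\ref{lem:partition-function-not-too-small}. The paper compresses the uniformization step into ``it suffices to show'' \eqref{eq:small-prob-outside-linear-box-2}; your explicit split into $A^+$ and $A^-$ and the remark that $\{\max_i|\gamma_i-L_i|\ge rN\}$ is not itself monotone in $\gamma$ simply fill in that elided reasoning, with the same absorption of $O(1)$ discrepancies into the constants.
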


\begin{proof}
  Due to shear invariance,  without loss of generality we can assume $m=p=q=0$.
  Lemma \ref{lem:main-monotonicity} implies $\mu_{0,0}^{0,n} \preceq \mu_{y,x}^{0,n} \preceq
  \mu_{1,1}^{0,n}$ for any $x, y \in [0,1]$.
  Therefore, it suffices to show
  \begin{equation}\label{eq:small-prob-outside-linear-box-2}
  \Pp \biggl\{\mu_{0,0}^{0,n} \Big\{ \gamma: \max_{0\le i \le  n } |
  \gamma_{i}| \ge rn/2
  \Big\} > \frac{1}{2}e^{-rn}
  \biggr\} \le e^{-k_1 r n}
\end{equation}
for some constant $k_1$ and sufficiently large $r$.

  Let $A(r) = \{ \gamma: \max\limits_{0 \le i \le n} |\gamma_i| \ge \frac{1}{2}rn \} $.
  Repeating the computation in \eqref{eq:nominator-not-too-big}, we see that
for sufficiently large $r$ and some constant $k_2$,
  \begin{equation*}
  \Pp \left\{ Z^{0,n}_{0,0} \bigl( A(r)  \bigr) > \frac{1}{2}\rho_0^n e^{-2rn} \right\} 
    < \frac{2n\lambda^n e^{- (r/2)^2n}}{\pi r \sqrt{n-1} \rho_0^ne^{-2rn}} \le
    e^{-k_2 r n}.
\end{equation*}  
By Lemma \ref{lem:partition-function-not-too-small}, for sufficiently large $r$ and a constant $k_3$, we have
\begin{equation*}
      \Pp \left\{ Z^{0,n}_{0,0} \le \rho_0^ne^{-rn} \right\} \le \beta^n e^{-rnr_0} \le
      e^{-k_3 rn}.
\end{equation*}
Now (\ref{eq:small-prob-outside-linear-box-2}) follows from these two estimates.
\end{proof}

The next statement is a direct consequence of Lemma~\ref{lem:small-probability-for-first-large-step-outside-linear-box}.
\begin{lemma}
  \label{lem:small-probability-for-first-large-step-outside-linear-box-all-r}
There are constants $d_1,d_2,R_0>0$  such that for all  $(m,p), (n,q) \in \Z
\times \Z$ ($n-m\ge 2$), with probability at least $1- d_2e^{-d_1 R_0|m-n|}$,
\begin{equation*} \mu_{y,x}^{m,n} \Big\{ \gamma: \max_{m\le i \le n} | \gamma_{i}-[(m,y), (n,x)]_i| \ge
r|n-m| \Big\} \le e^{-r|m-n|}
\end{equation*}
 for all $x \in [q,q+1], y \in [p,p+1]$ and $r \ge R_0$, $r \in \N$.
\end{lemma}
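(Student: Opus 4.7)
The plan is to deduce this directly from Lemma \ref{lem:small-probability-for-first-large-step-outside-linear-box} via a union bound over integer values of~$r$. I would first fix the quadruple $(m,p,n,q)$ with $n-m\ge 2$, and for each integer $r\ge R_0$ invoke Lemma \ref{lem:small-probability-for-first-large-step-outside-linear-box} to obtain an event $E_r$ of probability at least $1-e^{-d_1 r|m-n|}$ on which
\[
\mu_{y,x}^{m,n}\Big\{\gamma:\max_{m\le i\le n}|\gamma_i-[(m,y),(n,x)]_i|\ge r|n-m|\Big\}\le e^{-r|m-n|}
\]
holds for all $x\in[q,q+1]$ and $y\in[p,p+1]$. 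Then I would consider the intersection $E=\bigcap_{r\ge R_0,\ r\in\N}E_r$.

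By the union bound and the geometric series,
\[
\Pp(E^c)\le \sum_{r=R_0}^{\infty}e^{-d_1 r|m-n|}=\frac{e^{-d_1 R_0|m-n|}}{1-e^{-d_1|m-n|}}.
\]
Since $|m-n|\ge 2$, the denominator is bounded below by $1-e^{-2d_1}>0$, so there exists a constant $d_2$ depending only on $d_1$ such that $\Pp(E^c)\le d_2\,e^{-d_1 R_0|m-n|}$. On the event $E$, the claimed polymer-measure inequality holds simultaneously for every integer $r\ge R_0$ and every $x\in[q,q+1]$, $y\in[p,p+1]$, which is precisely the conclusion of the lemma.

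This is a routine strengthening of a single-parameter deviation estimate to a uniform-in-parameter statement, made possible by the exponential decay rate in~$r$ in the previous lemma. I do not expect any real obstacle; the only bookkeeping point is that the constant $d_1$ appearing in the new bound is the same one produced by Lemma \ref{lem:small-probability-for-first-large-step-outside-linear-box}, which is consistent with the geometric-series computation above, and that the restriction $n-m\ge 2$ is exactly what keeps the denominator uniformly bounded away from zero.
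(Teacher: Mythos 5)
Your proof is correct and spells out exactly what the paper means by calling this a ``direct consequence'' of Lemma~\ref{lem:small-probability-for-first-large-step-outside-linear-box}: a union bound over integer $r\ge R_0$, with the geometric series summing to $e^{-d_1R_0|m-n|}/(1-e^{-d_1|m-n|})$, and the restriction $n-m\ge 2$ bounding the denominator away from zero so that $d_2=(1-e^{-2d_1})^{-1}$ works. The only cosmetic point is that $R_0$ need not be an integer, so the sum should formally start at $\lceil R_0\rceil$; this only improves the bound and does not affect the conclusion.
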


\begin{proof}[Proof of Lemma~\ref{lem:small-first-step-almost-surely-all-range}]
  It suffices to prove the statement for fixed $c$ and~$(n,q)$.  
  Let  $K = 2c+R+1$.
  Lemma \ref{lem:one-step-control} implies that with probability one,  if $\frac{n-N}{2} \ge n_1=n_1(n,q,c)$, then for all $s$
satisfying $n_1  \le s \le \frac{n-N}{2}$ and all $x\in[q,q+1]$, 
\begin{equation*}
    \mu_{\nu,x}^{N,n} \pi_{n-1}^{-1} \bigl( [-Ks, Ks]^c \bigr)
    \le 4\nu \bigl( [-c |N|, c|N| ]^c \bigr) + 8e^{-\sqrt{s}}.
  \end{equation*}
This implies that for some $k_1>0$ and all $L \in [Kn_1, K(n-N)/2]$, we have
\begin{equation}
  \label{eq:less-than-linear}
\mu_{\nu, x }^{N,n } \pi_{n-1}^{-1} \bigl( [-L,L]^c \bigr) \le 4 \nu \bigl(  [-c|N|, c|N|]^c \bigr)
+ 8 e^{-k_1\sqrt{L}}.
\end{equation}
Noticing that for all $L \ge K(n-N)/2$, we have the trivial inequality 
\begin{equation*}
\mu_{\nu,x}^{N,n}\pi_{n-1}^{-1}\big( [-L,L]^c \big)  \le
\mu_{\nu,x}^{N,n}\pi_{n-1}^{-1}\big( [-K(n-N)/2,K(n-N)/2]^c \big),
\end{equation*}
we can extend \eqref{eq:less-than-linear} to all $L \in  [Kn_1, 2R_0(n-N)]$ by adjusting the constant $k_1$ appropriately. 
Here $R_0$ is taken from Lemmas~\ref{lem:small-probability-for-first-large-step-outside-linear-box}    and~\ref{lem:small-probability-for-first-large-step-outside-linear-box-all-r}.

The Borel--Cantelli lemma implies that with probability one, the statement of 
Lemma~\ref{lem:small-probability-for-first-large-step-outside-linear-box-all-r} holds true for all
sufficiently negative $m=N$ and $p \le c|N| + 1$.
In particular, for sufficiently negative $N$, we have
\begin{equation*}
  \mu_{y,x}^{N,n} \pi_{n-1}^{-1} \Bigl(  \Big[x+\frac{y-x}{n-N}-r|n-N|,\, x+\frac{y-x}{n-N}+r|n-N|\Big]^c \Bigr) \le e^{-r|n-N|}
\end{equation*}
for all $|y| \le c|N|$ and $r \ge R_0$. Applying this estimate to $y=\pm c|N|$ and using monotonicity, we
obtain for  $r \ge R_0$ and sufficiently negative~$N$:
\begin{align*}
  \notag
  & \mu_{\nu,x}^{N,n} \pi_{n-1}^{-1} \bigl(  [-2r|n-N|, 2r|n-N|]^c \bigr) 
  \\
  \notag 
\le&\mu_{\nu,x}^{N,n} \pi_{n-1}^{-1} \Bigl(  \big[-(|q|+c+2+r|n-N|), |q|+c+2 + r|n-N|\big]^c \Bigr) 
\\ 
\le&
 \mu_{\nu,x}^{N,n} \pi_{n-1}^{-1} \Bigl(  \Big[x+\frac{-c|N|-x}{n-N}-r|n-N|,\, x+\frac{c|N|-x}{n-N}+r|n-N|\Big]^c \Bigr)
\\ \notag
 \le &\nu \bigl( [-c|N|,
  c|N|]^c \bigr)+ 2e^{-r|n-N|}.
\end{align*}
Therefore, for some constant $k_2 > 0$ and all $L \in [2R_0(n-N), +\infty)$, we have 
\begin{equation}
  \label{eq:more-than-linear}
\mu_{\nu,x}^{N,n} \pi_{n-1}^{-1} \bigl( [-L, L]^c \bigr) \le \nu \bigl(  [-c|N|, c|N|]^c \bigr) +
2e^{- k_2 L}.
\end{equation}
Combining the estimates (\ref{eq:less-than-linear}) and (\ref{eq:more-than-linear}), we see that
\eqref{eq:small-first-step-almost-surely-all-range} holds for all $L\ge Kn_1$, which completes
the proof of the lemma.
\end{proof}

%
To prove the uniform integrability of $u^N_v(n,x)$ in Lemma~\ref{lem:uniform-integrability-of-uN},  we need an additional lemma:
\begin{lemma}
\label{lem:small-probability-of-first-large-step}
There is a constant $s_0$ such that for $|N|/2 \ge s \ge s_0$, 
\begin{equation*}
\Pp \left\{
\mu_{0,0}^{N,0} \pi^{-1}_{-1} ([-(R+2)s, (R+2)s]^c) \le 4e^{-\sqrt{s}}
\right\}
> 1 - e^{-\sqrt{s}}.
\end{equation*}
\end{lemma}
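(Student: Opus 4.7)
The plan is to derive the claim as a direct consequence of the time-reversed counterpart of Lemma~\ref{lem:sets-for-remote-control-to-holds}, tuning the parameters so that the ``event level'' is $s$ itself, whence the probability tail of that event translates directly into the $e^{-\sqrt s}$ estimate we need.

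First I would fix $\delta\in(0,1/16)$, which ensures that the admissible range $(0,1-4\delta)$ for the exponent $\beta$ in Lemma~\ref{lem:sets-for-remote-control-to-holds} contains some $\beta>1/2$; and I would take $(m,q)=(0,0)$ and $v=2$. The backward analog of that lemma then produces an event $\widetilde\Omega_2^{(s)}:=\Omega_2^{(s),\mathrm{back}}(\delta,v,0,0)$ with
\[
\Pp\bigl(\widetilde\Omega_2^{(s)}\bigr)\ \ge\ 1-e^{-s^\beta}\ \ge\ 1-e^{-\sqrt s}
\]
for all $s$ sufficiently large, since $\beta>1/2$.

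On $\widetilde\Omega_2^{(s)}$ I would then apply the backward version of~(\ref{eq:compactness-control-by-terminal-measure}), specialized to $v'=0$, $v_0=0$, $v_1=1$, and starting measure $\nu=\delta_0$ at time $N$. The hypotheses check out directly: $|v'|+v_1=1\le v=2$, the condition $Qs^{-\delta}<v_1-v_0=1$ holds once $s\ge Q^{1/\delta}$, and $|N|/2\ge s$ is our standing assumption. Because $v_0=0$ and $\nu=\delta_0$, the term $2\nu([-v_0|N|,v_0|N|]^c)$ vanishes, and the inequality — for the backward polymer, with the maximum running over the time indices $-1,-2,\ldots,-s$ — takes the form
\[
\mu_{0,0}^{N,0}\Bigl\{\gamma:\ \max_{1\le i\le s}|\gamma_{-i}|\ge (R+2)s\Bigr\}\ \le\ 4e^{-\sqrt s}.
\]
Dropping all indices in the maximum except $i=1$ yields $\mu_{0,0}^{N,0}\pi_{-1}^{-1}\bigl([-(R+2)s,(R+2)s]^c\bigr)\le 4e^{-\sqrt s}$, which is exactly the content of the lemma on the event $\widetilde\Omega_2^{(s)}$.

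Finally I would pick $s_0$ large enough to absorb all the thresholds that appeared along the way: the lower bound $Q^{1/\delta}$ ensuring the curvature-type condition on $v_1-v_0$, the level at which Lemma~\ref{lem:sets-for-remote-control-to-holds} becomes effective, and the size at which $e^{-s^\beta}\le e^{-\sqrt s}$. No substantive obstacle arises in carrying this out; the one step that deserves care is making sure that the backward analog of Lemma~\ref{lem:sets-for-remote-control-to-holds} actually controls the maximum deviation near the \emph{endpoint} $(0,0)$ rather than near the base of the cone at time $N$ — this is essential because the marginal $\pi_{-1}^{-1}$ lies at distance $1$ from the endpoint but distance $|N|-1\gg |N|/2$ from the starting point, so the forward version of the lemma does not apply directly. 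Once the backward symmetry (invoked in the paper as a straightforward modification of the forward proofs) is in place, the conclusion follows immediately from the argument above.
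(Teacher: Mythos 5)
Your proof is correct and follows essentially the same route as the paper's: both invoke the backward version of Lemma~\ref{lem:sets-for-remote-control-to-holds} with $(m,q)=(0,0)$, $v'=v_0=0$, $v_1=1$, $\nu=\delta_0$, and then pass from the bound on $\max_{1\le i\le s}|\gamma_{-i}|$ in~\eqref{eq:compactness-control-by-terminal-measure} to the single marginal $\pi^{-1}_{-1}$ by inclusion. The only differences are cosmetic parameter choices (the paper takes $\delta=1/9$, $\beta=1/2$, $v=1$; you take $\delta<1/16$, $\beta>1/2$, $v=2$), both of which satisfy the hypotheses.
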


\begin{proof}
 In Lemma \ref{lem:sets-for-remote-control-to-holds}, we take $\delta = 1/9$, $\beta = 1/2$, $v=1$, $(m,q) = (0,0)$ and consider
 the set $\Omega_2^{(s)}(1/9, 1, 0,0)$.
 The lemma follows from the backward version of (\ref{eq:compactness-control-by-terminal-measure}) with $v_0=v'=0$, $v_1=1$
 and $\nu = \delta_0$.
\end{proof}

\begin{proof}[Proof of Lemma \ref{lem:uniform-integrability-of-uN}]
  By Lemma~\ref{lem:monotonicity_of_x-ux},
\begin{equation*}
u^N_v(n,x) - x = -\int_{\R} y  \mu_{Nv, x}^{N,n} (dy)
\end{equation*}
is non-increasing in $x$.
Therefore, it suffices to show the uniform integrability of $ \bigl(  u^N_v(n,x) \bigr)_{N < n}$
for fixed $(n,x) \in \ZR$.
We also notice that $u^N_v(0,0) \stackrel{d}{=} u^N_0(0,0) + v$.
So, without loss of generality, let us assume $(n,x) = (0,0)$ and $v=0$.
Let us write $f^N_{0,0,-1}(0,y) = f^N(y)$ and $u^N_0(0,0) = u^N$.

Lemma~\ref{lem:small-probability-of-first-large-step} implies that if $L=(R+2)s \in [(R+2)s_0,
(R+2)|N|/2]$, then
\begin{equation}
  \label{eq:small-probability-first-step-less-than-linear}
 \Pp \Big\{  \int_{|y| > L } f^N(y) dy \le 4e^{-k_1\sqrt{L}} \Big\} > 1 - e^{-k_2\sqrt{L}}
\end{equation}
for some constants $k_1$ and $k_2$.
Using the inequality
$$\int_{|y| > L} f^N(y) dy \le \int_{|y| > (R+2)|N|/2} f^N(y) dy$$
for $L \ge (R+2)|N|/2$ and adjusting the constants $k_1$, $k_2$ appropriately, we can extend~(\ref{eq:small-probability-first-step-less-than-linear}) to all $L \in [(R+2)s_0, R_0|N|]$.
Here, $R_0$ is taken from Lemma~\ref{lem:small-probability-for-first-large-step-outside-linear-box}.
Next, Lemma~\ref{lem:small-probability-for-first-large-step-outside-linear-box} implies that  if
$L =r|N| \ge R_0|N|$, then
\begin{equation}\label{eq:small-probability-of-first-large-step-larger-than-linear}
\Pp \Big\{  \int_{|y| > L} f^N(y) dy  \le e^{- L} \Big\}  > 1 - e^{-d_1L}.
\end{equation}
Combining the estimates~(\ref{eq:small-probability-first-step-less-than-linear}) and (\ref{eq:small-probability-of-first-large-step-larger-than-linear}), 
we can find constants $c_1,c_2,c_3,c_4$,  independent
of $N$,  such that for $L \ge (R+2)s_0$,
\begin{equation*}
\Pp \Big\{ \int_{|y| > L} f^N(y) dy \le c_1 e^{-c_2\sqrt{L}}  \Big\} > 1 - c_3 e^{-c_4 \sqrt{L}}.
\end{equation*}
This implies that $u^N = -\int_{\R}y  f^N(y)  dy$ are uniformly integrable.
\end{proof}

\subsection{Uniqueness of global solutions}\label{sec:uniqueness-of-global-solutions}
The main goal of this section is to finish the proof of Theorem~\ref{thm:global_solutions} by establishing the uniqueness
of global solutions.

Let $w(x) \in \mathbb{H}'$ and $V(x) = e^{-\int_0^x w(x')dx'}$ be its Hopf--Cole transform.
We can introduce the following point-to-line polymer measures:
\begin{equation*}
  \bar{\mu}_{V,x}^{N,n} (A_N \times ... \times A_{n-1})
= \frac{ \int_{A_N} dx_N \cdots \int_{A_{n-1}} dx_{n-1} \delta_{x}(dx_n) \ V(x_N)
  \prod\limits_{i=N}^{n-1} Z^{i,i+1}_{x_i, x_{i+1}}
}{\int_{\R} V(x_N) Z^{N,n}_{x_N,x}dx_N
}.
\end{equation*}
The fact that $w \in \mathbb{H}'$ guarantees that all integrals are finite.

\begin{lemma}
  \label{lem:condition-second-kind-polymer-measure-converge-to-LLN}
  Let $\big(w_N(\cdot) \big)$ be a stationary sequence of random functions in $\mathbb{H}'$ and
  $\bigl( V_N (\cdot) \bigr)$ be the corresponding Hopf--Cole transforms.
  Let $v \in \R$.
Suppose that one of the conditions \eqref{eq:no_flux_from_infinity}, \eqref{eq:flux_from_the_left_wins}, \eqref{eq:flux_from_the_right_wins} 
is satisfied by $W(\cdot)=W_N(\cdot) = \int_0^{\cdot} w_N(y')dy'$ for all $N$ with probability~$1$.	
Then for almost every $\omega$
and all $n \in \Z$, the probability measures $\nu_{N,n,x} (N < n)$ defined by
\begin{equation*}
\nu_{N,n,x}(dy) = \bar{\mu}_{V_N, x}^{N,n}\pi_N^{-1}(dy) = \frac{  Z^{N,n}(y,x) V_N(y)}{\int_{\R} 
  Z^{N,n}(y',x) V_N(y')dy'} dy
\end{equation*}
satisfy 
\begin{equation*}
\liminf_{N\to-\infty} e^{h|N|}\sup_{x \in [-L,L]} \nu_{N,n,x}( [(v+\varepsilon)N, (v-\varepsilon)N]^c ) = 0,
\end{equation*}
for all $L\in \N$ and  $\varepsilon > 0$, and some constant $h(\varepsilon) > 0$ depending on $\varepsilon$.
\end{lemma}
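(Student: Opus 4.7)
Abbreviate $A_N:=[(v+\eps)N,(v-\eps)N]^c$ (for $N<0$ this is an open set surrounding $\pm\infty$) and write
\[
\nu_{N,n,x}(A_N) \;=\; \frac{\int_{A_N} Z^{N,n}(y,x)V_N(y)\,dy}{\int_\R Z^{N,n}(y,x)V_N(y)\,dy}.
\]
The plan is a Laplace-type analysis. Writing $\zeta=y/|N|$, Theorem~\ref{thm:shape-function} suggests $Z^{N,n}(y,x)\approx\exp\{|N|(\alpha_0-\zeta^2/2)\}$, while the hypothesis on $W_N$ (combined with the ergodic theorem applied to the stationary process $w_N$) gives $V_N(y)=e^{-W_N(y)}\approx e^{-v_{\pm}y}$, where $v_\pm$ is the asymptotic slope of $W_N$ at $\pm\infty$. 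Substituting, the integrand is heuristically $\exp\{|N|(\alpha_0+\phi(\zeta))+o(|N|)\}$ where $\phi(\zeta)=-\zeta^2/2-v_-\zeta$ for $\zeta<0$ and $\phi(\zeta)=-\zeta^2/2-v_+\zeta$ for $\zeta>0$. Under each of the three hypotheses on $(v,v_\pm)$, a direct computation shows that $\phi$ attains its unique global maximum at $\zeta=-v$ (on the side where the corresponding $v_\pm$ equals $v$) with value $v^2/2$, and that on $\{|\zeta+v|\ge\eps\}$ one has $\phi(\zeta)\le v^2/2-h(\eps)$ for some $h(\eps)>0$. Once this heuristic is made rigorous one obtains $\nu_{N,n,x}(A_N)\le e^{-h(\eps)|N|+o(|N|)}$.

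For the denominator, Theorems~\ref{thm:shape-function} and~\ref{thm:concentration-of-free-energy} applied at the single slope $v$ give, on an event of probability $\ge 1-e^{-c|N|^{\beta}}$ and uniformly in $x\in[-L,L]$,
\[
\ln Z^{N,n}(y,x) \;\ge\; (n-N)\alpha_0-\tfrac{v^2}{2}|N|-o(|N|),\qquad y\in[vN,vN+1],
\]
while the ergodic theorem applied to $w_N$ together with the hypothesis yields $W_N(vN)=-v^2|N|+o(|N|)$; integrating over the unit interval gives $\text{Denom}\ge\exp\{|N|(\alpha_0+v^2/2)-o(|N|)\}$. For the numerator, I split $A_N$ into a moderate part $B_K:=A_N\cap[-K|N|,K|N|]$ (with $K$ large enough to control the opposite-side slope) and a far tail. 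The far tail is handled by Lemma~\ref{lem:expectation-of-Z}, Markov's inequality, and Borel--Cantelli, using that $W_N\in\HH$ grows at most linearly so its exponential loses to the Gaussian tail of $\E Z^{N,n}(y,x)$. On $B_K$, I discretize $[-K|N|,K|N|]$ into $O(|N|)$ unit sub-intervals, apply the upper form of Theorem~\ref{thm:concentration-of-free-energy} at each lattice point, and interpolate between adjacent points using the convolutional smoothness of $Z^{N,n}(\cdot,x)$ from Lemma~\ref{lem:smoothness-of-partition-function} (the log-Lipschitz cost over a unit shift is $O(|N|)$, absorbed into the $o(|N|)$ terms once the $|N|\phi(\zeta)$ gap is exploited). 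Combined with the ergodic estimates $W_N(y)\ge(v-\delta)y$ for $y<-L_0(\omega)$ and $W_N(y)\ge(-v+\delta)y$ for $y>L_0(\omega)$, this bounds $\int_{B_K}Z^{N,n}(y,x)V_N(y)\,dy$ by $\exp\{|N|(\alpha_0+v^2/2-h(\eps))+o(|N|)\}$.

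Uniformity in $x\in[-L,L]$ follows from the stochastic monotonicity of polymer measures in the endpoint (Lemma~\ref{lem:main-monotonicity}): the dependence of $Z^{N,n}(y,x)$ on $x$ enters the analysis only through the $(y-x)^2$ term and, for $|y|\sim|N|$, the $x$-dependent correction to the leading exponent is $O(Ly/|N|)=O(L)$, negligible with respect to the gain $|N|h(\eps)$, so the supremum over $x\in[-L,L]$ is reduced to bounded-cost corrections at the endpoints. Taking the ratio of the numerator and denominator estimates and multiplying by $e^{h|N|}$ with $h<h(\eps)$ gives $\liminf_{N\to-\infty} e^{h|N|}\sup_{x\in[-L,L]}\nu_{N,n,x}(A_N)=0$ on a full-measure event, which is the claim.

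The main obstacle is the passage from pointwise-in-$y$ concentration of $\ln Z^{N,n}(y,x)$ to uniformity in $y$ over a range of length $O(|N|)$. The saving grace is that Theorem~\ref{thm:concentration-of-free-energy} provides a concentration probability of order $\exp(-c\,u^2/(|N|\ln^2|N|))$ for deviations $u$, which for $u$ of order $|N|^{\beta}$ with $\beta\in(1/2,1)$ is summable across the $O(|N|)$ lattice points needed in the union bound; and the interpolation cost between adjacent lattice points remains subleading relative to the gain $|N|h(\eps)$ once the quadratic Gaussian kernel is accounted for.
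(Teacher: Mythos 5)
There is a genuine gap, and it is exactly the point that makes the conclusion a $\liminf$ rather than a $\lim$.

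Your Laplace-type analysis treats the potentials $W_N$ as if the quantitative content of the asymptotic-slope hypotheses were uniform in $N$. In the steps ``$W_N(vN)=-v^2|N|+o(|N|)$'' and ``$W_N(y)\ge(v-\delta)y$ for $y<-L_0(\omega)$,'' the crossover scale $L_0$ and the implicit constants of the $o(\cdot)$ error are allowed to depend on $N$ — and nothing in the hypothesis prevents them from blowing up as $N\to-\infty$. You invoke ``the ergodic theorem applied to the stationary process $w_N$'' to justify these estimates, but stationarity of $(w_N)_N$ is stationarity in the \emph{index} $N$, not in the spatial variable $y$; there is no spatial ergodicity available, and the slope conditions are purely asymptotic in $y$ for each fixed~$N$. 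Since you also need pointwise control of $W_N$ on compacts (to lower-bound the denominator) and a uniform exterior slope bound (for the far tail of the numerator), none of which is quantified uniformly in $N$, the chain of estimates does not close for every large $N$.

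This is precisely the obstacle that the paper circumvents with Lemma~\ref{lem:property-on-stationary-rvs}: for the stationary sequences of random variables $X_m,Y_m,Z_m$ (the infima of constants making \eqref{eq:boundedness-of-W}--\eqref{eq:W-asymtopic-slope-at-infinity} hold), one extracts a single random threshold $R(\omega)$ and a random sequence $m_k\uparrow\infty$ along which all three quantities are $\le R$. Everything else in the Laplace estimate (the truncation and concentration of $Z^{n-m,n}$ via Lemma~\ref{lem:truncation-and-concentration-almost-surely-true}, the exponential gap $h(\eps)>0$ under~\eqref{eq:choice-of-delta}, and the reduction of $\sup_{x\in[-L,L]}$ to $x=\pm L$ via the monotonicity Lemma~\ref{lem:monotonicity-for-second-kind-polymer-measure}) proceeds only along $(m_k)$, yielding the $\liminf$. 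Your proposal, by contrast, would actually prove a full $\lim$ if it were correct — which is too strong and is a symptom of the unjustified uniformity assumption. The case $w_N\equiv w$, where uniformity is trivial, is handled separately in Lemma~\ref{lem:LLN-for-p2l-polymer-measure-non-random} and does produce a $\lim$; that contrast is a useful sanity check.

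A secondary issue: your handling of $\sup_{x\in[-L,L]}$ by ``$O(Ly/|N|)=O(L)$ corrections'' is heuristic; the paper reduces to the two endpoints $x=\pm L$ using stochastic dominance of the point-to-line polymer measures $\bar\mu^{N,n}_{V,x}$ in $x$ (Lemma~\ref{lem:monotonicity-for-second-kind-polymer-measure}), which cleanly justifies the reduction. This part is repairable within your framework, but the $N$-uniformity gap above is structural.
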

We give the proof of this lemma after we derive uniqueness from it.

\begin{proof}[Proof of the uniqueness part of Theorem \ref{thm:global_solutions}]
  Let $v \in \R$ and let $u_{v}(n,\cdot)$ be a stationary global solution in $\mathbb{H}'(v,v)$. 
We will prove that for almost every~$\omega$,  $u_{v,\omega}$ coincides with the global
solution constructed in Section~\ref{sec:existence-of-global-solution}.

Let  $V_v(n,\cdot)$ be the Hopf--Cole transforms of $u_v$ and $C_{v,m,n}$ be
the family of constants such that (\ref{eq:evolution-for-V}) holds true.
Let $\bar{\mu}_x^{-\infty,n}$ be defined as in \eqref{eq:def-of-mu-bar}.
Then we have~\eqref{eq:solution-through-pt-to-line-measure}.

Since $u_v(n,x) \in \mathbb{H}'(v,v)$, the potential of $u_v(n,x)$ satisfies one of the conditions
 \eqref{eq:no_flux_from_infinity}, \eqref{eq:flux_from_the_left_wins}, \eqref{eq:flux_from_the_right_wins}
depending on the value of $v$.
Therefore, by Lemma~\ref{lem:condition-second-kind-polymer-measure-converge-to-LLN}, we have
\begin{equation*}
  \liminf_{N \to -\infty} \bar{\mu}_x^{-\infty,n} \pi_N^{-1}( [(v+\varepsilon)N, (v-\varepsilon)N ]^c)= 0.
\end{equation*}
By Theorem~\ref{thm:all-space-time-point-compactness} we have that for $m$ large enough and $n-N
\ge 2m$,
\begin{multline*}
  \bar{\mu}_x^{-\infty,n} \pi_{n-m}^{-1}( [ (v+2\varepsilon)(n-m), (v-2\varepsilon)(n-m)]^c ) 
\\  \le 4 \bar{\mu}_x^{-\infty,n} \pi_N^{-1} ( [(v+\varepsilon)N, (v-\varepsilon)N ]^c) + 6 e^{-\sqrt{m}}.
\end{multline*}
Taking $\liminf$ as $N\to -\infty$,  we obtain
\begin{equation*}
  \bar{\mu}_x^{-\infty,n} \pi_{n-m}^{-1}( [ (v+2\varepsilon)(n-m), (v-2\varepsilon)(n-m)]^c ) \le 6e^{-\sqrt{m}}.
\end{equation*}
So $\bar{\mu}_x^{-\infty,n}$ satisfies SLLN with slope $v$ and is supported on $S_x^{-\infty,n}$.
Therefore,
by Lemma \ref{lem:uniqueness-of-polymer-measures}, we have $\bar{\mu}_x^{-\infty,n} = \mu_x^{-\infty,n}(v)$.
This shows that $u_v(n,\cdot)$ is exactly what we have constructed in Section~\ref{sec:existence-of-global-solution}, and the proof of uniqueness is complete.
\end{proof}

To prove  Lemma \ref{lem:condition-second-kind-polymer-measure-converge-to-LLN} we first need several auxiliary statements.
\begin{lemma}
\label{lem:property-on-stationary-rvs}
Let $(X_n)_{n\in\N}$ be a stationary sequence of  random variables such that $\Pp
(X_n < \infty) = 1$.
Then there is a random number $k=k(\omega)$ such that 
\begin{equation*}
  \Pp \left\{ \omega: X_n(\omega) \le k(\omega) \quad \text{\rm for infinitely many $n$}
  \right\} = 1.
\end{equation*}
\end{lemma}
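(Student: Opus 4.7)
The plan is to define $k(\omega)$ by a truncation-level argument rather than to appeal to any ergodic or Borel--Cantelli-type input that would require independence. For each $M \in \N$, set $A_n^M = \{X_n \le M\}$ and $B_M = \limsup_n A_n^M$. Since the sequence $(X_n)$ is stationary, $\Pp(A_n^M) = \Pp(A_1^M) =: p_M$ for all $n$, and by the reverse Fatou inequality
\[
\Pp(B_M) = \Pp\Bigl(\bigcap_{N\ge 1}\bigcup_{n\ge N} A_n^M\Bigr) \ge \limsup_n \Pp(A_n^M) = p_M.
\]
Because $\Pp(X_1 < \infty) = 1$, we have $p_M = \Pp(X_1 \le M) \uparrow 1$ as $M \to \infty$. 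The events $B_M$ are nested: $A_n^M \subset A_n^{M+1}$ implies $B_M \subset B_{M+1}$. Hence
\[
\Pp\Bigl(\bigcup_{M\in\N} B_M\Bigr) \;=\; \lim_{M\to\infty} \Pp(B_M) \;\ge\; \lim_{M\to\infty} p_M \;=\; 1.
\]

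Next, on the full-measure event $\bigcup_M B_M$, define
\[
k(\omega) \;=\; \min\{M\in\N : \omega \in B_M\},
\]
and set $k(\omega) = 0$ (say) on the null set where this is undefined. This $k$ is measurable because each $B_M$ is measurable and the minimum over a nested sequence of measurable events is a measurable function of $\omega$. By construction, for a.e.\ $\omega$ we have $\omega \in B_{k(\omega)}$, which by definition of $B_{k(\omega)}$ means $X_n(\omega) \le k(\omega)$ for infinitely many $n$.

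There is no genuine obstacle here: the argument uses only stationarity (to get the uniform bound $\Pp(A_n^M) = p_M$) and the a.s.\ finiteness of $X_1$ (to push $p_M$ to $1$). The one point worth flagging is that we deliberately avoid trying to show $\Pp(B_M) = 1$ for some fixed $M$, which would generally require ergodicity or independence; instead, we let the threshold $k$ itself be random, chosen large enough (depending on $\omega$) so that $\omega$ lands in the corresponding $B_M$.
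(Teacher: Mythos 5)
Your proof is correct, and it takes a genuinely different and more elementary route than the paper. The paper's proof invokes the Birkhoff ergodic theorem: it sets $A_\infty=\bigcap_k\{X_n\le k\text{ for finitely many }n\}$, observes that on $A_\infty$ the empirical frequency $\frac1n\sum_{i<n}\ONE_{\{X_i\le k\}}$ tends to $0$, identifies the a.s.\ limit with $\E(\ONE_{\{X_0\le k\}}\,|\,\mathcal I)$, and integrates over $A_\infty$ to conclude $\Pp(A_\infty)=0$. You instead work directly with the events $B_M=\limsup_n\{X_n\le M\}$ (which is precisely $A_M^c$ in the paper's notation), use the reverse Fatou inequality $\Pp(\limsup A_n)\ge\limsup\Pp(A_n)$ together with the constancy of the marginals $\Pp\{X_n\le M\}=p_M$, and let $p_M\uparrow1$. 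This replaces the ergodic theorem with continuity of measure and Fatou, which is both simpler and strictly weaker in hypothesis: your argument only requires the $X_n$ to be identically distributed with $\Pp(X_0<\infty)=1$, not jointly stationary. (The paper's proof, by contrast, genuinely uses the stationarity of the joint law via the ergodic theorem.) Both constructions of $k(\omega)$ as the smallest admissible threshold are essentially the same. Your measurability remark is fine since the nesting $B_M\subset B_{M+1}$ gives $\{k\le M\}=B_M$ up to the null complement.
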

\begin{proof}
Let $A_k = \{ \omega: X_n(\omega) \le k \quad \text{for finitely many $n$} \}$.
Clearly $A_{k+1} \subset A_{k}$ for all $k\in\N$.
Let $A_{\infty} = \bigcap\limits_{k=1}^{\infty} A_{k}$.
We want to prove that $\Pp (A_{\infty}) = 0$.

By the ergodic theorem, on $A_{\infty}$, for any $k$, we have
\begin{equation*}
0 = \lim_{n \to \infty} \frac{1}{n}\sum_{i=0}^{n-1} \ONE_{X_i \le k} = \E
(\ONE_{X_0 \le k} | \mathcal{I}),
\end{equation*}
where $\mathcal{I}$ is the invariant $\sigma$-algebra for the
stationary sequence $\big(X_n\big)$.
Therefore 
\begin{equation*}
0 = \E \big(\ONE_{A_{\infty}} \E (\ONE_{X_0 \le k}| \mathcal{I})  \big) = \E
\ONE_{A_{\infty}}\ONE_{X_0 \le k}.
\end{equation*}
Since $\Pp(X_0 < \infty)=1$, by the Bounded Convergence
Theorem we have 
\begin{equation*}
0 = \lim_{k\to \infty} \E \ONE_{A_{\infty}}\ONE_{X_0 \le k} = \Pp (A_{\infty})
\end{equation*}
as desired.
\end{proof}

\begin{lemma}
  \label{lem:truncation-and-concentration-almost-surely-true}
There is a full measure set $\Omega''$ such that the following is true for every $\omega \in \Omega''$.
For all  $c > 4\sqrt{\ln (2\lambda/\rho_0)})$ and $(n,q) \in \Z \times \Z$, 
there is a constant $m_0 = m_0(n,q,c)$ such that for all $m > m_0$, we have
\begin{equation}
  \label{eq:truncation-almost-surely}
 \frac{\int_{|y| \ge cm}  Z^{n-m,n}(y,q)e^{c|y|/17} dy   }{\int_q^{q+1} Z^{n-m,n}(y,q)dy} \le 2^{-m}
\end{equation}
and 
\begin{equation}
  \label{eq:concentration-almost-surely}
\Big| \ln \int_I Z^{n-m,n}(y,x)H(y) dy -  \ln \int_I e^{\alpha(m,x-y)} H(y) dy \Big| \le 2m^{3/4},
\end{equation}
for all intervals $I \subset
[-cm,cm]$, all $x \in [q,q+1]$ and all positive functions $H(\cdot)$.
Here, $\alpha(\cdot, \cdot)$ has been defined in~\eqref{eq:space-time-shape}.
\end{lemma}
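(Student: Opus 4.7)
The plan is to derive both \eqref{eq:truncation-almost-surely} and \eqref{eq:concentration-almost-surely} from a single uniform shape-function approximation, namely that on a full measure event, for every $(n,q)\in\Z^2$ and every rational $c>c_*:=4\sqrt{\ln(2\lambda/\rho_0)}$, and all sufficiently large $m$,
\begin{equation*}
|\ln Z^{n-m,n}(y,x)-\alpha(m,x-y)|\le m^{3/4},\qquad (y,x)\in[-cm,cm]\times[q,q+1],
\end{equation*}
which I refer to as $(\star)$, where $\alpha(m,z)=\alpha_0 m-z^2/(2m)$. Taking a countable intersection over $(n,q)\in\Z^2$ and rational $c>c_*$ produces the set $\Omega''$. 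Given $(\star)$, estimate \eqref{eq:concentration-almost-surely} is immediate: on any interval $I\subset[-cm,cm]$, multiplying $(\star)$ pointwise and taking logarithms yields
\[
\Big|\ln\!\int_I Z^{n-m,n}(y,x)H(y)\,dy-\ln\!\int_I e^{\alpha(m,x-y)}H(y)\,dy\Big|\le m^{3/4}\le 2m^{3/4}.
\]

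To prove $(\star)$, I first restrict to integer arguments. For fixed $y\in\Z\cap[-cm,cm]$ and $x\in\{q,q+1\}$, time-stationarity gives $Z^{n-m,n}(y,x)\stackrel{d}{=}Z^m(x-y)$, and Theorem~\ref{thm:concentration-of-free-energy} applied with $u=m^{3/4}/2$ (which lies in the admissible range for large $m$) yields a failure probability $\le c_1\exp(-c_2 m^{1/2}/\ln^2 m)$. A union bound over the $O(m)$ integer pairs keeps the probability summable in $m$, so Borel--Cantelli gives a.s.\ eventual validity at all such integer arguments. Extending to real $(y,x)$ within unit cells then uses the one-step recursion
\[
Z^{n-m,n}(y,x)=e^{-F_{n-m}(y)}\!\int g(z-y)\,Z^{n-m+1,n}(z,x)\,dz
\]
and its reverse analogue in $x$: the variation of $e^{-F_{n-m}(y)}$ over a unit interval is bounded by $2F_{n-m}^*(\lfloor y\rfloor)$, which by~\ref{item:exponential-moment-for-maximum}, Markov, and a union bound over $\lfloor y\rfloor\in\Z\cap[-cm,cm]$ is $O(\ln m)$ a.s.; the Gaussian-kernel ratio $g(z-y)/g(z-\lfloor y\rfloor)$ is controlled by using the straightness/tightness estimates of Theorem~\ref{thm:delta-straightness-for-polymer-measure} to confine the effective integration window of $z$ to a band of width $O(\sqrt m\,\mathrm{polylog}\,m)$ around $\lfloor y\rfloor$. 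All errors introduced are $o(m^{3/4})$ and are absorbed into the slack in $(\star)$.

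For \eqref{eq:truncation-almost-surely}, I bound the numerator and denominator separately. By Fubini and Lemma~\ref{lem:expectation-of-Z},
\[
\E\!\int_{|y|\ge cm}Z^{n-m,n}(y,q)\,e^{c|y|/17}\,dy=\lambda^m\!\int_{|y|\ge cm}g_m(q-y)\,e^{c|y|/17}\,dy\le C\lambda^m e^{-15c^2m/34};
\]
the last bound follows by noting that on $\{|y|\ge cm\}$ the exponent $-(q-y)^2/(2m)+c|y|/17$ attains its maximum at $|y|=cm$ with value $-c^2m(\tfrac12-\tfrac1{17})+O(|q|)=-15c^2m/34+O(1)$, and invoking a standard Gaussian-tail estimate. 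Markov and Borel--Cantelli then give a pathwise numerator bound $\le e^{-15c^2m/34+m\ln\lambda+o(m)}$ for large $m$. For the denominator, restricting the intermediate path coordinates in $\int_q^{q+1}Z^{n-m,n}(y,q)\,dy$ to $[q,q+1]^{m-1}$ yields the lower bound $\bar g^{\,m}\exp(-\sum_{k=n-m}^{n-1}F_k^*(q))$ with $\bar g=g(1)$, and the i.i.d.\ SLLN applied to $(F_k^*(q))_k$ gives an a.s.\ lower bound $\rho^m$ with $\rho$ arbitrarily close to $\bar g\,e^{-\E F_0^*(0)}$, hence $\ge\rho_0$ by the construction in the proof of Lemma~\ref{lem:partition-function-not-too-small}. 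The resulting ratio is at most $\exp(-m[15c^2/34-\ln(2\lambda/\rho_0)-o(1)])\le 2^{-m}$, using $15c^2/34>\ln(2\lambda/\rho_0)+\ln 2$, which is comfortably implied by $c>4\sqrt{\ln(2\lambda/\rho_0)}$.

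The main obstacle I foresee is the uniform-in-$y$ extension in $(\star)$ from integer to real $y$: the naive Gaussian-kernel ratio $g(z-y)/g(z-\lfloor y\rfloor)=\exp((y-\lfloor y\rfloor)(z-\lfloor y\rfloor)+O(1))$ contains an unbounded $z$-factor in the exponent, and taming it requires straightness-based confinement of the effective $z$-window, after which the unit-step variation becomes $o(m^{3/4})$ and is absorbed into the slack.
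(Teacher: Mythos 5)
Your treatment of \eqref{eq:truncation-almost-surely} is essentially the paper's: compute $\E a_m$ via Lemma~\ref{lem:expectation-of-Z} and a Gaussian tail estimate, use Markov plus Borel--Cantelli for the numerator, and produce an exponential lower bound on the denominator. The only variation is that you get the denominator bound from the SLLN applied to $\sum_{k} F_k^*(q)$, whereas the paper invokes its quantitative Lemma~\ref{lem:partition-function-not-too-small}; both give the rate $\rho_0^m$ a.s.\ for large $m$, so this part is sound.

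For \eqref{eq:concentration-almost-surely}, your plan $(\star)$ --- establish the uniform pointwise approximation $|\ln Z^{n-m,n}(y,x)-\alpha(m,x-y)|\le m^{3/4}$ and then pass under the integral --- coincides with the paper's route (which obtains this with the constant~$2$ as Lemma~\ref{lem:concentration-simultaneouly-for-one-point}). You correctly identify the sticking point: the Gaussian kernel ratio $g(z-y)/g(z-[y])$ carries a linear-in-$z$ exponent, so the effective integration variable $z$ (the first polymer coordinate) must be confined to a window of width $o(m^{3/4})$. The problem is the tool you cite does not do this. Theorem~\ref{thm:delta-straightness-for-polymer-measure} controls the deviation of $\gamma^{\text{out}}(k)$ from a shrinking cone for $k\ge n$ (the threshold index), and the tightness results of Section~\ref{sec:compactness-of-polymer-measures} only confine the early path to a linearly-wide band $\sim (2c+R+1)s$; neither constrains a single increment to scale $O(\sqrt m)$. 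Plugging a linear-width window into the Gaussian-ratio exponent produces an $O(m)$ term, which destroys $(\star)$. What is actually needed, and what the paper uses in Lemma~\ref{lem:continuity-of-partition-function-wrt-end-point}, is Lemma~\ref{lem:small-probability-for-single-square-root-jump}: with probability $\ge 1-e^{-r_2 m}$, the polymer measure puts weight $\le 2^{-m}$ on paths whose single step $|\gamma_k-\gamma_{k+1}|$ exceeds $R'\sqrt m$; applying this to $k=0$ (and $k=m-1$ for the $x$-variation), together with the endpoint monotonicity of Lemma~\ref{lem:main-monotonicity} to pass from integer to real endpoints, gives the $O(\sqrt m)$ confinement and hence the $K\sqrt m$ variation over a unit cell. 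With that substitution your argument closes; as written, the confinement step is unsupported.
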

We will prove this lemma after we use it to derive Lemma~\ref{lem:condition-second-kind-polymer-measure-converge-to-LLN}.

We also need a monotonicity statement about point-to-line polymer measures.
\begin{lemma}
  \label{lem:monotonicity-for-second-kind-polymer-measure}
  Let $x<x'$ and $V(x)$ be a positive function that grows at most exponentially.
  Then for any $m,n$ with $m<n$, the polymer measure $\bar{\mu}_{V,x}^{m,n}$ is stochastically dominated by $\bar{\mu}_{V,x'}^{m,n}$.
\end{lemma}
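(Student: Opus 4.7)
The plan is to argue by induction on $n-m$, using the Markov structure of the backward polymer measure to reduce to a one-step situation where Lemma~\ref{lem:abstract-monotonicity} applies directly.

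For the base case $n=m+1$, the measure $\bar\mu_{V,x}^{m,m+1}$ lives on $\R$ (the location of $\gamma_m$) with density proportional to $V(y)g(x-y)e^{-F_m(y)}$. Setting $\nu(dy) = V(y)e^{-F_m(y)}\,dy$ — a positive $\sigma$-finite Borel measure, well defined since $V$ grows at most exponentially, $F_m(y)/|y|\to 0$ on $\Omega$, and $g$ decays super-exponentially — and noting $g(x-y) = g(y-x)$, the cumulative distribution function $\bar\mu_{V,x}^{m,m+1}((-\infty,a])$ coincides exactly with the function $G(x,a)$ of Lemma~\ref{lem:abstract-monotonicity}, which is strictly decreasing in $x$. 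This is precisely stochastic monotonicity of $\bar\mu_{V,x}^{m,m+1}$ in $x$.

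For the inductive step with $n>m+1$, I would condition on the last free coordinate $\gamma_{n-1}$. By the Markov structure built into the definition \eqref{eq:def-of-mu-bar}, the conditional law of $(\gamma_m,\ldots,\gamma_{n-2})$ given $\gamma_{n-1}=z$ is exactly $\bar\mu_{V,z}^{m,n-1}$ and does not depend on $x$. The marginal of $\gamma_{n-1}$ has density proportional to
\[
\tilde V(z)\, g(x-z)\, e^{-F_{n-1}(z)},\qquad \tilde V(z) := \int V(y)\, Z^{m,n-1}(y,z)\,dy,
\]
where $\tilde V$ is positive and the normalizing integral
$\int g(x-z)\tilde V(z) e^{-F_{n-1}(z)}\,dz = \int V(y)Z^{m,n}(y,x)\,dy$
is finite by the standing growth assumption on $V$. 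Applying Lemma~\ref{lem:abstract-monotonicity} to the measure $\tilde V(z)e^{-F_{n-1}(z)}\,dz$ shows that this marginal is stochastically increasing in $x$.

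Then for $x<x'$, Strassen's theorem (Lemma~\ref{lem:dominance-by-coupling}) yields a coupling of the two marginals of $\gamma_{n-1}$ with $\gamma_{n-1}\le\gamma_{n-1}'$ a.s., and the induction hypothesis supplies, conditionally, a coordinatewise-monotone coupling of $\bar\mu_{V,\gamma_{n-1}}^{m,n-1}$ and $\bar\mu_{V,\gamma_{n-1}'}^{m,n-1}$. Concatenation produces a monotone coupling of $\bar\mu_{V,x}^{m,n}$ and $\bar\mu_{V,x'}^{m,n}$, completing the induction. The only place requiring care is the verification that each normalizing integral is finite at each inductive step and that the auxiliary weight $\tilde V$ still grows at most exponentially, so that the hypotheses of Lemma~\ref{lem:abstract-monotonicity} are met — this propagates easily because convolution with a Gaussian against an exponentially bounded function yields another exponentially bounded function, and the factors $e^{-F_i}$ contribute only at-most-exponential growth in view of \eqref{eq:def-of-Omega}.
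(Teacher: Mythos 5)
Your proof takes essentially the same approach as the paper: reduce via the backward Markov structure to showing stochastic monotonicity of the $\gamma_{n-1}$-marginal, identify that marginal density as $\tilde V(\eta)e^{-F_{n-1}(\eta)}g(x-\eta)$, and apply Lemma~\ref{lem:abstract-monotonicity} with $\nu(d\eta)=\tilde V(\eta)e^{-F_{n-1}(\eta)}\,d\eta$, then propagate by an inductive coupling argument modeled on Lemma~\ref{lem:dominance-1}. The paper's proof is terser (it delegates the induction/coupling machinery to the phrase ``similarly to Lemma~\ref{lem:dominance-1}''), but the argument is the same, and your explicit attention to finiteness of the normalizing integrals and exponential boundedness of $\tilde V$ is a welcome addition rather than a deviation.
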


\begin{proof}
First, we have
\begin{equation*}
\mu_{V,y}^{m,k}(A_m \times \dotsb \times A_{k-1}) = \int_{A_{k-1}} \bar{\mu}_{V,y}^{m,k}\pi_{k-1}^{-1}(dx_{k-1})
\bar{\mu}_{V,x_{k-1}}^{m,k-1}(A_m \times \dotsb \times A_{k-2}).
\end{equation*}
Therefore, similarly to Lemma \ref{lem:dominance-1}, it suffices to show that
 $\bar{\mu}_{V, x}^{m,n}\pi_{n-1}^{-1} \preceq \bar{\mu}_{V,
   x'}^{m,n}\pi_{n-1}^{-1}$
and use an induction argument.

Now we compute the marginals at time $n-1$ : 
\begin{align*}
 \bar{\mu}_{V,x}^{m,n}\{X_{n-1}\le r\}
  &= \frac{ \int_{\R} dy\int_{(-\infty,r]} d\eta V(y)Z_{y,\eta}^{m,n-1} e^{-F_{n-1}(\eta)} g(\eta-x)  }{ \int_{\R}
    dy\int_{\R} d\eta V(y)Z_{y,\eta}^{m,n-1} e^{-F_{n-1}(\eta)} g(\eta-x)  } .
\end{align*}
Let
\begin{equation*}
\nu(d\eta) = \int dy V(y) Z^{m,n-1}_{y,\eta} e^{-F_{n-1}(\eta)} d\eta.
\end{equation*}
Then by Lemma \ref{lem:abstract-monotonicity},  $\bar{\mu}_{V,x}^{m,n}\{X_{n-1}\le r\}$ is decreasing in
$x$, so  $\bar{\mu}_{V, x}^{m,n}\pi_{n-1}^{-1}$  is dominated by  $\bar{\mu}_{V,
  x'}^{m,n}\pi_{n-1}^{-1}$.
\end{proof}

\begin{proof}[Proof of Lemma~\ref{lem:condition-second-kind-polymer-measure-converge-to-LLN}]
We take $\Omega''$ from the statement of Lemma
\ref{lem:truncation-and-concentration-almost-surely-true} and fix an arbitrary $\omega \in \Omega''$.

  Fix $n$ and $L \in \N$.
  Since Lemma \ref{lem:monotonicity-for-second-kind-polymer-measure} implies $\nu_{N,n,-L} \preceq
  \nu_{N,n,x} \preceq \nu_{N,n,L}$ for $x \in [-L, L]$ , it suffices to show that  for every $\varepsilon
  > 0$, 
\begin{equation*}
\liminf_{N\to -\infty} e^{h|N|}\max_{a = \pm L} \nu_{N,n,a} \big( [(v+\varepsilon)N,
(v-\varepsilon)N ]^c \big) = 0,
\end{equation*}
or, equivalently, that for every
$ \varepsilon \in (0,1)$ there is a random sequence $m_k = m_k(\omega,\varepsilon)   \uparrow + \infty$  such that 
\begin{equation}
  \label{eq:what-to-prove}
  \begin{split}
0 &=\lim_{k\to \infty}e^{hm_k} \nu_{n-m_k,n,a}\bigl( [-(v+\varepsilon)m_k, -(v-\varepsilon)m_k]^c
    \bigr)\\ 
&= \lim_{k\to\infty}  e^{hm_k}\frac{\int_{|y+vm_k| > \varepsilon m_k} Z^{n-m_k,n}(y,a) V_{n-m_k} (y) dy  }{ \int_{\R}
  Z^{n-m_k,n}(y,a) V_{n-m_k}(y)dy}
\end{split}
\end{equation}
for $a = \pm L$.

The proof consists of two steps.
The first step is to use  \eqref{eq:no_flux_from_infinity},  \eqref{eq:flux_from_the_left_wins},
\eqref{eq:flux_from_the_right_wins} and Lemma
\ref{lem:property-on-stationary-rvs} to find a random sequence $(m_k)$ with certain properties;
the second is to combine those properties and estimates provided by  Lemma
\ref{lem:truncation-and-concentration-almost-surely-true} to derive~\eqref{eq:what-to-prove}.

We can assume that $v\ge 0$, since the case $v<0$ is totally symmetric to the case $v>0$.
Let us fix some $\delta > 0$ such that
\begin{equation}
  \label{eq:choice-of-delta}
  \delta < 
\begin{cases}
\varepsilon/4,  & v =0, \\
(\varepsilon/4) \wedge (v/2) \wedge \frac{\varepsilon^2}{8v}, & v > 0.
\end{cases}
\end{equation}

\textbf{Step 1 --- find $(m_k)$:} we claim that there is a random constant $R=R(\omega)$ and a random sequence
  $(m_k)$ such that for every $m = m_k$, 
\begin{equation}
  \label{eq:boundedness-of-W}
  |W_{n-m}(y)|  \le R, \quad  y \in [-L,L+1], 
\end{equation}
\begin{equation}
  \label{eq:W-finite-slope}
  W_{n-m}(y) \ge -R(|y|+1) , \quad  y \in \R,
\end{equation}
and
  \begin{subequations}
    \label{eq:W-asymtopic-slope-at-infinity}
    \begin{align}
    \nonumber  v&= 0, \\ 
      \label{eq:zero-v-infinity-slope}
      W_{n-m}(y) &\ge -\delta|y|, && |y| \ge R, \\ 
     \nonumber   \text{or,}& \\ 
      \nonumber v &> 0, \\
      \label{eq:positive-v-negative-infinite-slope}
      |W_{n-m}(y)  - vy| &\le \delta|y|, && y < -R,\\
      \label{eq:positive-v-plus-infinity-slope}
        W_{n-m}(y)  &\ge (-v+2\delta)|y|, && y > R.
    \end{align}
  \end{subequations}

To see this, for each $m$, we let $X_m$, $Y_m$ and $Z_m$ be the infimum of $R$ such that 
\eqref{eq:boundedness-of-W}, \eqref{eq:W-finite-slope} and
\eqref{eq:W-asymtopic-slope-at-infinity} are satisfied.
Due to stationary of $W_N(\cdot)$, $(X_m)$, $(Y_m)$ and $(Z_m)$ are all stationary sequences of random variables.
Also, 
$X_m $ are a.s. finite because $W_N(\cdot)$ are locally finite;
$Y_m $ are a.s. finite because $W_N(\cdot) \in \mathbb{H}$; 
$Z_m$ are a.s. finite due to~\eqref{eq:no_flux_from_infinity}
or~\eqref{eq:flux_from_the_left_wins}, depending on $v$.
Therefore, by Lemma \ref{lem:property-on-stationary-rvs}, there 
is a random number $R=R(\omega)$ such that $X_m\vee Y_m\vee Z_m \le R$ for infinitely many $m$
almost surely.
This proves the claim.

\textbf{Step 2 --- show (\ref{eq:what-to-prove})}.
For simplicity we will write $m = m_k$ in what follows, so $m \to \infty$ actually means $m=m_k$, $k \to \infty$.
Let us fix $c \ge 17( v+1)  \vee 4 \sqrt{\ln(2\lambda/\rho_0)}$ 
and write
\begin{align*}
  &\quad\nu_{n-m,n,a}([-(v+\varepsilon) m, -(v-\varepsilon) m]^c)  \\
  &= \frac{\int_{|y|<cm, |y+vm| > \varepsilon m} Z^{n-m,n}_{y,a}e^{-W_{n-m}(y) } dy  }{ \int_{\R}
    Z^{n-m,n}_{y,a}e^{-W_{n-m}(y) }dy}
  + \frac{\int_{|y| \ge cm} Z^{n-m,n}_{y,a}e^{-W_{n-m}(y) } dy  }{ \int_{\R}
    Z^{n-m,n}_{y,a} e^{-W_{n-m}(y) } dy} \\
  &= A^m  + B^{m}.
\end{align*}
We will show that both $A^m$ and $B^m$  decay exponentially.

First we look at $B^m$.
By \eqref{eq:W-asymtopic-slope-at-infinity}, if $m$ is sufficiently large, then 
$-W_{n-m}(y)  \le (|v|+\delta) |y| \le c|y|/17$ for all $|y| \ge cm$.
Due to \eqref{eq:boundedness-of-W} we have
\begin{equation*}
\int_{\R} Z^{n-m,n}(y,a)e^{-W_{n-m}(y) }dy \ge e^{-R}\int_a^{a+1} Z^{n-m,n}(y,a)dy.
\end{equation*}
Therefore, by Lemma \ref{lem:truncation-and-concentration-almost-surely-true} we have
\begin{equation*}
B^m \le e^R\frac{\int_{|y| \ge cm} Z^{n-m,n}(y,a)e^{c|y|/17 } dy  }{ \int_a^{a+1}
    Z^{n-m,n}(y,a) dy} \le \frac{e^R}{2^m}
\end{equation*}
for sufficiently large $m$.

Next we look at $A^m$.
Using   Lemma~\ref{lem:truncation-and-concentration-almost-surely-true}, we obtain that for sufficiently large~$m$, 
\begin{align*}
A^m & \le \exp\big( 4m^{3/4}\big)\cdot \frac{\int_{|y|<cm, |y+vm| > \varepsilon m} e^{-\frac{(y-a)^2}{2m}-W_{n-m}(y) } dy  }{ \int_{-cm}^{cm}
      e^{-\frac{(y-a)^2}{2m}-W_{n-m}(y) }dy} \\
  &\le \exp\big( 4m^{3/4} + L^2/m + 2Lc \big)\cdot \frac{\int_{|y+vm| > \varepsilon m} e^{-\frac{y^2}{2m}-W_{n-m}(y) } dy  }{ \int_{-cm}^{cm}
    e^{-\frac{y^2}{2m}-W_{n-m}(y) }dy}.
  \end{align*}
Let us denote the ratio of integrals in the last line by $\tilde{A}^m$.
  It suffices to show that~$\tilde{A}^m$ decays exponentially.
We will consider the cases $v=0$ and $v>0$ separately.

Suppose $v=0$.
For sufficiently large $m$, we have $W_{n-m}(y) \ge -\delta |y|$ for all $|y| > \varepsilon m$ by
\eqref{eq:zero-v-infinity-slope} and $W_{n-m}(y) \le R$ for $y \in [0,1]$ by \eqref{eq:boundedness-of-W}.
Therefore, 
\begin{equation*}
  \tilde{A}^m \le 
  \frac{\int_{|y|> \varepsilon m } e^{- \frac{y^2}{2m} + \delta |y| } dy}{ \int_0^1
  e^{-\frac{y^2}{2m} - R} }
   \le e^{\frac{1}{2m} + R}    \int_{|y| > \varepsilon m }     e^{-\frac{y^2}{2m} + \delta |y| } dy
  \le e^{\frac{1}{2m} + R}  \cdot  \frac{4}{\varepsilon} e^{- \frac{\varepsilon^2}{4}m}
\end{equation*}
as desired.
Here,  in the last inequality, we used $\delta < \varepsilon /4$ to obtain
\begin{equation}
\label{eq:gaussian-integral-estimate}
\int_{|y| > \varepsilon m} e^{-\frac{y^2}{2m} + \delta |y|} dy \le \int_{|y| > \varepsilon m}
e^{-\frac{y^2}{4m}} dy \le \frac{4}{\varepsilon} e^{-\frac{\varepsilon^2 }{4}m}.
\end{equation}

Suppose $v > 0$.
Let $\tilde{A}^m = (A_1+A_2+A_3)/A_4$, where
\begin{align*}
A_1 &= \int_{ |y+vm| > \varepsilon m,  y \le -R} e^{-\frac{y^2}{2m}-W_{n-m}(y)} dy, & A_2&= \int_{-R}^Re^{-\frac{y^2}{2m}-W_{n-m}(y)}dy,
\\
A_3 &= \int_{R}^{\infty} e^{-\frac{y^2}{2m}-W_{n-m}(y)} dy, & A_4&= \int_{-cm}^{cm} e^{-\frac{y^2}{2m}-W_{n-m}(y)}dy.
\end{align*}
For sufficiently large $m$, by~(\ref{eq:positive-v-negative-infinite-slope}),
(\ref{eq:gaussian-integral-estimate}), (\ref{eq:W-finite-slope}) and~(\ref{eq:positive-v-plus-infinity-slope}),
 we have
\begin{align*}
  A_1 &\le \int_{ |y+vm| > \varepsilon m} e^{-\frac{y^2}{2m} - (v+\delta) y} dy
  \\&\le e^{( \frac{v^2}{2} +  v\delta ) m} \int_{|y'| > \varepsilon m} e^{-\frac{y'^2}{2m} + \delta |y'|} dy'
  \le \frac{4}{\varepsilon} \exp \bigl( (v^2/2 + v\delta- \varepsilon^2/4) m  
  \bigr),
\end{align*}
\begin{equation*}
A_2 \le \int_{-R}^R e^{-\frac{y^2}{2m} + R(|y| +1) } dy \le 2Re^{R^2 +R},
\end{equation*}

\begin{equation*}
  A_3  \le \int_R^{\infty} e^{-\frac{y^2}{2m} + (v-2\delta) y} dy
  \le\int_{-\infty}^{\infty} e^{-\frac{y^2}{2m} + (v-2\delta) y} dy = \sqrt{2m\pi} \exp \Bigl(
  \frac{(v-2\delta)^2}{2}m 
  \Bigr),
\end{equation*}
and 
\begin{equation*}
  A_4 \ge \int_{-vm}^{-vm+1} e^{-\frac{y^2}{2m} + (v-\delta)y} dy
  \ge \exp \bigl( (v^2/2 - v\delta) m \bigr).
\end{equation*}
Therefore, 
\begin{multline*}
  \tilde{A}^m  \le \frac{4}{\varepsilon}\exp \bigl( -(\varepsilon^2/4 - 2v\delta)m \bigr)
   \\+ 2Re^{R^2+R}  \exp \bigl(  -(v^2/2-v\delta) m \bigr) + \sqrt{2m\pi} \exp \bigl( -(v\delta - 2\delta^2)m \bigr),
\end{multline*}
and the right-hand side decays exponentially due to (\ref{eq:choice-of-delta}).
\end{proof}

Now we turn to the proof of~Lemma \ref{lem:truncation-and-concentration-almost-surely-true}. We begin with~\eqref{eq:truncation-almost-surely}.

\begin{proof}[Proof of~\eqref{eq:truncation-almost-surely}] Let us fix $(n,q)$ and~$c$. 
Due to the Borel--Cantelli lemma and the fact that for sufficiently large $m$, 
\begin{equation*}
\int_{|y| \ge cm } Z^{n-m,m}(y,q) e^{c|y|/17} dy \le \int_{|y-q| \ge cm/2} Z^{n-m,n}(y,q) e^{c|y-q|/16} dy,
\end{equation*}
the inequality~\eqref{eq:truncation-almost-surely} will follow if we prove that for some constant $k>0$ and sufficiently large $m$, 
  \begin{equation}
    \label{eq:small-probability}
\Pp \bigg\{ \frac{a_m}{b_m}   > 2^{-m } \bigg\} \le e^{-k m},
\end{equation}
where 
 \begin{equation*}
a_m = \int_{|y-q| \ge cm/2}  Z^{n-m,n}(y,q)e^{c|y-q|/16} dy, \quad b_m =  \int_q^{q+1} Z^{n-m,n}(y,q) dy.
\end{equation*}
By Lemma \ref{lem:partition-function-not-too-small}, we have
$\Pp \{b_m \le \rho_0^m\} \le e^{-k_1m}$ for some constant $k_{1}$.
By Markov inequality,
\begin{equation*}
\begin{split}
  \Pp \big\{ a_m \ge (\rho_0/2)^m \big\} \le  \Big(\frac{2}{\rho_0}\Big)^m \E a_m 
  =& \Big(\frac{2\lambda}{\rho_0}\Big)^m \int_{|y-q| \ge cm/2} \frac{1}{\sqrt{2m\pi}} e^{-\frac{(y-q)^2}{2m} + c|y-q|/16}dy \\
  \le& \Big(\frac{2\lambda}{\rho_0}\Big)^m \int_{|y-q| \ge cm/2} \frac{1}{\sqrt{2m\pi}} e^{-\frac{(y-q)^2}{4m}}dy \\
  \le& \frac{8}{c \sqrt{2m\pi}} e^{-\big( c^2/16- \ln (2 \lambda/\rho_0) \big) m} dy   \le e^{-k_2m}
\end{split}
\end{equation*}
for a constant $k_2>0$ if $c > 4 \sqrt{\ln(2\lambda/\rho_0)}$ and $m$ is sufficiently large.
Combining these two inequalities, we obtain (\ref{eq:small-probability}) and complete the proof of~\eqref{eq:truncation-almost-surely}.
\end{proof}

To prove~\eqref{eq:concentration-almost-surely}, the second part of Lemma~\ref{lem:truncation-and-concentration-almost-surely-true},
we need auxiliary statements.

\begin{lemma}
  \label{lem:continuity-of-partition-function-wrt-end-point} There is a number $K$ such that,
 with probability one, for any
  $c > 0$ and $(n,q) \in \Z \times \Z$, and
   for sufficiently large $m$, 
\begin{equation}
  \label{eq:continuity-of-partition-functions}
  \big| \ln Z^{n-m,n}(y, x) - \ln Z^{n-m,n}([y],[x]) \big| \le K \sqrt{m},\quad [x]=q,\quad |[y]| \le cm. 
\end{equation}
\end{lemma}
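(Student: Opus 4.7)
The strategy is to control separately how $\ln Z^{n-m,n}(y,x)$ changes under a unit shift of the starting point $y$ and under a unit shift of the endpoint $x$. Writing $Z^{n-m,n}(y,x) = e^{-F_{n-m}(y)} h(y)$ where $h(y) = \int_\R g(z-y)\, Z^{n-m+1,n}(z,x)\,dz$, we have
\[
\bigl|\ln Z^{n-m,n}(y,x) - \ln Z^{n-m,n}([y],x)\bigr| \le \bigl|F_{n-m}(y)-F_{n-m}([y])\bigr| + \bigl|\ln h(y)-\ln h([y])\bigr|,
\]
and the $x$-shift is handled analogously via $Z^{n-m,n}(y,x) = \int Z^{n-m,n-1}(y,z)e^{-F_{n-1}(z)}g(z-x)\,dz$. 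The potential summands are each bounded by $2F^*_{n-m}([y])$ or $2F^*_{n-1}(q)$, and by assumption~\ref{item:exponential-moment-for-maximum} together with a Borel--Cantelli argument of the same flavor already used several times in the paper, almost surely $\max_{|j|\le cm+1} F^*_{n-m}(j) = O(\log m)$ for sufficiently large~$m$, which is $o(\sqrt m)$.

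The main estimate is on $|\ln h(y)-\ln h([y])|$. By Lemma~\ref{lem:smoothness-of-partition-function}, $h$ is smooth, so by the mean value theorem it suffices to bound $\sup_{\xi\in[[y],[y]+1]}|h'(\xi)/h(\xi)|$. Differentiating under the integral using $\partial_y g(z-y) = (z-y)g(z-y)$ gives
\[
\frac{h'(\xi)}{h(\xi)} = \frac{\int (z-\xi)\, g(z-\xi)\, Z^{n-m+1,n}(z,x)\,dz}{\int g(z-\xi)\, Z^{n-m+1,n}(z,x)\,dz} = \E_{\mu^{n-m,n}_{\xi,x}}[X_{n-m+1}-\xi],
\]
identifying the logarithmic derivative with the mean first-step displacement of the polymer measure based at~$\xi$. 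By the triangle inequality, $|\E[X_{n-m+1}-\xi]|\le \E|X_{n-m+1}-\xi|$, and Lemma~\ref{lem:main-monotonicity} lets us sandwich $\mu^{n-m,n}_{\xi,x}$ between $\mu^{n-m,n}_{j,x}$ and $\mu^{n-m,n}_{j+1,x}$ whenever $\xi\in[j,j+1]$, so it suffices to estimate this absolute moment uniformly over integer starting points $|j|\le cm+1$.

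The key input is the time-reversed analogue of Lemma~\ref{lem:small-first-step-almost-surely-all-range} applied to the first step rather than the last: its proof is a routine adaptation using Lemma~\ref{lem:small-probability-for-first-large-step-outside-linear-box-all-r} and the mirror image of Lemma~\ref{lem:one-step-control}. It yields that on a full measure event, for every $c>0$ and every $(n,q)\in\Z\times\Z$ there exist $a_1,a_2,L_0,m_0$ such that
\[
\mu^{n-m,n}_{j,x}\bigl\{|X_{n-m+1}-j|>L\bigr\} \le a_1 e^{-a_2\sqrt{L}}, \qquad L\ge L_0,
\]
uniformly over $|j|\le cm+1$, $x\in[q,q+1]$ and $m\ge m_0$. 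Integrating this tail gives $\E|X_{n-m+1}-j|\le K_1$, which is in fact much stronger than the $K\sqrt m$ required. The main obstacle is ensuring that this tail estimate holds simultaneously for every integer starting point in the linear band $|j|\le cm+1$; this is where the Borel--Cantelli step must be done carefully, but since the bad-event probabilities decay like $e^{-a_2\sqrt L}$, the union bound over the $O(m)$ integer starting points remains comfortably summable in $m$ once $L$ is a sufficiently large multiple of $\sqrt m$. Combining these estimates with the analogous treatment of the $x$-endpoint yields~\eqref{eq:continuity-of-partition-functions} with a suitable universal constant $K$.
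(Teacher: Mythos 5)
Your identification of the logarithmic derivative $h'(\xi)/h(\xi)$ with the mean first-step displacement $\E_{\mu^{n-m,n}_{\xi,x}}[X_{n-m+1}-\xi]$ is correct, and reducing the lemma to a bound on that displacement is a legitimate reorganization of the argument. The paper instead truncates the partition function to paths whose first and last steps are $O(\sqrt m)$ (via the events $C_{x,y}^m$) and compares the truncated quantities directly, but both routes rest on the same key fact that the polymer's first and last steps are $O(\sqrt m)$ with overwhelming probability. The potential summand $|F_{n-m}(y)-F_{n-m}([y])|$ handled via~\ref{item:exponential-moment-for-maximum} and Borel--Cantelli is also fine.

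The gap is in the uniformity argument near the end. You invoke a time-reversed mirror of Lemma~\ref{lem:small-first-step-almost-surely-all-range}, but that lemma's constants $a_1,a_2,L_0,N_0$ are $\omega$-dependent and depend on the integer class $q$ of the fixed endpoint (and, in the mirror, on the class $p$ of the starting point); you need the tail bound to hold uniformly over the \emph{growing} range $|j|\le cm+1$, and there is nothing in that lemma giving uniformity over a range whose size grows with $m$. Worse, your justification — ``the bad-event probabilities decay like $e^{-a_2\sqrt L}$'' — conflates the \emph{polymer-measure tail} $\mu\{|X-j|>L\}\le a_1 e^{-a_2\sqrt L}$ with the \emph{probability of the bad $\omega$-event} on which that tail bound fails; in the underlying machinery (Lemma~\ref{lem:one-step-control}, Theorem~\ref{thm:all-space-time-point-compactness}, Lemma~\ref{lem:sets-for-remote-control-to-holds}) the latter decays like $e^{-s^\beta}$ where $s$ is a ``burn-in'' time, not like $e^{-\sqrt L}$, and the burn-in time $n_1(\cdot)$ is random with a dependence on the base point. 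The fix is to bypass Lemma~\ref{lem:small-first-step-almost-surely-all-range} entirely and argue in two scales, exactly as the paper does: apply Lemma~\ref{lem:small-probability-for-single-square-root-jump} at scale $L\asymp\sqrt m$ (bad-event probability $e^{-r_2 m}$, so a union over the $O(m)$ integers $|j|\le cm+1$ and then over $m$ is summable — this is the paper's $C^m_{x,y}$ step), giving $\mu\{|X_{n-m+1}-j|>R'\sqrt m\}\le 2\cdot 2^{-m}$ a.s.\ for large $m$ uniformly in $|j|\le cm+1$; then apply Lemma~\ref{lem:small-probability-for-first-large-step-outside-linear-box-all-r} at scale $L\gtrsim m$ (bad-event probability $d_2 e^{-d_1R_0m}$, likewise summable over the same range) to control the integral of the tail beyond $R_0 m$. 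Combined, $\E|X_{n-m+1}-j|\le R'\sqrt m + O(m\, 2^{-m}) + O(e^{-cm})$, which is the $O(\sqrt m)$ you want. With that substitution your argument closes; as cited it does not.
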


\begin{proof}
Without loss of generality we can assume $n = 0$. Let us define
\[
C_{x,y}^m=\bigg\{\mu_{y,x}^{-m,0} \{ \gamma: 
  |\gamma_{-m+1} - [y]| \vee |\gamma_{-1} - [x]| \ge (R'+1) \sqrt{m} \} \le 2^{-m+1}\bigg\}. 
\]
Due to Lemma \ref{lem:small-probability-for-single-square-root-jump} and $\mu_{p, q}^{-m, 0}
\preceq \mu_{y,x}^{-m,0} \preceq \mu_{p+1,q}^{-m,0}$ for $[x]=q$, $[y]=p$, we have 
\[
  \Pp \Big({\textstyle \bigcup_{[x]=p,\ [y]=q}}\, (C_{x,y}^m)^c \Big) \le 2e^{-r_2m}
\]
for sufficiently large $m$ and all $|p| \le cm$.
Then, the Borel--Cantelli  lemma implies that, with probability one, there is $m_1$ such that for $m>m_1$,
 all $[x]=q$ and $|[y]| \le cm$, $C_{x,y}^m$ holds.

Also, using~\ref{item:exponential-moment-for-maximum},  Markov's inequality, and the Borel--Cantelli lemma, we obtain that 
  with probability one, there is $m_2$ such that for $m>m_2$,
  \begin{equation}
  \label{eq:not-large-potential-almost-surely}
|F_{-m}(y) | \le \sqrt{m}, \quad |[y]| \le cm.
\end{equation}
Assume $m \ge 2\vee m_1\vee m_2$.
We consider only~$x$ and~$y$ such that $[x] = q$ and $|[y]| \le cm$.
Let us fix $p=[y]$ and define 
\begin{equation}\label{eq:definition-of-Z-tilda-minus-m-0-y-x}
  \tilde{Z}^{-m,0}_{y,x} = \int_{(z_1,z_2) \in E}\,dz_1dz_2
  e^{-\frac{1}{2}(y-z_1)^2 - F_{-m}(y) - \frac{1}{2}(x-z_2)^2 - F_{-1}(z_2)}
  Z^{-m+1,-1}_{z_1,z_2}
\end{equation}
where $E = \{(z_1,z_2)\, : \, |z_1 - p|\vee | z_2 -q| < (R'+1)\sqrt{m}\}$.
Since $m>m_1$, we have $  \bigl| 1 - \tilde{Z}^{-m,0}_{y,x}/Z^{-m,0}_{y,x}  \bigr| \le 2^{-m+1}$
and hence
\begin{equation}
\label{eq:diff-of-Z-tilde-and-Z}
\big|  \ln Z^{-m,0}_{y,x}  -  \ln \tilde{Z}^{-m,0}_{y,x} \big| \le 2^{-m+2}, 
\end{equation}
since $|\ln (1+x)| \le 2|x|$ for $|x| \le 1/2$.
Using~(\ref{eq:not-large-potential-almost-surely}) and~(\ref{eq:definition-of-Z-tilda-minus-m-0-y-x}), we see that
\begin{equation}
  \label{eq:continuity-of-Z-tilde}
\begin{split}
  &\big| \ln \tilde{Z}^{-m,0}_{y,x} - \ln \tilde{Z}^{-m,0}_{p,q} \big|\\
  \le &2 \sqrt{m} + \sup_{(z_1, z_2) \in E}\bigg( \biggl| \frac{(y-z_1)^2}{2} - \frac{(p-z_1)^2}{2} \biggr| + \biggl|
  \frac{(x-z_2)^2}{2} - \frac{(q-z_2)^2}{2} \biggr|  \bigg)\\
  \le & 2\sqrt{m} + 2  \bigl( (R'+1) \sqrt{m} + 1 \bigr).
\end{split}
\end{equation}
The lemma follows from~(\ref{eq:diff-of-Z-tilde-and-Z}), (\ref{eq:continuity-of-Z-tilde}) and the triangle inequality.
\end{proof}

\begin{lemma}
\label{lem:concentration-simultaneouly-for-one-point}
 With probability one, for all $c > 0$ and $(n,q) \in \Z \times \Z$, we have
\begin{equation*}
|\ln Z^{n-m,n}(y,x) - \alpha(m,x-y)| \le 2 m^{3/4},\quad [x]=q,\quad |[y]|\le cm,
\end{equation*}
for sufficiently large $m$.
\end{lemma}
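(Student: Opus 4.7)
The plan is to combine the concentration inequality (Theorem~\ref{thm:concentration-of-free-energy}) with the endpoint continuity estimate (Lemma~\ref{lem:continuity-of-partition-function-wrt-end-point}). The rough strategy is: first obtain the concentration only at integer endpoints via a union bound plus Borel--Cantelli, then use continuity to extend the bound to arbitrary $y\in\R$ with $|[y]|\le cm$ and $x$ with $[x]=q$. Finally, a countable intersection over a countable collection of parameters yields a full-measure event on which the bound holds for every $c>0$ and every $(n,q)\in\Z\times\Z$ simultaneously.

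First, I will fix $c>0$, $(n,q)\in\Z\times\Z$ and integers $p$ with $|p|\le cm$. By the shift-invariance in Lemma~\ref{lem:shear-for-Z_v}, $\ln Z^{n-m,n}(p,q)$ has the same distribution as $\ln Z^{0,m}(0,q-p)$. Applying Theorem~\ref{thm:concentration-of-free-energy} with slope $v=(q-p)/m$ (note $|v|\le c$) and choice $u=m^{3/4}$, and observing that $m^{3/4}\in(c_3 m^{1/2}\ln^{3/2}m,m\ln m]$ for all sufficiently large $m$, we obtain
\[
\Pp\Big\{|\ln Z^{n-m,n}(p,q)-\alpha(m,q-p)|>m^{3/4}\Big\}\le c_1\exp\!\left(-c_2\frac{m^{1/2}}{\ln^2 m}\right),
\]
where we used $\alpha(v)m=\alpha_0 m-(q-p)^2/(2m)=\alpha(m,q-p)$. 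A union bound over the $O(m)$ integers $p$ with $|p|\le cm$ keeps the probability summable in $m$, so by Borel--Cantelli there is $m_0=m_0(\omega,c,n,q)$ such that, for $m\ge m_0$,
\[
|\ln Z^{n-m,n}(p,q)-\alpha(m,q-p)|\le m^{3/4},\qquad p\in\Z,\ |p|\le cm.
\]

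Next, I will pass from integer to real endpoints. Fix any $(y,x)$ with $[x]=q$ and $|[y]|\le cm$, set $p=[y]$, and write
\[
|\ln Z^{n-m,n}(y,x)-\alpha(m,x-y)|
\le \bigl|\ln Z^{n-m,n}(y,x)-\ln Z^{n-m,n}(p,q)\bigr|
+ m^{3/4} + \bigl|\alpha(m,q-p)-\alpha(m,x-y)\bigr|.
\]
Lemma~\ref{lem:continuity-of-partition-function-wrt-end-point}, applied (after enlarging $c$ if needed to accommodate~$|[y]|\le cm$) on the same full-measure event and a possibly larger $m_0$, bounds the first term by $K\sqrt m$. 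For the last term, since $\alpha(m,z)=\alpha_0 m - z^2/(2m)$ and $|q-p|,|x-y|\le (c+1)m$, $|(q-p)-(x-y)|\le 2$, we have $|\alpha(m,q-p)-\alpha(m,x-y)|\le 2(c+1)$. Hence for $m$ large enough (depending on $c$, $n$, $q$), the right-hand side is at most $m^{3/4}+K\sqrt m+2(c+1)\le 2m^{3/4}$, as desired.

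Finally, intersect the full-measure events constructed above over the countable family of parameters $c\in\N$ and $(n,q)\in\Z\times\Z$; since any real $c$ is dominated by an integer, this yields a single full-measure set on which the conclusion holds for every $c>0$ and $(n,q)\in\Z\times\Z$ simultaneously. The main technical point, and what could be called the principal obstacle, is ensuring that the tail bound from Theorem~\ref{thm:concentration-of-free-energy} at the chosen scale $u=m^{3/4}$ beats the $O(m)$-size union bound over integer endpoints; this is exactly why the factor $n^{1/2}\ln^{3/2}n$ in the hypothesis of that theorem is just barely affordable, and why we pick $u=m^{3/4}$ rather than something closer to~$\sqrt m$.
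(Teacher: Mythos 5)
Your proof is correct and follows essentially the same route as the paper's: apply Theorem~\ref{thm:concentration-of-free-energy} with $u=m^{3/4}$ at integer endpoints $p$ with $|p|\le cm$, use a union bound and Borel--Cantelli to get an almost-sure eventual bound, then extend to real endpoints via Lemma~\ref{lem:continuity-of-partition-function-wrt-end-point} together with the elementary Lipschitz estimate on $\alpha(m,\cdot)$, and finally intersect over the countable family of parameters. The minor differences (citing the shift-invariance lemma versus invoking shear invariance directly, and the unnecessary parenthetical about enlarging $c$) do not affect the argument.
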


\begin{proof} Let us fix $c$ and  $(n,q)$.
  By Theorem \ref{thm:concentration-of-free-energy} and shear invariance, for sufficiently large $m$
and all~$p \in \Z$, we have
\begin{equation*}
  \Pp \Big\{ \big| \ln Z^{n-m,n}(p,q) - \alpha(m,q-p) \big| \ge m^{3/4} \Big\} \le e^{-m^{1/3}},
\end{equation*}
The Borel--Cantelli lemma implies that almost surely,
\begin{equation}
  \label{eq:concentration-for-partition-function-almost-surely}
 \big| \ln Z^{n-m,n}(p,q) - \alpha(m,q-p) \big| \le m^{3/4},
\end{equation}
for sufficiently large $m$ and all $|p| \le cm$.
Also, if $[x]=q$ and $|[y]| \le cm$, we have
\begin{equation}
  \label{eq:Lip-continuous-for-alpha} 
\begin{split}
  | \alpha(m,[x]-[y]) - \alpha(m,x - y) |
  \le &\biggl| \frac{([x]-[y])^2}{2m} - \frac{(x-y)^2}{2m} \biggr| \\
\le &  \frac{2(|q|+1) + 2 (cm+1)}{m}.
\end{split}
\end{equation}
The lemma follows from Lemma \ref{lem:continuity-of-partition-function-wrt-end-point},
(\ref{eq:concentration-for-partition-function-almost-surely}) and (\ref{eq:Lip-continuous-for-alpha}).
\end{proof}

Now, the inequality~\eqref{eq:concentration-almost-surely}
follows from Lemma~\ref{lem:concentration-simultaneouly-for-one-point}, which completes the proof of 
Lemma~\ref{lem:truncation-and-concentration-almost-surely-true} and concludes the entire uniqueness part.

\subsection{Basins of pullback attraction}\label{sec:pullback}

The global solutions play the role of one-point pullback attractors.
The goal of this section is to prove Theorem \ref{thm:pullback_attraction}.

First we need a version of Lemma \ref{lem:condition-second-kind-polymer-measure-converge-to-LLN}
where $w_N\equiv w$ are independent of $N$, which is the case in Theorem \ref{thm:pullback_attraction}.

\begin{lemma}
  \label{lem:LLN-for-p2l-polymer-measure-non-random}
  Let $v \in \R$ and $w(\cdot) \in \mathbb{H}'$.
  If one of the conditions \eqref{eq:no_flux_from_infinity}, \eqref{eq:flux_from_the_left_wins}, \eqref{eq:flux_from_the_right_wins}
 is satisfied by $W(\cdot)= \int_0^{\cdot} w(y')dy'$,
then for almost every $\omega$ and every $n \in \Z$, the probability measures $\nu_{N,n,x} (N < n)$ defined by
\begin{equation*}
\nu_{N,n,x}(dy) = \frac{  Z^{N,n}(y,x) e^{-W(y)}}{\int_{\R} 
  Z^{N,n}(y',x) e^{-W(y')}dy'} dy
\end{equation*}
satisfy 
\begin{equation*}
\lim_{N\to-\infty} e^{h|N|}\sup_{x \in [-L,L]} \nu_{N,n,x}( [(v+\varepsilon)N, (v-\varepsilon)N]^c ) = 0,
\end{equation*}
for all $L\in \N$ and  $\varepsilon > 0$, and some constant $h(\varepsilon) > 0$ depending on $\varepsilon$.
\end{lemma}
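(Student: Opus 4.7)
The proof will follow the same overall scheme as the proof of Lemma~\ref{lem:condition-second-kind-polymer-measure-converge-to-LLN}, but will be substantially simpler because $w$ is fixed and does not depend on~$N$. In particular, the ``$\liminf$'' in the previous lemma will be strengthened to a genuine ``$\lim$''. As before, by Lemma~\ref{lem:monotonicity-for-second-kind-polymer-measure}, $\bar\mu_{V,-L}^{N,n} \preceq \bar\mu_{V,x}^{N,n} \preceq \bar\mu_{V,L}^{N,n}$ for $x\in[-L,L]$ (with $V=e^{-W}$), so that $\nu_{N,n,-L} \preceq \nu_{N,n,x} \preceq \nu_{N,n,L}$ and it suffices to prove the result for $x = \pm L$ with $L\in\N$.

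The main simplification is in Step~1 of the previous proof. There, because $w_N$ varied stationarily with $N$, the constants controlling the local bound, linear lower bound, and asymptotic slope of $W_{n-m}$ formed a stationary sequence of a.s.\ finite random variables, so Lemma~\ref{lem:property-on-stationary-rvs} was invoked to extract a random subsequence $m_k$ along which these constants were simultaneously dominated by a single random~$R$. In the present setting, $W(\cdot)=\int_0^\cdot w(y')\,dy'$ is a fixed deterministic element of~$\HH$ satisfying one of~\eqref{eq:no_flux_from_infinity}, \eqref{eq:flux_from_the_left_wins}, or~\eqref{eq:flux_from_the_right_wins}. Consequently, given any $\delta>0$ chosen as in~\eqref{eq:choice-of-delta}, there exists a deterministic constant $R=R(W,\delta,L)$ for which the analogues of~\eqref{eq:boundedness-of-W}, \eqref{eq:W-finite-slope}, and~\eqref{eq:W-asymtopic-slope-at-infinity} hold for $W$ itself, uniformly for all $m$. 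No random subsequence is needed.

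With this replacement in hand, fix any $\varepsilon>0$, choose $\delta$ as in~\eqref{eq:choice-of-delta}, and fix $c \ge 17(|v|+1) \vee 4\sqrt{\ln(2\lambda/\rho_0)}$. Working on the full measure set $\Omega''$ of Lemma~\ref{lem:truncation-and-concentration-almost-surely-true} (applied to the pair $(n,[\pm L])$ and this $c$), Step~2 of the previous proof goes through verbatim: one splits
\[
\nu_{N,n,a}\bigl([-(v+\varepsilon)|N|,-(v-\varepsilon)|N|]^c\bigr) = A^{|N|} + B^{|N|},
\]
where $B^{|N|}$ collects the contribution of $|y|\ge c|N|$ and $A^{|N|}$ the contribution of $|y|<c|N|$ with $|y+v|N|\,|>\varepsilon|N|$. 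The estimate~\eqref{eq:truncation-almost-surely} (combined with~\eqref{eq:boundedness-of-W} for $W$) yields $B^{|N|}\le e^{R}\cdot 2^{-|N|}$ for all large~$|N|$, while~\eqref{eq:concentration-almost-surely} reduces $A^{|N|}$ to a Gaussian ratio of the form $e^{O(|N|^{3/4})}\tilde A^{|N|}$, where $\tilde A^{|N|}$ is estimated exactly as in the cases $v=0$ and $v>0$ (the case $v<0$ being symmetric) to yield exponential decay of rate at least $\min\{\varepsilon^2/8,\,v\delta/2\}$. Combining, both $A^{|N|}$ and $B^{|N|}$ decay exponentially in $|N|$, and since all estimates hold for \emph{every} sufficiently large $|N|$ (not merely along a subsequence), we obtain
\[
\lim_{N\to-\infty} e^{h|N|}\sup_{x\in[-L,L]}\nu_{N,n,x}\bigl([(v+\varepsilon)N,(v-\varepsilon)N]^c\bigr)=0
\]
for some $h=h(\varepsilon)>0$, as required. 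The only mildly delicate point is to verify that all the a.s.\ events from Lemma~\ref{lem:truncation-and-concentration-almost-surely-true} can indeed be combined into a single full-measure event that works for all relevant $n$ and $x\in\{\pm L:L\in\N\}$ simultaneously, which follows by a straightforward countable intersection.
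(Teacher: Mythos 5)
Your proposal is correct and follows essentially the same route as the paper, which simply observes that because $w_N \equiv w$ is independent of $N$, the subsequence-extraction step (via Lemma~\ref{lem:property-on-stationary-rvs}) in the proof of Lemma~\ref{lem:condition-second-kind-polymer-measure-converge-to-LLN} can be dropped, so the $\liminf$ there upgrades to a $\lim$. Your write-up fleshes out the same observation in more detail but is substantively identical.
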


\begin{proof}
  The proof is similar to that of Lemma~\ref{lem:condition-second-kind-polymer-measure-converge-to-LLN}.
  Because $w_N(\cdot) \equiv w(\cdot)$ are independent of $N$, there is no need to choose a subsequence
  $(m_k)$ to satisfy \eqref{eq:W-asymtopic-slope-at-infinity}, \eqref{eq:boundedness-of-W}, and
  \eqref{eq:W-finite-slope} as we did in the first step of proving Lemma~\ref{lem:condition-second-kind-polymer-measure-converge-to-LLN}.
Therefore,  we obtain $\lim$ instead of $\liminf$ in the conclusion.
\end{proof}

\begin{proof}[Proof of Theorem \ref{thm:pullback_attraction}] We define $\hat\Omega=\bar{\Omega} \cap \Omega'' \cap
  \Omega_v'$ and let $\omega\in\hat\Omega$.  We also define $V(x) = e^{-\int_0^{x } w(x') dx'}$
  and consider the measures
  \begin{equation*}
\nu_{N,n,x}(dy) = \bar{\mu}_{V, x}^{N,n}\pi_N^{-1}(dy) = \frac{V(y) Z^{N,n}(y,x)}{\int_{\R} V(y')
  Z^{N,n}(y',x)dy'} dy.
 \end{equation*}
 Then we have $\bar{\mu}_{V,x}^{N,n} = \mu_{\nu_{N,n,x},x}^{N,n}$
 and 
\begin{equation*}
\Psi^{N,n}_{\omega} w(x) = \int_{\R}  (y-x)   \bar{\mu}_{V, x}^{N,n}\pi^{-1}_{n-1} (dy).
\end{equation*} 
Due to Lemma~\ref{lem:monotonicity_of_x-ux}, it suffices to prove pointwise convergence, i.e., to show that
\begin{equation}
\label{eq:what-to-prove-pullback}
\lim_{N\to -\infty} \int_{\R} (y-x) \bar{\mu}_{V, x}^{N,n}\pi^{-1}_{n-1} (dy) = \int_{\R} (y-x) \mu_{x}^{-\infty,n}(v)\pi_{n-1}^{-1}(dy),\quad x\in\R.
\end{equation}
Using Lemmas~\ref{lem:LLN-for-p2l-polymer-measure-non-random} and~\ref{lem:small-first-step-almost-surely-all-range}, we
obtain that for some constants $b_1$ and $b_{2}$, 
  \begin{equation}
    \label{eq:growth-condition-for-g}
   \bar{\mu}_{V, x}^{N,n}\pi^{-1}_{n-1}([-L,L]^c)   \le b_1 e^{-b_2 \sqrt{L}}.
  \end{equation}
By Lemma \ref{lem:LLN-for-p2l-polymer-measure-non-random}, for fixed $(n,x) \in
\Z\times\R$, $(\nu_{N,n,x})_{N<n}$ is a family of probability measures satisfying LLN with slope
$v$.
Hence by Lemma \ref{lem:uniqueness-of-polymer-measures}, $\mu_{\nu_{N,n,x},x}^{N,n}$ converges weakly
to $\mu_x^n(v)$, so $\bar{\mu}_{V,x}^{N,n} \pi_{n-1}^{-1}$ converges weakly to  $\mu_{x}^n(v)\pi_{n-1}^{-1}$.
Now (\ref{eq:what-to-prove-pullback}) follows from this and~(\ref{eq:growth-condition-for-g}), and the proof is complete.
\end{proof}
\section{Overlap of polymer measures}
\label{sec:overlap}

In this section we prove Theorem~\ref{thm:convergence-total-variation}. In fact, we give a proof for the time-reversed version
of the theorem because this is more convenient. We recall that 
\begin{equation*}
\| \mu - \nu \|_{TV} = \sup_{A \in \mathcal{B}(\R)} |\mu(A) - \nu(A)|.
\end{equation*}
and that $\Omega_v'=\Omega'\cap \Omega_v$.

The convergence of polymer measures in total variation distance is a consequence 
of the existence of ratios of partition functions and the LLN for polymer measures.

For the rest of this section, we fix $v \in \R$ and always assume that $\omega\in\Omega'_v$.
We will also fix $(n_1,x_1)$ and $(n_2,x_2)$, and write~$\mu_i^N = \mu_{x_i}^{n_i} \pi^{-1}_N$, $i=1,2$.

\begin{lemma}
\label{lem:TV-distance-controlled-by-ratio-of-density}
Let $\mu$ and $\nu$ be two probability measures with densities $f$ and $g$ respectively, such that
both $f$ and $g$ are positive on some Borel set $C$, and zero outside~$C$.
Then 
\begin{equation*}
\| \mu - \nu \|_{TV} \le 1- \inf_{x\in C} \frac{g(x)}{f(x)} 
\end{equation*}
\end{lemma}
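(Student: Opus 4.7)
The plan is to reduce the total variation distance to an integral involving the minimum of the two densities and then apply the pointwise bound coming from the definition of the infimum. Set $\alpha = \inf_{x\in C} g(x)/f(x)$. Because $f$ and $g$ both integrate to $1$ and are supported on the same set $C$, we must have $\alpha \le 1$; otherwise $g>f$ on all of $C$ would contradict $\int_{C}(g-f)\,dx = 0$. In particular $\alpha \in [0,1]$, so the right-hand side $1-\alpha$ of the claimed inequality lies in $[0,1]$.

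The first step is to recall the standard identity
\begin{equation*}
\|\mu - \nu\|_{TV} \;=\; \tfrac{1}{2}\int_{\R}|f(x)-g(x)|\,dx \;=\; 1 - \int_{\R}\min\bigl(f(x),g(x)\bigr)\,dx,
\end{equation*}
which follows from splitting $\R$ into $\{f\ge g\}$ and $\{f<g\}$ and using $\int f = \int g = 1$. Since $f$ and $g$ vanish outside $C$, the integral of $\min(f,g)$ can be restricted to $C$.

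The second step uses the definition of $\alpha$: for every $x\in C$ we have $g(x)\ge\alpha f(x)$, hence $\min(f(x),g(x))\ge \min(f(x),\alpha f(x)) = \alpha f(x)$, using $\alpha\le 1$. Integrating over $C$ gives
\begin{equation*}
\int_{\R}\min(f,g)\,dx \;=\; \int_{C}\min(f,g)\,dx \;\ge\; \alpha\int_{C}f\,dx \;=\; \alpha.
\end{equation*}
Combining with the identity above yields $\|\mu-\nu\|_{TV}\le 1-\alpha$, which is the desired bound. No step here is a real obstacle; the only subtlety is verifying $\alpha\le 1$ so that the pointwise inequality $\min(f,\alpha f)=\alpha f$ is valid, which is handled by the mass-conservation argument in the first paragraph.
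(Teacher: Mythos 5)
Your proof is correct and uses essentially the same idea as the paper: the pointwise bound $g \ge \alpha f$ combined with the fact that $f$ integrates to $1$. The paper goes directly via $\|\mu-\nu\|_{TV} = \int_A (f-g)\,dx \le (1-\alpha)\int_A f \le 1-\alpha$ where $A=\{f\ge g\}$, which sidesteps the need to check $\alpha\le 1$; your detour through the identity $\|\mu-\nu\|_{TV}=1-\int\min(f,g)$ is equivalent (since $\int_A(f-g)=1-\int\min(f,g)$) but forces you to verify $\alpha\le 1$ so that $\min(f,\alpha f)=\alpha f$, which you correctly do.
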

\begin{proof}
Let $A = \{ x \in C: f(x) \ge g(x)\}$ and $d = \inf_{x\in C} g(x)/f(x)$.
Then 
\begin{equation*}
  \| \mu - \nu \|_{TV} = \int_A (f(x)-g(x)) \,dx \le \int_A (1-d)f(x)\, dx
  \le (1-d)\int_C f(x) \, dx = 1-d.
\end{equation*}
\end{proof}

\begin{lemma}
\label{lem:sublinear-growth-of-polymer-meausres}
There are constants $\alpha_N, \beta_N$ depending on $\omega, x_i, n_i$ such that
\begin{align*}
\lim_{N\to -\infty}
\frac{\alpha_N}{N}&=\lim_{N\to -\infty} \frac{\beta_N}{N}=v, \\
\lim_{N\to -\infty}\mu_1^N([\alpha_N, \beta_{N}]^c) &= \lim_{N\to -\infty}\mu_2^N([\alpha_N, \beta_{N}]^c) =0.
\end{align*}
\end{lemma}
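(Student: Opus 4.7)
The plan is to exploit the fact that each infinite-volume polymer measure $\mu_{x_i}^{-\infty,n_i}(v)$ satisfies the strong law of large numbers with slope $v$, as established in (the time-reversed version of) Theorem~\ref{thm:thermodynamic-limit}. In particular, for $i=1,2$ and every $\varepsilon > 0$,
\[
\mu_i^N\big(\{y: |y - vN| > \varepsilon |N|\}\big) \longrightarrow 0 \qquad \text{as } N \to -\infty.
\]
The task is then to combine the two convergences ($i=1$ and $i=2$) and let $\varepsilon$ shrink with $|N|$ in a controlled way via a diagonal choice.

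Concretely, I will pick a decreasing sequence $\varepsilon_k \downarrow 0$ (for instance $\varepsilon_k = 1/k$), and use the LLN above to select a strictly decreasing sequence of integers $N_k \to -\infty$ such that
\[
\max_{i \in \{1,2\}}\ \mu_i^N\big(\{y : |y - vN| > \varepsilon_k |N|\}\big) < 1/k \qquad \text{for every } N \le N_k.
\]
For each $N \le N_1$, let $k(N)$ denote the unique integer with $N_{k(N)+1} < N \le N_{k(N)}$, and set
\[
\alpha_N = N\big(v + \varepsilon_{k(N)}\big), \qquad \beta_N = N\big(v - \varepsilon_{k(N)}\big).
\]
Since $N<0$, we have $\alpha_N < \beta_N$, and $[\alpha_N,\beta_N]^c = \{y : |y/N - v| > \varepsilon_{k(N)}\}$, so $\mu_i^N([\alpha_N,\beta_N]^c) < 1/k(N)$ for $i=1,2$. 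By construction $k(N) \to \infty$ as $N \to -\infty$, hence $\alpha_N/N = v + \varepsilon_{k(N)} \to v$, $\beta_N/N = v - \varepsilon_{k(N)} \to v$, and both polymer tails tend to zero, as required.

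There is essentially no obstacle here: the lemma is a direct, soft consequence of the LLN contained in Theorem~\ref{thm:thermodynamic-limit}, and the only minor point is ensuring that a single diagonal sequence $(N_k)$ handles both $i=1$ and $i=2$ simultaneously, which is automatic because we threshold the maximum of the two tail probabilities. The reason for isolating this statement is presumably that the intervals $[\alpha_N, \beta_N]$ will be fed into Lemma~\ref{lem:TV-distance-controlled-by-ratio-of-density} together with the uniform convergence of partition-function ratios from Theorem~\ref{thm:convergence-of-partition-function-ratios} to conclude the total-variation coupling claim of Theorem~\ref{thm:convergence-total-variation}.
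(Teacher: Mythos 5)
Your proof is correct and follows essentially the same route as the paper: both invoke the LLN with slope $v$ for the marginals $\mu_i^N$, choose a decreasing sequence $N_k \to -\infty$ along which the tail masses drop below a vanishing threshold simultaneously for $i=1,2$, and then define $\alpha_N, \beta_N$ using the bracket $k(N)$ with $N_{k+1} < N \le N_k$. The only cosmetic difference is your choice of $\varepsilon_k = 1/k$ and threshold $1/k$ versus the paper's $2^{-k}$ for both.
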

\begin{proof}
Since the measures $\mu_i^{N}$ satisfy the LLN with slope $v$, there is  a decreasing sequence of negative numbers  $(N_k)$ such that
\begin{equation*}
\mu_i^N ([   (-v-2^{-k})|N|, (-v+2^{-k})|N|]^c) \le 2^{-k}, \quad N \le N_k,\ i=1,2.
\end{equation*}
For every $N$, let $k$ be such that $N_{k+1} \le N < N_k$.
Then setting
\begin{equation*}
\alpha_N = (-v-2^{-k})|N|, \quad \beta_N = (-v+2^{-k})|N|
\end{equation*}
completes the proof.
\end{proof}

Let $f_i^N(x)$ be the density of $\mu_i^N$.
We will need the following representation of~$f_i^N$.
\begin{lemma}
\label{lem:representation-through-solution-to-burgers-equation}
Recall the function $V_v(N,x)$ which is the Hopf--Cole transform of the global solution $u_v(n,x)$.
Then 
\begin{equation*}
f_i^N(x) = \frac{ V_v(N,x) Z_{x,x_i}^{N,n_i}}{\int V_v(N,x') Z_{x',x_i}^{N,n_i} \, dx'}.
\end{equation*}
\end{lemma}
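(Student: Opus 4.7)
The plan is to read off $f_i^N$ directly from Theorem~\ref{thm:convergence-of-density-functions}, then factor the Busemann-type object $G_v$ so that the dependence on the integration variable $x$ lands entirely inside $V_v(N,\cdot)$ and the partition function, while everything else becomes a constant that is forced by the fact that $f_i^N$ is a probability density.

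First, I would recall that by Theorem~\ref{thm:convergence-of-density-functions} (in its time-reversed form) and the definition $\mu_i^N = \mu_{x_i}^{-\infty,n_i}(v)\pi_N^{-1}$, the density of $\mu_i^N$ exists and is given by
\begin{equation*}
f_i^N(x) \;=\; Z^{N,n_i}_{x,x_i}\, G_v\bigl((N,x),(n_i,x_i)\bigr).
\end{equation*}
So the task is to rewrite $G_v\bigl((N,x),(n_i,x_i)\bigr)$ in terms of $V_v(N,x)$.

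Next, I would use the cocycle (multiplicative) property of $G_v$. Because $G_v$ is defined pointwise as a limit of ratios
\[
G_v\bigl((m_1,y_1),(m_2,y_2)\bigr)=\lim_{M\to-\infty}\frac{Z^{M,m_1}_{y_M,y_1}}{Z^{M,m_2}_{y_M,y_2}}
\]
along any sequence $y_M/M\to v$ (Theorem~\ref{thm:convergence-of-partition-function-ratios}), the identities $G_v(a,b)G_v(b,c)=G_v(a,c)$ and $G_v(a,b)=G_v(b,a)^{-1}$ hold automatically. Combined with the definition $V_v(N,x)=G_v\bigl((N,x),(N,0)\bigr)$, this gives
\begin{equation*}
G_v\bigl((N,x),(n_i,x_i)\bigr) \;=\; V_v(N,x)\, C_i(N),\qquad C_i(N):=G_v\bigl((N,0),(n_i,x_i)\bigr),
\end{equation*}
where the key point is that $C_i(N)$ does \emph{not} depend on~$x$.

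Finally, substituting back yields
\begin{equation*}
f_i^N(x) \;=\; C_i(N)\, V_v(N,x)\, Z^{N,n_i}_{x,x_i}.
\end{equation*}
Since $f_i^N$ is the density of a probability measure, integrating in $x$ forces
\[
C_i(N) \;=\; \Bigl(\int V_v(N,x')\, Z^{N,n_i}_{x',x_i}\,dx'\Bigr)^{-1},
\]
and substituting this constant back gives the claimed formula. There is really no hard step here; the only thing worth checking is that the integral in the denominator is finite and positive, which follows from $V_v(N,\cdot)>0$ (it is a Hopf--Cole transform) together with the fact that $Z^{N,n_i}_{\cdot,x_i}$ has Gaussian decay in its first argument modulated by the locally bounded potential, so the integrand is a bona fide positive integrable function. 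The mild subtlety to be careful about is distinguishing the forward and backward versions of $G_v$ and of Theorem~\ref{thm:convergence-of-density-functions}; once the sign conventions are aligned, the argument is a one-line manipulation on top of the results already established in Section~\ref{sec:global-solutions-and-ratios-of-partition-functions}.
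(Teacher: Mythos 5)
Your proposal is correct and follows essentially the same route as the paper's own proof. The paper works with the ratio $f_i^N(x)/f_i^N(y)$ and cancels the $x$-independent factor of $G_v$ implicitly before invoking normalization, whereas you make the cocycle factorization $G_v\bigl((N,x),(n_i,x_i)\bigr)=V_v(N,x)\,G_v\bigl((N,0),(n_i,x_i)\bigr)$ and the resulting $x$-independent constant explicit, then integrate; these are the same argument in slightly different packaging.
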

\begin{proof}
  By (\ref{eq:expression-of-polymer-measure-density}) in Theorem \ref{thm:convergence-of-density-functions} we have 
\begin{equation*}
f_i^N(x) = Z^{N,n_i}_{x,x_i} G_v \bigl( (N,x), (n_i,x_i) \bigr).
\end{equation*}
Thus for $x \neq y$, 
\begin{equation*}
\frac{f_i^N(x)}{f_i^N(y)} = \frac{Z^{N,n_i}_{x,x_i}}{Z^{N,n_i}_{y,x_i}} \frac{G_v \bigl( (N,x),
  (n_i,x_i) \bigr)}{G_v \bigl(  (N,y), (n_i,x_i) \bigr)}
= \frac{Z^{N,n_i}_{x,x_i}}{Z^{N,n_i}_{y,x_i}} \frac{G_v \bigl( (N,x), (N,0)  \bigr)}{ G_v \bigl(
  (N,y), (N,0) \bigr)}
= \frac{Z^{N,n_i}_{x,x_i} V_v(N,x)}{Z^{N,n_i}_{y,x_i}V_v(N,y)},
\end{equation*}
and our claim follows.
\end{proof}

\begin{proof}[Proof of Theorem~\ref{thm:convergence-total-variation}]
Let $D_i^N = \mu_i^N([\alpha_N,\beta_N])$, $i=1,2$, and let 
\begin{equation*}
\tilde{\mu}_i^N (A) = (D_i^{N} )^{-1} \mu_i^N( A \cap [\alpha_N, \beta_N]).
\end{equation*}
Then the measures $\tilde{\mu}_i^N$, $i=1,2$, are supported on $[\alpha_N,\beta_N]$ with densities given by $\tilde{f}_i^N(x)=(D_i^N)^{-1} f_i^N(x)$. Also, 
\begin{equation}
  \label{eq:modified-measures}
\|  \tilde{\mu}_i^N - \mu_i^N \|_{TV} \le 1-D_i^{N}.
\end{equation}
Combining this with Lemma \ref{lem:TV-distance-controlled-by-ratio-of-density} we obtain 
\begin{equation*}
\begin{split}
  \| \mu_1^N - \mu_2^N \|_{TV}
  &\le \| \mu_1^N - \tilde{\mu}_1^N \|_{TV} +
  \| \tilde{\mu}_1^N - \tilde{\mu}_2^N \|_{TV} +
  \| \tilde{\mu}_2^N - \mu_2^N \|_{TV} \\
  &\le 3-D^N_1-D^N_2 - \inf_{x \in [\alpha_N, \beta_N]} \frac{\tilde{f}_2^N(x)}{\tilde{f}_1^N(x)}.
\end{split}
\end{equation*}
Since $D^{N}_i \to 1$ as $N\to -\infty$, $i=1,2$, it suffices to show 
\begin{equation*}
\lim_{N \to -\infty} \inf_{x \in [\alpha_N, \beta_N]} \frac{\tilde{f}_2^N(x)}{\tilde{f}_1^N(x)} = 1.
\end{equation*}
Using the representation of $f^N_i$ in Lemma
\ref{lem:representation-through-solution-to-burgers-equation}, we see that
\begin{equation*}
\tilde{f}_i^N = \frac{ V_v(N,x) Z_{x,x_i}^{N,n_i}}{\int_{\alpha_N}^{\beta_{N}} V_v(N,x') Z_{x',x_i}^{N,n_i} \, dx'}.
\end{equation*}
and hence
\begin{equation*}
  \frac{\tilde{f}_2^N(x)}{\tilde{f}_1^N(x)} = \frac{Z^{N,n_2}_{x,x_2}}{Z^{N,n_1}_{x,x_1}}
  \frac{\int_{\alpha_N}^{\beta_{N}} V_v(N,x') Z_{x',x_1}^{N,n_1} \, dx'}{\int_{\alpha_N}^{\beta_{N}}
    V_v(N,x') Z_{x',x_2}^{N,n_2} \,dx'}
  \ge \frac{m_N}{M_{N}},
\end{equation*}
where 
\begin{equation*}
  m_N = \inf_{x \in [\alpha_N, \beta_N]} \frac{Z^{N,n_2}_{x,x_2}}{Z^{N,n_1}_{x,x_1}}, \quad
  M_n = \sup_{x \in [\alpha_N, \beta_N]} \frac{Z^{N,n_2}_{x,x_2}}{Z^{N,n_1}_{x,x_1}}.
\end{equation*}
Our goal is to show that $\lim_{N\to \infty} m_N/M_N = 1$.

Since the partition function is continuous with respect to endpoints, both the supremum and infimum
are achieved at some points $x = x^N_+$ and $x=x_-^{N}$.
Since $\lim_{N\to -\infty} x^N_{\pm}/N=v$, Theorem
\ref{thm:convergence-of-partition-function-ratios}  implies 
\begin{equation*}
  \lim_{N\to -\infty} m_N = \lim_{N\to -\infty} M_N
  = \lim_{N\to -\infty} \frac{Z^{N,n_2}_{x^N_{\pm}, x_2}}{Z^{N,n_1}_{x^N_{\pm}, x_1}}=
  G_v \bigl( (n_2,x_2), (n_1,x_1) \bigr)\in(0,\infty),
\end{equation*}
so $\lim_{N\to -\infty} m_N/M_N =1$. This completes the proof.
\end{proof}

\section{Proofs of lemmas from Section~\ref{sec:setting}}
\label{sec:aux}
\begin{proof}[Proof of Lemma~\ref{lem:invariant_spaces}]
 Let us check that if $W\in\HH$, then $\Phi^{n,n+1}_\omega W\in\HH$ for all $n$ and $\omega$.
Due to~\eqref{eq:forcing-averages-to-0}, there is a number $k=k(n,\omega)>0$ such that $F_n(x)+W(x)\ge -k(|x|+1)$
for all $x\in\R$. Since
\begin{align*}
 \int_\R g_{2\visc}(y-x)e^{-\frac{F_n(x)}{2\visc}-\frac{W(x)}{2\visc}}dx\le 
\int_\R g_{2\visc}(y-x)e^{\frac{k|x|+1}{2\visc}}dx<\infty,
\end{align*}
$\Phi^{n,n+1}_\omega W(y)$ is well-defined for all $y\in\R$, and
\begin{align*}
\liminf_{y\to+\infty}\frac{\Phi^{n,n+1}_\omega W(y)}{y}
&\ge -\liminf_{y\to+\infty}\frac{2\visc}{y}\ln \int_\R g_{2\visc}(y-x)e^{\frac{k|x|+1}{2\visc}}dx\\
&= -\liminf_{y\to+\infty}\frac{2\visc}{y}\ln \int_\R g_{2\visc}(y-x)e^{\frac{kx+1}{2\visc}}dx\\
&= -\liminf_{y\to+\infty}\frac{2\visc}{y}\ln (e^{\frac{ky}{2\visc}+\frac{k^2}{4\visc}+\frac{1}{2\visc}})=-k >-\infty.
\end{align*}
In the second line, we used that the contribution from the negative values of $x$ is asymptotically 
negligible due to the fast decay of the Gaussian kernel. 
For the last line, we used the Gaussian moment generating function. The behavior as $y\to-\infty$ is treated similarly. 
The local Lipschitz property follows from the $C^1$ property that can be obtained by differentiating the integrand in the definition of $\Phi$.

Iterating this, we obtain parts~\ref{it:inv-of-HH} and \ref{it:cocycle} of the lemma.
The proof of part~\ref{it:inv-of-HH-vv} is similar to that of part~\ref{it:inv-of-HH}.
\end{proof}

\begin{proof}[Proof of Lemma~\ref{lem:monotonicity_of_x-ux}]
  Let $V(\cdot)$ be the Hopf--Cole transform of $w(\cdot)$.
  Then
\begin{equation*}
  x-\Psi^{n_0,n_1}w(x) = x - \int_{\R} (x-y) \bar{\mu}_{V, x}^{m,n}(dy) = \int_{\R} y \bar{\mu}_{V,x}^{m,n}(dy).
  \end{equation*}
The conclusion then follows from Lemma~\ref{lem:monotonicity-for-second-kind-polymer-measure}.
\end{proof}

\bibliographystyle{alpha}
\bibliography{Burgers}
\end{document}